\newcommand{\GI}[2][]{\sidenote[colback=yellow!20]{\textbf{GI\xspace #1:} #2}}
\newcommand{\HL}[2][]{\sidenote[colback=orange!20]{\textbf{HL\xspace #1:} #2}}
\newcommand{\CG}[2][]{\sidenote[colback=cyan!10]{\textbf{CG\xspace #1:} #2}}
\renewcommand{\setminus}{-}
\newcommand{\wconv}[1][]{\xrightarrow[#1]{\mathcal D}}
\DeclareMathOperator*{\plim}{p-lim}
\newcommand{\leqnomode}{\tagsleft@true\let\veqno\@@leqno}
\newcommand{\reqnomode}{\tagsleft@false\let\veqno\@@eqno}
\newcommand{\mylabel}[2]{\def\@currentlabel{#2}\label{#1}}
\newcommand{\II}{\mathit{II}}
\newcommand{\III}{\mathit{III}}
\newcommand{\IV}{\mathit{IV}}
\newcommand{\e}{\epsilon}
\newcommand{\eps}{\epsilon}
\newcommand{\Aep}{\mathcal{A}^{\eps}}
\newcommand{\B}{\mathcal B}
\newcommand{\fraka}{\mathfrak{a}}
\newcommand{\frakb}{\mathfrak{b}}
\newcommand{\calH}{{\mathcal{H}}}
\renewcommand{\st}{\,|\,}
\begin{document}
\title{An oscillator driven by algebraically decorrelating noise}
\author{Chistophe Gomez}
\address{%
  Aix Marseille Univ, CNRS, I2M, Marseille, France
  }
\email{christophe.gomez@univ-amu.fr}
\author{Gautam Iyer}
\address{%
  Department of Mathematical Sciences,
  Carnegie Mellon University,
  Pittsburgh, PA 15213, USA}
\email{gautam@math.cmu.edu}
\author{Hai Le}
\address{%
  Department of Mathematics,
  Pennsylvania State University,
  State College PA 16802}
\email{leviethaipsu3@gmail.com}
\author{Alexei Novikov}
\address{%
  Department of Mathematics,
  Pennsylvania State University,
  State College PA 16802}
\email{anovikov@math.psu.edu}

\subjclass[2010]{Primary 37H05, 60H10;
  Secondary 60F05, 60G15.
}
\keywords{Hamiltonian systems, Random perturbations, Algebraic decaying correlations, Fractional processes, Diffusion-approximation}
\thanks{%
  This work has been partially supported by
  the National Science Foundation under grants
NSF DMS-1813943, DMS-1515187 to AN and HL,
   NSF DMS-1814147 to GI, 
  and the Center for Nonlinear Analysis.
}
\begin{abstract}
  We consider a stochastically forced nonlinear oscillator driven by a stationary Gaussian noise that has an algebraically decaying covariance function.
  It is well known that such noise processes can be renormalized to converge to \emph{fractional} Brownian motion, a process that has memory.
  In contrast, we show that the renormalized limit of the nonlinear oscillator driven by this noise converges to diffusion driven by standard (not fractional) Brownian motion, and thus retains no memory in the scaling limit.
  The proof is based on the study of a fast-slow system using the perturbed test function method.
\end{abstract}

\maketitle 

\section{Introduction}

Consider a stochastically forced nonlinear oscillator with 1 degree of freedom:
\begin{equation}\label{e:oscillator}
  \ddot x_t  + f(x_t)  =  \epsilon  v(t),\quad x_0\in \R,\quad \dot{x}_0 = y_0\in \R\,.
\end{equation}
Here $f\colon \R \to \R$ is a given smooth function, and $v$ is a stochastic process representing the noise.
Our interest is to study the asymptotic long-time behavior of $x$ when the noise $v$ has an algebraically decaying covariance function.
In this case, the rescaled noise converges to fractional Brownian motion (fBm), a process that has a memory.
We aim to study how the nonlinear dynamics affects the limiting behavior, and study whether or not the rescaled oscillator also converges to a process with memory.

A similar question was studied by Komorowski et\ al.~\cite{KomorowskiNovikovEA14} in the case of a passive tracer particle advected by a periodic shear flow.
In this case, it turns out that there is a certain parameter regime where the time rescaled dynamics is Markovian and the memory effect of the noise is forgotten.
However, there is also a regime (namely the ``very long-time'' behavior when the Hurst index of the driving noise is larger than $1/2$) where the memory effect persists.
In contrast, for the oscillator~\eqref{e:oscillator}, we will show that the memory effect \emph{never} persists, and the effective long-time behavior is always Markovian.
The main reason for this is that the oscillatory nature of the deterministic dynamics cancels slowly decaying correlations, and destroys the memory effect.
To study~\eqref{e:oscillator} we cannot use the limit theorem for additive functionals fBm's used by~\cite{KomorowskiNovikovEA14}.
Instead, we recast~\eqref{e:oscillator} as a fast-slow system and use the perturbed test function method.
Even though this method was introduced in the '76~\cite{PapanicolaouStroockEA76,Kushner84}, it has applications to many problems arising today~\cites{FouqueGarnierEA07,DebusscheRoselloEA21,CaillerieVovelle21}, and is the only method we presently know that can be used to prove our main result.

To describe our setup, we first introduce the Hamiltonian
\begin{equation*}
  H(x,y) \defeq \frac{1}{2}y^2+\int_0^x f(s) \, ds\,,
\end{equation*}
and recast~\eqref{e:oscillator} as a stochastically perturbed Hamiltonian system
\begin{subequations} 
  \begin{align}
  \label{e:system1}
  &\dot x_t  =
  y_t= \partial_y H(x_t ,y_t )\,,
  \\
  \label{e:system2}
  &\dot y_t
  = \ddot x_t= - f(x_t ) + \epsilon v(t)= -\partial_x H(x_t ,y_t )+ \epsilon v(t )\,.
  \end{align}
\end{subequations}
Setting $X_t \defeq (x_t,y_t)^T$, we rewrite~\eqref{e:system1}--\eqref{e:system2} compactly as
\begin{equation}\label{e:noisyHamiltonianUnscaled}
\dot X_t = \nabla^{\bot}H(X_t ) + \epsilon v(t) e_2,\quad X_0 = (x_0,y_0)\in \R^2\,.
\end{equation} 
where $\nabla ^{\bot}\defeq (\partial_y,-\partial_x)^T$, and $e_2 = (0, 1)^T$.

To study the long-time behavior, 
we consider the time rescaled process $X^\epsilon(t) \defeq X(t/\epsilon^2)$, and observe
\begin{equation}\label{e:hamiltonian}
\dot X_t^{\epsilon} = \frac{1}{\epsilon^2}\nabla^{\bot}H(X_t^{\epsilon} ) + \frac{1}{\epsilon} v\left(\frac{t}{\epsilon^2}\right)e_2 \,,
\quad X_0^{\epsilon} = X_0 = (x_0,y_0)\in \R^2.
\end{equation}
When the noise $v$ is white, the behavior of $X^\epsilon$ as $\epsilon \to 0$ is completely described by the averaging principle of Freidlin and Wentzell~\cites{FreidlinWentzell93,FreidlinWentzell94,FreidlinWeber98}.
To briefly explain this, we note that in the absence of noise (i.e.\ when $v\equiv 0$), the process $X^{\epsilon}$ travels very fast along level sets of the Hamiltonian.
In the presence of noise, the process $X^\epsilon$ will also diffuse slowly across these level sets.
To capture the limiting behavior we factor out the fast motion by projecting to the  \emph{Reeb graph} of the Hamiltonian, which has the effect of identifying all closed trajectories of the Hamiltonian system to points.
Now, as $\epsilon \to 0$, this projection converges to a diffusion driven by a standard Brownian motion and we refer the reader to~\cites{FreidlinWentzell93,FreidlinWentzell94,FreidlinWeber98} for details.
(We also remark that when the noise~$v$ is white, one may also study the behavior of~$X$ on time scales shorter than $1/\epsilon^2$.
In some scenarios, a robust and stable limiting behavior is also observed on these time scales and has been studied by several authors~\cites{Young88,YoungJones91,Bakhtin11,HaynesVanneste14,HaynesVanneste14a,HairerKoralovEA16,TzellaVanneste16,HairerIyerEA18}.)

In this paper, our interest is to study the case when the driving noise in~\eqref{e:hamiltonian} is colored in time and has algebraically decaying correlations.
It is well known that this noise can be renormalized to converge to a fractional Brownian motion (fBm), with Hurst parameter that is determined from the decay rate of the correlation function (see for instance ~\cite{Taqqu74,Taqqu77,Marty05}).
When the Hurst parameter is not half (which happens when the correlation function decays like $1/t^\gamma$ with $\gamma \neq 1$), the renormalized limit of the noise has memory and is a non-Markovian process.
We aim to study whether the memory effect is also present in the~$\epsilon \to 0$ limit of the nonlinear oscillator~\eqref{e:hamiltonian}, where the noise is combined with the Hamiltonian dynamics.

At present, we are only able to analyze the scenario where the Hamiltonian is smooth with exactly one non-degenerate critical point.
In this case, it is convenient to think about the dynamics of~$X^\epsilon$ in terms of action-angle coordinates.
The angular coordinate of $X^\epsilon$ changes very fast, and has no meaningful limit as $\epsilon \to 0$.
The action coordinate of~$X^\epsilon$, on the other hand, changes slowly as a result of the interaction between the noise and the averaged angular coordinate and has a non-trivial limit $\epsilon \to 0$.
To study this we use the Hamiltonian itself as a proxy for the action coordinate and state our results in terms of convergence of the process $H(X^\epsilon)$.
(To relate this to the Freidlin--Wentzell framework, we note that when $H$ has exactly one non-degenerate critical point, the Reeb graph has exactly one vertex and one edge, and~$H(X^\epsilon)$ is precisely the projection of $X^\epsilon$ onto the Reeb graph.)

In this paper, we prove two results. They show that even though the driving noise has memory, the time correlations are destroyed by the oscillatory dynamics of~\eqref{e:hamiltonian}, and $H(X^\epsilon)$ converges to a diffusion driven by standard Brownian motion (a process without memory).
Roughly speaking our main results are as follows:
\begin{enumerate}
  \item
    If the Hamiltonian~$H$ is quadratic, and the noise~$v$ is any stationary Gaussian process with an algebraically decaying covariance function, then $H(X^\epsilon)$ converges to a diffusion process driven by a Brownian motion. In this particular case, the limiting diffusion is a rescaling of the square of the 2-dimensional Bessel process.

  \item
    If the Hamiltonian is not quadratic (but still smooth with exactly one non-degenerate critical point), then we can show $H(X^\epsilon)$ converges to a diffusion driven by Brownian motions, provided the noise~$v$ is chosen suitably.
    (The noise~$v$ is still a stationary Gaussian process with an algebraically decaying covariance function.
    However, it must be expressible as a superposition of Ornstein--Uhlenbeck processes as we make use of certain exponential mixing estimates in the proof.)
\end{enumerate}


\subsection{Main results for a quadratic Hamiltonian.}

We now state our results precisely when $H$ is quadratic.
Hereafter we assume that the noise $v$ is a stationary Gaussian process with covariance function
\begin{equation}\label{e:rDef}
  R(t) \defeq \E v(t)v(0) = \E v(t + s) v(s) \,. 
\end{equation}
Our main result when the Hamiltonian is quadratic is as follows:
\begin{proposition}\label{p:qConv}
  Let~$H$ be the quadratic Hamiltonian
  \begin{equation}\label{e:Hquadratic}
    H(x) \defeq \frac{\abs{x}^2}{2} \,, \qquad x = (x_1, x_2) \in \R^2\,,
  \end{equation}
  and suppose the noise~$v$ is a stationary Gaussian process whose covariance function, $R$, is of the form
  \begin{equation}\label{e:RregularlyVarying}
    R(t) = \frac{L(t)}{t^\gamma}
  \end{equation}
  for some $\gamma \in (0, 2)$ and a function~$L$ that is slowly varying at infinity.   \footnote{%
  Recall a function~$L$ is said to be slowly varying at infinity if for every $s > 0$ we have $L(st) / t \to 1$ as $t \to \infty$.} If~$\gamma \in (0, 1]$, we further assume that $L$ has the slow increase property
  \begin{equation}\label{e:slowIncrease}
    \lim_{t \to \infty} \frac{L'(t)}{L(t) / t} = 0\,.
  \end{equation}
  Then the family of processes $H(X^\epsilon)_{\eps>0}$ converges in distribution to $H(W^D)$, where $W^D$ is a 2D Brownian motion with $W_0 = X_0$ and covariance matrix $D$ given by
  \begin{equation} \label{e:qcov}
  D_{11} = D_{22} \defeq  \lim_{x \to \infty} \int_0^x R(z) \cos(z) \, dz \qquad\text{and}\qquad D_{12}=D_{21}=0 \,.
  \end{equation}
\end{proposition}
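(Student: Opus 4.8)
The plan is to exploit the fact that for a quadratic Hamiltonian the deterministic flow is an explicit rotation, which lets one pass to a co-rotating frame, remove the fast oscillation exactly, and reduce the problem to a central limit theorem for a Gaussian additive functional of the noise. Concretely, with $H(x) = \abs{x}^2/2$ one has $\nabla^{\bot} H(x) = Jx$, where $J(x_1,x_2)^T \defeq (x_2,-x_1)^T$, so~\eqref{e:hamiltonian} is the linear equation $\dot X^\epsilon_t = \epsilon^{-2} J X^\epsilon_t + \epsilon^{-1} v(t/\epsilon^2)\, e_2$, whose deterministic flow $e^{tJ/\epsilon^2}$ is an orthogonal rotation and hence preserves $H$. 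Setting $Y^\epsilon_t \defeq e^{-tJ/\epsilon^2} X^\epsilon_t$ gives $H(X^\epsilon_t) = H(Y^\epsilon_t)$, and variation of constants together with the substitution $u = s/\epsilon^2$ yields the explicit representation
\begin{equation*}
  Y^\epsilon_t = X_0 + N^\epsilon_t\,,
  \qquad
  N^\epsilon_t \defeq \epsilon \int_0^{t/\epsilon^2} v(u)\,\bigl(-\sin u,\ \cos u\bigr)^T \, du \,.
\end{equation*}
It therefore suffices to prove that $N^\epsilon \to W^D - X_0$ in distribution on $C([0,\infty);\R^2)$, equivalently $Y^\epsilon \to W^D$, and then to conclude by the continuous mapping theorem applied to the continuous map $\omega \mapsto H\circ\omega$ on path space.

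Next, the random vectors $N^\epsilon_{t_1},\dots,N^\epsilon_{t_k}$ are jointly Gaussian with mean zero (they are $L^2$-limits of Gaussian Riemann sums of $v$), so their law is determined by the covariance and weak convergence reduces to (a) convergence of these covariances to those of a Brownian motion with diffusion matrix $D$, and (b) tightness. For (a) I would write $v(u)\psi(u)$ with $\psi = (-\sin,\cos)^T$ and expand using product-to-sum formulas. In the resulting double integrals over $[0,t/\epsilon^2]\times[0,s/\epsilon^2]$, the \emph{difference-frequency} contributions carry a $\cos(u-u')$ or $\sin(u-u')$ weight against $R(u-u')$ and, by Fubini in $\tau = u-u'$, collapse to integrals of the primitive $G(\tau) \defeq \int_0^\tau R(z)\cos z\,dz$: since $G(\tau)\to D_{11}$, the diagonal entries converge to $\min(s,t)\,D_{11}$, while the $\sin$-weighted off-diagonal entries vanish by oddness, producing $D_{12}=D_{21}=0$. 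The \emph{sum-frequency} contributions carry a $\cos(u+u')$ or $\sin(u+u')$ factor; integrating that factor out in the variable $u+u'$ leaves a bounded factor, so these terms are $O\!\bigl(\epsilon^2\int_0^{t/\epsilon^2}\abs{R}\bigr)$, and Karamata's theorem gives $\epsilon^2\int_0^{t/\epsilon^2}\abs{R(z)}\,dz \to 0$ for every $\gamma\in(0,2)$ (the representative hard case being $\gamma\in(0,1)$, where this quantity is of size $\epsilon^{2\gamma} L(\epsilon^{-2})\to 0$). This vanishing of the sum-frequency terms is exactly where the oscillatory dynamics destroys the long-range correlations of the driving noise.

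For (b), since each increment $N^\epsilon_t - N^\epsilon_s$ is a centered Gaussian vector, the equivalence of Gaussian moments gives $\E\abs{N^\epsilon_t-N^\epsilon_s}^{2m} \le C_m \bigl(\E\abs{N^\epsilon_t-N^\epsilon_s}^2\bigr)^m$, so by the Kolmogorov--Chentsov criterion it is enough to bound $\E\abs{N^\epsilon_t-N^\epsilon_s}^2 \le C\abs{t-s}$ uniformly in $\epsilon$. Here the identity $\psi(u)\cdot\psi(u') = \cos(u-u')$ removes all sum-frequency terms at once, so $\E\abs{N^\epsilon_t-N^\epsilon_s}^2 = \epsilon^2 \iint_{[s/\epsilon^2,\,t/\epsilon^2]^2} \cos(u-u')\,R(u-u')\,du\,du' = 2\epsilon^2 \int_0^{(t-s)/\epsilon^2} G(\tau)\,d\tau$, which is $\le C\abs{t-s}$ because $G$ is bounded (being continuous and convergent at infinity). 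Combining (a) and (b) gives $N^\epsilon \to W^D - X_0$ in distribution, hence $H(X^\epsilon) = H(X_0 + N^\epsilon) \to H(W^D)$ in distribution.

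The step I expect to be the main obstacle is the covariance analysis under the bare regular-variation hypothesis~\eqref{e:RregularlyVarying}, and in particular showing that $D_{11} = \lim_{x\to\infty}\int_0^x R(z)\cos z\,dz$ exists and is finite (and hence that $G$ is bounded). When $\gamma\in(1,2)$ this is immediate since $R\in L^1$ near infinity; but when $\gamma\in(0,1]$ the integral converges only conditionally and must be controlled by integrating by parts, $\int_A^x R(z)\cos z\,dz = [R(z)\sin z]_A^x - \int_A^x R'(z)\sin z\,dz$, and this is precisely where the slow-increase assumption~\eqref{e:slowIncrease} is needed: it forces $R'(z) = -\gamma R(z)z^{-1}(1+o(1))$, so $R'$ is absolutely integrable near infinity and the boundary term $R(z)\sin z\to 0$. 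Once these facts and the uniform-in-$\epsilon$ Karamata estimates for $\int_0^T\abs{R}$ and $\int_0^T (G-D_{11})$ are in hand, the joint convergence of the multi-time Gaussian covariances is routine. (The identification of $H(W^D)$ with a rescaled squared two-dimensional Bessel process mentioned in the introduction is then a separate and elementary consequence of It\^o's formula applied to $t\mapsto \tfrac12\abs{W^D_t}^2$, using that $D = D_{11} I_2$.)
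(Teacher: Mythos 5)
Your proposal is correct and follows the same overall architecture as the paper's proof: pass to the co-rotating frame $Y^\epsilon_t = M(-t/\epsilon^2)X^\epsilon_t$ so that $H(X^\epsilon)=H(Y^\epsilon)$, reduce everything to Gaussian covariance computations for the oscillatory integrals, split those into difference-frequency and sum-frequency parts, kill the sum-frequency parts with Karamata's theorem, and use the slow-increase hypothesis~\eqref{e:slowIncrease} to make sense of the conditionally convergent integral defining $D_{11}$ when $\gamma\in(0,1]$. The covariance limits you describe are exactly the content of Lemma~\ref{l:quad2}, including the oddness argument for the vanishing of the cross term.

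The one place you genuinely depart from the paper is tightness. The paper (Lemma~\ref{l:quad1}) estimates each component $w_i^\epsilon$ separately; the sum-frequency term then survives and must be bounded quantitatively in $\abs{t-s}$, which yields only $\E(w_i^\epsilon(t)-w_i^\epsilon(s))^2\le C\abs{t-s}^{1-\gamma}$ for $\gamma\in(0,1)$, and the proof compensates by taking a Gaussian moment of order $2M$ with $M(1-\gamma)>1$ before invoking Kolmogorov's criterion. You instead observe that for the full vector increment the angular weights combine to $\sin u\sin u'+\cos u\cos u'=\cos(u-u')$, so the sum-frequency contribution cancels identically and
\[
  \E\abs{N^\epsilon_t-N^\epsilon_s}^2
    = 2\epsilon^2\int_0^{(t-s)/\epsilon^2} G(\tau)\,d\tau
    \le 2\norm{G}_\infty\,\abs{t-s}\,,
\]
with $G(\tau)=\int_0^\tau R(z)\cos z\,dz$ bounded precisely because the limit $D_{11}$ exists (Remark~\ref{r:Dexists}). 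This is sharper and uniform in $\gamma\in(0,2)$, and it lets you take a single fixed moment ($m=2$ already suffices) rather than a $\gamma$-dependent one; it also removes the need for the quantitative sum-frequency bounds of Lemma~\ref{l:quad1} entirely, since for the identification of the limit you only need those terms to vanish at fixed $s,t$. The trade-off is that the paper's component-wise bounds are reusable elsewhere, but for this proposition your route is the more economical one.
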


\begin{remark}\label{r:Dexists}
  For~$\gamma \in (0, 1]$ we note that~\eqref{e:slowIncrease} implies that~$R' < 0$ near infinity.
  Since~$\cos(z)$ oscillates periodically, the limit in~\eqref{e:qcov} exists and is finite.
  For~$\gamma \in (1, 2)$, we note $R \in L^1(\R)$ and so the limit in~\eqref{e:qcov} also exists and is finite.
\end{remark}

As mentioned earlier, it is well known~\cite{Taqqu74,Taqqu77,Marty05} that the renormalized noise converges to a fractional Brownian motion.
For the reader's convenience, we state a result to that effect next.

\begin{proposition}\label{p:fbmconverge}
  Let~$v$ be a stationary Gaussian process with covariance function~$R$ given by~\eqref{e:RregularlyVarying} for some~$\gamma \in (0, 2)$ and a slowly varying function~$L$.
  If~$\gamma \in (1, 2)$, we additionally suppose
  \begin{equation}\label{e:intR0}
    \int_0^\infty R(t) \, dt = 0\,.
  \end{equation}
  Let
  \begin{equation}\label{e:udef}
    \sigma(\epsilon) = \begin{dcases}
      L(\epsilon^{-2})^{1/2} \epsilon^\gamma & \gamma \neq 1\,,\\
      L(\epsilon^{-2})^{1/2} \epsilon \abs{\ln \epsilon}^{1/2} & \gamma = 1\,,
    \end{dcases}
    \quad\text{and}\quad
    u^\eps(t) \defeq \frac{1}{\sigma(\epsilon)}\int_0^{t}v\left(\frac{s}{\epsilon^2}\right) \, ds \,.
  \end{equation}
  Then, as~$\epsilon \to 0$, the family of processes $(u^\eps)_{\eps>0}$ converges in distribution
  to $\sigma_\calH B^{\calH}$, where $B^\calH$ is a standard fractional Brownian motion with Hurst index~$\calH =1-\gamma/2$, and
  \begin{equation}\label{e:sigmaH}
    \sigma_{\mathcal H}^2 \defeq
      \begin{dcases}
  \frac{1}{\mathcal H \abs{2\mathcal H - 1}} & \mathcal H \neq \frac{1}{2}\,,
  \\
  1 & \mathcal H = \frac{1}{2}\,.
      \end{dcases}
  \end{equation}
\end{proposition}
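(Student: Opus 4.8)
The key structural fact is that for each fixed $\eps>0$ the process $u^\eps$ is a continuous, centered \emph{Gaussian} process: it is a pathwise Lebesgue integral of (a measurable version of) the stationary Gaussian field $v$, which is a.s.\ locally integrable since $\E\int_0^T\abs{v(s/\eps^2)}\,ds=T\,\E\abs{v(0)}<\infty$. Since the limit $\sigma_\calH B^\calH$ is also a centered Gaussian process, convergence in distribution in $C([0,\infty))$ reduces to two things: (i) convergence of finite dimensional distributions, which for centered Gaussian processes is equivalent to pointwise convergence of the covariance function to that of $\sigma_\calH B^\calH$; and (ii) tightness of $(u^\eps)_{\eps>0}$.

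For (i), stationarity of $v$ makes $u^\eps(t)-u^\eps(s)$ equal in law to $u^\eps(t-s)$, so it suffices to compute $\lim_{\eps\to0}\E[u^\eps(t)^2]$; the limit of the full covariance is then automatically $\tfrac{\sigma_\calH^2}{2}\bigl(t^{2\calH}+s^{2\calH}-\abs{t-s}^{2\calH}\bigr)$ by polarization, which is exactly the covariance of $\sigma_\calH B^\calH$. Using~\eqref{e:rDef} and the fact that $R$ is even,
\begin{equation*}
  \E[u^\eps(t)^2]
  = \frac{1}{\sigma(\eps)^2}\int_0^t\!\!\int_0^t R\Bigl(\frac{a-b}{\eps^2}\Bigr)\,db\,da
  = \frac{2\eps^2}{\sigma(\eps)^2}\Bigl(t\,F\bigl(\tfrac t{\eps^2}\bigr)-\eps^2 G\bigl(\tfrac t{\eps^2}\bigr)\Bigr),
  \qquad F(x)\defeq\int_0^x\! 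R,\quad G(x)\defeq\int_0^x\! rR(r)\,dr,
\end{equation*}
and the rest of part (i) is a case analysis driven by Karamata's theorem on integrals of regularly varying functions (recall $R(r)=L(r)r^{-\gamma}$, so $rR(r)=L(r)r^{1-\gamma}$). For $\gamma\in(0,1)$, both $F$ and $G$ are regularly varying with positive index, and Karamata gives $F(x)\sim\frac{L(x)x^{1-\gamma}}{1-\gamma}$, $G(x)\sim\frac{L(x)x^{2-\gamma}}{2-\gamma}$; plugging in $x=t/\eps^2$, using $L(t/\eps^2)\sim L(\eps^{-2})$ together with the definition of $\sigma(\eps)$, one obtains $\E[u^\eps(t)^2]\to\frac{2\,t^{2-\gamma}}{(1-\gamma)(2-\gamma)}=\sigma_\calH^2 t^{2\calH}$ with $\calH=1-\gamma/2$. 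Hypothesis~\eqref{e:slowIncrease} is used here to control the lower order error terms, and (as in Remark~\ref{r:Dexists}) it also forces $R'<0$ near infinity. For $\gamma=1$ the primitive $F$ is itself slowly varying but diverges (at logarithmic speed), which is precisely why $\sigma(\eps)$ carries the extra factor $\abs{\ln\eps}^{1/2}$; the exponent is $\calH=\tfrac12$ and the limit is a Brownian motion ($\sigma_\calH=1$). For $\gamma\in(1,2)$ we have $R\in L^1$, so $F(t/\eps^2)\to\int_0^\infty R$, and the hypothesis $\int_0^\infty R=0$ is exactly what annihilates the would-be leading term $t\,F(t/\eps^2)$ (which otherwise forces a different, Brownian, scaling); after the cancellation $F(x)=-\int_x^\infty R(r)\,dr$ is regularly varying of index $1-\gamma$, and Karamata again produces the $t^{2-\gamma}$ scaling with $\sigma_\calH^2=(\calH\abs{2\calH-1})^{-1}$.

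For (ii), applying the same identity to $u^\eps(t)-u^\eps(s)$ expresses $\E\bigl[\abs{u^\eps(t)-u^\eps(s)}^2\bigr]$ through $F$ and $G$ evaluated at $(t-s)/\eps^2$, and the Karamata estimates above — made uniform in $\eps$ and in $s,t$ over compacts by Potter's bounds for slowly varying functions — give $\E\bigl[\abs{u^\eps(t)-u^\eps(s)}^2\bigr]\le C_T\abs{t-s}^{2\calH}$ for $s,t\in[0,T]$. Since this increment is Gaussian, every even moment is a power of the second: $\E\bigl[\abs{u^\eps(t)-u^\eps(s)}^{2p}\bigr]=c_p\bigl(\E[\abs{u^\eps(t)-u^\eps(s)}^2]\bigr)^p\le C_{p,T}\abs{t-s}^{2p\calH}$. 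Choosing $p$ with $2p\calH>1$, Kolmogorov's continuity criterion yields tightness of $(u^\eps)_{\eps>0}$ in $C([0,T])$ for every $T$, hence in $C([0,\infty))$. Combining (i) and (ii) gives $u^\eps\wconv\sigma_\calH B^\calH$.

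The main obstacle is the covariance asymptotics of part (i): the boundary exponent $\gamma=1$ requires care with the logarithmic primitive and is where~\eqref{e:slowIncrease} is used essentially, while for $\gamma\in(1,2)$ one must verify the exact cancellation of the $O(t)$ Brownian term induced by $\int_0^\infty R=0$ and then isolate the surviving regularly varying remainder with the correct constant. A secondary difficulty is passing from pointwise Karamata asymptotics to estimates that are uniform in the triple $(\eps,s,t)$, which is what the tightness bound needs; this is handled by Potter's inequalities after treating separately the regimes where $\abs{t-s}$ is small relative to $\eps^2$, comparable to a fixed constant, and bounded away from zero.
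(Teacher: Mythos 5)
Your proposal is correct and takes essentially the same route as the paper's proof in Appendix~\ref{s:fbmConvProof}: both exploit that $u^\eps$ and the limit are centered Gaussian, reduce everything to the asymptotics of $\E(u^\eps(t)-u^\eps(s))^2$ via regular-variation arguments on the covariance integral (you via Karamata applied to the primitives $F$ and $G$ after an integration by parts, the paper via the uniform convergence theorem for slowly varying functions plus dominated convergence on the double integral, and, for $\gamma\in(1,2)$, the same use of~\eqref{e:intR0} to replace $\int_0^x R$ by $-\int_x^\infty R$), and then obtain tightness from the uniform second-moment bound together with Gaussian hypercontractivity of moments and a Kolmogorov/Billingsley-type criterion. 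One small correction: \eqref{e:slowIncrease} is a hypothesis of Proposition~\ref{p:qConv}, not of Proposition~\ref{p:fbmconverge}, and it is not needed anywhere in your argument --- the Karamata asymptotics for $F$ and $G$ require only that $L$ be slowly varying, so you should delete the sentence invoking it.
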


To reiterate our main point, we note that Proposition~\ref{p:qConv} implies
\begin{equation}\label{e:p1convExpl}
  H\paren[\Big]{X_0 + 
    \frac{1}{\epsilon^2} \int_0^t \grad^\perp H(X^\epsilon_s) \, ds
    + \frac{1}{\epsilon} \int_0^t v\paren[\Big]{\frac{s}{\epsilon^2}} e_2 \, ds }
  \xrightarrow{\epsilon \to 0} H(W^D_t) \,,
\end{equation}
for a Brownian motion $W^D$ with covariance matrix~$D$.
However, 
\begin{equation*}
  \frac{1}{\sigma(\epsilon)} \int_0^t v \paren[\Big]{ \frac{s}{\epsilon^2} } \, ds
    \xrightarrow{\epsilon \to 0} \sigma_\mathcal H B^H_t\,,
\end{equation*}
for a fBm $B^\mathcal H$ with Hurst index~$\mathcal H$.
Of course, $H(W^D)$ is a Markov process with no memory, but $B^\mathcal H$ is a non-Markovian process with memory.

Note further that when $\gamma \in (0, 1)$, we see $\epsilon\ll \sigma(\epsilon) $ and so the term $\frac{1}{\epsilon} \int_0^t v(s/\epsilon^2) \, ds$ appearing in~\eqref{e:p1convExpl} diverges as~$\epsilon \to 0$.
On the other hand when $\gamma \in (1, 2)$ (and equation~\eqref{e:intR0} holds), we see $\sigma(\epsilon) \ll \epsilon$, and so this term vanishes.
In both cases, the oscillatory Hamiltonian term contributes non-trivially and as a result, the time correlations are forgotten and $H(X^\epsilon)$ converges to a memoryless Markov process.
\medskip

We now explain the reason for the assumption~\eqref{e:intR0}, which is required in Proposition~\ref{p:fbmconverge} when $\gamma \in (1, 2)$, but not in Proposition~\ref{p:qConv}.
If~$\gamma \in (1, 2)$ by the central limit theorem one can show that $\frac{1}{\epsilon} \int_0^t v(s / \epsilon^2) \, ds$ converges to a Brownian motion.
The covariance of this Brownian motion is proportional to~$\int_0^\infty R(t) \, dt$.
If this is~$0$ (as required by assumption~\eqref{e:intR0}), then one can divide $\int_0^t v(s/\epsilon^2) \, ds$ by the smaller factor $\epsilon^\gamma = \sigma(\epsilon)$, and look for a non-trivial limit.
This is precisely what is given by Proposition~\ref{p:fbmconverge}.
The reason the assumption~\eqref{e:intR0} is not needed for Proposition~\ref{p:qConv} is because in~\eqref{e:p1convExpl} the noise is only scaled by a factor of~$1/\epsilon$ and not $1/\sigma(\epsilon)$.
\medskip

Finally, we briefly summarize the main idea of the proofs of Propositions~\ref{p:qConv} and~\ref{p:fbmconverge}.
First, Proposition~\ref{p:fbmconverge} only requires convergence of Gaussian processes and can be proved by directly computing covariances.
Similar results are well known~\cites{Taqqu74,Taqqu77,Marty05}.
Since the exact result we need isn't readily available, we prove it in Appendix~\ref{s:fbmConvProof}.

The proof of Proposition~\ref{p:qConv} requires a little more work.
The main simplification obtained from the assumption that $H$ is quadratic is that the evolution of the angle coordinate is independent of the action coordinate.
As a result one can perform a (time dependent) rotation in space and prove convergence of $H(X^\epsilon)$ by studying integrals of the form
\begin{equation*}
  \frac{1}{\epsilon} \int_0^t v\paren[\Big]{\frac{s}{\epsilon^2}} \cos\paren[\Big]{ \frac{s}{\epsilon^2} } \, ds
  \qquad\text{and}\qquad
  \frac{1}{\epsilon} \int_0^t v\paren[\Big]{\frac{s}{\epsilon^2}} \sin\paren[\Big]{ \frac{s}{\epsilon^2} } \, ds\,.
\end{equation*}
These are now Gaussian and one can study convergence by computing covariances directly.
We note that the study of similar integrals arises in the study of random media where several authors~\cites{Marty05,FouqueGarnierEA07,Garnier95} have used oscillatory dynamics to decorrelate the medium.

\subsection{Main results for Hamiltonians with only one critical point.}

We now study the case where the Hamiltonian has exactly one non-degenerate critical point but is not necessarily quadratic.
In this case we will need to work with a stationary Gaussian noise process~$v$ that can be obtained by super-imposing Ornstein--Uhlenbeck processes and written in the form
\begin{equation*}
  v(t) \defeq \int_S e^{-\mu \abs{p}^{2\beta} (t - u)} \mathcal B( du, dp )\,.
\end{equation*}
Here $S \subseteq \R$ is a bounded symmetric open interval, $\mu$, $\beta$ are constants, and $\mathcal B$ is a Gaussian random measure that is white in time and colored in space and will be constructed explicitly in Section~\ref{s:noise} below to ensure that the covariance of~$v$ decays algebraically.
Our main theorem can now be stated as follows.

\begin{theorem}\label{t:mainIntro}
  Let~$H\colon \R^2 \to \R$ be a smooth Hamiltonian with exactly one non-degenerate minimum at $(0,0)$, and~$v$ be the stationary Gaussian noise described above.
  Then
  \begin{equation}\label{e:HXepConv}
  H(X^{\epsilon}) \wconv[\epsilon \to 0]  \mathcal X\,,
  \end{equation}
  where $\mathcal X$ is a diffusion driven by standard Brownian motion, with infinitesimal generator
  \begin{equation}\label{eq:generator_X}
  	L = \frac{1}{2\Lambda(\mathcal X)} \partial_{\mathcal X}(\Sigma(\mathcal X)\partial_{\mathcal X})  \,.
  \end{equation}
  Here the coefficients~$\Lambda$ and~$\Sigma$ are defined by
  \[
  \Lambda(\mathcal X)=\oint_{\{H=\mathcal X\}} \, \frac{dl}{|\nabla H|} \,,
  \]
  and
  \[
  \Sigma(\mathcal X) = 2\int_{u=0}^\infty du \, R(u)\oint_{\{H=\mathcal X\}} \partial_y H(\check X^{x,y}_u) \partial_y H(x,y) \frac{ dl(x,y)}{|\nabla H(x,y)|} \,,
  \]
  where $\check X^{x,y}$ is the solution to
  \[
  \partial_t \check X^{x,y}_t = \nabla^\perp H( \check X^{x,y}_t),\qquad\text{with}\qquad \check X^{x,y}_{t=0} = (x,y)\,.
  \]
\end{theorem}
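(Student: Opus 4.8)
The plan is to apply the perturbed test function method to a Markovian lift of the pair $\bigl(X^\epsilon, v(\cdot/\epsilon^2)\bigr)$. Since $v$ is a superposition of Ornstein--Uhlenbeck processes, write it as a linear functional $v(t) = v(Z_t)$ of an Ornstein--Uhlenbeck field $Z$ (the collection of OU processes, one per frequency $p\in S$, appearing in the representation of $v$, each with relaxation rate $\mu\abs{p}^{2\beta}$). Then $(X^\epsilon_t, Z^\epsilon_t)$, with $Z^\epsilon_t\defeq Z_{t/\epsilon^2}$, is Markov, and --- since the $X^\epsilon$-dynamics is a pure drift --- its generator has the form
\begin{equation*}
  \mathcal A^\epsilon = \frac{1}{\epsilon^2}\,\mathcal L_0 + \frac{1}{\epsilon}\,\mathcal L_1\,, \qquad
  \mathcal L_0 = \nabla^\perp H\cdot\nabla_X + \mathcal Q\,, \qquad
  \mathcal L_1 = v(Z)\,\partial_y\,,
\end{equation*}
where $\mathcal Q$ is the OU generator on the field $Z$, and $\partial_y$ denotes differentiation in the second $X$-coordinate. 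The invariant measure of $\mathcal L_0$ is the product of the centered Gaussian law of $Z$ with the normalized occupation measure $\Lambda(\mathcal X)^{-1}\oint_{\{H=\mathcal X\}} dl/\abs{\nabla H}$ of the Hamiltonian flow on the periodic orbit $\{H=\mathcal X\}$.

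For a smooth, compactly supported test function $\varphi$ on $(0,\infty)$, I would look for a perturbed test function $\varphi^\epsilon = \varphi(H(X)) + \epsilon\varphi_1 + \epsilon^2\varphi_2$ solving the usual hierarchy. The order-$\epsilon^{-2}$ equation $\mathcal L_0\bigl(\varphi(H(X))\bigr)=0$ holds automatically, since $H$ is conserved by the Hamiltonian flow and $\varphi(H(X))$ is independent of $Z$. The order-$\epsilon^{-1}$ equation $\mathcal L_0\varphi_1 = -\mathcal L_1\bigl(\varphi(H(X))\bigr) = -v(Z)\,\varphi'(H(X))\,\partial_y H(X)$ is solvable because its right-hand side has zero mean against the invariant measure ($\E v = 0$), and yields $\varphi_1(X,Z) = \varphi'(H(X))\int_0^\infty v_t(Z)\,\partial_y H(\check X^X_t)\,dt$, where $v_t(Z)\defeq\E\bigl[v(Z^{(t)})\mid Z^{(0)}=Z\bigr]$ and $\check X^X$ is the Hamiltonian flow started at $X$. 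Finally, solvability of the order-$\epsilon^0$ equation $\mathcal L_0\varphi_2 + \mathcal L_1\varphi_1 = L\varphi(H(X))$ forces $L\varphi$ to equal the average of $\mathcal L_1\varphi_1 = v(Z)\,\partial_y\varphi_1$ against the invariant measure; pairing the outer $v(Z)$ with $v_t(Z)$ inside $\varphi_1$ produces the covariance $R(t) = \E v(0)v(t)$ after integrating out the Gaussian field, and averaging the resulting product of $\partial_y H$-factors over $\{H=\mathcal X\}$ with the weight $dl/\abs{\nabla H}$ reproduces exactly $L\varphi = \frac{1}{2\Lambda}\,\partial_{\mathcal X}(\Sigma\,\partial_{\mathcal X})\varphi$ with $\Lambda$ and $\Sigma$ as in the statement.

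The first delicate point is the convergence of the time integrals defining $\varphi_1$ and $\Sigma(\mathcal X)$. The functions of time against which $R$ is integrated --- namely $t\mapsto\partial_y H(\check X_t)$ and the orbit autocorrelation $t\mapsto\oint\partial_y H(\check X_t)\,\partial_y H\,dl/\abs{\nabla H}$ --- are periodic with period $\Lambda(\mathcal X)$ and each has vanishing mean over one period, since $\int_0^{\Lambda(\mathcal X)}\partial_y H(\check X_t)\,dt = \check x_{\Lambda(\mathcal X)}-\check x_0 = 0$ by Hamilton's equation $\dot{\check x}_t = \partial_y H(\check X_t)$. Expanding these functions in Fourier series along the orbit --- which therefore carry no zero-frequency mode --- reduces the integrals to sums of terms $\int_0^\infty v_t(Z)\,e^{2\pi i k t/\Lambda(\mathcal X)}\,dt$ with $k\neq 0$, on each of which the OU mode of frequency $p$ contributes a factor $\bigl(\mu\abs{p}^{2\beta} - 2\pi i k/\Lambda(\mathcal X)\bigr)^{-1}$; for $k\neq 0$ this is bounded uniformly in $p\in S$, whereas for $k=0$ it would blow up as $\abs{p}\to 0$, reflecting the failure of $R\in L^1$ when $\gamma\le 1$. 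This is the quantitative form of the ``oscillation destroys memory'' mechanism; the decay and eventual monotonicity of $R$ (cf.\ Remark~\ref{r:Dexists}) then control the tails by summation by parts, and in particular $\Sigma(\mathcal X)<\infty$.

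The main obstacle is to make this expansion rigorous. Because $v$ is Gaussian (hence unbounded) and only algebraically decorrelating --- the low-frequency OU modes relax arbitrarily slowly, so there is no uniform spectral gap --- one cannot simply invoke an exponential ergodicity estimate for $\mathcal L_0$. Instead I would combine mode-by-mode exponential mixing of the OU components with Gaussian moment bounds to obtain uniform-in-$\epsilon$ control of $\varphi_1$, $\varphi_2$ and of $\mathcal A^\epsilon\varphi^\epsilon = L\varphi(H(X)) + O(\epsilon)$, together with a priori estimates --- obtained from the martingale associated with $\varphi^\epsilon$ --- preventing $H(X^\epsilon)$ from escaping to infinity on a finite horizon. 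This gives tightness of $\bigl(H(X^\epsilon)\bigr)_{\epsilon>0}$ in $C([0,T])$ and identifies every subsequential limit as a solution of the martingale problem for $L$. One then checks that this martingale problem is well posed on $[0,\infty)$: $\Lambda$ is smooth and bounded below, $\Sigma>0$ on $(0,\infty)$, and near the minimum $\mathcal X=0$ one has $\Lambda(\mathcal X)\to\Lambda(0)>0$ and $\Sigma(\mathcal X)\sim c\,\mathcal X$, so that $L$ degenerates in a squared-Bessel fashion (consistent with the quadratic case of Proposition~\ref{p:qConv}), $\mathcal X=0$ is an inaccessible boundary, and uniqueness holds. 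This identifies the limit as the diffusion $\mathcal X$ and completes the proof.
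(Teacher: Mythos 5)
Your proposal is correct in substance and uses the same underlying mechanism as the paper: a perturbed test function expansion in which the Hamiltonian oscillation removes the zero-frequency mode of the noise, so that the relevant resolvent factors $\bigl(\mu\abs{p}^{2\beta}-2\pi i k\,\omega\bigr)^{-1}$ with $k\neq 0$ stay bounded as $\abs{p}\to 0$ despite the absence of a spectral gap, followed by tightness, identification via a martingale problem, and inaccessibility of the boundary $\mathcal X=0$ via the scale function. The technical packaging differs in two ways. First, you work with a genuine Markovian lift $(X^\epsilon,Z^\epsilon)$ and test functions of $H(X)$ alone, whereas the paper passes to action--angle coordinates, splits the angle into a fast part $\tau^\epsilon$ and a slow part $\psi^\epsilon$, and runs the argument through Kushner's pseudo-generator on the non-Markovian pair $(I^\epsilon,\psi^\epsilon)$ (Theorem~\ref{mainresult}); the paper's route is heavier but yields joint convergence of $(I^\epsilon,\psi^\epsilon)$, i.e.\ also the slow angular diffusion, while yours is leaner if one only wants $H(X^\epsilon)$. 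Second, you localize by taking $\varphi$ compactly supported in $(0,\infty)$, whereas the paper introduces an explicit cutoff $\phi_M$, proves convergence of the truncated process, and removes the cutoff using the lower semicontinuity of the stopping times $\eta_M$ together with the Portmanteau theorem.

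Two places in your outline need more than you give them. (i) The order-$\epsilon^0$ solvability condition produces, a priori, a second-order operator with separate diffusion and drift coefficients; showing that the drift is exactly $\tfrac{1}{2\Lambda}\partial_{\mathcal X}\Sigma$, so that $L$ takes the stated divergence form, requires the integration-by-parts identities coming from the symplectic structure (in the paper, $\partial_\tau b=-\partial_I a$ and the mean-zero property of $a$; see Section~\ref{sec:propSDE}). You assert the answer but this step is where the specific form of $\Sigma$ and $\Lambda$ is actually earned. (ii) Localizing the test function does not by itself control the prelimit process near $\{H=0\}$ and at infinity: the coefficients behave like $\sqrt{I}$ and $1/\sqrt{I}$ near the critical point (Proposition~\ref{l:bound_a}), and one must show both a uniform-in-$\epsilon$ non-escape estimate (the analogue of Lemma~\ref{bound_I}) and that the limit law does not charge paths reaching $0$ (Proposition~\ref{p:etaInf}) before the localization can be removed; your appeal to the squared-Bessel degeneration addresses the limit but not the prelimit. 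Also, uniform control of the Gaussian suprema entering $\varphi_1$, $\varphi_2$ (the paper's Lemma~\ref{l:bound}, proved by metric entropy bounds) is a genuine piece of work that your phrase ``Gaussian moment bounds'' compresses considerably. None of these is a wrong turn, but they are where most of the paper's length lives.
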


Theorem~\ref{t:mainIntro} follows immediately from  the more general Theorem~\ref{mainresult}, below, concerning fast-slow systems, and the proof of Theorem~\ref{t:mainIntro} is presented shortly after the statement of Theorem~\ref{mainresult}.
Moreover, Theorem~\ref{mainresult} also explicitly characterizes the limiting system and can be used to explicitly characterize the diffusion~$\mathcal X$ in Theorem~\ref{t:mainIntro}.

Note that when~$H$ is not quadratic the convergence of $H(X^\epsilon)$ can't be reduced to Gaussian integrals as in Proposition~\ref{p:qConv}, and so the proof of Theorem~\ref{t:mainIntro} is more involved.
We prove Theorem~\ref{t:mainIntro} by first switching to action-angle coordinates and converting~\eqref{e:hamiltonian} to a fast-slow system, where the slow variables are mean-zero in the fast variable.
We then use the perturbed test function (PTF) method~\cite{Kushner84} to prove convergence of this system.
This approach provides more information than just the convergence of $H(X^\epsilon)$, as stated in Theorem~\ref{t:mainIntro}, and we refer the reader to Theorem~\ref{mainresult}, below, for the precise statement.
Of course, the convergence of action coordinate is equivalent to convergence of the processes~$H(X^\epsilon)$.
However, our proof also identifies an asymptotic diffusive behavior of the angle coordinate on the slow scale.

We remark that our fast-slow system is different from that recently considered by Hairer and Li~\cite{HairerLi20}.
Indeed, in~\cite{HairerLi20}, the authors proved an averaging principle for a coupled fast-slow system driven by fBm, with Hurst index $\mathcal H > \frac{1}{2}$, and they obtained convergence in probability to the naively  averaged system.
However, they require the fast variable to be driven by an independent \emph{standard} Brownian motion.
In the scenario that arises in the present paper, the fast variable is driven by noise that converges to a \emph{fractional} Brownian motion when renormalized, and the same noise is also used to drive the slow variable.
Moreover the limiting system we obtain is not the naively averaged one, but a diffusion with an averaged generator akin to the limiting behavior of fast-slow systems driven by standard Brownian noise (see for instance~\cite{PavliotisStuart08}).
\medskip

 Theorem~\ref{t:mainIntro} is stated for Hamiltonians which have only one (non-degenerate) critical point.
The Reeb graph of such Hamiltonians is simply the half line, and the limiting process in Theorem~\ref{t:mainIntro} is a diffusion on the half line.
For general Hamiltonians, Reeb graph may have multiple vertices, and the limiting process should be a diffusion on this Reeb graph.
When the driving noise is a standard Brownian motion (as in the Freidlin--Wentzell theory), this diffusion is characterized through its generator on each edge of the Reeb graph, and certain gluing conditions on the vertices.
While the proofs in the Freidlin--Wentzell theory rely heavily on the Markov property, and do not apply to our situation, the form of the infinitesimal generator~\eqref{eq:generator_X}
is similar to that in~\cite[Equation 1.8]{FreidlinWeber98}.
As a result, we expect similar gluing conditions to hold in our context. 

Explicitly, for a sufficiently general class of Hamiltonians, we expect that the process $(H(X^\eps))_{\eps>0}$ converges in distribution to a diffusion process on the Reeb graph.
The generator of this diffusion should be
\[
L_j h_j(\mathcal X) = \frac{1}{2\Lambda_j(\mathcal X)} \partial_{\mathcal X}(\Sigma_j(\mathcal X)\partial_{\mathcal X} h_j) 
\]
on each edge $I_j$.
The coefficients~$\Lambda_j$ and $\Sigma_j$ are obtained by taking line integrals as follows.
Let~$\mathcal C_j(\mathcal X)$ the connected component of the level set~$\set{H = \mathcal X}$ corresponding to a point $\mathcal X \in I_j$.
Now~$\Sigma_j$ and~$\Lambda_j$ are given by
\begin{gather*}
	\Sigma_j(\mathcal X) = 2\int_{u=0}^\infty du \, R(u)\oint_{C_j(\mathcal X)} \partial_y H(\check X^{x,y}_u) \partial_y H(x,y) \, \frac{dl(x,y)}{|\nabla H(x,y)|} \,,
	\\
	\Lambda_j(\mathcal X) = \oint_{C_j(\mathcal X)} \frac{dl(x, y)}{\abs{\grad H(x, y)}} \,.
\end{gather*}

To describe the gluing condition, consider an interior vertex $O_k$ corresponding to the saddle point $(\mathbf{x}_k,\mathbf{y}_k)$ and the level set $\set{H = \mathcal X_k}$, at which the collection of edges $\set{ I_j}$ meet.
(The typical situation is a ``figure eight'' scenario, which has three edges meeting at the vertex $O_k$.)
Now define
\begin{equation*}
	\beta_{k j} = \lim_{\mathcal X \to \mathcal X_k} C_{j}(\mathcal X)\,,
\end{equation*}
where the limit is taken along the edge $I_{j}$.
The gluing condition at the vertex~$O_k$ can then be written as
\begin{equation}\label{eq:gluing}
	\sum_{j:\, I_j\sim O_k} \pm \beta_{kj}\,h'_j(H(\mathbf{x}_k,\mathbf{y}_k)) = 0 \,,
\end{equation}
where sign before $\beta_{kj}$ is positive if $\mathcal X > \mathcal X_k$ along the edge $I_j$, 
and negative if $\mathcal X < \mathcal X_k$ along the edge $I_j$.

The main difference between our main result, Theorem~\ref{t:mainIntro}, and the standard Freidlin--Wentzell theory~\cite{FreidlinWeber98} is in the coefficient $\Sigma$.
The definition of~$\Sigma$ in Theorem~\ref{t:mainIntro} involves an average of $\partial_y H(\check X^{x,y}_u) \partial_y H(x,y)$, with a shift of one factor to the point $\check X^{x,y}_{t=u}$.
In~\cite{FreidlinWeber98}  the coefficient $\Sigma$ depends only on $ (\partial_y H(x,y))^2 $.
The reason for this modification lies in the fact that 
in our case the oscillations of the Hamiltonian system and the random 
fluctuations in~\eqref{e:hamiltonian} are strongly coupled, and are both of order $O(1/\eps^2)$. 
The oscillatory behavior of the Hamiltonian system and random fluctuations fully interact 
with each other and produce an effective diffusion coefficient involving
the correlation of $\partial_y H(\check X^{x,y}_u)$ with $\partial_y H(x,y)$ for pairs of points on the same orbit.
If the oscillatory behavior of the Hamiltonian system is slower than the random fluctuations, then their coupling is weak and we would obtain the same generators as in~\cite{FreidlinWeber98}.  
This will arise if, for example, we consider white noise in~\eqref{e:hamiltonian} as the driving random force.  
This dichotomy of limiting behaviors between strongly and weakly coupled  setups is  
well-known in other approximation-diffusion limits of random differential equations with periodic 
components, (for instance, compare and contrast Theorems~6.4 and~6.5 in~\cite{FouqueGarnierEA07}). In the present work, we need the oscillations of the Hamiltonian system and the random fluctuations to be of the same order; otherwise, the shift $\check X^{x,y}_u$, and hence $\Sigma$, is not well-defined.
We hope to present a proof of this phenomenon in future work.

\subsection{Plan of this paper.}
In Section~\ref{sec:quad} we prove Proposition~\ref{p:qConv}, modulo two computational lemmas.
The proofs of these lemmas are relegated to Appendix~\ref{append:quad}.
The proof of Proposition~\ref{p:fbmconverge} is presented in Appendix~\ref{s:fbmConvProof}.
In Section~\ref{s:noise} we construct the noise process~$v$ that will be used in Theorem~\ref{t:mainIntro}.
We also establish basic properties of the noise in this section and put two computationally involved proofs in Appendices~\ref{proofpropmar} and~\ref{proofpropbound}. 
In Section~\ref{s:main} we state a precise, more general, version of Theorem~\ref{t:mainIntro} (Theorems~\ref{mainresult} and~\ref{t:mainGen}), and prove Theorem~\ref{t:mainIntro}.
We prove regularity of the coefficients of the limiting equation in Theorem~\ref{mainresult} (equation~\eqref{eq:I}) in Appendix~\ref{proofpropbound_a}, and study the limiting equation itself in Section~\ref{sec:proofI}.
Finally, we prove Theorem~\ref{mainresult} in Sections~\ref{sec:property_limit_eq}, \ref{sec:propmain} and~\ref{sec:propSDE}.

\subsection*{Acknowledgments}
The authors thank Lenya Ryzhik for suggesting this problem to us, and for many helpful discussions.

\section{Proof of convergence for a quadratic Hamiltonian.}\label{sec:quad}

This section is devoted to proving Proposition~\ref{p:qConv}.
The main idea behind the proof is that when~$H$ is quadratic, the deterministic dynamics rotates with constant angular speed in all trajectories.
Performing a spatial rotation will now reduce the problem to studying convergence of Gaussian processes, which can be resolved by computing covariances.
\begin{proof}[Proof of Proposition~\ref{p:qConv}] 
  When $H$ is given by~\eqref{e:Hquadratic}, we use Duhamel's formula to write the solution of~\eqref{e:hamiltonian} as
  \begin{equation*}
  X^{\epsilon}_t =
  \begin{pmatrix}
  \phantom-\cos (\frac{t }{ \epsilon^2}) & \sin ( \frac{t}{\epsilon^2})\\
  -\sin (\frac{t}{\epsilon^2}) & \cos (\frac{t}{\epsilon^2})
  \end{pmatrix}
  X_0
  + \frac{1}{\epsilon} \int_0^t
  v\paren[\Big]{\frac{\tau}{\epsilon^2}}
  \begin{pmatrix}
  \sin( \frac{t - \tau)}{\epsilon^2} )\\
  \cos( \frac{t - \tau}{\epsilon^2} )
  \end{pmatrix}
  \, d\tau.
  \end{equation*}
  Let $M(t)$ be the rotation matrix
  \begin{equation*}
  M(t) \defeq \begin{pmatrix}
  \cos (t ) & \sin ( t)\\
  -\sin (t) & \cos (t)
  \end{pmatrix}\,,
  \end{equation*}
  and define the rotated process~$Y^{\eps}$ by $Y^\epsilon_t=M(-t/\epsilon^2)X^{\epsilon}_t$.
  Clearly $H(X^\epsilon) = H(Y^\epsilon)$, and so convergence of the processes~$H(X^\epsilon)$ reduces to convergence of the processes~$H(Y^\epsilon)$.

  We claim that the processes~$Y^\epsilon$ itself converge to a Brownian motion as~$\epsilon \to 0$.
  To see this, observe
  \begin{equation*}
  Y^{\epsilon}_t
  = X_0
  + \frac{1}{\epsilon}\int_{0}^{t}
  v\paren[\Big]{\frac{\tau}{\epsilon^2} }
  \begin{pmatrix}
  - \sin \paren{ \frac{\tau}{\epsilon^2} }\\
  \phantom- \cos \paren{ \frac{\tau}{\epsilon^2} }
  \end{pmatrix}
  \, d\tau\,.
  \end{equation*}
  We will now show that the second term above converges to a Brownian motion.
  For this, define
  \begin{gather}
    \label{e:w1}
    w_1^{\epsilon}(t) \defeq \frac{1}{\epsilon}\int_0^tv\left(\frac{\tau}{\epsilon^2}\right)\sin\left(\frac{\tau}{\epsilon^2}\right)d\tau\,,
    \\
    \label{e:w2}
    w_2^{\epsilon}(t) \defeq \frac{1}{\epsilon}\int_0^tv\left(\frac{\tau}{\epsilon^2}\right)\cos\left(\frac{\tau}{\epsilon^2}\right)d\tau \,.
  \end{gather}
Note, the noise~$v$ (when normalized similarly by~$\sigma(\epsilon)$ without the periodic terms) converges to fBm.
However, the expressions above use a normalization factor of~$\epsilon$ instead of $\sigma(\epsilon)$, and have an oscillatory factor.
  For this reason we claim $(w_1^\epsilon, w_2^\epsilon)$ converges to a Brownian motion.
  To prove convergence of $w_1^\epsilon$, $w_2^\epsilon$, we first state two lemmas:
  \begin{lemma}\label{l:quad1}
    For any $T > 0$, there exists a constant $C>0$ such that for every $s,t\in [0,T]$,  $i \in \set{1, 2}$, we have
    \begin{equation}\label{e:EwiTight}
    \E (w^\epsilon_i (t)-w^\epsilon_i(s))^2
      \leq \begin{dcases}
  C|t-s|^{1-\gamma}  & \gamma \in (0, 1) \,,\\
  C\abs{t - s}^{2-\gamma}  & \gamma \in [1, 2)\,.
      \end{dcases}
    \end{equation} 
  \end{lemma}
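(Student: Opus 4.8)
The plan is to compute the second moment directly, exploiting the Gaussianity of $v$ and the explicit oscillatory kernel. Writing $w^\eps_i(t)-w^\eps_i(s) = \tfrac1\eps\int_s^t v(\tau/\eps^2)\,\theta_i(\tau/\eps^2)\,d\tau$ with $\theta_1=\sin$, $\theta_2=\cos$, we expand the square and take expectations, so that
\begin{equation*}
  \E\bigl(w^\eps_i(t)-w^\eps_i(s)\bigr)^2
  = \frac1{\eps^2}\int_s^t\int_s^t R\!\left(\frac{\tau-\sigma}{\eps^2}\right)\theta_i\!\left(\frac\tau{\eps^2}\right)\theta_i\!\left(\frac\sigma{\eps^2}\right)\,d\tau\,d\sigma\,,
\end{equation*}
using $\E v(a)v(b)=R(a-b)$ from~\eqref{e:rDef}. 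Substituting $\tau=\eps^2 a$, $\sigma=\eps^2 b$ turns this into $\eps^2\int\int_{[s/\eps^2,t/\eps^2]^2} R(a-b)\theta_i(a)\theta_i(b)\,da\,db$; after a further change of variables to difference and sum coordinates $r=a-b$, $\rho=a+b$, and using the product-to-sum identity $\theta_i(a)\theta_i(b)=\tfrac12\cos(r)\pm\tfrac12\cos(\rho)$, the $\rho$-integral of the $\cos(\rho)$ term is $O(1)$ uniformly (it is an oscillatory integral over an interval, bounded independently of $\eps$), while the main term is $\tfrac12\cos(r)$ integrated against the length of the $\rho$-range. This reduces the estimate to controlling $\eps^2\int_0^{(t-s)/\eps^2}\bigl(\tfrac{t-s}{\eps^2}-r\bigr)R(r)\cos(r)\,dr$ plus a lower-order contribution.

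The key analytic input is then the behavior of $\int_0^x R(r)\cos(r)\,dr$ and of $\int_0^x r R(r)\cos(r)\,dr$ as $x\to\infty$, under the regular-variation hypothesis $R(r)=L(r)/r^\gamma$. For $\gamma\in(1,2)$, $R\in L^1$ (Remark~\ref{r:Dexists}), so $\int_0^x R(r)\cos r\,dr$ converges and $\bigl|\int_0^x rR(r)\cos r\,dr\bigr| \lesssim \int_0^x r^{1-\gamma}L(r)\,dr \lesssim x^{2-\gamma}L(x)$ by Karamata's theorem; plugging into the displayed quantity with $x=(t-s)/\eps^2$ and using $L(x)\lesssim x^{\delta}$ for small $\delta$ gives the bound $C|t-s|^{2-\gamma}$ (here I would absorb a slowly varying factor using $L((t-s)/\eps^2)\le C_\delta ((t-s)/\eps^2)^\delta$ and note that since $s,t$ range over a \emph{bounded} interval $[0,T]$ the $\eps$-dependence is harmless, or more carefully keep the slowly varying correction and note it is dominated on the relevant scale). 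For $\gamma\in(0,1]$, $R\notin L^1$, and here I use the slow-increase property~\eqref{e:slowIncrease}: integrating by parts, $\int_0^x R(r)\cos r\,dr = [R(r)\sin r]_0^x - \int_0^x R'(r)\sin r\,dr$, and~\eqref{e:slowIncrease} forces $R'(r)\sim -\gamma L(r)/r^{\gamma+1}$ with $R'<0$ near infinity, so this integral is bounded; similarly $\int_0^x rR(r)\cos r\,dr$ is controlled by $\int_0^x \bigl(R(r)+rR'(r)\bigr)$-type terms after integration by parts, yielding $O(x^{1-\gamma}L(x))$. Multiplying by $\eps^2$ and setting $x=(t-s)/\eps^2$ produces the claimed $C|t-s|^{1-\gamma}$ (modulo the same harmless slowly varying factor, bounded uniformly for $t-s\le T$ by adjusting $C$ via Potter's bounds).

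I expect the main obstacle to be the careful bookkeeping of the slowly varying function $L$: one must verify that the estimates in Lemma~\ref{l:quad1} hold with a constant $C$ \emph{uniform over $s,t\in[0,T]$ and $\eps$ small}, which requires invoking Potter-type bounds ($L(x)\le C_\delta\max(x^\delta,x^{-\delta})$) to trade the slowly varying factor for an arbitrarily small power, and then checking that this small power does not destroy the Hölder exponents $1-\gamma$ and $2-\gamma$ (it does not, since we only need \emph{some} exponent in $(0,1)$ for tightness via Kolmogorov's criterion, as presumably used in the subsequent step of the proof of Proposition~\ref{p:qConv}). A secondary technical point is justifying the oscillatory-integral bound on the $\cos(\rho)$ term uniformly in $\eps$ — this follows from a single integration by parts in $\rho$, with boundary terms bounded by $\sup R \cdot$(range lengths), times $\eps^2$, which is $O(|t-s|)$ and hence also acceptable. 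The actual computations are routine once the change of variables and the two Karamata/integration-by-parts estimates are set up; I would relegate them to the appendix as the paper's plan indicates.
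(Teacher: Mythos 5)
Your strategy coincides with the paper's: expand the variance using Gaussianity, apply the product-to-sum identity to split the kernel into a difference part $\cos(\theta-\tau)$ and a sum part $\cos(\theta+\tau)$, reduce the difference part to $(t-s)\int_0^{D}R(r)\cos r\,dr$ and $\epsilon^2\int_0^{D}rR(r)\cos r\,dr$ with $D=(t-s)/\epsilon^2$, dispose of the sum part by a single integration by parts in the oscillating variable, and then run a case analysis in $\gamma$ combining Karamata's theorem, the slow-increase hypothesis~\eqref{e:slowIncrease} (which gives $|rR'(r)|\le 2|R(r)|$ at infinity and the finiteness of $\sup_x|\int_0^x R\cos|$), and Potter-type bounds to absorb the slowly varying factor. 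For $\gamma\in(0,1)$ and $\gamma\in(1,2)$ this closes exactly as in the paper's appendix.

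There is, however, a genuine gap at $\gamma=1$. You group it with $(0,1]$ and announce the bound ``$C|t-s|^{1-\gamma}$'', which at $\gamma=1$ is a constant --- not the bound $C|t-s|^{2-\gamma}=C|t-s|$ that the lemma asserts, and useless for the subsequent Kolmogorov tightness argument. Your proposed mechanism for handling the slowly varying factor, namely $L(x)\le C_\delta x^{\delta}$, cannot repair this: at $\gamma=1$ the problematic term $\epsilon^2\int_0^D rR(r)\cos r\,dr$ is of size $\epsilon^2 L(D)$ plus $\epsilon^2\int_0^D|R|$, and Potter only yields $C_\delta\,\epsilon^{2-2\delta}(t-s)^{\delta}$, which is not $O(t-s)$ uniformly (take $t-s=\epsilon^4$). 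What is actually needed --- and what the paper proves --- are the genuinely \emph{linear} bounds $L(z)\le Cz$ for all $z\ge 0$ (from $L(z)=zR(z)\le R(0)z$ near the origin and $L(z)/z\to 0$ at infinity by slow variation) and $\int_0^x|R|=g(x)\le Cx$, the latter because $g$ is slowly varying with $g(0)=0$ and $g'(0)=R(0)<\infty$ (Bingham--Goldie--Teugels, Proposition~1.5.9a). With these two facts the $\gamma=1$ case gives $C|t-s|$ as required; without them your argument only delivers $\E(w_i^\epsilon(t)-w_i^\epsilon(s))^2\le C_\delta|t-s|^{\delta}$ for some small $\delta>0$, which would still salvage tightness but does not establish the estimate as stated.
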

  \begin{lemma}\label{l:quad2}
    For every $s,t\geq 0$, and $i \in \set{1, 2}$ we have 
    \begin{gather}
    \label{e:Ewi2}
    \lim_{\epsilon\to 0}\E (w^\epsilon_i (t)- w^\epsilon_i (s))^2 = D_{11} |t-s|\,,
    \\
    \label{e:Ewij}
    \lim_{\epsilon\to 0}\E (w^\epsilon_1 (t)- w^\epsilon_1 (s))(w^\epsilon_2(t) - w^\epsilon_2(s)) = 0\,.
    \end{gather}
  \end{lemma}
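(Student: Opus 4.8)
The plan is to compute the covariance directly. Fix $i \in \{1,2\}$ and write $c_i(\tau) = \sin(\tau)$ if $i=1$ and $c_i(\tau) = \cos(\tau)$ if $i=2$. Using stationarity and the covariance function $R$ from~\eqref{e:rDef}, I would expand
\[
  \E\big(w_i^\epsilon(t) - w_i^\epsilon(s)\big)^2
  = \frac{1}{\epsilon^2} \int_s^t \int_s^t R\paren[\Big]{\frac{\tau_1 - \tau_2}{\epsilon^2}} c_i\paren[\Big]{\frac{\tau_1}{\epsilon^2}} c_i\paren[\Big]{\frac{\tau_2}{\epsilon^2}} \, d\tau_1 \, d\tau_2.
\]
After the change of variables $\tau_1 = \tau_2 + \epsilon^2 z$ and using the product-to-sum identity $c_i(a+z)c_i(a) = \tfrac12\cos(z) \pm \tfrac12 c'(2a+z)$-type formulas (so the integrand splits into a part depending only on $z$ and a rapidly oscillating part in the "center" variable $a = \tau_2/\epsilon^2$), the non-oscillatory piece produces $\frac12 \int R(z)\cos(z)\,dz$ over a range growing to $\pm(t-s)/\epsilon^2$, which by Remark~\ref{r:Dexists} converges to $\tfrac12 D_{11}$... but we also pick up the symmetric contribution from $z<0$ and from swapping $\tau_1,\tau_2$, giving the full factor so the limit is $D_{11}|t-s|$. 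The oscillatory piece, integrated against $\int_s^t \ldots \, d\tau_2$, carries a factor $\cos(2\tau_2/\epsilon^2 + \cdots)$ which integrates to $O(\epsilon^2)$ after one integration by parts in $\tau_2$, hence vanishes. For~\eqref{e:Ewij} the same manipulation applies with $c_1 c_2 = \sin\cos = \tfrac12\sin(2a + z) \cdot(\cdots)$ having \emph{no} constant-in-$a$ term, so the entire expression is purely oscillatory and vanishes in the limit.

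In more detail, the key steps in order are: (i) write the double integral for the covariance; (ii) substitute $z = (\tau_1-\tau_2)/\epsilon^2$, so $d\tau_1 = \epsilon^2\,dz$ and the prefactor $1/\epsilon^2$ cancels, leaving $\int \int R(z) c_i(\cdot)c_i(\cdot)\,dz\,d\tau_2$ over an $\epsilon$-dependent region; (iii) apply the trigonometric product formulas to separate the $z$-only term $\tfrac12 R(z)\cos(z)$ (for $i=1,2$; for the cross term there is no such term) from center-oscillating terms; (iv) show the $z$-only term gives $D_{11}|t-s|$ using dominated/controlled convergence justified by the bounds behind Remark~\ref{r:Dexists} (for $\gamma \in (1,2)$ use $R \in L^1$; for $\gamma \in (0,1]$ use that $R' < 0$ near infinity and the alternating-sign estimate on $\int_0^x R(z)\cos z\,dz$); (v) bound the center-oscillating terms by integrating by parts once in $\tau_2$, gaining a factor $\epsilon^2$ and using $|R|, |R'|$ bounds near infinity together with integrability of $|R|$ or the slow-increase property~\eqref{e:slowIncrease} near the origin and at infinity. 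Since $w_1^\epsilon, w_2^\epsilon$ are jointly Gaussian with mean zero, the convergence of these second moments is all that is required to identify the limiting covariance structure (combined with Lemma~\ref{l:quad1} for tightness, handled separately).

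The main obstacle I anticipate is step (v): rigorously controlling the rapidly oscillating "center" integrals uniformly in $\epsilon$, especially in the regime $\gamma \in (0,1]$ where $R \notin L^1$ and the slow-increase hypothesis~\eqref{e:slowIncrease} must be invoked to make the integration-by-parts boundary and bulk terms small. One must be careful that after substituting $z = (\tau_1 - \tau_2)/\epsilon^2$ the $z$-integration runs over an interval of length $\sim |t-s|/\epsilon^2 \to \infty$, so the estimates on $R(z)\cos z$ and $R'(z)$ for large $z$ (not just their pointwise decay) are what make both the convergent term converge and the oscillatory remainder vanish; handling the interplay between the growing domain and the small prefactor is the delicate point. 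I expect these computations to be exactly what is deferred to Appendix~\ref{append:quad}.
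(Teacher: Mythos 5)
Your outline for \eqref{e:Ewi2} is essentially the paper's own argument: expand the covariance as a double integral, use the product-to-sum identity to split $\sin\theta\sin\tau$ into $\tfrac12\cos(\theta-\tau)$ (non-oscillatory in the ``center'' variable) plus $-\tfrac12\cos(\theta+\tau)$ (oscillatory), identify the limit of the first piece with $D_{11}|t-s|$ via the boundedness of $x\mapsto\int_0^x R(z)\cos z\,dz$ from Remark~\ref{r:Dexists}, and kill the oscillatory piece by one integration by parts in $\tau$, which produces the prefactor $\epsilon^2$ against $\int_0^{(t-s)/\epsilon^2}|R|$; the case distinctions you flag ($R\in L^1$ for $\gamma\in(1,2)$, Karamata plus \eqref{e:slowIncrease} for $\gamma\in(0,1]$, and the slowly varying primitive $\int_0^x|R|$ for $\gamma=1$) are exactly the ones the appendix works through.

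There is, however, one concrete error in your treatment of \eqref{e:Ewij}. You assert that $\sin\cdot\cos$ produces \emph{no} term depending only on the difference variable, so that the cross-covariance is ``purely oscillatory.'' That is false: $\sin\theta\cos\tau=\tfrac12[\sin(\theta+\tau)+\sin(\theta-\tau)]$, so the cross-covariance contains the non-oscillatory piece
\begin{equation*}
  \tilde I_1^\epsilon=\epsilon^2\int_{s/\epsilon^2}^{t/\epsilon^2}\int_{s/\epsilon^2}^{t/\epsilon^2}\sin(\theta-\tau)\,R(\theta-\tau)\,d\theta\,d\tau\,,
\end{equation*}
which your scheme would simply drop. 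This term does vanish, but for a reason you must supply: the integrand is antisymmetric under $\theta\leftrightarrow\tau$ while the square domain is symmetric, so $\tilde I_1^\epsilon=0$ identically (this is how the paper argues, via the substitution $\theta'=(t+s)/\epsilon^2-\theta$, $\tau'=(t+s)/\epsilon^2-\tau$); alternatively, after your change of variables the inner integral is $\int_0^{(t-\tau_2)/\epsilon^2}R\sin\,dz-\int_0^{(\tau_2-s)/\epsilon^2}R\sin\,dz$, whose two halves converge to the same conditionally convergent limit, so dominated convergence gives $0$. Without one of these observations the proof of \eqref{e:Ewij} is incomplete. The remaining oscillatory piece $\tfrac12\sin(\theta+\tau)R(\theta-\tau)$ is then handled exactly as the $\cos(\theta+\tau)$ term in the variance computation, as you indicate.
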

  
  The proofs of both Lemmas~\ref{l:quad1} and~\ref{l:quad2} are lengthy, but direct computations.
  Thus, for clarity of presentation, we postpone the proofs to Appendix~\ref{append:quad}.
  Once Lemmas~\ref{l:quad1} and~\ref{l:quad2} are established, the proof of Proposition~\ref{p:qConv} follows quickly.
  Indeed, Lemma~\ref{l:quad1} implies that the family $(Y^\epsilon)_{\eps>0}$ is tight on $\mathcal{C}([0, T],\mathbb{R}^2)$

  To see this, it suffices to show that the processes~$w^\epsilon_i$ are tight.
  Without loss of generality suppose~$T = 1$, and suppose~$\gamma \in (0, 1)$ (the case when $\gamma \in [1, 2)$ is similar).
  Choose an integer $M$ such that $M_{\gamma}=M(1-\gamma)>1$.
  Since $w_i^{\eps}$ is Gaussian process, we know
  \[
    \E(w^\epsilon_i (t)-w^\epsilon_i(s))^{2M} = C_M[\E(w^\epsilon_i (t)-w^\epsilon_i(s))^{2}]^M
  \]
  for some constant $C_M$.
  We will allow $C_M$ to change from line to line as long as it only depends on $M$, and remains independent of~$\epsilon$.
  The above implies
  \begin{equation}\label{e:moment_est}
    \E(w^\epsilon_i (t)-w^\epsilon_i(s))^{2M}
      \leq C_M |t-s|^{M_{\gamma} }\,.
  \end{equation}
  Since $M_\gamma > 1$ by choice, Kolmogorov's criterion (see for instance~\cite{EthierKurtz86} Proposition 10.3) implies that the family~$w_i^\epsilon$ is tight.
  This implies that the processes $Y^\epsilon$ converge in distribution along a subsequence to a continuous process~$Y$. 


  Since each process $Y^\epsilon$ is a Gaussian process, the limiting process $Y$ must also be a Gaussian process.
  Now Lemma~\ref{l:quad2} implies $Y$ is a Brownian motion in $\mathbb R^2$ with $Y_0 = X_0$ and covariance matrix~$D$.
  Since the law of the limiting process is uniquely determined, the family $Y^\epsilon$ itself must converge in distribution, without having to select a subsequence.
  Finally, since $H(X^\epsilon_t) = H(Y^\epsilon_t)$, we obtain convergence of $(H(X^\epsilon))_{\eps>0}$ as claimed.
\end{proof}

\section{Construction of the Noise.}\label{s:noise}

In this section, we will construct noise~$v$ that will be used in Theorem~\ref{t:mainIntro}, and establish a few properties that will be used in the course of the proof of Theorem~\ref{t:mainIntro}.
We require the covariance function~$R$ to be of the form
\begin{equation}\label{e:R}
R(t)
  \defeq \int_S r(p)e^{-\mu|p|^{2\beta}|t|} \, dp\,,
\end{equation}
where $S = (-r_s, r_s)$ is a symmetric, bounded, open interval, $\mu, \beta > 0$ are constants, and $r\colon S\setminus \{0\} \to [0, \infty)$ is defined by
\begin{equation*}
r(p)\defeq \frac{ \lambda(p)}{\vert p\vert^{2\alpha}}\,.
\end{equation*}
Here $\lambda \colon S \to \R$ is a smooth bounded even function such that $\lambda(0) \neq 0$ and
\begin{equation*}
  \int_S r(p)\, dp >0\,,
\end{equation*}
and
\begin{equation}\label{cond_alpha_beta}
  \alpha <\frac{1}{2}\,.
\end{equation} 

We will now construct a stationary Gaussian process~$v$ with covariance function~$R$.
The chosen form of~$R$ allows us to construct~$v$ by superimposing Ornstein--Uhlenbeck processes as follows.
Let~$\xi$ be 2D white noise and define a Gaussian random measure $\B$ by
\begin{equation*}
\mathcal{B}(du,dp) =\sqrt{2\mu r(p)} \, |p|^{\beta} \mathbf{1}_{S}(p)\, \xi(du,dp) \,,
\end{equation*}
Clearly, the covariance of~$\B$ is given by
\begin{equation*}
\E \mathcal{B}(du,dp) \, \mathcal{B}(du', dp')
= 2\mu\, r(p) \, |p|^{2\beta} \delta(u-u')\delta(p-p') \, du \, du'\, dp\, dp'\,.
\end{equation*}
Now define the measure-valued Gaussian random process $V$ by
\begin{equation*}
V(t,dp) = \int_{-\infty}^t e^{-\mu|p|^{2\beta}(t-u)}\,\B(du,dp)\,,
\end{equation*}
and finally define the noise $v$ by
\begin{equation*}
v(t) = \int_S V(t,dp)\,.
\end{equation*}

We now establish a few properties of the process~$v$ that will be used in the proof of Theorem~\ref{t:mainIntro}.
\begin{lemma}
  The process~$v$ defined above is a stationary Gaussian process with covariance function~$R$.
\end{lemma}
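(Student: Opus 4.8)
The plan is to verify directly from the explicit construction that $v$ is Gaussian, stationary, and has the prescribed covariance $R$. Since $v(t) = \int_S V(t,dp)$ is a linear functional of the Gaussian random measure $\B$ (equivalently of the underlying white noise $\xi$), it is automatically a centered Gaussian process; the only real content is the covariance identity and stationarity. First I would write out $V(t,dp)$ explicitly and substitute, so that
\begin{equation*}
  v(t) = \int_S \int_{-\infty}^t e^{-\mu|p|^{2\beta}(t-u)} \, \B(du,dp)\,.
\end{equation*}

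Next I would compute $\E v(t) v(s)$ for $t \ge s$ using the isometry for the Gaussian random measure, i.e.\ the covariance formula
\begin{equation*}
  \E \, \B(du,dp)\,\B(du',dp') = 2\mu \, r(p)\,|p|^{2\beta}\,\delta(u-u')\,\delta(p-p')\,du\,du'\,dp\,dp'
\end{equation*}
stated in the excerpt. Plugging in, the $p'$ and $u'$ integrals collapse onto $p' = p$, $u' = u$, and the domain of integration in $u$ becomes $(-\infty, \min(t,s)] = (-\infty, s]$, giving
\begin{equation*}
  \E v(t) v(s) = \int_S 2\mu \, r(p)\,|p|^{2\beta} \paren[\Big]{\int_{-\infty}^s e^{-\mu|p|^{2\beta}(t-u)} e^{-\mu|p|^{2\beta}(s-u)} \, du}\, dp\,.
\end{equation*}
The inner integral is elementary: $e^{-\mu|p|^{2\beta}(t+s)}\int_{-\infty}^s e^{2\mu|p|^{2\beta} u}\, du = e^{-\mu|p|^{2\beta}(t-s)} / (2\mu |p|^{2\beta})$, so the factors of $2\mu|p|^{2\beta}$ cancel and we are left with $\int_S r(p) e^{-\mu|p|^{2\beta}(t-s)}\, dp = R(t-s)$, which is exactly~\eqref{e:R}. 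By symmetry of $|p|^{2\beta}$ and evenness considerations this equals $R(|t-s|)$ for all $t,s$, matching~\eqref{e:rDef}. Stationarity then follows because $\E v(t) v(s)$ depends only on $t - s$ and $v$ is Gaussian with mean zero, so all finite-dimensional distributions are shift-invariant; more conceptually, the shift $u \mapsto u + h$ preserves the law of the white-noise measure $\xi$ (hence of $\B$), and $v(t+h)$ is obtained from $v(t)$ by exactly this shift.

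There is one preliminary point to dispatch: one must check that the stochastic integrals defining $V(t,dp)$ and $v(t)$ actually converge in $L^2$, i.e.\ that $v(t)$ is a well-defined square-integrable random variable. This reduces to finiteness of $R(0) = \int_S r(p)\, dp = \int_S \lambda(p)|p|^{-2\alpha}\, dp$, which holds precisely because $\lambda$ is bounded and $\alpha < 1/2$ (condition~\eqref{cond_alpha_beta}), so $|p|^{-2\alpha}$ is integrable near $0$, and $S$ is bounded. I expect this integrability check together with the careful bookkeeping of the domain of the $u$-integral (the truncation at $\min(t,s)$, which is the source of the exponential decay in $|t-s|$ rather than growth) to be the only places requiring any care; the rest is a routine application of the Itô isometry for Gaussian random measures. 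The main ``obstacle,'' such as it is, is purely notational: being careful that the double integral over $(u,p)$ and $(u',p')$ collapses correctly and that one does not accidentally drop the constraint $u \le \min(t,s)$.
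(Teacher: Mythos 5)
Your proposal is correct and follows essentially the same route as the paper: both apply the covariance identity for $\mathcal B$ to collapse the $(u',p')$ integration, evaluate the elementary $u$-integral over $(-\infty, s]$ so that the factor $2\mu|p|^{2\beta}$ cancels, and read off $R(t-s)$. The only additions you make — checking $L^2$ well-posedness via $\alpha<1/2$ and spelling out why dependence on $t-s$ alone gives stationarity for a centered Gaussian process — are sound and merely make explicit what the paper leaves implicit.
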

\begin{proof}
  Clearly~$v$ is a Gaussian process.
  To see that~$v$ is stationary with covariance function~$R$ we choose any $s \leq t$ and compute
  \begin{align*}
  \E v(s) v(t)
  &=
  \begin{multlined}[t]
  \int_{-\infty}^s \int_S
  \exp\paren[\Big]{ -\mu \paren[\big]{
      \abs{p}^{2\beta} (s - u) + \abs{p}^{2 \beta} (t - u)
  }}
  \cdot 2\mu\abs{p}^{2\beta}
  r(p) \, du \, dp
  \end{multlined}
  \\
  &= \int_S  r(p) e^{-\mu|p|^{2\beta}(t-s)} \,dp
    = R(t - s)\,.
    \qedhere
  \end{align*}
\end{proof}

\begin{lemma}
  If~$R$ is given by~\eqref{e:R}, then
  \begin{equation*}
    \lim_{t \to \infty} t^\gamma R(t) = c_0
  \end{equation*}
  where $c_0$ and~$\gamma$ are defined by
  \begin{equation*}
    \gamma \defeq \frac{1 - 2\alpha}{2 \beta}
    \qquad\text{and}\qquad
    c_0 \defeq \lambda(0) \int_{p \in \R} \frac{e^{-\mu \abs{p}^{2\beta}}}{\abs{p}^{2\alpha}}  \, dp\,.
  \end{equation*}
\end{lemma}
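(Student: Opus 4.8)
The plan is to compute the large-$t$ asymptotics of $R(t) = \int_S r(p) e^{-\mu|p|^{2\beta}|t|}\,dp$ by the standard Laplace-type rescaling $p = q/t^{1/(2\beta)}$. Substituting, we get
\[
  R(t) = \int_S \frac{\lambda(p)}{|p|^{2\alpha}} e^{-\mu|p|^{2\beta}t}\,dp
       = t^{-\frac{1-2\alpha}{2\beta}} \int_{t^{1/(2\beta)}S} \frac{\lambda(q/t^{1/(2\beta)})}{|q|^{2\alpha}} e^{-\mu|q|^{2\beta}}\,dq \,,
\]
where the Jacobian contributes a factor $t^{-1/(2\beta)}$ and rescaling $|p|^{-2\alpha}$ contributes $t^{2\alpha/(2\beta)}$, so the prefactor is $t^{-(1-2\alpha)/(2\beta)} = t^{-\gamma}$. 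Hence $t^\gamma R(t) = \int_{t^{1/(2\beta)}S} \lambda(q t^{-1/(2\beta)}) |q|^{-2\alpha} e^{-\mu|q|^{2\beta}}\,dq$, and it remains to pass to the limit $t\to\infty$ inside this integral.

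For the limit, first note that as $t\to\infty$ the domain $t^{1/(2\beta)}S$ increases to all of $\R$, and for each fixed $q$ the integrand converges pointwise to $\lambda(0)|q|^{-2\alpha} e^{-\mu|q|^{2\beta}}$ since $\lambda$ is continuous at $0$. To justify interchanging limit and integral I would invoke dominated convergence: since $\lambda$ is bounded, say $|\lambda|\le \|\lambda\|_\infty$, the integrand is dominated by $\|\lambda\|_\infty |q|^{-2\alpha} e^{-\mu|q|^{2\beta}}\mathbf 1_{\R}(q)$, which is integrable on $\R$ — near $q=0$ because $2\alpha < 1$ (this is exactly condition~\eqref{cond_alpha_beta}, guaranteeing $|q|^{-2\alpha}$ is locally integrable in one dimension), and near infinity because of the Gaussian-type decay $e^{-\mu|q|^{2\beta}}$ with $\mu,\beta>0$. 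Writing the integral over $\R$ as an integral over $t^{1/(2\beta)}S$ plus the tail, or simply multiplying the integrand by $\mathbf 1_{t^{1/(2\beta)}S}(q)$ which increases to $\mathbf 1_{\R}(q)$, DCT then yields
\[
  \lim_{t\to\infty} t^\gamma R(t) = \lambda(0)\int_{\R} \frac{e^{-\mu|q|^{2\beta}}}{|q|^{2\alpha}}\,dq = c_0 \,,
\]
which is the claimed identity with $\gamma = (1-2\alpha)/(2\beta)$ and $c_0$ as stated.

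The only mild subtlety — hardly an obstacle — is the integrability of $|q|^{-2\alpha}$ at the origin, which forces the restriction $\alpha < 1/2$; this is precisely why~\eqref{cond_alpha_beta} was imposed. Everything else is a routine change of variables plus dominated convergence, so I would keep the write-up short: state the substitution, identify the dominating function, check its integrability using $\alpha<1/2$ and $\mu,\beta>0$, and conclude by DCT. One could alternatively phrase the argument via the substitution $s = \mu|p|^{2\beta}t$ to recognize the limiting integral as a Gamma function value, but the rescaling above is cleaner and matches the form of $c_0$ in the statement directly.
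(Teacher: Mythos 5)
Your proposal is correct and matches the paper's argument exactly: the paper performs the same rescaling $p' = p\,t^{1/(2\beta)}$, obtaining $t^\gamma R(t) = \int_{t^{1/(2\beta)}S} |p'|^{-2\alpha}\lambda(p'/t^{1/(2\beta)}) e^{-\mu|p'|^{2\beta}}\,dp'$, and then passes to the limit. Your write-up simply makes explicit the dominated convergence step (with the domination by $\|\lambda\|_\infty |q|^{-2\alpha}e^{-\mu|q|^{2\beta}}$ and the role of $\alpha<1/2$) that the paper leaves implicit.
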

\begin{proof}
  By a change of variable, note that
  \begin{align*}
  t^{\gamma}R(t)
  = t^{\gamma}\int_S
  \frac{\lambda (p)}{|p|^{2\alpha }}
  e^{ - \mu |p|^{2\beta } t}
  \, dp
  &= 
  \int_{(t^{1/(2\beta)} S)}
  \frac{1}{|p'|^{2\alpha }}
  \lambda \paren[\Big]{ \frac{p'}{t^{1 / (2\beta) }} }
  e^{ - \mu |p'|^{2\beta }}
  \, dp'\,,
  \end{align*}
  which converges to~$c_0$ as $t \to \infty$.
\end{proof}

\begin{remark}
  Note that~\eqref{cond_alpha_beta} already implies~$\gamma > 0$.
  If additionally we assume
  \begin{equation}\label{cond_alpha_beta2}
    2 \alpha + 4 \beta > 1
  \end{equation}
  then we will also have $\gamma < 2$.
  In this case the process~$v$ can be renormalized as in~\eqref{e:udef} to converge to a fBm.
  For Theorem~\ref{t:mainIntro}, however, the assumption~\eqref{cond_alpha_beta2} is unnecessary.
\end{remark}

Let $\set{\mathcal G_t}$ be the augmented filtration generated by~$V$.
That is,
\begin{equation}\label{def_flitr_v}
\mathcal{G}_{t} \defeq \sigma\paren[\big]{ \mathcal N \cup \set{ \sigma(V(s,\cdot)) \st 0\leq s\leq t} }\,,
\end{equation}
where $\mathcal N$ is the set of all null sets known at time infinity.
Our next lemma computes conditional expectations of $V(\cdot,dp)$ with respect to the filtration~$\mathcal G$.

\begin{lemma}\label{l:mar} 
  We have for any $t, h\geq 0$
  \begin{equation}\label{markovesp}
  \E\big[ V(t+h,dp) \vert \mathcal{G}_{t}\big]=e^{-\mu\vert p \vert^{2\beta}h}\,V(t,dp)\end{equation}
  and 
  \begin{equation}\label{markovvar}\begin{split}
  \E\Big[V(t+h,dp)V(t+h,dq) \Big\vert \mathcal{G}_{t}\Big]&-  \E\big[V(t+h,dp) \vert \mathcal{G}_{t}\big]\E\big[V(t+h,dq)\vert \mathcal{G}_{t}\big] \\
  &=(1-e^{-2\mu\vert p\vert^{2\beta}h})r(p)\delta(p-q)\,dp\,dq\,.
  \end{split}\end{equation}
\end{lemma}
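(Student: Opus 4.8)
The plan is to split the stochastic integral defining $V(t+h,dp)$ at time~$t$. Writing $a(p)\defeq\mu\abs{p}^{2\beta}$ for brevity,
\begin{equation*}
  V(t+h,dp)
  = \int_{-\infty}^{t+h} e^{-a(p)(t+h-u)}\,\B(du,dp)
  = e^{-a(p)h}\int_{-\infty}^{t} e^{-a(p)(t-u)}\,\B(du,dp)
    + \int_{t}^{t+h} e^{-a(p)(t+h-u)}\,\B(du,dp)\,,
\end{equation*}
so that $V(t+h,dp) = e^{-a(p)h}\,V(t,dp) + W_{t,h}(dp)$, where $W_{t,h}(dp)\defeq\int_{t}^{t+h} e^{-a(p)(t+h-u)}\,\B(du,dp)$. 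The first summand is $\mathcal G_t$-measurable, being a deterministic multiple of $V(t,\cdot)$, while the second depends only on the restriction of the white noise~$\B$ to the window $(t,t+h]$, and is therefore independent of $\mathcal G_t$: indeed $\mathcal G_t$ is contained (up to null sets) in the $\sigma$-algebra generated by $\B$ on $(-\infty,t]$, since every $V(s,\cdot)$ with $0\le s\le t$ is built from $\B$ on $(-\infty,s]$. Since $\E\,\B(du,dp)=0$ gives $\E W_{t,h}(dp)=0$, conditioning yields $\E[W_{t,h}(dp)\mid\mathcal G_t]=\E W_{t,h}(dp)=0$, and~\eqref{markovesp} follows at once.

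For~\eqref{markovvar}, abbreviate $A_p\defeq e^{-a(p)h}\,V(t,dp)$, so $V(t+h,dp)=A_p+W_{t,h}(dp)$ with $A_p$ being $\mathcal G_t$-measurable and $W_{t,h}(dp)$ mean-zero and independent of $\mathcal G_t$. Expanding $V(t+h,dp)\,V(t+h,dq)$, the cross terms $A_p W_{t,h}(dq)$ and $A_q W_{t,h}(dp)$ vanish under conditioning on $\mathcal G_t$, and $\E[W_{t,h}(dp)\,W_{t,h}(dq)\mid\mathcal G_t]=\E[W_{t,h}(dp)\,W_{t,h}(dq)]$ is deterministic; hence the left-hand side of~\eqref{markovvar} reduces to $\E[W_{t,h}(dp)\,W_{t,h}(dq)]$. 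Using the covariance of~$\B$ and integrating out one time variable against $\delta(u-u')$,
\begin{equation*}
  \E[W_{t,h}(dp)\,W_{t,h}(dq)]
  = \paren[\Big]{ \int_{t}^{t+h} e^{-(a(p)+a(q))(t+h-u)}\,du }\,
    2\mu\, r(p)\,\abs{p}^{2\beta}\,\delta(p-q)\,dp\,dq\,.
\end{equation*}
Setting $q=p$ in the exponent (legitimate because of $\delta(p-q)$) and using $\int_{t}^{t+h} e^{-2a(p)(t+h-u)}\,du=(1-e^{-2a(p)h})/(2a(p))$, the factor $2\mu\abs{p}^{2\beta}$ cancels the $2a(p)$ in the denominator and one is left with exactly $(1-e^{-2\mu\abs{p}^{2\beta}h})\,r(p)\,\delta(p-q)\,dp\,dq$, as claimed.

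The only point requiring care is the interpretation of these measure- (equivalently, distribution-) valued identities: each statement should be read after pairing $V(t,dp)$ and $W_{t,h}(dp)$ against test functions $\phi,\psi\in C_c^\infty(S)$, so that $\int_S\phi(p)\,W_{t,h}(dp)$ is an honest Gaussian random variable, the independence claim becomes a statement about jointly Gaussian families, and the formal $\delta(p-q)$ in~\eqref{markovvar} encodes the covariance form $\int_S\phi(p)\psi(p)\,(1-e^{-2\mu\abs{p}^{2\beta}h})\,r(p)\,dp$. With this convention the ``main obstacle'' is merely bookkeeping --- checking measurability of $V(s,\cdot)$, $s\le t$, with respect to the pre-$t$ white noise and the independence of $W_{t,h}$ from $\mathcal G_t$ --- after which both identities are a direct application of the It\^o isometry for the Gaussian random measure~$\B$.
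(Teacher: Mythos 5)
Your proof is correct, but it takes a different route from the paper's. The paper works entirely at the level of the Gaussian family $\set{V(s,\cdot)}$: it first reduces the conditioning on $\mathcal G_t$ to conditioning on finitely many $V(t_1,\cdot),\dots,V(t_n,\cdot)$, proves~\eqref{markovesp} by writing $V(t_{n+1},dp)=e^{-\mu\abs{p}^{2\beta}(t_{n+1}-t_n)}V(t_n,dp)+Y$ with $Y$ shown to be uncorrelated (hence independent) of the earlier values via a covariance computation, and then runs an induction on $n$; for~\eqref{markovvar} it invokes a general Gaussian projection lemma (that the residuals $X-\E[X\mid\mathcal G]$ are independent of $\mathcal G$, after Rozanov) to identify the conditional covariance with an unconditional one. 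You instead exploit the explicit Ornstein--Uhlenbeck integral representation and split the white noise at time $t$, so that $V(t+h,dp)=e^{-\mu\abs{p}^{2\beta}h}V(t,dp)+W_{t,h}(dp)$ with $W_{t,h}$ manifestly independent of the pre-$t$ noise, and both identities drop out of the It\^o isometry. Your decomposition is essentially the paper's $Y$ made concrete, but your independence argument is structural (disjoint time windows of $\B$) rather than computational, and it handles the full filtration $\mathcal G_t$ in one step by embedding it into $\sigma(\B\vert_{(-\infty,t]})$, avoiding both the reduction to finite collections and the induction. What the paper's argument buys in exchange is that it only uses the covariance function of $V$, not the particular construction; what yours buys is brevity and transparency. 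Your closing remark about pairing against test functions is exactly the right way to make the measure-valued statements rigorous, and the arithmetic ($2\mu r(p)\abs{p}^{2\beta}\cdot(1-e^{-2\mu\abs{p}^{2\beta}h})/(2\mu\abs{p}^{2\beta})=r(p)(1-e^{-2\mu\abs{p}^{2\beta}h})$) checks out.
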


Our last result concerns the boundedness of our noise process.   
\begin{lemma}\label{l:bound} Let $T>0$, $M>0$,
  \begin{equation*} D_{k,M} \defeq [0,T]\times L^\infty([0,T],W_{k,M}) 
  \end{equation*}
  with
  \begin{equation*}
  W_{k,M} \defeq \big\{\varphi\in W^{1,k}(S):\quad \|\varphi\|_{W^{1,k}}\leq M \big\}
  \end{equation*}
  and where $W^{1,k}(S)$ stands for the Sobolev space with $k\in(1,\infty]$. We have
  \begin{equation}\label{boundV1}\E\Big[\sup_{(t,\varphi)\in D_{k,M}} \Big\vert V\Big(\frac{t}{\eps^2},\varphi(t,\cdot)\Big)\Big\vert\Big]\leq C + \frac{C(\eps)}{\eps}\,,\end{equation}
  and for any $n\in\N^*$
  \begin{equation}\label{boundV2}\sup_{\eps}\sup_{t\in[0,T]}\E\Big[\sup_{\varphi\in W_{k,M}} \Big\vert V\Big(\frac{t}{\eps^2},\varphi\Big)\Big\vert ^n\Big]\leq C_n\,,\end{equation}
  where $C$, $C_n$ and $C(\eps)$ are three positive constants where the latter satisfies 
  \[\lim_{\eps\to 0} C(\eps) = 0.\] 
\end{lemma}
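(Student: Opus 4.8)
The plan is to view $(s,\psi)\mapsto V(s,\psi)\defeq\int_S\psi(p)\,V(s,dp)$ as a centered Gaussian field and to bound the moments of its supremum by combining Dudley's entropy bound with the Borell--TIS Gaussian concentration inequality; the whole argument reduces to a single metric entropy estimate for the ball $W_{k,M}$, and this is where the hypothesis $\alpha<1/2$ from~\eqref{cond_alpha_beta} enters. First I would record, by a direct computation from the definition of $V$ and the covariance of $\mathcal B$, that for all $s,s'\ge 0$ and $\psi,\psi'\in W_{k,M}$,
\[
  \E\bigl(V(s,\psi)-V(s,\psi')\bigr)^2=\int_S(\psi-\psi')^2\,r(p)\,dp\,,\qquad
  \E\bigl(V(s',\psi)-V(s,\psi)\bigr)^2=2\int_S\psi^2\,r(p)\bigl(1-e^{-\mu\abs{p}^{2\beta}\abs{s-s'}}\bigr)\,dp\,.
\]
The second quantity is bounded by $2\int_S\psi^2 r\,dp$ and also by $C\abs{s-s'}$, the latter because $\abs p^{2\beta}r(p)=\lambda(p)\abs p^{2\beta-2\alpha}$ is integrable on the bounded interval $S$ (indeed $\alpha<1/2\le\beta+1/2$). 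Hence the canonical metric of the field on a set of the form $[0,L]\times W_{k,M}$ is dominated by $d(\psi,\psi')+\min\{C,C\abs{s-s'}^{1/2}\}$, where $d(\psi,\psi')^2\defeq\int_S(\psi-\psi')^2 r\,dp$, and this metric makes $[0,L]\times W_{k,M}$ totally bounded.

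The key step is the entropy bound for $(W_{k,M},d)$. Since $r(p)=\lambda(p)/\abs p^{2\alpha}$ with $\lambda$ bounded and $\alpha<1/2$, one has $r\in L^{\rho}(S)$ for some $\rho>1$; Hölder's inequality then yields $d(\psi,\psi')\le C\,\|\psi-\psi'\|_{L^q(S)}$ for a suitable finite exponent $q=q(\alpha)$. For $k\in(1,\infty)$ the embedding $W^{1,k}(S)\hookrightarrow L^q(S)$ is compact with entropy numbers of order $n^{-1}$ (a classical one-dimensional Sobolev entropy estimate, valid since $k>1$); for $k=\infty$ one first uses $\|\varphi\|_{W^{1,q+1}(S)}\le C\|\varphi\|_{W^{1,\infty}(S)}$ because $S$ is bounded. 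Therefore $\log N(\delta,W_{k,M},d)\le\log N(c\delta,W_{k,M},\|\cdot\|_{L^q(S)})\le C/\delta$, so that $\int_0^1\sqrt{\log N(\delta,W_{k,M},d)}\,d\delta<\infty$; here $N(\delta,A,\varrho)$ denotes the $\delta$-covering number of $A$ in the metric $\varrho$.

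Granting this, \eqref{boundV2} is immediate: since $t\mapsto V(t,\cdot)$ is stationary, the field $\{V(t/\eps^2,\psi):\psi\in W_{k,M}\}$ has the same law as $\{V(0,\psi):\psi\in W_{k,M}\}$ for every $\eps$ and $t$, so the left side of~\eqref{boundV2} equals $\E\bigl[\sup_{\psi\in W_{k,M}}\abs{V(0,\psi)}^n\bigr]$. This field is $L^2$-continuous in $\psi$, hence has a separable version, which by the entropy bound and Dudley's theorem has a.s.\ bounded paths with $\E\bigl[\sup_\psi\abs{V(0,\psi)}\bigr]<\infty$; since moreover $\sup_\psi\E V(0,\psi)^2=\sup_\psi\int_S\psi^2 r\,dp\le CM^2$ (using $W^{1,k}(S)\hookrightarrow L^\infty(S)$ and $r\in L^1(S)$), the Borell--TIS inequality promotes this to Gaussian tails for $\sup_\psi\abs{V(0,\psi)}$ and hence to the finiteness of all its moments, with constants $C_n$ independent of $\eps$ and $t$. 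For~\eqref{boundV1}, note that $\sup_{(t,\varphi)\in D_{k,M}}\abs{V(t/\eps^2,\varphi(t,\cdot))}=\sup_{t\in[0,T]}\sup_{\psi\in W_{k,M}}\abs{V(t/\eps^2,\psi)}$, since the value $\varphi(t,\cdot)$ at each $t$ may be chosen freely in $W_{k,M}$; after the time change $s=t/\eps^2$ this is the supremum $Z_\eps$ of the same Gaussian field over $[0,T/\eps^2]\times W_{k,M}$. Its covering numbers obey $\log N(\delta)\le\log(T/\eps^2)+2\log(C/\delta)+\log N(\delta,W_{k,M},d)$ for small $\delta$, with diameter bounded uniformly in $\eps$, so Dudley's bound and subadditivity of the square root give
\[
  \E Z_\eps\le C+C\sqrt{\log(T/\eps^2)}+C\int_0^{C}\sqrt{\log(C/\delta)}\,d\delta+C\int_0^{C}\sqrt{\log N(\delta,W_{k,M},d)}\,d\delta\le C+C'\sqrt{\abs{\log\eps}}\,;
\]
taking $C(\eps)\defeq C'\eps\sqrt{\abs{\log\eps}}$, which tends to $0$, gives $\E Z_\eps\le C+C(\eps)/\eps$, as claimed.

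The one genuinely delicate point is the entropy estimate for $(W_{k,M},d)$. Because the weight $r(p)\asymp\abs p^{-2\alpha}$ blows up at $p=0$, one cannot simply dominate $d$ by $\|\cdot\|_{L^\infty(S)}$: that would reduce matters to the entropy of a ball in $C^{0,1-1/k}(\bar S)$, whose Dudley integral diverges when $1<k\le 2$. The resolution is precisely that $\alpha<1/2$ forces $r\in L^{\rho}(S)$ for some $\rho>1$, which lets $d$ be controlled by an $L^q$-norm to which the dimension-independent $n^{-1}$ decay of one-dimensional Sobolev entropy numbers applies. Everything else --- the reductions by stationarity and by linearity of $\psi\mapsto V(s,\psi)$, the verification of total boundedness, and the observation that the long horizon $T/\eps^2$ contributes only the harmless factor $\sqrt{\abs{\log\eps}}$ --- is routine.
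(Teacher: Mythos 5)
Your argument is correct and is essentially the paper's: reduce the supremum over $D_{k,M}$ to one over $[0,T]\times W_{k,M}$, dominate the canonical Gaussian pseudometric by a product metric, apply Dudley's entropy bound using the $\log N(\delta)\lesssim \delta^{-1}$ entropy of the one-dimensional Sobolev ball together with the $T/(\eps^2\delta^2)$ covering of the time interval, and obtain all moments of the fixed-time supremum via stationarity plus Borell--TIS. The only real divergence is how the spatial part of the canonical metric is controlled: you pass through $L^q(S)$ via H\"older with $r\in L^\rho(S)$, $\rho>1$, whereas the paper dominates by $\Vert\varphi-\psi\Vert_{L^\infty(S)}$ using only $r\in L^1(S)$ (both are where $\alpha<1/2$ enters) and then quotes the Birman--Solomjak entropy estimate for the $W^{1,k}$-ball in the uniform norm. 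Your closing claim that the sup-norm route fails for $1<k\le 2$ is mistaken: what is needed is the entropy of the $W^{1,k}(S)$-ball itself in $C^0(\bar S)$, which is of order $\delta^{-1}$ for every $k>1$ in one dimension (this is precisely the paper's citation of Birman--Solomjak), not the larger $\delta^{-k/(k-1)}$ entropy of the H\"older ball $C^{0,1-1/k}$ into which it embeds; the embedding loses information, and estimating the Sobolev ball directly is what makes the Dudley integral converge. So your $L^q$ detour is a valid alternative but not a necessary repair. Apart from this aside the two proofs coincide, including the treatment of the degenerate pseudometric (you invoke a separable version, the paper exhausts by the sets $D^n_{k,M}$) and the $\sqrt{\vert\log\eps\vert}$ contribution of the long time horizon, which both arguments absorb into $C(\eps)/\eps$.
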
 

Here the notation $V(t/\eps^2,\varphi)$ denotes integral of a function $\varphi(p)$ with respect to the measure $V(t/\eps^2,dp)$.
That is,
\[
  V\Big(\frac{t}{\e^2},\varphi\Big) \defeq \int_S \varphi(p)\,V\Big(\frac{t}{\e^2},dp\Big)\,,
\]
and we will drop the $S$ in the above notation for simplicity unless specified otherwise.
The proofs of these two Lemmas are computationally involved, and we relegate them to Appendices~\ref{proofpropmar} and~\ref{proofpropbound} respectively.

\section{The Main Theorem}\label{s:main}

One canonical way to analyze integrable Hamiltonian systems is to use a set of \emph{action-angle coordinates}.
These coordinates separate the slow and fast motion, and preserves the Hamiltonian structure.

\subsection{Action-angle coordinates}\label{s:actionangle}

The Liouville-Arnold theorem~\cite{Arnold89} asserts that there exists a symplectic canonical transformation $\varphi \colon X = (x,y)\mapsto (I,\theta) \in \R \times \T$, where the action variable $I$ and the angle variable $\theta$ satisfy
\begin{equation}\label{eq:KH}
K(I) = H(x,y)\qquad \text{and} \qquad \{I, \theta\} = 1\,,
\end{equation}
where $K$ is a given one variable smooth enough increasing function such that $K(0)=0$, $\{\cdot,\cdot\}$ stands for the standard Poisson bracket and is defined by
\[\{g,h\} = \partial_x g \partial_y h - \partial_y g \partial_x h\,,\]
for our Hamiltonian system with one degree of freedom.
The relation on the right-hand side of \eqref{eq:KH} can be also stated as
\begin{equation*}
\grad I \cdot \grad^\perp \theta = 1\,.
\end{equation*}
Note that in the action-angle coordinate the Hamiltonian is a function of the action coordinate alone. 

The existence of such transformation $\varphi$ is guaranteed by the Liouville-Arnold theorem, and can be constructed through a generating function \cite[Section 50 pp. 281]{Arnold89}, and implicitly defined by the relations
\begin{equation}\label{generating_eq}
y = \partial_x \mathcal{S}(I,x)\,,\qquad \theta = \partial_I \mathcal{S}(I,x)\,,\qquad\text{and}\qquad H(x, \partial_x \mathcal{S}(I,x)) = K(I)\,.
\end{equation}  
Such a construction is not unique.
For convenience in the forthcoming analysis we will choose $\mathcal{S}(I,0)=0$ so that 
\begin{gather*}
  \mathcal{S}(I,x)=\int_0^x \partial_{x'} \mathcal{S}(I,x')\,dx'\,,
  \\
  \partial_I \mathcal{S}(I,x)=\int_0^x \partial_{I x'} \mathcal{S}(I,x')\,dx'\,,
\end{gather*}
and hence
\[\partial_I \mathcal{S}(I,0)=0\,.\]
The meaning of the second relation is that the zero angle corresponds to either the positive part of the $y$-axis or the negative part. 
Consequently, we have for any $I>0$
\begin{equation}\label{phi_10}
\varphi^{-1}_1 (I, \theta=0)=0 \qquad \text{and}\qquad \varphi^{-1}_2 (I, \theta=0)\neq 0\,,
\end{equation}
and in particular,
\begin{equation}\label{partial_I_phi_10}
\partial_I \varphi^{-1}_1 (I, \theta=0)=0\,.
\end{equation}
The identity~\eqref{phi_10} follows from the fact that $K$ vanishes only for $I=0$.
For any $I>0$, the $x$ and $y$ components cannot simultaneously vanish.

Now let us write $\varphi = (\varphi_1, \varphi_2)$ and define
\begin{equation*}
I_t \defeq I(X_t) = \varphi_1(X_t)\,,
\qquad\text{and}\qquad
\theta_t \defeq \theta(X_t) = \varphi_2(X_t)\,.
\end{equation*}
In the absence of random fluctuations (i.e.\ when $v=0$), the Hamiltonian system~$\dot X_t = \grad^\perp H(X_t)$ becomes
\begin{equation*}
\dot{I}_t = 0\,,
\qquad\text{and}\qquad
\dot{\theta_t} = \omega (I_t)\,,
\qquad\text{with}\qquad
\omega(I) = K'(I)\,.
\end{equation*}
In the presence of random fluctuations, the Hamiltonian system \eqref{e:noisyHamiltonianUnscaled} becomes
\begin{equation}\label{e:Itheta1}
\dot I_t
= \eps v(t) a(I_t, \theta_t) \,,
\qquad\text{and}\qquad
\dot \theta_t
= \omega(I_t)
+\eps v(t) b(I_t, \theta_t)\,,
\end{equation}
where
\begin{equation*}
a = e_2 \cdot \grad \varphi_1 \circ \varphi^{-1} \,,
\quad\text{and}\quad
b = e_2 \cdot \grad \varphi_2 \circ \varphi^{-1} \,.
\end{equation*}

\begin{example}
For illustration purposes, we now explicitly compute the action-angle variables when the Hamiltonian is quadratic.
From \eqref{generating_eq}, we have
\[
  x = \sqrt{\frac{I}{\pi}}\cos \paren[\Big]{ 2\pi \theta- \frac{\pi}{2} }
  \qquad\text{and}\qquad
  y =  \sqrt{\frac{I}{\pi}}\sin \paren[\Big]{ 2\pi \theta- \frac{\pi}{2} } 
\]
where $\theta$ gives the angle of $X=(x,y)^T$ from the negative part of the vertical axis. One can then see that the action variable is a multiple of the Hamiltonian,
\[K(I) = \frac{I}{2\pi}\qquad\text{and} \qquad \omega(I) = \frac{1}{2\pi},\]
 while the angle variable corresponds to the angle on a trajectory, which is a circle in the case of $H(X)=|X|^2/2$, with period $1$.
 In this case, the functions $a$ and $b$ are as follows
\begin{equation}\label{eq:quadab}
  a(I,\theta)= 2\sqrt{\pi I}\sin\paren[\Big]{ 2\pi \theta- \frac{\pi}{2}}
  \quad\text{and}\quad
  b(I,\theta)=\frac{1}{2\sqrt{\pi I}}\cos\paren[\Big]{ 2\pi \theta-\frac{\pi}{2}}\,.
\end{equation}
\end{example}

\subsection{The main theorem}\label{s:mainthm}

To study the long-time behavior of \eqref{e:Itheta1}, we let
\begin{equation*}
I^\eps_t = I_{t/\eps^2}\qquad\text{and}\qquad \theta^\eps_t = \theta_{t/\eps^2}\,.
\end{equation*} 
Now~\eqref{e:Itheta1} becomes
\begin{equation}\label{e:Itheta1eps}
\dot I_t^\eps = \frac{1}{\eps} v\Big(\frac{t}{\eps^2}\Big) a(I_t^\eps,\theta_t^\eps)\,, \qquad \dot \theta_t^\eps = \frac{\omega(I_t^\eps)}{\eps^2} + \frac{1}{\eps} v\Big(\frac{t}{\eps^2}\Big)b(I_t^\eps,\theta_t^\eps) \,,
\end{equation}
with $I^\eps_0 = I_0$ and $\theta^\eps_0 = \theta_0$. Using the approach in~\cite{Garnier95} to separate fast and slow motions, we study this system by splitting the angle variable $\theta_t^\eps$ into two parts
\begin{equation*}
\theta_t^\eps = \psi_t^\eps + \tau_t^\eps \,.
\end{equation*} 
The evolution of $\tau_t^\epsilon$ (fast motion) is obtained by averaging~\eqref{e:Itheta1eps} over the angular coordinate, and $\psi_t^\epsilon$ (slow motion) is the remainder.
That is we require
\begin{equation}\label{e:tau}
\dot \tau_t^\eps = \frac{\omega(I_t^\eps)}{\eps^2} + \frac{1}{\eps}v\Big(\frac{t}{\eps^2}\Big) \av{ b(I_t^\eps, \cdot)}
\end{equation}
with initial condition $\tau^\eps_0 = 0$, and where we have the notation
\begin{equation*}
\av{g} \defeq \int_{\theta = 0}^1 g(\theta) \, d\theta \,.
\end{equation*}
Considering only the slow motion variables $(I_t^\eps,\psi_t^\eps)$, the system~\eqref{e:Itheta1eps} becomes
\begin{equation}\label{systemIpsi}
\begin{split}
\dot I_t^\eps &= \frac{1}{\eps} v\Big(\frac{t}{\eps^2}\Big) A(I_t^\eps,\psi_t^\eps,\tau_t^\eps) \\
\dot \psi_t^\eps &= \frac{1}{\eps} v\Big(\frac{t}{\eps^2}\Big) B(I_t^\eps,\psi_t^\eps,\tau_t^\eps)
\end{split}
\end{equation}
with
\begin{equation*}
A(I,\psi,\tau) \defeq a(I,\psi+\tau)\qquad\text{and}\qquad B(I,\psi,\tau) \defeq b(I,\psi+\tau) - \av{b(I,\cdot)}\,.
\end{equation*}
The above equations are coupled with the initial conditions $I^\eps_0 = I_0$, and $\psi^\eps_0=\theta_0$. Note that~$A$ and $B$ are both 1-periodic and mean-zero with respect to $\tau$. This latter property is mandatory to deal with the long-range correlation case as illustrated in Proposition \ref{p:qConv}.
In fact, if the system possesses a component with zero frequency, a noise with long-range correlations will charge this component and cause the system to blow up as~$\epsilon \to 0$.

It is straightforward to see this mean-zero property in $\tau$ for $B$, but for $A$ we use that
\begin{equation*}
\textrm{Jac}\, \varphi^{-1}(I,\theta) = [\textrm{Jac}\,\varphi(\varphi^{-1}(I,\theta))]^{-1}\end{equation*} 
and that $\det \textrm{Jac}\,\varphi(X) = 1$ since $\varphi$ is a symplectic transformation, to obtain the following relations
\begin{equation}\label{relation_phi}
\begin{split}
\partial_I \varphi^{-1}_1(I,\theta)&= \partial_y \varphi_2(X),\quad \partial_I \varphi^{-1}_2(I,\theta)= -\partial_x \varphi_2(X)\,,\\
\partial_\theta \varphi^{-1}_1(I,\theta)& = - \partial_y \varphi_1(X), \quad \partial_\theta \varphi^{-1}_2(I,\theta)= \partial_x \varphi_1(X)\,.
\end{split}
\end{equation}
Therefore, we have
\begin{equation*}
a(I,\theta) = -\partial_\theta\varphi^{-1}_1(I,\theta)\end{equation*}
which is clearly mean-zero with respect to $\theta$ since we have the derivative of a periodic function. Note also that from these relations the mean-zero property in $\theta$ for $b$ is not clear. This is the reason why we introduce the compensation $\av{b}$ in the definition of $B$.

Our main result obtains the limiting behavior of~\eqref{systemIpsi} as $\epsilon \to 0$ under the following assumptions.
\begin{itemize}[\textbullet]
  \item
    The function~$K$ is smooth, and
    \begin{equation}\label{e:hypOmega}
      \inf_{I\geq 0} K'(I) =\inf_{I\geq 0} \omega(I) >\omega_0 > 0
    \end{equation}
    for some strictly positive number~$\omega_0$.

  \item 
    There exist $r>0$, and positive constants $c_{1,r}$, $c_{2,r}>0$ such that for any $I\in (0,r)$
    \begin{equation}\label{hyp_omega}
      c_{1,r} I \leq K(I) \leq c_{2,r} I \qquad\text{and}\qquad |\omega'(I)| \leq \frac{c_{2,r}}{I} \,.
    \end{equation}
\end{itemize}

Note that these conditions imply that $K(0)=0$ and that $K$ is an increasing function in $I$.
These assumptions are not too restrictive since any Hamiltonian satisfies~\eqref{e:hypOmega} and~\eqref{hyp_omega} near non-degenerate critical points.
The following proposition is a consequence of the above assumptions, which concerns bounds on the function $a$.
The proof is given in Appendix~\ref{proofpropbound_a}.
\begin{proposition}\label{l:bound_a}
  Under \eqref{e:hypOmega} and \eqref{hyp_omega} there exist $r>0$ and a constant $C_r>0$ such that for any $I \in (0,r)$ we have
  \[
  \sup_{\theta \in \mathbb{T}} \Big(|a(I,\theta)| + |\partial_\theta a(I,\theta)|\Big) \leq C_r \sqrt{I}\,,
  \]
  and
  \[
  \sup_{\theta \in \mathbb{T}} |\partial_I a(I,\theta)| \leq \frac{C_r}{ \sqrt{I}}\,.
  \]
\end{proposition}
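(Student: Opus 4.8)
\textbf{Proposal for the proof of Proposition~\ref{l:bound_a}.}
The plan is to reduce everything to estimates on the generating function $\mathcal S(I,x)$ and the shape of the level curves $\{H=K(I)\}$ near the non-degenerate minimum at the origin. Recall from~\eqref{relation_phi} that $a(I,\theta) = -\partial_\theta \varphi^{-1}_1(I,\theta)$, so that all three quantities to be bounded are derivatives of the inverse map $\varphi^{-1}=(\varphi^{-1}_1,\varphi^{-1}_2)$, i.e. of the parametrization $X=(x,y)$ of the orbit $\{H=K(I)\}$ by the angle $\theta$. The first step is therefore to obtain a quantitative description of these orbits. Since $(0,0)$ is a non-degenerate critical point, after a linear symplectic change of variables we may assume $H(x,y) = \tfrac12(x^2+y^2) + O(|x|^3+|y|^3)$ near the origin, so the level curve $\{H = K(I)\}$ is, for small $I$, a perturbation of the circle of radius $\sqrt{2K(I)}\sim \sqrt{I}$ (using the lower and upper bounds $c_{1,r}I\le K(I)\le c_{2,r}I$ from~\eqref{hyp_omega}). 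Thus on this orbit $|x|+|y| \lesssim \sqrt I$, which already gives the right scale for $|a|$ and $\partial_\theta a$.

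The second step is to make the $\theta$-parametrization precise. Using the defining relation $\dot X_t = \nabla^\perp H(X_t)$ together with $\dot\theta = \omega(I) = K'(I)$, the angle parametrization satisfies $\partial_\theta \varphi^{-1}(I,\theta) = \nabla^\perp H(\varphi^{-1}(I,\theta))/\omega(I)$. Hence
\[
  a(I,\theta) = -\partial_\theta\varphi^{-1}_1(I,\theta) = -\frac{\partial_y H(\varphi^{-1}(I,\theta))}{\omega(I)}\,.
\]
Now $\omega(I) = K'(I)$ is bounded below by $\omega_0>0$ by~\eqref{e:hypOmega}, so $|a(I,\theta)| \le |\partial_y H(X)|/\omega_0$; since $|\partial_y H(X)| = |y + O(|x|^2+|y|^2)| \lesssim |X| \lesssim \sqrt I$ on the orbit, we get $|a(I,\theta)|\le C_r\sqrt I$. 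For $\partial_\theta a$ we differentiate once more in $\theta$: $\partial_\theta a = -\big(\nabla \partial_y H(X)\cdot \partial_\theta X\big)/\omega(I)$, and $|\partial_\theta X| = |\nabla^\perp H(X)|/\omega(I) \lesssim \sqrt I$, while $\nabla\partial_y H$ is bounded near the origin, so $|\partial_\theta a| \le C_r\sqrt I$ as well.

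The third step — and the one I expect to be the main obstacle — is the bound $|\partial_I a(I,\theta)| \le C_r/\sqrt I$, because differentiating in $I$ crosses level sets and can produce a loss. Writing $a = -\partial_y H(\varphi^{-1})/\omega(I)$, we have
\[
  \partial_I a = -\frac{\nabla\partial_y H(X)\cdot \partial_I\varphi^{-1}(I,\theta)}{\omega(I)} + \frac{\partial_y H(X)\,\omega'(I)}{\omega(I)^2}\,.
\]
The second term is controlled by $|\partial_y H(X)|\,|\omega'(I)| \lesssim \sqrt I \cdot (1/I) = 1/\sqrt I$, using the bound $|\omega'(I)|\le c_{2,r}/I$ from~\eqref{hyp_omega}; this is exactly the source of the $1/\sqrt I$ blow-up. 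For the first term, by~\eqref{relation_phi} $\partial_I\varphi^{-1}_1 = \partial_y\varphi_2$ and $\partial_I\varphi^{-1}_2 = -\partial_x\varphi_2$, i.e. $\partial_I\varphi^{-1} = \nabla^\perp\varphi_1^{-1}$ evaluated appropriately; more usefully, since $\nabla I \cdot \partial_I\varphi^{-1} = 1$ and $\nabla I = \nabla H/K'(I)$, the component of $\partial_I \varphi^{-1}$ normal to the orbit has size $\sim K'(I)/|\nabla H| \sim \omega_0^{-1}/\sqrt I$ (as $|\nabla H(X)|\sim |X|\sim\sqrt I$ near the origin), while the tangential component is governed by $\partial_I\theta$-type terms. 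So $|\partial_I\varphi^{-1}| \lesssim 1/\sqrt I$, and since $\nabla\partial_y H$ is bounded and $\omega\ge\omega_0$, the first term is also $\lesssim 1/\sqrt I$. Combining gives $|\partial_I a| \le C_r/\sqrt I$. The care needed here is entirely in (i) establishing that near a non-degenerate minimum $|\nabla H(X)| \asymp |X| \asymp \sqrt{H(X)} \asymp \sqrt I$ uniformly in the angle, which follows from Morse's lemma / the non-degeneracy of the Hessian together with~\eqref{hyp_omega}, and (ii) keeping track of which derivatives of $\varphi^{-1}$ are tangential versus normal to the orbit so that no spurious power of $I$ is lost. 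A clean way to organize this is to work in the model coordinates given by Morse's lemma, where the orbits are genuinely circles and $I$, $\theta$ are, up to a smooth $I$-dependent reparametrization of the radius and the angle, polar coordinates; the hypotheses~\eqref{hyp_omega} are precisely what is needed to control the derivatives of that reparametrization.
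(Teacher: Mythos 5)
Your strategy for $|a|$ and $|\partial_\theta a|$ is the same as the paper's: write $a(I,\theta)=\partial_y H(\varphi^{-1}(I,\theta))/\omega(I)$ (note your sign is off — from \eqref{eq:partial_theta_varphi} one gets $a=-\partial_\theta\varphi_1^{-1}=+\partial_y H/\omega$ — but this is harmless), use $\omega\ge\omega_0$, and use that on the level set $\{H=K(I)\}$ near a non-degenerate minimum one has $\|\varphi^{-1}(I,\theta)\|\le C_r\sqrt I$ because $K(I)\ge\tilde\lambda\|X\|^2$ and $K(I)\le c_{2,r}I$. The decomposition of $\partial_I a$ into the $\omega'/\omega^2$ term (controlled by $|\omega'(I)|\le c_{2,r}/I$) and the $\partial_I\varphi^{-1}\cdot\nabla\partial_yH$ term is also exactly the paper's.

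The genuine gap is the bound $\|\partial_I\varphi^{-1}(I,\theta)\|\le C_r/\sqrt I$, which you assert but do not prove. The relation $\nabla I\cdot\partial_I\varphi^{-1}=1$ only controls the component of $\partial_I\varphi^{-1}$ along $\nabla I$ (and even there you need the \emph{lower} bound $|\nabla H|\gtrsim\sqrt I$ on the orbit, which should be stated — it follows from non-degeneracy plus $K(I)\ge c_{1,r}I$). The tangential component is not "governed by $\partial_I\theta$-type terms" in any way that yields a bound for free: it satisfies a linear ODE in $\theta$ obtained by differentiating \eqref{eq:partial_theta_varphi} with respect to $I$, and its size depends on (i) a Gronwall estimate propagating the bound around the orbit, with the forcing term $\omega'(I)\nabla H/\omega^2$ giving the $1/\sqrt I$ scale, and (ii) control of the initial value $\partial_I\varphi^{-1}(I,0)$, which is where the normalization $\partial_I\varphi_1^{-1}(I,0)=0$ of \eqref{partial_I_phi_10} enters together with the identity $\partial_I\varphi_2^{-1}(I,0)\,\partial_yH(0,\varphi_2^{-1}(I,0))=\omega(I)$ and the lower bound $|\varphi_2^{-1}(I,0)|\ge\sqrt{c_{1,r}I/\bar\lambda_2}$. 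Without pinning the angle origin to the $y$-axis and running this argument (this is precisely the paper's Lemma~\ref{lemma_bound_phi-1}), the claimed bound on the tangential part is unsupported; the suggested detour through Morse coordinates does not avoid this, since the Morse change of variables is not symplectic and one would still have to estimate the $I$-derivative of the resulting angle reparametrization, which is the same computation in disguise.
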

The proposition mainly describes the behavior of $a, \partial_Ia$ and $\partial_{\theta}a$ around $I=0$. These bounds will allow us to prove the main result of the paper that characterizes the limiting process of the sequence of processes $(I^{\eps},\psi^{\eps})_{\eps>0}$.
\begin{theorem}\label{mainresult}
  Assume \eqref{e:hypOmega} and \eqref{hyp_omega} hold, the family $(I^\eps, \psi^\eps)_{\eps>0}$ (defined in~\eqref{e:tau}--\eqref{systemIpsi}) converges in distribution in $\mathcal{C}([0,\infty),\mathbb{R}^2)$ to a process $(I_t,\psi_t)_{t\geq 0}$, where $(I_t)_{t\geq 0}$ is the unique weak solution of the SDE
  \begin{equation}\label{eq:I}\begin{split}
  dI_{t}&=\int_{\tau=0}^1  \, a(I_t,\tau)\, dW_t(I_t,\tau)\, d\tau \\
  &\quad+\Big[\int_{u=0}^\infty R(u)\int_{\tau=0}^1 \partial_I \big(a(I,\tau + \omega(I)u)\big)_{|I=I_t}  a(I_t,\tau)\\
  &\qquad\qquad+ a(I_t,\tau + \omega(I_t)u) \partial_I a(I_t,\tau) \,d\tau\,du\Big]\, dt\,,
  \end{split}\end{equation}
  with initial condition $I_{t=0}=I_0$. Here, $W$ is a real valued Brownian field with covariance function
  \begin{equation}\label{def_cov}
    \begin{split}
      \E[ W_t(y,\phi_1)W_s(y,\phi_2)]
      &=  t\wedge s
      \int_{u=0}^\infty
      R(u)
      \\
      &\quad\times
      \int_{\tau=0}^1
      \phi_1(\tau + \omega(y)u) \phi_2(\tau)
      + \phi_1(\tau)\phi_2(\tau + \omega(y)u)
      \, d\tau \,du\,,
    \end{split}
  \end{equation}
  for any $\phi_1,\phi_2 \in L^2_0(\T)$ where
  \[L^2_0(\T) \defeq \big\{\phi \in L^2(\T): \av{\phi} = 0\big\}\,.\]
  Explicitly, we can write
  \[
	  W_t(y,\tau) = \frac{1}{\sqrt{2}} \sum_{n\in\Z^*} e^{2i\pi n\tau}R^{1/2}_n(y) (B^1_{t,n}-iB^2_{t,n})
  \]
  where $B^1_{t,n}$ and $B^2_{t,n}$ are two independent real valued cylindrical Brownian motions satisfying $B^1_{t,-n}=B^1_{t,n}$, $B^2_{t,-n}=-B^2_{t,n}$, and $R_n=2\int_{0}^{\infty}R(u)\cos(2\pi n\omega(I)u)\,du$.
  
  Also, we have
  \begin{equation}\label{eq:psi}\begin{split}
  d\psi_{t}&=\int_{\tau=0}^1  \, b(I_t,\tau)\, dW_t(I_t,\tau) \,d\tau \\
  &\qquad+\Big[ \int_{u=0}^\infty R(u) \int_{\tau=0}^1 \partial_I \big(b(I,\tau + \omega(I)u)\big)_{|I=I_t}  a(I_t,\tau) \\
  &\qquad\qquad+ b(I_t,\tau + \omega(I_t)u) \partial_I a(I_t,\tau) \,d\tau\,du\,\Big ] dt\,,
  \end{split}\end{equation} 
  with $\psi_{t=0} = \theta_0$.
\end{theorem}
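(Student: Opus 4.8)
The plan is to prove Theorem~\ref{mainresult} via the perturbed test function (PTF) method applied to the fast–slow system~\eqref{e:tau}--\eqref{systemIpsi}, treating the triple $(I^\eps_t, \psi^\eps_t, \tau^\eps_t)$ together with the infinite-dimensional ``driving'' state of the Ornstein--Uhlenbeck ensemble $V(t/\eps^2, \cdot)$ as a Markov process. First I would set up the generator $\mathcal L^\eps$ of this joint Markov process: it contains a fast piece of order $1/\eps^2$ coming from the deterministic angular rotation $\omega(I)\partial_\tau$ and from the OU dynamics of $V$, an order-$1/\eps$ piece $\mathcal L_1^\eps = \eps^{-1} v(t/\eps^2)\bigl(A\,\partial_I + B\,\partial_\psi\bigr)$, and harmless lower-order terms. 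The goal is to show that for a smooth test function $h(I,\psi)$ one can construct correctors $h_1^\eps$, $h_2^\eps$ with $h^\eps = h + \eps h_1^\eps + \eps^2 h_2^\eps$ so that $\mathcal L^\eps h^\eps = \mathcal L h + o(1)$, where $\mathcal L$ is the generator of the limiting SDE~\eqref{eq:I}--\eqref{eq:psi}. The corrector $h_1^\eps$ is defined by integrating the order-$1/\eps$ term against the future of the OU flow: schematically $h_1^\eps = -\int_0^\infty \E_t\bigl[ (\partial_I h) A(I,\psi,\tau + \omega(I)s) + (\partial_\psi h) B(\cdots)\bigr]\,v(t/\eps^2 + s/\eps^2)\,ds$ up to the appropriate rescaling, using the exponential mixing of $V$ from Lemma~\ref{l:mar} to guarantee convergence of this integral; then $\mathcal L^\eps_1 h_1^\eps$, after averaging over the fast angle $\tau$ (which is justified because $\omega(I) > \omega_0 > 0$ by~\eqref{e:hypOmega}), produces exactly the drift and the square of the diffusion appearing in~\eqref{eq:I}, with the Brownian field covariance~\eqref{def_cov} emerging from $\int_0^\infty R(u)\,(\cdots)\,du$ after a Fourier expansion in $\tau$.

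The argument then splits into three standard PTF ingredients, matching the three final proof sections promised in the plan of the paper. Tightness of $(I^\eps,\psi^\eps)$ in $\mathcal C([0,\infty),\R^2)$: here the moment bounds on the noise from Lemma~\ref{l:bound} (both~\eqref{boundV1} and~\eqref{boundV2}), together with the bounds on $a,\partial_I a,\partial_\theta a$ near $I=0$ from Proposition~\ref{l:bound_a} and the corresponding elementary bounds on $b$, are used to control increments and to ensure $I^\eps$ does not reach $0$ in a way that destroys tightness; the singular $1/\sqrt I$ in $\partial_I a$ must be absorbed using $|a| \lesssim \sqrt I$, so that products like $(\partial_I a)\,a$ stay bounded. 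Identification of the limit: any subsequential limit $(I,\psi)$ is shown to solve the martingale problem for $\mathcal L$ by passing to the limit in $h^\eps(I^\eps_t,\psi^\eps_t) - h^\eps(I^\eps_0,\psi^\eps_0) - \int_0^t \mathcal L^\eps h^\eps\,ds$, using the correctors to kill the $O(1/\eps)$ fluctuations and the mixing estimates to show the error terms vanish in $L^1$. Uniqueness: the limiting equation~\eqref{eq:I} is an autonomous SDE for $I$ alone — its coefficients depend only on $I_t$ after the $\tau$-average — so weak uniqueness follows from the regularity of its coefficients, which is exactly the content of the analysis of the limiting equation in the section on \texttt{sec:proofI} (continuity of $I \mapsto \Sigma(I)$, $I\mapsto\Lambda(I)$ and non-degeneracy away from $0$, plus a boundary analysis at $I=0$ showing $0$ is not attained or is instantaneously reflecting as appropriate); once $I$ is determined, $\psi$ is obtained by integrating~\eqref{eq:psi} against the same Brownian field $W$, which is driven by the already-constructed limit.

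A few structural points deserve care. The Brownian field $W_t(y,\cdot)$ must be constructed on the limiting probability space with exactly the covariance~\eqref{def_cov}; the explicit Fourier series representation $W_t(y,\tau) = \tfrac{1}{\sqrt2}\sum_{n\in\Z^*} e^{2i\pi n\tau} R_n^{1/2}(y)(B^1_{t,n} - iB^2_{t,n})$ is checked by computing the covariance of the series and matching it against $\int_0^\infty R(u)\int_0^1 \phi_1(\tau+\omega(y)u)\phi_2(\tau)\,d\tau\,du$ symmetrized — the cosine transform $R_n(y) = 2\int_0^\infty R(u)\cos(2\pi n\omega(y)u)\,du$ is precisely the spectral density that appears. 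Because $a$ and $B$ are mean-zero in $\tau$ by construction (this is the whole point of splitting $\theta^\eps = \psi^\eps + \tau^\eps$ and compensating $b$ by $\av b$), there is no resonance with the zero frequency and the time integrals $\int_0^\infty R(u)(\cdots)\,du$ converge even though $R$ is only algebraically decaying — this is the mechanism by which the memory is destroyed, and it must be made quantitative using the oscillation $\cos(2\pi n\omega(I)u)$ against $R(u)\sim L(u)/u^\gamma$.

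I expect the main obstacle to be the interaction between the singular behavior of the action-angle coordinates at $I = 0$ and the only-algebraic mixing rate of the noise. The PTF correctors involve $\partial_I a \sim 1/\sqrt I$ and differentiating $a(I,\tau+\omega(I)u)$ in $I$ brings down a factor of $u\,\omega'(I)$ with $|\omega'(I)| \lesssim 1/I$, so the corrector and $\mathcal L^\eps_1 h_1^\eps$ contain terms that are simultaneously singular at $I=0$ and carry an extra factor of $u$ inside the $\int_0^\infty R(u)\,du$ integral; keeping this integral convergent and the resulting coefficients locally bounded (uniformly in $\eps$, for the tightness and error estimates) requires delicately combining Proposition~\ref{l:bound_a}, the assumption $c_{1,r} I \le K(I) \le c_{2,r} I$, and a stopping-time argument that localizes away from $I=0$ and then removes the localization at the end using control of how fast $I^\eps$ can approach $0$. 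A secondary difficulty is that, unlike in the classical Markovian Freidlin--Wentzell setting, the fast variable $\tau^\eps$ and the slow variables are driven by the \emph{same} noise, so the corrector must be built from the joint flow and the mixing Lemma~\ref{l:mar} must be applied to $V(t/\eps^2 + \cdot, \cdot)$ conditionally on $\mathcal G_{t/\eps^2}$ rather than to an independent fast process; verifying that the conditional-expectation error terms are genuinely $o(1)$ in $L^1([0,T]\times\Omega)$ is where the estimates~\eqref{boundV1}--\eqref{boundV2} of Lemma~\ref{l:bound} get used most heavily.
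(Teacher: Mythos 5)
Your proposal follows essentially the same route as the paper: a perturbed-test-function/pseudo-generator argument for the (localized) fast--slow system, tightness from Lemma~\ref{l:bound} and Proposition~\ref{l:bound_a}, identification of the limit as a martingale problem whose generator averages $R(u)$ against the $\omega(I)u$-shifted angle (with the mean-zero-in-$\tau$ property killing the zero frequency), a scale-function analysis showing $I=0$ is inaccessible, and removal of the localization at the end. The only deviations from the paper are organizational refinements --- the paper truncates $\psi$ as well as $I$, needs a third corrector $h^\eps_3$ to cancel residual oscillations left after the second one, and removes the cutoff via the Stroock--Varadhan localization lemma together with the Portmanteau theorem --- none of which changes the substance of your plan.
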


Note that the action variable~$I$ does not depend on the slow angular motion $\psi$.
However, the distribution of the slow angular motion~$\psi$ is completely determined by the motion of the action variable~$I$ and the Brownian field.

We also note that the functions~$\partial_I a$ and~$\partial_I b$ appearing in~\eqref{eq:I} and~\eqref{eq:psi} may be singular at $I = 0$.
Nevertheless, since the minimum of $H$ is non-degenerate, the function~$a \partial_I a$ has no singularity at $I = 0$ and there is no singularity on the right-hand side of equation~\eqref{eq:I}.
The term $a \partial_I b$ appearing in~\eqref{eq:psi} may still have an $O(I^{-1/2})$ singularity at $I = 0$.
However, since the point $I = 0$ is inaccessible (Proposition~\ref{p:etaInf}), the right-hand side of~\eqref{eq:psi} is well defined.

We now use Theorem~\ref{mainresult} to prove Theorem~\ref{t:mainIntro}.
\begin{proof}[Proof of Theorem~\ref{t:mainIntro}]
  Using action angle coordinates, we convert the system~\eqref{e:hamiltonian} to~\eqref{systemIpsi}.
  Since the Hamiltonian~$H$ has exactly one non-degenerate critical point, the assumptions~\eqref{e:hypOmega}--\eqref{hyp_omega} are satisfied.
  Now by Theorem~\ref{mainresult} we see that the family of processes~$(I^\epsilon, \psi^\epsilon)_{\eps>0}$ converges in distribution to the pair $(I, \psi)$ which solves~\eqref{eq:I}, \eqref{eq:psi}.
  Since $H(X^\epsilon) = K(I^\epsilon)$, we now obtain Theorem~\ref{t:mainIntro} by considering $\mathcal{X} = K(I)$. considering $\mathcal{X} = K(I)$. The infinitesimal generator for $I$ can be written in the form 
  \[
  \frac{1}{2}(\partial_I \tilde \Sigma (I) \partial_I) 
  \] 
  where 
  \[
  \tilde \Sigma(I) = 2\int_{u=0}^\infty R(u) \int_{\tau=0}^1 a(I,\tau+\omega(I)u)a(I,\tau) \,du \,d\tau\,.
  \]
  Now, using \eqref{eq:relation_a}, \eqref{eq:partial_theta_varphi} and that $\varphi^{-1}(I, \tau + \omega(I)u)=\check X^{x,y}_u$ for $\varphi^{-1}(I,\tau)=(x,y)$, together with $\omega(I) = 1/\Lambda(\mathcal X)$, we have
  \[
  \tilde \Sigma(I) = \frac{2}{\omega(I)}\int_{u=0}^\infty du \, R(u) \oint_{\{H=\mathcal X\}} \partial_y H(\check X^{x,y}_u) \partial_y H(x,y) |\nabla H(x,y)|^{-1}dl(x,y)\,.
  \]
  Finally, using $\partial_I = \omega(I)\partial_{\mathcal X}$ provides the desired result.
\end{proof}

For the quadratic Hamiltonian we can explicitly derive the stochastic differential equations for $(I_t)_{t\geq 0}$ and $(\psi_t)_{t\geq 0}$. Using equation \eqref{eq:quadab} we obtain
\begin{equation}\label{eq:I_quad}
d I_t = 2 \sqrt{m I_t} dB^1_t  + 2 m \, dt
\qquad\text{and}
\qquad
d \psi_t = \frac{m}{2\pi \sqrt{I_t}}\, dB^2_t
\end{equation}
with
\[
m = \pi \int_0^\infty R(u)\cos(u)\, du\,,
\]
and where $B^1$ and $B^2$ are two independent standard Brownian motions. One can easily remark that $(I_{t}/m)_{t\geq 0}$ is a 2-dimensional squared Bessel process, so that $(I_{t}/ 2\pi)_{t\geq 0}$ has the same law as $(|W_t|^2/2)_{t\geq 0}$, for $W$ being a 2-dimensional Brownian motion with covariance matrix given by \eqref{e:qcov}. Then, we recover the result of Proposition \ref{p:qConv}.

   The proof of Theorem \ref{mainresult} will be given in Section \ref{sec:property_limit_eq}.
\subsection{Generalization for Hamiltonian systems with one degree of freedom and 2-dimensional noises} 

Theorem \ref{mainresult} can be readily extended to general Hamiltonian systems with one degree of freedom and 2-dimensional noises. In fact, our proof does not depend on the particular shapes of Hamiltonians for a one-degree of freedom oscillator problems. Also, considering 2-dimensional noises brings no additional difficulties except more tedious notations in the proof of Theorem \ref{mainresult}. Here we state this extension of Theorem \ref{mainresult} for the sake of completeness, but the detailed proof will be omitted.

We consider the following one-degree of freedom Hamiltonian system
\[
X^\eps_t = \frac{1}{\eps} \nabla^\perp H(X^\eps_t) + \frac{1}{\eps} \mathbf{v}\Big(\frac{t}{\eps^2}\Big)\,,\qquad X^\eps_0 = X_0\,,
\]
where the Hamiltonian $H$ is as in Theorem \ref{mainresult}, and $\mathbf{v}$ is a 2-dimensional noise with covariance function
\[\mathbf{R}_{jl}(t-s) \defeq\E[\mathbf{v}_{j}(t)\mathbf{v}_{l}(s)] =  \int_S \frac{\lambda_{jl}(p)}{|p|^{2\alpha}} e^{-\mu |p|^{2\beta} |t-s|}\,dp\qquad j,l\in\{1,2\}\,,\]
where $(\lambda_{jl})_{j,l\in\{1,2\}}$ is a $2\times 2$ symmetric matrix valued function with assumptions similar to the ones considered in Section \ref{s:noise}.

Introducing the same action-angle coordinates as in Section \ref{s:actionangle}, they now satisfy
\[
\dot I_t^\eps = \frac{1}{\eps} \mathbf{v}\Big(\frac{t}{\eps^2}\Big) \cdot \mathbf{A}(I_t^\eps,\psi_t^\eps,\tau_t^\eps) \qquad\text{and}\qquad
\dot \psi_t^\eps = \frac{1}{\eps} \mathbf{v}\Big(\frac{t}{\eps^2}\Big) \cdot \mathbf{B}(I_t^\eps,\psi_t^\eps,\tau_t^\eps),
\]
with 
\begin{equation*}
\dot \tau_t^\eps = \frac{\omega(I_t^\eps)}{\eps^2} + \frac{1}{\eps}\mathbf{v}\Big(\frac{t}{\eps^2}\Big) \cdot \av{ b(I_t^\eps, \cdot)}\,,
\end{equation*}
and where
\begin{equation*}
\mathbf{A}(I,\psi,\tau) \defeq \begin{pmatrix} a_1(I,\psi+\tau) \\ a_2(I,\psi+\tau) \end{pmatrix} \qquad\text{and}\qquad \mathbf{B}(I,\psi,\tau) \defeq \begin{pmatrix} b_1(I,\psi+\tau) - \av{b_1(I,\cdot)} \\ b_2(I,\psi+\tau) - \av{b_2(I,\cdot)} \end{pmatrix},
\end{equation*}
with
\[
a_j(I,\theta) = e_j \cdot \nabla \varphi_1(\varphi^{-1}(I,\theta))
\qquad\text{and}\qquad
b_j(I,\theta)=e_j \cdot \nabla \varphi_2(\varphi^{-1}(I,\theta)) \,,
\]
for $j \in \set{1, 2}$.

The extension of Theorem \ref{mainresult} is as follows.
\begin{theorem}\label{t:mainGen}
The family $(I^\eps, \psi^\eps)_{\e>0}$ converges in distribution in $\mathcal{C}([0,\infty),\mathbb{R}^2)$ to a process $(I_t,\psi_t)_{t\geq 0}$, where $(I_t)_{t\geq 0}$ is the unique weak solution of the SDE
  \begin{equation}\label{eq:IT}\begin{split}
  dI_{t}&=\int_{\tau=0}^1  \, a(I_t,\tau) \cdot dW_t(I_t,\tau) \,d\tau \\
  &\quad+\int_{u=0}^\infty  \int_{\tau=0}^1 \partial_I \big(a^T(I,\tau + \omega(I)u)\big)_{|I=I_t} \mathbf{R}(u) a(I_t,\tau) \\
  &\qquad\qquad+ a^T(I_t,\tau + \omega(I_t)u) \mathbf{R}(u) \partial_I a(I_t,\tau) \,d\tau\,du\, dt,
  \end{split}\end{equation} 
  with initial condition $I_{t=0}=I_0$. Here, $W$ is a 2-dimensional real valued Brownian field with covariance function
  \begin{align*}
  \E[ W^j_t(y,\phi_1)W^l_s(y,\phi_2)]
  &=  \paren{ t\wedge s }
  \int_{u=0}^\infty
  R_{jl}(u)
  \\
  &\quad\times
  \int_{\tau=0}^1
  \phi_1(\tau + \omega(y)u) \phi_2(\tau)
  + \phi_1(\tau)\phi_2(\tau + \omega(y)u)
  \, d\tau \,du\,,
  \end{align*}
  for any $j,l\in\{1,2\}$ and $\phi_1,\phi_2 \in L^2_0(\T)$ where
  \[L^2_0(\T) \defeq \big\{\phi \in L^2(\T): \av{\phi} = 0\big\}\,.\]
  Also, we have
  \begin{equation}\label{eq:psiT}\begin{split}
  d\psi_{t}&=\int_{\tau=0}^1  \, b(I_t,\tau) \cdot dW_t(I_t,\tau)\, d\tau \\
  &\quad+\int_{u=0}^\infty  \int_{\tau=0}^1 \partial_I \big(b^T(I,\tau + \omega(I)u)\big)_{|I=I_t} \mathbf{R}(u) a(I_t,\tau) \\
  &\qquad\qquad+ b^T(I_t,\tau + \omega(I_t)u) \mathbf{R}(u) \partial_I a(I_t,\tau) \,d\tau\,du\, dt\,,
  \end{split}\end{equation} 
  with $\psi_{t=0} = \theta_0$.
\end{theorem}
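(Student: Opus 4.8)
The plan is to repeat, essentially verbatim, the argument used to prove Theorem~\ref{mainresult}, carrying along the extra vector and matrix indices. First I would note that the reduction in Section~\ref{s:actionangle} never uses the precise shape of~$H$: for any smooth Hamiltonian with a single non-degenerate minimum, the relations~\eqref{relation_phi} still give $a_2 = \partial_y\varphi_1 = -\partial_\theta\varphi_1^{-1}$ and $a_1 = \partial_x\varphi_1 = \partial_\theta\varphi_2^{-1}$, so each component of~$\mathbf A$ is the $\theta$-derivative of a $1$-periodic function, hence mean-zero in~$\tau$, while the compensation $\av{b_j(I,\cdot)}$ makes each component of~$\mathbf B$ mean-zero as well. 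Thus the $2$-dimensional system is again a genuine fast--slow system of the form~\eqref{systemIpsi}, with the scalar products $\mathbf v\cdot\mathbf A$ and $\mathbf v\cdot\mathbf B$ replacing $vA$ and $vB$; the bounds of Proposition~\ref{l:bound_a} (already stated for general~$H$ satisfying \eqref{e:hypOmega}--\eqref{hyp_omega}) and the vector versions of Lemmas~\ref{l:mar} and~\ref{l:bound} (whose proofs only use~$V$ componentwise, and so go through with~$r(p)$ replaced by $\lambda_{jl}(p)/|p|^{2\alpha}$) remain available.

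Next I would run the perturbed test function scheme exactly as for Theorem~\ref{mainresult}: for a test function $f = f(I,\psi)$ build $f^\eps = f + \eps f_1 + \eps^2 f_2$, choosing $f_1$ to cancel the $O(1/\eps)$ term $\tfrac1\eps\,\mathbf v(t/\eps^2)\cdot(\mathbf A\partial_I + \mathbf B\partial_\psi)f$ via the conditional expectation formula~\eqref{markovesp} read componentwise (each $\E[\mathbf v_j(t+h)\mid\mathcal G_t]$ decays exponentially in~$h$), and $f_2$ to remove the remaining non-averaged $O(1)$ term. The only change from the scalar case is that every product $v(\cdot)v(\cdot)$ now becomes a sum $\sum_{j,l}\mathbf v_j(\cdot)\mathbf v_l(\cdot)$, so each occurrence of $R(u)$ in the drift and in the generator acquires indices and is replaced by $\mathbf R_{jl}(u)$ contracted between the $j$-th component of the field evaluated at the shifted angle $\tau + \omega(I)u$ and the $l$-th component at~$\tau$. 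Collecting the terms produces exactly the drift of~\eqref{eq:IT}--\eqref{eq:psiT} and identifies the quadratic variation with the stated covariance; the limiting Brownian field~$W$ is $2$-dimensional, obtained by diagonalizing, for each fixed~$I$ and Fourier mode~$n$, the $2\times2$ matrix $R_n(I) = 2\int_0^\infty\mathbf R(u)\cos(2\pi n\omega(I)u)\,du$.

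Finally, tightness of $(I^\eps,\psi^\eps)_{\eps>0}$ in $\mathcal C([0,\infty),\R^2)$ follows from the perturbed test functions together with the moment bound~\eqref{boundV2}, and any subsequential limit solves the martingale problem for the generator identified above. To finish, one invokes well-posedness of the limiting system: the arguments of Section~\ref{sec:proofI} showing regularity of the diffusion coefficient, inaccessibility of $I=0$ (Proposition~\ref{p:etaInf}), and uniqueness of the weak solution of~\eqref{eq:IT}, use only the non-degeneracy of the minimum and the strict positivity of the spectral density at the origin; these persist once $r(p)$ is replaced by a matrix-valued density with $(\lambda_{jl}(0))$ positive definite, which is the content of ``assumptions similar to those of Section~\ref{s:noise}.'' I expect no genuinely new analytic obstacle here: the entire difficulty is the bookkeeping of the indices $j,l\in\{1,2\}$ and the attendant proliferation of terms, which is exactly why the detailed proof is omitted.
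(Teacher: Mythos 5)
Your proposal is correct and follows exactly the route the paper intends: the paper omits the detailed proof of Theorem~\ref{t:mainGen}, asserting that it is the proof of Theorem~\ref{mainresult} carried out with the extra indices $j,l\in\{1,2\}$, which is precisely what you outline (mean-zero structure of $\mathbf A$, $\mathbf B$ via~\eqref{relation_phi}, the perturbed test function scheme with $R(u)$ replaced by the contracted $\mathbf R_{jl}(u)$, and the unchanged well-posedness arguments of Section~\ref{sec:proofI}). No gap.
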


\section{Properties of the Limiting Equation for the Action Variable}\label{sec:proofI}

In this section, we study properties of the equation \eqref{eq:I}. The main issue concerns the behavior of $a$ at $I=0$, but we will see how to define a unique global solution for this equation that does not reach $0$ with probability one. In the case of a quadratic Hamiltonian, we have seen in \eqref{eq:I_quad} that $(I_t)_{t\geq 0}$ is related to a 2-dimensional squared Bessel process (see \eqref{eq:I_quad}) which is known to reach $0$ with probability zero \cite[Proposition 3.22 p. 161]{KaratzasShreve91}. 

\subsection{Existence of a solution} First, we remark that the function $a$ is not Lipschitz in $I$ as it typically has a square-root singularity near~$0$.
So it is not immediately apparent that equation~\eqref{eq:I} has solutions.
To construct solutions to equation~$\eqref{eq:I}$, define
\begin{gather}
  \label{def_fourieraRa}
  a_n(I) \defeq \int_{\tau=0}^1 a(I,\tau)e^{-2i\pi n \tau}\,d\tau
  \\
  \label{def_fourieraRR}
  R_n(I) \defeq2 \int_{u=0}^{\infty} R(u)\cos(2\pi n \omega(I)u)\,du\,.
\end{gather}
and let
\begin{align}\label{e:bfa}
  \nonumber
  \mathbf{a}(I) &\defeq \sum_{n\in\mathbb{Z^\ast}} |a_n(I)|^2 R_n(I)
    \\
    &= 2 \int_{u=0}^\infty R(u) \int_{\tau = 0 }^1  a(I,\tau) a (I, \tau + \omega(I) u) \, d\tau \,du\,,
  \\
  \nonumber
  \mathbf{b}(I) &\defeq \int_{u=0}^\infty R(u) \int_{\tau = 0 }^1\Big[ \partial_I (a (I, \tau + \omega(I) u) )  a(I,\tau)
  \label{def_bfb}
  \\
    &\qquad\qquad\qquad\qquad\qquad+ \partial_I a(I,\tau) a (I, \tau + \omega(I) u)\Big] \,d\tau \,du\,.
\end{align}
Note that equality in~\eqref{e:bfa} is justified by using the Fourier series expansion for $a(I,\tau)$, and using the fact that $a(I,\tau)$ is mean-zero in $\tau$.
That is,
\[
a(I,\tau) = \sum\limits_{n\in\mathbb Z^*}e^{2i\pi n\tau}a_n(I)\,.
\]

\begin{definition}\label{def_eq_I}
  A weak solution in the interval $(0,\infty)$ to equation~\eqref{eq:I} is a filtered probability space $(\Omega, \mathcal{F}, \mathcal F_t, \P)$ satisfying the usual conditions and a continuous pair of processes $(I_t, W_t)$ such that the followings hold.
  \begin{enumerate}
    \item
      The process $I$ takes values in $[0, \infty]$ with $I_0 \in (0, \infty)$,
    \item
      The process~$W$ is a standard $\mathcal F_t$-adapted Brownian motion on the Hilbert space $L^2_0(\T)$ with covariance function given by~\eqref{def_cov}.
    \item 
      For any $t, M > 0$ we have
      \begin{gather}
  \int_{s=0}^{t\wedge \zeta_M} ( \mathbf{a}(I_s) + |\mathbf{b}(I_s)|) \, ds < \infty \,,
  \\
  \label{EDS_I_def}
  I_{t\wedge \zeta_M} = I_0  + \int_{s=0}^{t\wedge \zeta_M}\int_{\tau=0} a(I_s,\tau)\, dW_s(I_s,\tau)  + \int_{s=0}^{t\wedge \zeta_M} \mathbf{b}(I_s)\,ds\quad \forall t\geq 0\,,
      \end{gather}
      almost surely.
      Here $\zeta_M \defeq \zeta_{1, M} \varmin \zeta_{2, M}$ where $\zeta_{1, M}$ and $\zeta_{2, M}$ are defined by
      \begin{align*}
  \zeta_{1,M} &\defeq\inf\{t\geq 0 \st |I_t| \geq M \}\\
  \zeta_{2,M} &\defeq\inf\{t\geq 0 \st |I_t| \leq 1/M \}\,.
      \end{align*}
  \end{enumerate}
\end{definition}
For notational convenience we denote
\begin{equation}\label{def_zetainfty}
\zeta_\infty \defeq \lim_{M\to \infty} \zeta_M = \inf\{t\geq 0 \st I_t\not\in (0,\infty)\}
\end{equation}
to be the first exit time of $I$ from $(0, \infty)$.

We will now work with the canonical probability space~$\mathcal C([0, \infty), \R)$ and canonical filtration 
\[
  \mathcal{M}_t = \sigma(h_s,\,0\leq s\leq t)\,.
\]
To construct a solution to \eqref{eq:I}, we note first that when restricted to the interval~$[1/M, M]$, the function~$a$ is Lipschitz.
Thus we know~\eqref{eq:I} has a strong solution when truncated to this interval (see Proposition~\ref{prop_main}, below).
If we denote this truncated solution by $(I^M, \zeta_M)$ we obtain an increasing sequence of stopping times~$\zeta_M$, an increasing family of~$\sigma$-algebras $\mathcal M_{\zeta_M}$, and a sequence of probability measures~$\P^M_I$ such that
\[ \P^{M+1}_I = \P^{M}_I \qquad\text{on} \quad \mathcal{M}_{\zeta_M}\qquad\forall M>0\,. \]
As a result, we can define $\P^0$ on $\bigcup_{M} \mathcal{M}_{\zeta_M}$ by
\[\P^0_I(O) = \P^M_I(O)\qquad \text{for}\quad O \in \mathcal{M}_{\zeta_M}\,,\]
and extend $\P^0_I$ to a probability measure on the $\sigma$-algebra $\mathcal{M}_{\zeta_\infty}$
such that for any $M \in \N$ we have
\begin{equation}\label{restrict_P0}
\P^{0}_I = \P^{M}_I \qquad\text{on} \quad \mathcal{M}_{\zeta_M}\,.
\end{equation}
Note that
\begin{equation}\label{tribu_eta_inf}
  \mathcal{M}_{\zeta_\infty} = \sigma\paren[\Big]{\bigcup_M \mathcal{M}_{\zeta_M} } \,,
\end{equation}
which can be easily verified from the fact that
\[\mathcal{M}_{\zeta_\infty} = \sigma(h_{t\wedge \zeta_\infty}, t\geq 0)\,,\] 
(see for instance~\cite[Lemma 1.3.3 p. 33]{StroockVaradhan06}).
As a result $\P^0_I$ provides a solution to \eqref{eq:I}, that we can denote by $I=(I_t)_{t\geq 0}$, in the sense of Definition \ref{def_eq_I}.
Note that the process $(I_t)_{t\geq 0}$ is nonnegative with probability one thanks to the Portmanteau theorem \cite[Theorem 2.1 p. 16]{Billingsley99}.

In the next two sections, we will first show (using the bounds in Proposition \ref{l:bound_a}) that  $\P^0_I(\zeta_\infty = \infty) = 1$.
That is, the process~$I$ does not reach~$0$ or blow up in finite time.
This will show that $\P^0_I$ is the unique extension on $\mathcal{M}\defeq \sigma(\bigcup_{t\geq0}\mathcal{M}_t)$ satisfying \eqref{restrict_P0}.
Additionally, we will show that $I=(I_t)_{t\geq 0}$ is the unique global solution to \eqref{eq:I}. 

\subsection{Non-explosion and inaccessibility to~\texorpdfstring{$0$}{0}}

Our aim in this section is to show that the process~$I$ can not explode in finite time nor reach $0$.
\begin{proposition}\label{p:etaInf}
  Consider a weak solution to~\eqref{eq:I} (as in definition~\ref{def_eq_I}), and let~$\zeta_\infty$ be as in~\eqref{def_zetainfty}.
  Then
  \[
    \P^0_I(\zeta_\infty = \infty) = 1\,.
  \]
\end{proposition}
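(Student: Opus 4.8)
The plan is to use the Feller-type test for explosions by constructing a suitable Lyapunov/scale function and showing that the natural scale sends both boundary points $0$ and $\infty$ to infinity. Since on each compact interval $[1/M, M]$ the process $I$ is a genuine It\^o diffusion with generator $\frac12 \partial_I(\tilde\Sigma(I)\partial_I\,\cdot\,)$ where $\tilde\Sigma(I) = \mathbf{a}(I)$ and drift $\mathbf{b}(I)$, I would first record, using Proposition~\ref{l:bound_a} (the bounds $|a|, |\partial_\theta a| \lesssim \sqrt I$ and $|\partial_I a| \lesssim 1/\sqrt I$ near $0$), the asymptotics of the coefficients at the two boundaries. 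Near $I = 0$: since $a(I,\tau)a(I,\tau+\omega(I)u)$ behaves like $I$ and each integral over $u$ converges (using the decay of $R$ together with $|\partial_\theta a| \lesssim \sqrt I$ to control the $u$-integral of $a(I,\tau+\omega(I)u)$), one gets $\mathbf{a}(I) = \tilde\Sigma(I) \asymp I$ and $\mathbf{b}(I) \to \mathbf{b}(0) > 0$ a strictly positive constant as $I \to 0^+$ — this is exactly the squared-Bessel-dimension-two picture from~\eqref{eq:I_quad}. Near $I = \infty$, I would extract from~\eqref{hyp_omega}, \eqref{e:hypOmega} and the smoothness of $H$ that $\tilde\Sigma(I)$ and $\mathbf{b}(I)$ grow at most polynomially (in fact one expects linear-type growth consistent with energy), so a logarithmic scale function suffices.

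Concretely, I would set $s(I) = \int_{I_0}^I \exp\paren[\big]{-\int_{I_0}^z \frac{2\mathbf{b}(r)}{\tilde\Sigma(r)}\,dr}\,dz$, the scale function for the diffusion on $(0,\infty)$, so that $s(I_{t\wedge\zeta_M})$ is a local martingale. Using $\tilde\Sigma(r)\asymp r$ and $\mathbf{b}(r)\to \mathbf{b}(0)>0$ near $0$, the inner integral $\int^{z} 2\mathbf{b}(r)/\tilde\Sigma(r)\,dr \sim c\ln(1/z)$ with $c = 2\mathbf{b}(0)/\tilde\Sigma'(0) \geq 1$ (the dimension-$\geq 2$ condition), whence $\exp(-\,\cdot\,) \sim z^{c}$ and $s(I) \to -\infty$ as $I \to 0^+$; this shows $0$ is inaccessible, i.e.\ $\P^0_I(\zeta_{2,M}\to\infty) \to 1$. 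For the $\infty$ end, the polynomial growth of the coefficients makes $\int^\infty |2\mathbf{b}/\tilde\Sigma|\,dr$ grow at most logarithmically, so $s(\infty) = +\infty$, and a standard argument (the bounded local martingale $s(I_{t\wedge\zeta_M})$ together with Feller's test, or equivalently checking $\int^\infty (s(\infty)-s(z))\,m(dz) = \infty$ for the speed measure $m$) gives non-explosion. Combining: $s(I_{t\wedge\zeta_\infty})$ cannot escape a finite interval in finite time with positive probability, forcing $\P^0_I(\zeta_\infty=\infty)=1$.

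The main obstacle I anticipate is verifying the precise boundary asymptotics of $\mathbf{a}(I)$ and $\mathbf{b}(I)$ — in particular, showing rigorously that $\mathbf{b}(0) = \lim_{I\to0}\mathbf{b}(I)$ exists, is finite and strictly positive, and that the ratio $2\mathbf{b}(0)/\tilde\Sigma'(0)$ is at least $1$ (the ``$2$-dimensional'' threshold that makes $0$ polar). This requires handling the $u$-integrals uniformly near $I = 0$: the integrand $\partial_I(a(I,\tau+\omega(I)u))a(I,\tau)$ contains a factor $\partial_I a$ that blows up like $I^{-1/2}$ and also a term $u\,\omega'(I)\partial_\theta a$ where $\omega'(I)$ may blow up like $1/I$ by~\eqref{hyp_omega}, so one must argue that these are compensated (for instance by the other factor of $\sqrt I$ and by integrating by parts in $u$ using $\partial_u[a(I,\tau+\omega(I)u)] = \omega(I)\partial_\theta a(\dots)$ to trade the growing $u$ for a derivative of $R$). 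I would also need the non-degeneracy of the minimum of $H$ at the origin to pin down the exact constant; the cleanest route is probably to compare directly with the quadratic model near $I=0$, where~\eqref{eq:I_quad} gives $d I_t = 2\sqrt{mI_t}\,dB_t + 2m\,dt$, and to show the general $\mathbf{a},\mathbf{b}$ agree with $4mI$ and $2m$ respectively to leading order. Once these asymptotics are in hand, the scale-function computation and Feller's test are routine.
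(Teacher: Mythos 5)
Your outline (scale function, Feller-type test at both ends) is the same as the paper's, but the step you flag as ``the main obstacle'' --- establishing that $\mathbf{b}(0)>0$, that $\tilde\Sigma(I)\asymp I$, and that the ratio $2\mathbf{b}(0)/\tilde\Sigma'(0)\geq 1$ --- is in fact the entire content of the proposition, and your plan for closing it would not work as stated. The key observation you are missing is structural, not asymptotic: comparing \eqref{e:bfa} with \eqref{def_bfb} shows that the drift is \emph{exactly} half the derivative of the diffusion coefficient, $\mathbf{b}(I)=\tfrac12\partial_I\mathbf{a}(I)$, because $\mathbf{b}$ is literally obtained by differentiating the integrand of $\mathbf{a}$ under the integral sign. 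Hence the scale density is $\exp\bigl(-\int^\xi \mathbf{a}'/\mathbf{a}\bigr)=\mathbf{a}(1)/\mathbf{a}(\xi)$ exactly, and $p(0_+)=\mathbf{a}(1)\lim_{x\to0_+}\int_1^x d\xi/\mathbf{a}(\xi)=-\infty$ follows from the \emph{one-sided} bound $0\leq\mathbf{a}(\xi)\leq C\xi$ alone (which comes from Proposition~\ref{l:bound_a} together with $0\leq R_n\lesssim n^{-2}$, as in \eqref{ineq_R}). No lower bound on $\mathbf{a}$, no existence of $\lim_{I\to0}\mathbf{b}(I)$, and no ``dimension'' computation are needed. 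This matters because, by the identity, your ratio $2\mathbf{b}(0)/\tilde\Sigma'(0)$ equals exactly $1$ --- the critical, borderline case --- so your fallback of matching the quadratic model ``to leading order'' is genuinely dangerous: a lower-order perturbation of the drift of a two-dimensional squared Bessel process can make $0$ accessible, so a leading-order comparison cannot decide inaccessibility without quantitative control of the error terms.

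A second, smaller discrepancy: at the boundary $I=\infty$ you invoke polynomial growth of the coefficients and the speed-measure criterion, but the hypotheses \eqref{e:hypOmega}--\eqref{hyp_omega} only constrain the coefficients near $I=0$, so there is nothing to feed into Feller's test at infinity. The paper instead shows that if $p(\infty)<\infty$ then exit can only occur at $\infty$, and then rules out explosion by a direct second-moment estimate, $\E\sup_{t\leq T}I_{t\wedge\zeta_M}^2\leq C(T)$ uniformly in $M$ (Lemma~\ref{bound_I}), proved via the Burkholder--Davis--Gundy and Gronwall inequalities. You should replace the speed-measure step with an a priori moment bound of this type.
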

This result together with \eqref{restrict_P0} implies that for any $t\geq 0$
\[
\lim_{M\to \infty}\P^M_I(\zeta_M \leq t) = \lim_{M\to \infty}\P^0_I(\zeta_M \leq t) = 0\,.
\]
Therefore, \cite[Theorem 1.3.5 p. 34]{StroockVaradhan06} guarantees that $\P^0_I$ is the unique extension satisfying \eqref{restrict_P0}. 

Before proving Proposition~\ref{p:etaInf} we need to introduce the \emph{scale function}
\[p(x) \defeq \int_1^x \exp\left(-2 \int_1^\xi \frac{\mathbf{b}(\nu)}{\mathbf{a}(\nu)} \,d\nu \right) d\xi,\quad x\in(0,\infty)\]
where $\mathbf{a}$ and $\mathbf{b}$ are defined by \eqref{e:bfa} and \eqref{def_bfb} respectively.
Observe
\begin{equation}\label{diff_eq_p}
p''(x) = -2 \frac{\mathbf{b}(x)}{\mathbf{a}(x)} p'(x).
\end{equation}
Standard results express the exit probability of $I$ in terms of the scale function, and we prove them in our context for completeness.
\begin{lemma}\label{l:exitProbA}
  For any $0 < x_- < x_+ < \infty$ we have
  \[\P(I_{\zeta_{x_-,x_+}} =x_-) = \frac{p(x_+)-p(I_0)}{p(x_+)-p(x_-)}\quad\text{and}\quad \P(I_{\zeta_{x_-,x_+}} =x_+) = \frac{p(I_0) - p(x_-)}{p(x_+)-p(x_-)}\,,\]
  where $\zeta_{x_-,x_+} = \zeta_{1,x_+}\wedge \zeta_{2,x_-}$.
\end{lemma}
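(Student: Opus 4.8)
The plan is to reduce to the classical one-dimensional diffusion exit-probability computation by applying It\^o's formula to $p(I_{t\wedge\zeta})$, where $\zeta = \zeta_{x_-,x_+}$ and $p$ is the scale function. First I would observe that on the interval $[x_-,x_+]$, the coefficient $\mathbf a$ is bounded below away from zero (by non-degeneracy of the minimum of $H$, $\mathbf a(I) = \tilde\Sigma(I)$ is continuous and strictly positive for $I>0$; see the discussion after Theorem~\ref{mainresult}), and $\mathbf b$ is bounded on $[x_-,x_+]$, so $p$ is a well-defined $C^2$ function with $p'>0$ there, and the ratio $\mathbf b/\mathbf a$ is bounded on $[x_-,x_+]$. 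Since the stopped process $I_{t\wedge\zeta}$ stays in the compact interval $[x_-,x_+]$, all the quantities that appear are bounded.

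Next I would apply It\^o's formula to $p(I_{t\wedge\zeta})$. Using the SDE~\eqref{EDS_I_def} — more precisely, the fact that before $\zeta$ the process $I$ has quadratic variation $d\langle I\rangle_t = \mathbf a(I_t)\,dt$ (this is exactly the content of the covariance~\eqref{def_cov} evaluated on $\phi_1=\phi_2=a(I_t,\cdot)$, which reproduces $\mathbf a(I)$ as in~\eqref{e:bfa}) and drift $\mathbf b(I_t)\,dt$ — the bounded-variation part of $p(I_{t\wedge\zeta})$ is
\[
\int_0^{t\wedge\zeta}\Bigl(\mathbf b(I_s)p'(I_s) + \tfrac12\mathbf a(I_s)p''(I_s)\Bigr)\,ds = 0
\]
by the defining ODE~\eqref{diff_eq_p} for $p$. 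Hence $p(I_{t\wedge\zeta})$ is a local martingale; since it is bounded (it takes values in $[p(x_-),p(x_+)]$), it is a genuine uniformly integrable martingale, so $\E\,p(I_{t\wedge\zeta}) = p(I_0)$ for all $t$.

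Then I would argue $\zeta<\infty$ almost surely: on the event $\{\zeta=\infty\}$ the martingale $p(I_{t\wedge\zeta})=p(I_t)$ would be a bounded martingale confined to $(p(x_-),p(x_+))$ whose quadratic variation $\int_0^t \mathbf a(I_s)(p'(I_s))^2\,ds$ grows at least linearly (since $\mathbf a (p')^2$ is bounded below on $[x_-,x_+]$), contradicting boundedness; hence $\P(\zeta<\infty)=1$ and $I_\zeta\in\{x_-,x_+\}$. Passing $t\to\infty$ in $\E\,p(I_{t\wedge\zeta})=p(I_0)$ via dominated convergence gives
\[
p(I_0) = p(x_-)\,\P(I_\zeta = x_-) + p(x_+)\,\P(I_\zeta = x_+),
\]
and combining this with $\P(I_\zeta=x_-)+\P(I_\zeta=x_+)=1$ and solving the resulting $2\times 2$ linear system yields the two stated formulas. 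The only mild subtlety — the ``main obstacle'' such as it is — is justifying the It\^o/quadratic-variation computation for the Hilbert-space-valued Brownian field $W$ in~\eqref{def_cov}: one must check that the stochastic integral $\int_0^{t\wedge\zeta} a(I_s,\tau)\,dW_s(I_s,\tau)\,d\tau$ appearing in~\eqref{EDS_I_def} is a well-defined continuous local martingale with the claimed quadratic variation $\mathbf a(I_t)\,dt$, which follows from the covariance structure~\eqref{def_cov} together with the boundedness of $a(I,\cdot)$ for $I$ in the compact range $[x_-,x_+]$ (Proposition~\ref{l:bound_a}). Everything else is the standard scale-function argument.
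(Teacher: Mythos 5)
Your proposal is correct and follows essentially the same route as the paper: apply It\^o's formula to $p(I_{t\wedge\zeta_{x_-,x_+}})$, observe that the drift and second-order terms cancel by the defining ODE $p''=-2(\mathbf b/\mathbf a)p'$, take expectations, let $t\to\infty$, and solve the resulting $2\times 2$ system. If anything you are slightly more careful than the paper, which implicitly assumes $\zeta_{x_-,x_+}<\infty$ a.s.\ when passing to the limit, whereas you justify it via the linear growth of the quadratic variation of the bounded martingale.
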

\begin{proof}[Proof of Lemma \ref{l:exitProbA}.] The proof consists of applying the It\^o formula for $p(I_t)$ with $t\leq \zeta_{x_-,x_+} \leq \zeta_\infty$. To do this we rewrite the stochastic integral of $\eqref{eq:I}$ in the Fourier domain, that is
  \[
  \int_{s=0}^t \int_{\tau=0}^1 a(I_s,\tau)\,dW_s(I_s,\tau)\,d\tau = \int_{s=0}^t \sum_{n\in\mathbb{Z}^\ast} a_n(I_s) R^{1/2}_n(I_s) \frac{1}{\sqrt{2}}(dB^1_{s,n} - i dB^2_{s,n})\,, 
  \]    
  providing an explicit semi-martingale representation for $(I_t)_{t\geq 0}$. Therefore, applying the It\^o formula to $p(I_t)$ we have 
  \[\begin{split}
      \MoveEqLeft
  p(I_{t\wedge \zeta_{x_-,x_+}})  = p(I_{0}) + \int_0^{t\wedge \zeta_{x_-,x_+}} p'(I_s) \,dI_s + \frac{1}{2}\int_0^{t\wedge \zeta_{x_-,x_+}} p''(I_s) d\langle I\rangle_s \\
    &= p(I_{0}) + \int_0^{t\wedge \zeta_{x_-,x_+}} p'(I_s) \sum_{n\in\mathbb{Z}^\ast} a_n(I_s) R^{1/2}_n(I_s) \frac{1}{\sqrt{2}}(dB^1_{s,n} - i dB^2_{s,n})\\
    &\qquad+ \int_0^{t\wedge \zeta_{x_-,x_+}} p'(I_s) \mathbf{b}(I_s)\, ds\\
    &\qquad+ \frac{1}{2}\int_0^{t\wedge \zeta_{x_-,x_+}}  -2 \frac{\mathbf{b}(I_s)}{\mathbf{a}(I_s)}p'(I_s)  \sum_{n\in\mathbb{Z}^\ast} |a_n(I_s)|^2 R_n(I_s)\,ds\\
    & = p(I_{0}) + \int_0^{t\wedge \zeta_{x_-,x_+}} p'(I_s) \sum_{n\in\mathbb{Z}^\ast} a_n(I_s) R^{1/2}_n(I_s) \frac{1}{\sqrt{2}}(dB^1_{s,n} - i dB^2_{s,n})
  \end{split}\] 
  thanks to \eqref{diff_eq_p} and \eqref{e:bfa}. Note that the stochastic integral on the right-hand side of the last line is a martingale starting from $0$, so by taking the expectation, and passing to the limit $t\to \infty$ we obtain
  \[
  p(I_{0}) = \E[p(I_{\zeta_{x_-,x_+}})] = p(x_-)\P(I_{\zeta_{x_-,x_+}} = x_-)  + p(x_+)\P(I_{\zeta_{x_-,x_+}} = x_+)\,.  
  \]
  Solving for $\P(I_{\zeta_{x_-,x_+}} = x_-)$ and $\P(I_{\zeta_{x_-,x_+}} = x_+)$ we obtain the desired result.
\end{proof}

With this lemma, we can now turn to the proof of Proposition \ref{p:etaInf}.

\begin{proof}[Proof of Proposition \ref{p:etaInf}.]
  Let us start with the following remark. The term $\mathbf{b}$ can be written as 
  \[
  \mathbf{b}(I) = \frac{1}{2}\partial_I \mathbf{a}(I)\,,
  \]
  so that
  \[\begin{split}
  p(0_+) &= \lim_{x\to 0_+} \int_1^x \exp\left(-2 \int_1^\xi \frac{\mathbf{b}(\nu)}{\mathbf{a}(\nu)} d\nu \right) d\xi \\
  &= \lim_{x\to 0_+}\mathbf{a}(1) \int_1^x  \exp\left(-\ln(\mathbf{a}(\xi) ) \right) \, d\xi \\
  & = \lim_{x\to 0_+}\mathbf{a}(1) \int_1^x \frac{d\xi}{\mathbf{a}(\xi) } \, .
  \end{split}\]
  Now, according to \eqref{e:bfa} together with Proposition \ref{l:bound_a}, we have
  \[0\leq \mathbf{a}(\xi) \leq C \, \xi\]
  since for any $n\in \mathbb{Z}^\ast$
  \begin{align}\label{ineq_R}
    \nonumber
  0\leq R_n(\xi)&=2\int_0^{\infty}R(u)\cos(2\pi n\omega(\xi)u)\, du\\
  \nonumber
  &= 2\int_0^{\infty} \Big(\int_S r(p)e^{-\mu|p|^{2\beta}u} \, dp\Big)\cos(2\pi n\omega(\xi)u)\, du \\
  &\nonumber =2\int_S \,dp\,r(p) \Big( \int_0^{\infty} \cos(2\pi n\omega(\xi)u)e^{-\mu |p|^{2\beta}u}\,du \Big) \\
  &=\int_S dp\,r(p)\frac{2\mu|p|^{2\beta}}{\mu^2|p|^{4\beta}+4\pi^2 n^2\omega^2(\xi)}\leq \frac{C_{\omega_0}}{n^2} \int_S |p|^{2\beta} r(p)  \,dp\,.
  \end{align}
  As a result, we have
  \[p(0_+) \leq\lim_{x\to 0_  +}\mathbf{a}(1) \int_1^x \frac{d\xi}{C\xi }  = -\infty \,, \]
  which implies $p(0_+)=-\infty$. To conclude the proof we consider two cases, that is $p(\infty)=\infty$ and $p(\infty)<\infty$. Following the same lines as \cite[Proposition 5.22 p. 345]{KaratzasShreve91}, if $p(\infty)=\infty$ we directly have the desired result, and if $p(\infty)<\infty$ we have
  \begin{align*}
  \lim\limits_{x_-\to 0_+}\lim\limits_{x_+\to \infty} \P(I_{\zeta_{x_-,x_+}}=x_+)=\lim\limits_{x_-\to 0_+}\lim\limits_{x_+\to \infty}\frac{p(I_0) - p(x_-)}{p(x_+)-p(x_-)} = 1\,.
  \end{align*}
  
  This implies $\P(\zeta_\infty = \zeta_{1, \infty})=1$ meaning that if $I_t$ exits the interval $(0,\infty)$, it almost surely does so at $\infty$.
  Here
  \[\zeta_{1, \infty} = \lim_{M\to \infty} \zeta_{1, M}\,.\] 
  To prove that $\P( \zeta_{1, \infty} = \infty)=1$, and then conclude the proof of the Proposition \ref{p:etaInf}, we need the following result. 
  \begin{lemma}\label{bound_I}
    For any $T>0$ we have
    \[\lim_{M\to \infty }\P \paren[\Big]{ \sup_{t\in[0,T]} I_{t\wedge \zeta_{M}} \geq M } = 0\,.\]
  \end{lemma}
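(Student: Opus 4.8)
The plan is to prove Lemma~\ref{bound_I} via a moment estimate on the stopped process $I_{t\wedge\zeta_M}$, combined with a Gr\"onwall-type argument and a maximal inequality, followed by a Chebyshev bound. First I would use the semimartingale representation of $I$ given in the definition of a weak solution, namely
\[
  I_{t\wedge\zeta_M} = I_0 + \int_{s=0}^{t\wedge\zeta_M}\int_{\tau=0}^1 a(I_s,\tau)\,dW_s(I_s,\tau)\,d\tau + \int_{s=0}^{t\wedge\zeta_M}\mathbf b(I_s)\,ds\,,
\]
and square it, taking expectations. The drift term is controlled using $\mathbf b(I) = \tfrac12\partial_I\mathbf a(I)$ together with the bound $0\le\mathbf a(\xi)\le C\xi$ established in the proof of Proposition~\ref{p:etaInf}; since $\mathbf a$ is at most linear, $\mathbf b$ is bounded on the relevant range, so $|\int_0^{t\wedge\zeta_M}\mathbf b(I_s)\,ds|\le C(T + \int_0^{t}I_{s\wedge\zeta_M}\,ds)$ up to adjusting constants, or more simply is bounded by $CT$. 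For the martingale term, I would compute its quadratic variation via the Fourier representation $\int_0^{t\wedge\zeta_M} p'... $ — more precisely, the bracket of $\int_0^\cdot\int_\tau a(I_s,\tau)\,dW_s(I_s,\tau)\,d\tau$ equals $\int_0^{\cdot}\mathbf a(I_s)\,ds$, which by the linear bound is $\le C\int_0^{\cdot}I_{s\wedge\zeta_M}\,ds$.

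The key steps in order: (1) write $f(t)\defeq \E\big[\sup_{r\le t} I_{r\wedge\zeta_M}^2\big]$; (2) apply Doob's $L^2$ maximal inequality to the martingale part and the trivial bound to the drift part to get $f(t)\le C\big(I_0^2 + T^2 + \int_0^t f(s)\,ds\big)$ — here the bracket bound $\int_0^{t}\mathbf a(I_s)\,ds \le C\int_0^t \sqrt{f(s)}\,ds$ or $\le C\int_0^t f(s)^{1/2}\,ds$ actually gives a sublinear-in-$f$ term, so Gr\"onwall applies even more easily; (3) conclude by Gr\"onwall that $f(T)\le C_T$ with $C_T$ \emph{independent of $M$}; (4) apply Chebyshev: $\P\big(\sup_{t\le T} I_{t\wedge\zeta_M}\ge M\big)\le M^{-2}\E\big[\sup_{t\le T}I_{t\wedge\zeta_M}^2\big]\le C_T/M^2 \to 0$ as $M\to\infty$. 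Once Lemma~\ref{bound_I} is in hand, one immediately gets $\P(\zeta_{1,M}\le T)\le\P(\sup_{t\le T}I_{t\wedge\zeta_M}\ge M)\to 0$, hence $\P(\zeta_{1,\infty}=\infty)=1$, completing Proposition~\ref{p:etaInf}.

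The main obstacle I anticipate is making the quadratic variation / It\^o isometry computation for the Hilbert-space-valued stochastic integral against the Brownian field $W$ fully rigorous, i.e.\ justifying that $d\langle \int_0^\cdot\int_\tau a(I_s,\tau)\,dW_s(I_s,\tau)\,d\tau\rangle_s = \mathbf a(I_s)\,ds$ with $\mathbf a$ as in~\eqref{e:bfa}. This uses the explicit covariance~\eqref{def_cov} and the Fourier-mode decomposition $W_t(y,\tau) = \tfrac{1}{\sqrt2}\sum_{n\in\Z^*}e^{2i\pi n\tau}R_n^{1/2}(y)(B^1_{t,n}-iB^2_{t,n})$, reducing the bracket to $\sum_n |a_n(I_s)|^2 R_n(I_s) = \mathbf a(I_s)$ (exactly as in the proof of Lemma~\ref{l:exitProbA}). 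A secondary technical point is that all estimates must be performed on the \emph{stopped} process so that $I_{s\wedge\zeta_M}$ stays in $[1/M,M]$ where $a$, $\partial_I a$ and hence the coefficients are well-behaved (bounded by Proposition~\ref{l:bound_a}), and one must check the resulting constant $C_T$ genuinely does not depend on $M$ — which it does not, precisely because the bound $\mathbf a(\xi)\le C\xi$ is uniform and only the linear growth of $\mathbf a$, not the behavior of $a$ near $0$, enters.
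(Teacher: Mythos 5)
Your proposal follows essentially the same route as the paper's proof: Chebyshev applied to $\E\sup_{t\le T} I_{t\wedge \zeta_M}^2$, the martingale/drift decomposition of the stopped process, a maximal inequality for the martingale part (the paper uses Burkholder--Davis--Gundy where you use Doob's $L^2$ inequality, equivalent here) with bracket $\int_0^{\cdot}\mathbf a(I_s)\,ds$, and Gr\"onwall to obtain a bound uniform in $M$. One correction to the justification: the bound $\mathbf a(\xi)\le C\xi$ is only established for $\xi$ near $0$ (via Proposition~\ref{l:bound_a}), and $\mathbf b=\tfrac12\partial_I\mathbf a$ is \emph{not} globally bounded --- the paper shows $|\mathbf b(I)|\le C$ for $I\le r$ and $|\mathbf b(I)|\le C_r I$ for $I>r$, with $\mathbf a(I)\le C(1+I^2)$ globally --- but your fallback estimate $\big|\int_0^{t\wedge\zeta_M}\mathbf b(I_s)\,ds\big|\le C\big(T+\int_0^t I_{s\wedge\zeta_M}\,ds\big)$, squared via Cauchy--Schwarz, is exactly the form the paper uses, and the Gr\"onwall loop still closes with a constant independent of $M$.
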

  Indeed, writing
  \[\begin{split}
  \P(\zeta_{1, \infty} < \infty) & = \lim_{T\to\infty}\P(\zeta_{1, \infty}\leq T) \\
  & = \lim_{T\to\infty}\lim_{M_0 \to \infty}\P\paren[\Big]{ \zeta_{1, \infty}\leq T,\inf_{t\in [0,\zeta_\infty)} I_t > 1/M_0},\\
  &=\lim_{T\to\infty}\lim_{M_0 \to \infty}\lim_{M \to \infty}\P\paren[\Big]{\zeta_{1, M}\leq T,\inf_{t\in [0,\zeta_M)} I_t > 1/M_0} \,.
  \end{split}\]
  To justify the second equality, we note that if $\zeta_{1,\infty}$ is finite, then $\inf_{t\in [0,\zeta_{\infty})}I_t$ must be positive since $\zeta_{1,\infty}=\zeta_{\infty}$ almost surely. We have for any $M>M_0$
  \[ \begin{split}
  \P\Bigl(\zeta_{1, M}\leq T,\inf_{t\in [0,\zeta_M]} I_t > \frac{1}{M_0} \Bigr)&\leq \P(\zeta_{1, M}\leq T,\, \zeta_M =\zeta_{1, M})\\
  & \leq \P\paren[\Big]{\sup_{t\in[0,T]} I_{t\wedge \zeta_{M}} \geq M }.
  \end{split}\]
  Therefore,   Lemma \ref{bound_I} will imply $\P(\zeta_{1, \infty} < \infty)=0$.
  \begin{proof}[Proof of Lemma \ref{bound_I}.]
    First we remark that using Chebychev's inequality
    \begin{align}\label{ineq_I}
      \nonumber\P\paren[\Big]{\sup_{t\in[0,T]} I_{t\wedge\zeta_{M}} \geq M}
	&= \P\paren[\Big]{ \sup_{t\in[0,T]} I_{t\wedge \zeta_{M}}^2 \geq M^2 }
	\\
	&\leq \frac{1}{M^2} \E \sup_{t\in[0,T]} I_{t\wedge \zeta_{1, M}}^2 \,.
    \end{align}
    Let $T'\leq T$, we also have 
    \begin{align*}
      \E\sup_{t\in[0,T']} I_{t\wedge \zeta_{M}}^2
	&\leq 3I_0 + 3\E\sup_{t\in[0,T']}  N^2_t + 3\E\sup_{t\in[0,T']} \Bigl(\int_{0}^{t\wedge \zeta_{M}} \mathbf{b}(I_s) ds\Bigr)^2 \\
    &= 3I_0 + J_{T'} + K_{T'}\,, 
    \end{align*}
    and where, using \eqref{EDS_I_def}
    \[N_{t\wedge\zeta_{M}} \defeq I_{t\wedge \zeta_{ M}} - I_0 - \int_{0}^{t\wedge \zeta_{M}} \mathbf{b}(I_s)\, ds = \int_{s=0}^{t\wedge \zeta_{M}} \int_{\tau=0}^1 a(I_s,\tau) \,dW_s(I_s,\tau)\,d\tau \]
    is a martingale with quadratic variation
    \[
    \langle N\rangle_t = \int_0^{t\wedge\zeta_{M}} \mathbf{a}(I_s)\,ds\,.
    \]
    
    For the term $J_{T'}$ we apply Burkholder-Davis-Gundy inequality \cite[Theorem 3.28 p. 166]{KaratzasShreve91} together with \eqref{ineq_R} from which we obtain
    \begin{multline*}
    J_{T'}  \leq C \E \int_0^{T'\wedge \zeta_{M}}\mathbf{a}(I_{s})\,ds  
    \leq C \int_0^{T'} \E[\mathbf{a}(I_{s\wedge \zeta_{M}})]\,ds
    \\
    \leq C_{\omega_0} \int_S |p|^{2\beta} |r(p)|\,dp \int_{s=0}^{T'} \int_{\tau=0}^1 \E[|a(I_{s\wedge \zeta_{M}},\tau)|^2] \, d\tau \, ds \,.
    \end{multline*}
    Moreover, using Proposition \ref{l:bound_a}, we have 
    \[\begin{split}
    \int_{0}^1 \E [|a(I_{s\wedge \zeta_{M}},\tau)|^2] \, d\tau
    &= \int_{0}^1 \Bigl(
    \E [|a(I_{s\wedge \zeta_{M}},\tau)|^2\mathbf{1}_{(I_{s\wedge \zeta_{M}} \leq r)}]
    \\
    &\qquad+ \E [|a(I_{s\wedge \zeta_{M}},\tau)|^2\mathbf{1}_{(I_{s\wedge \zeta_{M}} > r)}]
    \Bigr)
    \, d\tau
    \\
    &\leq r\,C + C_r \E I_{s\wedge \zeta_{M}}^2\\
    &\leq r\,C + C_r \E\sup_{t\in[0,s]}I_{t\wedge\zeta_{M}}^2 \,,
    \end{split}\]
    and hence
    \[
    J_{T'} \leq C_1\, T + C_2 \int_{0}^{T'} \E\paren[\Big]{ \sup_{t\in[0,s]}I_{t\wedge\zeta_{M}}^2 } ds\,.
    \]
    Now, for $K_{T'}$, we use Cauchy-Schwarz inequality
    \[\begin{split}
    \E\sup_{t\in[0,T']} \Bigl(\int_{0}^{t\wedge \zeta_{M}} \mathbf{b}(I_s)\, ds\Bigr)^2 & \leq T \E \sup_{t\in[0,T']} \int_{0}^{t\wedge \zeta_{M}} \mathbf{b}^2(I_s) \,ds\\
    &\leq T \E\int_{0}^{T'} \mathbf{b}^2(I_{s\wedge \zeta_{M}})\, ds  \,.
    \end{split}\]
    From Proposition \ref{l:bound_a}, for $I\leq r$, we again use the Cauchy-Schwarz inequality
    \[\begin{split}
    |\mathbf{b}(I)|&  \leq C \,|\omega'(I)| \, \sqrt{\int_0^1 |a(I,\tau)|^2\,d\tau \int_0^1 |\partial_\tau a(I,\tau)|^2\,d\tau}\\
    &+C_{\omega_0}  \sqrt{\int_0^1 |a(I,\tau)|^2\,d\tau \int_0^1 |\partial_I a(I,\tau)|^2\,d\tau}\\
    &\leq C.
    \end{split}\]
    As a result,
    \[\begin{split}
    \E \int_{0}^{T'} \mathbf{b}^2(I_{s\wedge \zeta_{M}}) \,ds
      &= \E \int_{0}^{T'} \mathbf{b}^2(I_{s\wedge \zeta_{M}}) \mathbf{1}_{(I_{s\wedge \zeta_{M}}\leq r)}\, ds
    \\
    &\qquad + \E  \int_{0}^{T'} \mathbf{b}^2(I_{s\wedge\zeta_{M}}) \mathbf{1}_{(I_{s\wedge \zeta_{M}}> r)} \,ds 
    \\
    &\leq C_1\, T + C_{2,r} \E \int_{0}^{T'} I_{s\wedge \zeta_{M}}^2 \mathbf{1}_{(I_{s\wedge\zeta_{M}}> r)} \,ds
    \\
    & \leq C_1\, T + C_{2,r} \int_{0}^{T'} \E\Bigl( \sup_{t\in[0,s]} I_{t\wedge \zeta_{M}}^2 \Bigr) \,ds \,,
    \end{split}\]
    and 
    \[
    K_{T'} \leq C_1\, T^2 + C_2 \, T \int_{0}^{T'} \E\Bigl(\sup_{t\in[0,s]} I_{t\wedge \zeta_{M}}^2 \Bigr) \,ds\,.
    \]
    Finally, we have
    \[
    \E \sup_{t\in[0,T']} I_{t\wedge \zeta_{M}}^2  \leq C_1(1 + T + T^2) + C_2(1 + T) \int_{0}^{T'} \E \Bigl( \sup_{t\in[0,s]} I_{t\wedge \zeta_{M}}^2 \Bigr) \, ds
    \]
    and then by Gronwall's inequality with $T'=T$
    \[
    \E \sup_{t\in[0,T]} I_{t\wedge \zeta_{M}}^2 \leq C_1(1 + T + T^2)e^{C_2(1 + T)T}\,.
    \]
    This concludes the proof of the Lemma by letting $M\to \infty$ in \eqref{ineq_I}. Consequently, the proof of Proposition \ref{p:etaInf} is also complete.
  \end{proof}
  \let\qed\relax
\end{proof}

\subsection{Uniqueness of solutions}\label{sec:uniqueness}

The weak uniqueness property for equation~\eqref{eq:I} comes from the fact that this equation has strong uniqueness on $\mathcal{M}_{\zeta_{2,M}}$ for any $M$, and then weak uniqueness on $\mathcal{M}_{\zeta_{2,M}}$ for any $M$ \cite[Proposition 3.20 p. 309]{KaratzasShreve91}, since we avoid the lack of regularity at $0$ for the action variable $I$. However, according to Proposition \ref{p:etaInf}, we have $\lim_{M\to \infty}\P^0_I(\zeta_{2,M}\leq T)=0$ for any $T>0$ which guarantees the weak uniqueness thanks to \cite[Theorem 1.3.5 p. 34]{StroockVaradhan06}.

\section{Proof of the main theorem (Theorem~\ref{mainresult})}\label{sec:property_limit_eq}

\subsection{Truncated Process}\label{sec:truncation}
We first describe the strategy used to prove Theorem \ref{mainresult}.
In addition to the difficulties concerning the behavior of $a$ around $I=0$ that we mentioned in Theorem \ref{mainresult}, we have no \emph{apriori} estimates for the process $(I^\eps_t,\psi^\eps_t)_{t\geq 0}$ that are uniform in $\eps$, in probability. This is a problem when trying to apply \cite[Theorem 4 p. 48]{Kushner84} directly to prove the tightness property, and when trying to identify the law of the subsequential limits.
To bypass this we follow the strategy developed in \cite[Chapter 11]{StroockVaradhan06} by introducing a \emph{truncated process} that does not suffer from the above problems. To relate the truncated process with the original one we introduce a family of stopping times. As we will see, these times go to $\infty$ when we remove the truncation and then finally obtain the weak convergence of the original processes.    


Let $M>0$ and consider the process $(I^{\eps,M}_t,\psi^{\eps,M}_t)_{t\geq 0}$ which is the unique solution to
\begin{equation}\label{systemIpsi_M}
\begin{split}
\dot I^{\eps,M}_t &= \frac{1}{\eps} v\Big(\frac{t}{\eps^2}\Big)  \phi_M(I^{\eps,M}_t, \psi^{\eps,M}_t)a(I^{\eps,M}_t,\psi^{\eps,M}_t+\tau^{\eps,M}_t) \\
\dot \psi^{\eps,M}_t &= \frac{1}{\eps} v\Big(\frac{t}{\eps^2}\Big)  \phi_M(I^{\eps,M}_t, \psi^{\eps,M}_t)(b(I^{\eps,M}_t,\psi^{\eps,M}_t+\tau^{\eps,M}_t) -  \av{b(I^{\eps,M}_t,\cdot)}) 
\end{split}
\end{equation}
coupled with the initial conditions $I^{\eps,M}_0 = I_0$, and $\psi^{\eps,M}_0=\theta_0$,
with 
\begin{equation*}
\dot \tau^{\eps,M}_t = \frac{\omega(I^{\eps,M}_t)}{\eps^2} + \frac{1}{\eps}v\Big(\frac{t}{\eps^2}\Big) \phi_M(I_t^{\eps,M},\psi_t^{\eps,M})\av{ b(I^{\eps,M}_t, \cdot)}\,.
\end{equation*}
Here $\phi_M$ is a smooth function on $\R^2$ such that
\begin{equation*}
0\leq \phi_M\leq 1,\quad\phi_M(I,\psi)=1 \quad \text{if} \quad 1/M \leq I \leq M \quad\text{and}\quad |\psi|\leq M\,,
\end{equation*}
and
\begin{equation*}
\phi_M(I,\psi)=0\quad \text{if}\quad  I > 2M\quad\text{or}\quad  I < 1/(2M)\quad\text{or}\quad  \vert \psi\vert > 2M \,.
\end{equation*}
Thanks to the truncation provided by the cutoff function $\phi_M$, for both action and angle variables, the process $(I^{\eps,M}_t,\psi^{\eps,M}_t)_{t\geq 0}$ does not suffer from the regularity problem for the action variable around $0$ as mentioned above. Also, its convergence in distribution in $\mathcal{C}([0,\infty), \mathbb{R}^2)$ can be proved using the perturbed-test function method and martingale properties \cite{Kushner84,PapanicolaouStroockEA76} thanks to the boundedness of the process. All these points are developed more precisely in Section \ref{sec:propmain}.

To simplify the notation, we now omit the superscript $M$ and denote
\begin{equation*}
Y_t^\eps \defeq \begin{pmatrix} I_t^{\eps, M} \\ \psi_t^{\eps, M}  \end{pmatrix}\,.
\end{equation*}
Now the system~\eqref{systemIpsi_M} can be rewritten as
\begin{equation}
\label{e:defEqDiff}
\dot Y^{\eps}_t
= \frac{1}{\eps} v\paren[\Big]{\dfrac{t}{\eps^2}}
F(Y^\eps_t,\tau^{\eps}_t)
\end{equation}
with
\begin{equation}\label{eq:tau}
\dot \tau^{\eps}_t
= \frac{\omega(Y^{\eps}_t)}{\eps^2}
+ \frac{1}{\eps}  v\paren[\Big]{\dfrac{t}{\eps^2}}G(Y^{\eps}_t)\,.
\end{equation}
Here
\begin{equation}
\label{e:defF}
\begin{split}
F(Y,\tau) &\defeq
\begin{pmatrix}  \phi_M(I, \psi)a(I,\psi+\tau) \\ \phi_M(I, \psi)(b(I,\psi+\tau) -  \av{b(I,\cdot)})  \end{pmatrix},\\
G(Y) &\defeq \phi_M(I,\psi)\av{b(I,\cdot)}\,,
\end{split}
\end{equation}
and $a,b$ are defined in equation~\eqref{e:Itheta1}. Note that $F$ is a smooth bounded function (with bounded derivatives) 1-periodic and mean-zero with respect to $\tau$ (the truncation only affects $\psi$, not $\tau$), and $\omega(Y_t^\eps) = \omega ( I_t^{\eps, M})$.

To relate the truncated and original processes we introduce first some notations. In the remaining of the paper, the space of all possible outcomes is $\mathcal{C}([0,\infty),\R^2)$, and we denote the corresponding canonical filtration by $\mathcal{M}_t = \sigma(y_s,0\leq s\leq t)$. The laws of the truncated and original processes on $\mathcal{C}([0,\infty),\R^2)$ will be denoted respectively by $\P^{\eps, M}$ and $\P^\eps$. Now, we consider the stopping times
\begin{equation*}
\eta_{1,M}(y)\defeq\inf\{t\geq 0: \| y_t \|\geq M \}\quad \text{and}\quad \eta_{2,M}(y)\defeq\inf\{t\geq 0: \vert y^1_t \vert\leq 1/M \}\,,
\end{equation*}
for any $y=(y^1,y^2)\in \mathcal{C}([0,\infty),\R^2)$ and 
\begin{equation}\label{def_etaM}
\eta_{M}\defeq \eta_{1,M} \wedge \eta_{2,M}\,.
\end{equation}
It is not hard to see that $\eta_{1,M}$ and $\eta_{2,M}$ are lower semi-continuous so that $\eta_{M}$ is also lower semi-continuous, that is their lower level sets are closed subsets of $\mathcal{C}([0,\infty),\R^2)$.
The latter property will be used to obtain the convergence of the original process from the truncated one.

From the definition of the above stopping times and the cutoff function $\phi_M$ it is clear that
\begin{equation}\label{e:probAEps}
\P^{\eps,M} = \P^{\eps}\qquad\text{on} \quad \mathcal{M}_{\eta_M}\,.
\end{equation}
This is the relation that links the truncated and original processes and that will be also used below in Section \ref{cv_I_psi} to show the convergence of the original process.

For now, we have the following convergence results for the truncated process. We return the index $M$ for a moment to emphasize the truncation.

\begin{proposition}\label{prop_main}
  
  The family $(Y^{\eps})_{\eps>0}\defeq (I^{\eps,M},\psi^{\eps,M})_{\eps >0}$ converges in distribution in $\mathcal{C}([0,\infty),\R^2)$ to the unique solution $(Y_t^M)_{t\geq 0} = (I_t^M, \psi_t^M)_{t\geq 0}$ to the martingale problem with generator defined by
  
  \begin{equation}\label{def_L}
  \mathcal{L} h(y) \defeq
  \int_{\tau=0}^1 \int_{u=0}^\infty
  R(u)
  \tilde F(0,y,\tau)
  \cdot \nabla ( \tilde F(u,y,\tau)\cdot \nabla h(y))
  \, du \, d\tau
  \,,
  \end{equation}
  where
  \begin{equation*}    
  \tilde F(u,y,\tau) \defeq F(y,\tau +\omega(y)u)\,,
  \end{equation*}
  with $F$ defined in~\eqref{e:defF} and starting point $(I_0,\psi_0)$.
\end{proposition}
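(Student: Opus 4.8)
The plan is to apply the perturbed test function (PTF) method of Papanicolaou--Stroock--Varadhan and Kushner to the fast-slow system~\eqref{e:defEqDiff}--\eqref{eq:tau}. Since the cutoff~$\phi_M$ makes~$F$ bounded with bounded derivatives, and since~$F$ is mean-zero in the fast angular variable~$\tau$, the truncated process~$Y^\eps$ and its driving fast phase~$\tau^\eps$ are uniformly controlled in~$\eps$, so there are no \emph{apriori} estimate issues to worry about here (those were dealt with by the truncation). The proof naturally splits into three parts: (i) tightness of the laws~$\P^{\eps,M}$ on~$\mathcal C([0,\infty),\R^2)$; (ii) identification of every subsequential limit as a solution of the martingale problem for~$\mathcal L$ in~\eqref{def_L}; and (iii) well-posedness of that martingale problem, which upgrades subsequential convergence to full convergence.

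First I would set up the perturbed test function. Given~$h \in C_c^\infty(\R^2)$ (or~$C^\infty$ with bounded derivatives), define the first-order corrector
\[
  h_1^\eps(t,y,\tau) \defeq \frac{1}{\eps}\int_t^\infty \E\!\left[ v\!\paren[\Big]{\frac{r}{\eps^2}} F(y,\tau + \tilde\tau^\eps_{r-t}) \cdot \nabla h(y) \,\Big|\, \mathcal F_t \right] dr\,,
\]
where~$\tilde\tau^\eps$ denotes the fast deterministic phase started from~$\tau$ with frozen~$y$, i.e. $\tilde\tau^\eps_s = \tau + \omega(y)s/\eps^2$; heuristically~$h_1^\eps$ solves $\partial_t h_1^\eps + (\text{fast generator})h_1^\eps = -\tfrac1\eps v(t/\eps^2)F\cdot\nabla h$. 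One checks that~$h_1^\eps = O(\eps)$ uniformly (using the mixing/decay of~$R$ and the boundedness of~$F$, $\nabla h$, $\omega$, via Lemma~\ref{l:bound} for the needed moment bounds on~$V$), and that
\[
  \Big( \frac{\partial}{\partial t} + \mathcal A^\eps\Big)\big(h + h_1^\eps\big)(t,Y^\eps_t,\tau^\eps_t) = \mathcal L h(Y^\eps_t) + o(1)
\]
in a suitable sense, where~$\mathcal A^\eps$ is the generator of~$(t,Y^\eps,\tau^\eps)$ and the change of variables~$\tau + \omega(y)u$ in~\eqref{def_L} comes from integrating out the fast phase (substituting~$r - t = \eps^2 u$). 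This gives that
\[
  h(Y^\eps_t) - \int_0^t \mathcal L h(Y^\eps_s)\,ds
\]
is an approximate~$\mathcal F_t$-martingale, with errors vanishing as~$\eps \to 0$ after taking expectations against bounded continuous functionals of the past. Tightness in part~(i) follows from the standard Kurtz/Aldous criterion: the corrected process~$h+h_1^\eps$ is a genuine semimartingale whose drift and bracket are uniformly bounded (again by the~$\phi_M$-truncation and Lemma~\ref{l:bound}), so the Aldous condition holds for~$h(Y^\eps)$ for enough test functions~$h$ (e.g. the coordinate functions), giving tightness of~$(Y^\eps)$ itself. For part~(ii), passing to a subsequential limit~$Q$, the approximate martingale identity passes to the limit (using continuity of~$\mathcal L h$, which requires the regularity of the coefficients — here smoothness of~$a,b,\omega$ away from~$I=0$ is enough since~$\phi_M$ localizes away from~$0$), so~$Q$ solves the martingale problem for~$\mathcal L$ started at~$(I_0,\psi_0)$.

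Finally, part~(iii): well-posedness of the martingale problem for~$\mathcal L$. On the truncated region~$\phi_M \equiv 1$, the generator~$\mathcal L$ is a (possibly degenerate) second-order operator with smooth bounded coefficients — its diffusion matrix is built from~$\int_0^\infty R(u) F(y,\tau+\omega(y)u)\otimes F(y,\tau)\,du$ averaged over~$\tau$, which one recognizes (via Fourier series in~$\tau$, as in~\eqref{e:bfa} and~\eqref{def_fourieraRR}) as a nonnegative symmetric matrix with entries~$\tfrac12\sum_n R_n(I)\,\overline{c_n(I)}\,c_n(I)$-type expressions. Outside the region where~$\phi_M = 1$ the coefficients are still smooth and bounded because of the cutoff. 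Uniqueness then follows either from the fact that the coefficients are Lipschitz (so the associated SDE — the~$I$-equation~\eqref{eq:I} truncated, coupled with the~$\psi$-equation~\eqref{eq:psi} — has pathwise unique solutions, invoking Proposition~\ref{prop_main}-type strong solvability on~$[1/M,M]$ already alluded to in Section~\ref{sec:uniqueness}), or, where the diffusion degenerates in the~$\psi$ direction, by noting that the~$I$-component is autonomous and the~$\psi$-component is then an explicit functional of~$(I,W)$, so the law is determined. This well-posedness, combined with tightness and the martingale characterization, yields that the full family~$(Y^\eps)_{\eps>0}$ converges in distribution to the unique solution~$(Y^M_t)_{t\geq 0}$, which is what the proposition asserts.

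The main obstacle I expect is the careful construction and estimation of the corrector~$h_1^\eps$: one must show it is genuinely~$O(\eps)$ and that the residual terms generated when applying~$\partial_t + \mathcal A^\eps$ to~$h + h_1^\eps$ vanish, and this requires quantitative control of conditional expectations of~$v(r/\eps^2)$ given the past — precisely the mixing-type estimates and the moment bounds~\eqref{boundV1}--\eqref{boundV2} on~$V$ from Lemma~\ref{l:bound}, together with the Markov-type formula~\eqref{markovesp}. A secondary delicate point is the interchange of the~$u$-integral (correlation time) with the~$\eps \to 0$ limit, which relies on the algebraic decay of~$R$ being integrable against the bounded oscillatory factors coming from~$F(y,\tau+\omega(y)u)$; here the lower bound~$\omega \geq \omega_0 > 0$ from~\eqref{e:hypOmega} is essential, since it is what makes the fast phase genuinely oscillate and produces the convergent~$u$-integrals~$R_n(I)$ in~\eqref{def_fourieraRR} rather than a divergent~$\int_0^\infty R$.
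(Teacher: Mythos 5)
Your overall strategy (pseudo-generator plus perturbed test functions, tightness via Kushner, identification of limits as solutions of a well-posed martingale problem) is the same as the paper's, and your first corrector $h_1^\eps$ is essentially the one used there: it is built from $\E_t^\eps\bigl[V(u+t/\eps^2,dp)\bigr]=e^{-\mu|p|^{2\beta}u}V(t/\eps^2,dp)$ and the Fourier expansion of $F$ in $\tau$, and your remarks on why $\av{F}=0$ and $\omega\geq\omega_0>0$ are essential are exactly right. The tightness and well-posedness parts of your sketch also match the paper in substance (Lemmas~\ref{bound2} and~\ref{A1} feed Kushner's tightness theorem; uniqueness comes from the explicit nonnegative square root $\sigma$ of the diffusion matrix $\fraka$, which is locally Lipschitz thanks to the cutoff).

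The genuine gap is in the identification step, where you assert that $\bigl(\partial_t+\mathcal A^\eps\bigr)(h+h_1^\eps)=\mathcal L h(Y^\eps_t)+o(1)$. This is false as stated: one corrector does not suffice. After adding $h_1^\eps$, the pseudo-generator $\mathcal A^\eps(h_0^\eps+h_1^\eps)$ is an $O(1)$ quantity of the form $\mathcal A^\eps_1+\mathcal A^\eps_2+\mathcal P^\eps_2$, where only the diagonal Fourier modes ($n+m=0$) contribute to $\mathcal A^\eps_2$. The off-diagonal part $\mathcal A^\eps_1$ carries factors $e^{2i\pi(n+m)\tau^\eps_t}$ with $n+m\neq 0$ which are $O(1)$ in size and do not vanish pointwise; they only average out over the fast phase, and the paper must introduce a second corrector $h_2^\eps$ (centering the quadratic functional $V\,V-\E[V\,V]$ using the conditional covariance formula~\eqref{markovvar}) and a third corrector $h_3^\eps$ (with an $e^{-\eps(u-t)}$ regularization, because $\int_0^\infty e^{2i\pi(n+m)\omega u}\,du$ is only conditionally convergent) to dispose of them. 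Moreover, even the diagonal part $\mathcal A^\eps_2$ does not converge to $\mathcal L h(Y^\eps_t)$ pointwise: showing $\E\bigl|\int_0^t(\mathcal A^\eps_2(u)-\mathcal L h(Y^\eps_u))\,du\bigr|\to 0$ is a separate second-moment computation on the quadratic Gaussian functional of $V$ (Lemma~\ref{lemma_L}), done by partitioning $[0,t]$ into blocks of length $\eps\eta$ and exploiting the decay of the conditional covariance; the singularity $r(p)\sim|p|^{-2\alpha}$ near $p=0$ enters here and is controlled only because $\alpha<1/2$. These steps are the technical heart of the proposition — they are precisely where the long-range correlation of the noise interacts with the deterministic oscillation — so your "the residual terms vanish" cannot be taken for granted and needs to be replaced by the construction of $h_2^\eps$, $h_3^\eps$ and the averaging lemma.
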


From this characterization, we can deduce the SDEs satisfied by the limiting process $Y_t^M = (I_t^M, \psi_t^M)$ up to the stopping time $\eta_M$. 

\begin{proposition}\label{prop_SDE_I_psi_M}
  Denoting $\tilde I^M_t \defeq I^M_{t\wedge \eta_M}$, we have
  \begin{equation}\label{eq:IM}\begin{split}
  d\tilde I^M_t&=\int_{\tau=0}^1  \, a(\tilde I^M_t,\tau)\, dW_{t\wedge \eta_M}(\tilde I^M_t,\tau)\, d\tau \\
  &\quad+\int_{u=0}^\infty R(u)\Big(\int_{\tau=0}^1  \partial_I\big( a(I,\tau + \omega(I)u)\big)_{|I=\tilde I^M_t} a(\tilde I^M_t,\tau) \\
  &\qquad\qquad+ a(\tilde I^M_t,\tau + \omega(\tilde I^M_t)u) \partial_I a(\tilde I^M_t,\tau) \Big)\,d\tau\, du\, dt,
  \end{split}\end{equation} 
  with $\tilde I^M_{t=0} = I_0$, and where $W$ is defined in~\eqref{def_cov}. Also, we have
  \begin{equation}\label{eq:psiM}\begin{split}
  d\psi^M_{t\wedge \eta_M}&=\int_{\tau=0}^1  \, b(\tilde I^M_t,\tau)\, dW_{t\wedge \eta_M}(\tilde I^M_t,\tau) \,d\tau \\
  &\quad+\int_{u=0}^\infty R(u)\Big(\int_{\tau=0}^1 \partial_I \big(b(I,\tau + \omega(I)u)\big)_{|I=\tilde I^M_t}  a(\tilde I^M_t,\tau) \\
  &\qquad\qquad+ b(\tilde I^M_t,\tau + \omega(\tilde I^M_t)u) \partial_I a(\tilde I^M_t,\tau)\Big) \,d\tau\,du\, dt,
  \end{split}\end{equation}
  with $\psi^M_{t=0} = \theta_0$.
\end{proposition}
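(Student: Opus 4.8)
The plan is to derive the stochastic differential equations in~\eqref{eq:IM} and~\eqref{eq:psiM} directly from the martingale problem characterization in Proposition~\ref{prop_main}. Concretely, I would start from the generator~$\mathcal L$ defined in~\eqref{def_L}, expand the composition $\tilde F(0,y,\tau) \cdot \nabla(\tilde F(u,y,\tau) \cdot \nabla h(y))$ using the product rule, and separate the resulting expression into a second-order (diffusion) part and a first-order (drift) part. Writing $y = (I,\psi)$ and recalling $\tilde F(u,y,\tau) = F(y,\tau+\omega(y)u)$ with $F$ given by~\eqref{e:defF}, the first component of $\tilde F$ involves $\phi_M(I,\psi) a(I,\tau+\omega(I)u)$ and the second involves $\phi_M(I,\psi)(b(I,\tau+\omega(I)u) - \av{b(I,\cdot)})$. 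On the event $\{t < \eta_M\}$ the cutoff $\phi_M$ equals $1$ and its derivatives vanish, so up to the stopping time $\eta_M$ one may replace $\tilde F$ by the untruncated integrand; this is precisely where the truncation is used, and it reduces the computation to the genuine coefficients $a$ and $b$.

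Next I would read off the diffusion coefficient: the second-order part of $\mathcal L h$ is $\int_\tau\int_u R(u)\, \tilde F_i(0,y,\tau)\tilde F_j(u,y,\tau)\, \partial_i\partial_j h(y)\, du\, d\tau$ (symmetrized), which identifies the quadratic-variation structure of the limiting martingale. Comparing this with the covariance of the Brownian field $W$ in~\eqref{def_cov} — note that~\eqref{def_cov} is built from exactly the kernel $\int_u R(u)\int_\tau \phi_1(\tau+\omega(y)u)\phi_2(\tau) + \phi_1(\tau)\phi_2(\tau+\omega(y)u)\, d\tau\, du$ — one sees that the martingale part of $\tilde I^M$ is $\int_0^{t\wedge\eta_M}\int_\tau a(\tilde I^M_s,\tau)\, dW_s(\tilde I^M_s,\tau)\, d\tau$, and likewise the martingale part of $\psi^M_{\cdot\wedge\eta_M}$ is obtained by replacing $a$ with $b$; here one checks that $a(I,\cdot)$ and $b(I,\cdot) - \av{b(I,\cdot)}$ both lie in $L^2_0(\T)$, so the pairing against $W$ makes sense. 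For the drift, the first-order part of $\mathcal L h$ is $\int_\tau\int_u R(u)\, \tilde F_i(0,y,\tau)\, \partial_i \tilde F_j(u,y,\tau)\, \partial_j h(y)\, du\, d\tau$; taking $\partial_I$ through $\tilde F_1(u,y,\tau) = a(I,\tau+\omega(I)u)$ (which produces $\partial_I(a(I,\tau+\omega(I)u))$, including the $\omega'(I)u\,\partial_\theta a$ term) and through $\tilde F_2$ analogously, and noting that the $\psi$-derivatives of the untruncated $F$ vanish because $a$, $b$ depend on $\psi$ only through $\psi+\tau$ and the $\tau$-average is then over the full period, yields exactly the drift integrals written in~\eqref{eq:IM} and~\eqref{eq:psiM}. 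Assembling these pieces, and invoking that a continuous process solving the martingale problem for a generator of the form ``$\tfrac12$(diffusion) $+$ drift'' admits the stated It\^o representation on a possibly enlarged probability space with the Brownian field $W$ (standard, e.g.\ via the martingale representation theorem for continuous local martingales with the identified bracket), gives the SDEs up to $\eta_M$.

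The main obstacle I anticipate is bookkeeping rather than conceptual: one must carefully match the covariance structure coming out of $\mathcal L$ with the specific representation of $W$ in~\eqref{def_cov}, keeping track of the symmetrization (the two terms $\phi_1(\tau+\omega u)\phi_2(\tau)$ and $\phi_1(\tau)\phi_2(\tau+\omega u)$ arise from the two orderings of the factors $\tilde F(0,\cdot)$ and $\tilde F(u,\cdot)$ in the second-order term after symmetrizing in $i,j$), and one must verify that the cross-terms between the $I$ and $\psi$ components are consistent — i.e.\ that the joint bracket of the martingale parts of $\tilde I^M$ and $\psi^M$ matches what the single Brownian field $W$ dictates, which is why $(I,\psi)$ are both driven by the \emph{same} $W$. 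A secondary technical point is justifying the interchange of the $\partial_I$ with the $u$- and $\tau$-integrals in the drift term, which requires the integrability afforded by the decay of $R$ together with the bounds on $a$, $\partial_I a$, $\partial_\theta a$ from Proposition~\ref{l:bound_a}; within the truncated regime $1/M \le I \le M$ this is immediate. Once the identification is complete, Proposition~\ref{prop_SDE_I_psi_M} follows, and one notes that the equation for $\tilde I^M$ is autonomous (no dependence on $\psi$), consistent with the structure already observed for~\eqref{eq:I}.
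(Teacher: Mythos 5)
Your overall strategy (read the diffusion and drift coefficients off the generator $\mathcal L$ from Proposition~\ref{prop_main}, identify the quadratic variation with the covariance of $W$, and invoke a Hilbert-space martingale representation theorem) is the same as the paper's, and the martingale part of your argument is sound. But there is a genuine gap in your drift computation: you claim that the $\psi$-derivative contributions to $\mathrm{Jac}_y[\tilde F(u,y,\tau)]\,\tilde F(0,y,\tau)$ vanish ``because $a$, $b$ depend on $\psi$ only through $\psi+\tau$ and the $\tau$-average is then over the full period.'' That reasoning only applies to the $\tau$-integral of a \emph{single} mean-zero derivative; here the derivative is multiplied by another $\tau$-dependent factor, so the relevant term for the $I$-equation is
\begin{equation*}
\int_{u=0}^\infty R(u)\int_{\tau=0}^1 \partial_\tau a\big(I,\tau+\omega(I)u\big)\,\big(b(I,\tau)-\av{b(I,\cdot)}\big)\,d\tau\,du\,,
\end{equation*}
which does not vanish. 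If you drop it, your drift contains only the first summand $\partial_I\big(a(I,\tau+\omega(I)u)\big)\,a(I,\tau)$ of~\eqref{eq:IM} and is missing the second summand $a(I,\tau+\omega(I)u)\,\partial_I a(I,\tau)$ entirely.

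The missing idea is how that cross term is converted into the second summand. The paper integrates by parts in $\tau$ (the $\av{b}$ part drops since $\int_0^1\partial_\tau a\,d\tau=0$), obtaining $-\int_0^1 a(I,\tau+\omega(I)u)\,\partial_\tau b(I,\tau)\,d\tau$, and then uses the symplectic relations~\eqref{relation_phi}: since $b=\partial_I\varphi_1^{-1}$ and $a=-\partial_\tau\varphi_1^{-1}$, equality of mixed partials gives $\partial_\tau b(I,\tau)=-\partial_I a(I,\tau)$, turning the cross term into $+\int_0^1 a(I,\tau+\omega(I)u)\,\partial_I a(I,\tau)\,d\tau$. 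The identical manipulation (with $b$ in place of the outer $a$) produces the term $b(I,\tau+\omega(I)u)\,\partial_I a(I,\tau)$ in~\eqref{eq:psiM}. Without this integration by parts and the identity $\partial_\tau b=-\partial_I a$, the stated form of the drift cannot be reached, so you should add this step explicitly; the rest of your outline then goes through.
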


As we can see, this result is similar to the one of Theorem \ref{mainresult}, with coefficients $a$ and $b$ themselves, but involving the cutoff $M$ through the stopping time $\eta_M$.
The proofs of Proposition~\ref{prop_main} and~\ref{prop_SDE_I_psi_M} are presented in Sections~\ref{sec:propmain} and~\ref{sec:propSDE}, respectively.
In the next section, we will show how these two propositions can be used to prove Theorem \ref{mainresult}.
\subsection{Convergence of \texorpdfstring{$(I^\eps,\psi^\eps)_{\eps>0}$}{(I,psi)}}\label{cv_I_psi}

In order to prove Theorem \ref{mainresult}, we need to remove the cutoff $M$ to show the convergence in distribution of $(I^\eps,\psi^\eps)_{\eps>0}$ to $(I_t,\psi_t)_{t\geq 0}$, where $I$, $\psi$ are defined by \eqref{eq:I}-\eqref{eq:psi}.
This is similar to Lemma 11.1.1 (p. 262) in~\cite{StroockVaradhan06}, and we adapt the proof to our situation.

In what follows, we denote by $\P^M$ and $\P^0$ the distribution of $Y^M=(I^M_t,\psi^M_t)_{t\geq 0}$ (defined in Proposition \ref{prop_main}) and $Y=(I_t,\psi_t)_{t\geq 0}$ (defined in Proposition \ref{mainresult}), respectively. Note that according to \eqref{eq:psiM} and \eqref{eq:psi} the distributions $\P^M$ and $\P^0$ are completely determined by their first marginal $\P^M_I$ and $\P^0_I$. Also, in view of \eqref{eq:IM} and \eqref{eq:psiM}, it is straightforward to see that (with $\eta_M$ defined in~\eqref{def_etaM})
\begin{equation}\label{eq:proba}
\P^0 = \P^M \qquad\text{on}\quad \mathcal{M}_{\eta_M}\,.
\end{equation}
To prove this convergence, let $\Phi$ be a continuous bounded function on $\mathcal{C}([0,\infty),\R^2)$, and write
\begin{align*}
  \MoveEqLeft
  \E^{\P^{\eps}}[\Phi(y_t,t\in [0,T])] - \E^{\P}[\Phi(y_t,t\in [0,T])] \\
    &= \E^{\P^{\eps}}[\Phi(y_t,t\in [0,T])] - \E^{\P^{\eps,M}}[\Phi(y_t,t\in [0,T])]\\
    &\qquad+\E^{\P^{\eps,M}}[\Phi(y_t,t\in [0,T])] - \E^{\P^{M}}[\Phi(y_t,t\in [0,T])]\\
    &\qquad+\E^{\P^{M}}[\Phi(y_t,t\in [0,T])] - \E^{\P^0}[\Phi(y_t,t\in [0,T])]\,.
\end{align*} 
Considering the following decomposition 
\[\begin{split}
\E^{\P^{\eps}}[\Phi(y_t,t\in [0,T])] & = \E^{\P^{\eps}}[\Phi(y_t,t\in [0,T])\mathbf{1}_{\{\eta_M \leq T\}}] \\
&\qquad+ \E^{\P^{\eps}}[\Phi(y_t,t\in [0,T])\mathbf{1}_{\{\eta_M > T\}}] 
\end{split}\]
with 
\[\begin{split}
    \MoveEqLeft
\E^{\P^{\eps}}[\Phi(y_t,t\in [0,T]) \mathbf{1}_{\{\eta_M > T\}}] = \E^{\P^{\eps}}[\Phi(y_{t\wedge \eta_M},t\in [0,T])\mathbf{1}_{\{\eta_M > T\}}] \\
&=\E^{\P^{\eps,M}}[\Phi(y_{t\wedge \eta_M},t\in [0,T])\mathbf{1}_{\{\eta_M > T\}}] \\
&=\E^{\P^{\eps,M}}[\Phi(y_{t},t\in [0,T])\mathbf{1}_{\{\eta_M > T\}}]\\
&=\E^{\P^{\eps,M}}[\Phi(y_{t},t\in [0,T])] - \E^{\P^{\eps,M}}[\Phi(y_{t},t\in [0,T])\mathbf{1}_{\{\eta_M \leq T\}}] \,.
\end{split}\]
Here we used~\eqref{e:probAEps} in the second line.
Using~\eqref{e:probAEps} again we note
\[\begin{split}
\E^{\P^{\eps}}[\Phi(y_t,t\in [0,T])] & - \E^{\P^{\eps,M}}[\Phi(y_t,t\in [0,T])]\\
& \leq \|\Phi\|_\infty (\P^{\eps}(\eta_M \leq T)+\P^{\eps,M}(\eta_M \leq T)) \\
&\leq 2\|\Phi\|_\infty \P^{\eps,M}(\eta_M \leq T)\,.
\end{split}\]

The same results hold for $\P^0$ and $\P^M$ instead of $\P^\eps$ and $\P^{\eps,M}$ respectively, but using \eqref{eq:proba} instead of \eqref{e:probAEps}. As a result, we obtain
\[\begin{split}
\limsup_{\eps\to 0}|\E^{\P^{\eps}}[\Phi(y_t,t\in [0,T])] & - \E^{\P^0}[\Phi(y_t,t\in [0,T])] | \\
&\leq 2\|\Phi\|_\infty\limsup_{\eps\to 0}\P^{\eps,M}(\eta_M \leq T) \\
& + 2\|\Phi\|_\infty\P^0(\eta_M \leq T)\,.
\end{split}\] 

Notice that the stopping time $\eta_M$ is lower semi-continuous on $\mathcal{C}([0,\infty),\R^2)$, so that $(\eta_M \leq M)\in \mathcal{M}_{\eta_M}$ is a closed subset of $\mathcal{C}([0,\infty),\R^2)$.  According to the Portmanteau theorem \cite[Theorem 2.1 p. 16]{Billingsley99} we have
\[
\limsup_{\eps\to 0}|\E^{\P^{\eps}}[\Phi(y_t,t\in [0,T])] - \E^{\P^0}[\Phi(y_t,t\in [0,T])] | \leq 4\|\Phi\|_\infty \P^0(\eta_M \leq T)\,,
\] 
for any $M$.
Moreover, we have
\[
\P^0(\eta_M \leq T) \leq \P^0(\zeta_M \leq T) + \P^0(\zeta'_M \leq T)\,.
\]
where for any any $y=(y^1,y^2)\in\mathcal{C}([0,\infty),\R^2)$ we define
\[\zeta'_M(y) \defeq \inf\{t\geq 0: |y^2_t|\geq M\}\,.\]
By Proposition~\ref{p:etaInf} we know
\[
\P^0(\zeta_M \leq T) = \P^0_I(\zeta_M \leq T)\underset{M\to \infty}{\longrightarrow} 0\,.
\]

For the other term, let $0<\eta<1$ and write
\begin{align*}
  \P^0(\zeta'_M \leq T) \leq \P\Big(&\sup_{t\in[0,T]}|\psi_t| \geq M,  \inf_{t\in[0,T]}I_t>\eta, \sup_{t\in [0,T]} I_t < \frac{1}{\eta}\Big) \\
  &+\P\Big( \sup_{t\in [0,T]} I_t \geq \frac{1}{\eta} \Big) +\P\Big(\inf_{t\in[0,T]}I_t \leq \eta\Big)\,,  
\end{align*}
then using Markov inequality and \eqref{eq:psi}, we obtain the following for some $C_{\eta}>0$
\[\begin{split}
\P\Big(&\sup_{t\in[0,T]} |\psi_t| \geq M, \inf_{t\in[0,T]} I_t>\eta, \sup_{t\in [0,T]} I_t < \frac{1}{\eta}\Big) \\
& \leq \frac{1}{M^2} \E\Big[\sup_{t\in[0,T]}|\psi_t|^2 \mathbf{1}_{\{\inf_{t\in[0,T]} I_t >\eta,\, \sup_{t\in [0,T]} I_t < \frac{1}{\eta}\}}\Big]\\
&\leq \frac{2}{M^2} \E\Big[\sup_{t\in[0,T]}\Big| \int_{\tau=0}^1\int_{s=0}^t b(I_s,\tau) dW_s(I_s,\tau)d\tau\Big|^2 \mathbf{1}_{\{\inf_{t\in[0,T]} I_t >\eta,\,\sup_{t\in [0,T]} I_t < \frac{1}{\eta}\}}\Big]\\
& \quad+\frac{C_\eta}{M^2}\,.
\end{split}\]
Indeed, since we are on the event $\{\inf_{t\in[0,T]} I_t >\eta,\, \sup_{t\in [0,T]} I_t < \frac{1}{\eta}\}$ we avoid the singularity of the function $b$ at $I=0$ and keep $I$ bounded. Therefore, we can directly bound the drift term in \eqref{eq:psi} by bounding continuous functions $a,b, \partial_I a, \partial_I b$ by some constant $C_{\eta}$.
We will subsequently let $C_{\eta}$ be a constant that only depends on $\eta$ whose value may change from line to line. The details are as follows.

Write the Fourier expansions of~$a$ and~$b$ as
\[
	a(I,\tau) = \sum\limits_{n\in \mathbb Z^*}e^{2i \pi n\tau}a_n(I),\qquad b(I,\tau)=\sum\limits_{n\in \mathbb Z}e^{2i\pi n\tau}b_n(I)\,.
\]
Considering only the first integrand in the drift term in \eqref{eq:psi} and substituting the above expansions, we obtain
\begin{align}
  \MoveEqLeft
  \nonumber
	 \int_{u=0}^\infty R(u) \int_{\tau=0}^1 \partial_I \big(b(I,\tau + \omega(I)u)\big)_{|I=I_t}  a(I_t,\tau)\,d\tau\,du\\
  \nonumber
	&= \int_{u=0}^{\infty} R(u)\int_{\tau=0}^{1}\Big[ \sum\limits_{n\in\mathbb Z}\Big((\partial_I b)_n(I_t)+2i\pi n\omega'(I_t)u b_n(I_t) \Big)e^{2i\pi n(\tau+\omega(I_t)u)}\\
  \nonumber
	&\hspace{12em}\cdot\Big( \sum\limits_{n'\in \mathbb Z^*}a_{n'}(I_t)e^{2i\pi n'\tau}\Big)\Big]\,d\tau\,du
	\\
	\label{e:dsum1}
	& = \int_{u=0}^{\infty} R(u) \sum\limits_{n\in\mathbb Z^*}\Big((\partial_I b)_n(I_t)+2i\pi n\omega'(I_t)u b_n(I_t) \Big)a_{-n}(I_t)e^{2i\pi n\omega(I_t)u}\,du
	\\
	\label{e:dsumReal}
	& = R_1 + R_2\,,
\end{align}
where
\begin{align*}
  R_1 &\defeq \int_{u=0}^{\infty} R(u)\sum\limits_{n\in\mathbb Z^*}(\partial_Ib)_n(I_t)a_{-n}(I_t)\cos(2\pi n\omega(I_t)u) \, du\\
  R_2 &\defeq -\int_{u=0}^{\infty}R(u)\sum\limits_{n\in\mathbb Z^*}(\partial_Ib)_n(I_t)a_{-n}(I_t)\cdot 2\pi n\omega'(I_t)u\sin(2\pi n\omega(I_t)u)\,du\,.
\end{align*}
Here~\eqref{e:dsum1} followed by expanding the double sum in $n$ and $n'$, integrating in $\tau$, and observing that the only terms that do not vanish are those with $n+n'=0$.
The last equality~\eqref{e:dsumReal} followed since the expression is real-valued and so must equal its real part.

Using \eqref{ineq_R} to bound the integral in $u$ and bounding $(\partial_Ib)_n(I_t)$, $a_{-n}(I_t)$ by some constant $C_{\eta}$, we have that
\[
|R_1| \leq C_{\eta}\sum\limits_{n\in\mathbb Z^*}\frac{1}{n^2}\leq C_{\eta}\,.
\]
Similarly, we can estimate that
\begin{align*}
	\int_{u=0}^{\infty} &R(u) \cdot 2\pi nu \sin(2\pi n\omega(I_t)u)\,du\\&= 	\int_{u=0}^{\infty}\Big(\int_S r(p)e^{-\mu|p|^{2\beta}u} \, dp\Big)\cdot 2\pi nu \sin(2\pi n\omega(I_t)u)\,du\\
	&=\int_S \,dp\, r(p)\Big(\int_{u=0}^{\infty} 2\pi n u \sin(2\pi n \omega(I_t)u)e^{-\mu|p|^{2\beta}u}\,du  \Big)\\
	&=2\int_S\,dp\,r(p)\frac{(2\pi n)^2 \omega(I_t)\mu|p|^{2\beta}}{(\mu^2|p|^{4\beta}+4\pi^2 n^2\omega^2(I_t))^2}\le \frac{C_{\omega_0}}{n^2}\int_S |p|^{2\beta}r(p)\,dp\,.
\end{align*}
Therefore, we also have
\[
|R_2| \le C_{\eta}\sum\limits_{n\in\mathbb Z^*}\frac{1}{n^2}\le C_{\eta}\,.
\]
Combining these estimates we have
\begin{align*}
  \MoveEqLeft
  \Bigl|
    \int_{u=0}^\infty R(u) \int_{\tau=0}^1 \partial_I \big(b(I,\tau + \omega(I)u)\big)_{|I=I_t}  a(I_t,\tau)
    \\
  &\qquad\qquad+ b(I_t,\tau + \omega(I_t)u) \partial_I a(I_t,\tau) \,d\tau\,du\,\Bigr|
  \leq C_{\eta}\sum\limits_{n\in\mathbb Z^*}\frac{1}{n^2}\leq C_{\eta}\,.
\end{align*}

For the stochastic integral term, we introduce a smooth function $b^\eta$ such that for any $\tau\in (0,1)$ we have
\[
b^\eta(I,\tau) = b(I,\tau)\quad\text{if}\quad  I >\eta \qquad\text{and}\qquad b^\eta(I,\tau)=0\quad\text{if}\quad I<\frac{\eta}{2}\,.
\]
Then
\[\begin{split}
\E\Big[&\sup_{t\in[0,T]}\Big| \int_{\tau=0}^1\int_{s=0}^t b(I_s,\tau) dW_s(I_s,\tau)d\tau\Big|^2 \mathbf{1}_{\{\inf_{t\in[0,T]} I_t >\eta,\, \sup_{t\in [0,T]} I_t < \frac{1}{\eta}\}}\Big]\\
&=\E\Big[\sup_{t\in[0,T]}\Big|\int_{\tau=0}^1\int_{s=0}^t b^\eta(I_s,\tau) dW_s(I_s,\tau)d\tau\Big|^2 \mathbf{1}_{\{\inf_{t\in[0,T]} I_t >\eta,\, \sup_{t\in [0,T]} I_t < \frac{1}{\eta}\}}\Big]\\
&\leq C_\eta\,,
\end{split}\]
where the last inequality follows from the  Burkholder-Davis-Gundy inequality to bound the supremum of a continuous local martingale.
Finally, for any $0<\eta<1$ we have 
\[\limsup_{M\to \infty}\P^0(\zeta'_M \leq T) \leq \P\Big( \sup_{t\in [0,T]} I_t \geq \frac{1}{\eta} \Big)+ \P\Big(\inf_{t\in[0,T]}I_t \leq \eta\Big),\]
and
\begin{align*}
  \lim_{\eta\to 0}\P\Big(\inf_{t\in[0,T]}I_t \leq \eta\Big)&=\P\Big(\inf_{t\in[0,T]}I_t =0\Big)=0\,,\\
  \lim_{\eta\to 0}\P\Big( \sup_{t\in [0,T]} I_t \geq \frac{1}{\eta} \Big)&=0 \quad\quad (\text{by Lemma~\ref{bound_I}})\,.
\end{align*}
As a result, we obtain
\[
\lim_{\eps\to 0}\E^{\P^{\eps}}[\Phi(y_t,t\in [0,T])] = \E^{\P^0}[\Phi(y_t,t\in [0,T])]\,,
\]
which concludes the proof of Theorem \ref{mainresult}. 

It remains to prove Proposition~\ref{prop_main}, and we do this in the next section.

\section{Proof of Proposition \ref{prop_main}}\label{sec:propmain}

%
%

We will prove convergence of the family $(Y^{\eps})_{\eps>0}$ by first showing tightness, and then showing all subsequential limits are solutions of a well-posed martingale problem with generator \eqref{def_L}.
This follows the classical approach to prove weak convergence of a sequence of diffusions using martingale problem that is well developed in \cite{StroockVaradhan06}.
However, because the processes involved are not diffusions (and often not even Markovian), we use pseudo-generators and the perturbed test function method (see for instance~\cite{Kushner84}).

\subsection{Pseudo-generator and Perturbed test function method}  

In this section, we follow \cite[Chapter 3]{Kushner84}.
Even though the pair of processes $(Y_t^{\eps},\tau_{t}^{\eps})_{t\geq 0}$ is Markov for each $\eps>0$, the process $(Y_t^{\eps})_{t\geq 0}$ by itself is \emph{not} Markov.
Since analyzing the pair $(Y_t^{\eps},\tau_{t}^{\eps})_{t\geq 0}$ is difficult, we study the process $(Y_t^\epsilon)_{t \geq 0}$ on its own by using the pseudo-generator.
This allows us to apply martingale techniques to the non-Markovian process $(Y^\epsilon_t)_{t \geq 0}$.

\subsubsection{Pseudo-generator}\label{pseudogene} Let 
\begin{equation*}
\mathcal{G}^\eps _t\defeq\mathcal{G}_{t/\eps^2}\,,\end{equation*}
where $\mathcal{G}_{t}$ is defined by \eqref{def_flitr_v},
and $\mathcal{S}^\eps $ be the set of all measurable functions $h(t)$, adapted to the filtration $(\mathcal{G}^\eps _t)$, for which $\sup_{t\leq T} \E[\lvert h(t) \rvert ] <+\infty $, and where $T>0$ is fixed.  The $\plim$ and the pseudo-generator are defined as follows. Let $h$ and $h^\delta$ in $\mathcal{S}^\eps $ for all $\delta>0$. We say that $h=\plim_\delta h^\delta$ if
\begin{equation*}
\sup_{t, \delta }\E \lvert h^\delta(t)\rvert <+\infty\quad \text{and}\quad \lim_{\delta\rightarrow 0}\E \lvert h^\delta (t) -h(t)\rvert =0 \quad \forall t\geq 0\,.\end{equation*}
We say that $h\in \mathcal{D}(\mathcal{A}^\eps)$ the domain of $\mathcal{A}^\eps$ and $\mathcal{A}^\eps h=g$ if $h$ and $g$ are in $\mathcal{S}^\eps$ and 
\begin{equation*}
\plim_{\delta \to 0} \Bigl( \frac{\E^\eps _t  h(t+\delta)-h(t)}{\delta}-g(t) \Bigr)=0\,,
\end{equation*}
where $\E^\eps _t$ is the conditional expectation given $\mathcal{G}^\eps _t$. The difference with respect to the usual infinitesimal generator is that we have \textit{average} limit (via $\plim$) rather than pointwise limit. This greatly helps us average out the noise in the limit $\epsilon \to 0$. A useful result \cite[Theorem 1, p. 39]{Kushner84} about the pseudo-generator $\mathcal{A}^\eps$ that will be central in our proof is the following.
\begin{proposition}[Theorem 1, p.\ 30 in~\cite{Kushner84}]\label{martingale}
  Let $h\in \mathcal{D}(\mathcal{A}^\eps)$. Then
  \begin{equation*}
  M_h ^\eps (t)=h(t)-h(0)-\int _0 ^t  \mathcal{A}^\eps h(u)\,du
  \end{equation*}
  is a $( \mathcal{G}^\eps _t )$-martingale.
\end{proposition}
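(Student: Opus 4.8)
The statement is precisely the one quoted from \cite{Kushner84}; for completeness I indicate the argument one would run. There are three things to verify: that $M_h^\eps$ is $(\mathcal G^\eps_t)$-adapted, that $\E|M_h^\eps(t)|<\infty$ for each $t$, and that $\E^\eps_s[M_h^\eps(t)]=M_h^\eps(s)$ for all $0\le s\le t$. The first two are routine. Since $h$ and $\mathcal A^\eps h$ both belong to $\mathcal S^\eps$, the random variable $h(t)$ and the process $u\mapsto\mathcal A^\eps h(u)$ are adapted, hence so is $t\mapsto\int_0^t\mathcal A^\eps h(u)\,du$; and Tonelli together with $\sup_{u\le T}\E|\mathcal A^\eps h(u)|<\infty$ gives $\E\int_0^t|\mathcal A^\eps h(u)|\,du<\infty$, so all terms are integrable. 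The entire content is therefore the conditional-expectation identity, which after rearranging and using Fubini on the right (justified by the integrability just noted) reads
\[
  \E^\eps_s[h(t)]-h(s)\;=\;\int_s^t\E^\eps_s[\mathcal A^\eps h(u)]\,du\,.
\]

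To prove this I would discretize. Fix $0\le s<t$, for $n\in\N$ put $\delta=(t-s)/n$ and $t_k=s+k\delta$, and set
\[
  g^\delta(u)\;\defeq\;\frac{\E^\eps_u[h(u+\delta)]-h(u)}{\delta}\,,
\]
which lies in $\mathcal S^\eps$ and, by the very definition of $h\in\mathcal D(\mathcal A^\eps)$, satisfies $\plim_{\delta\to 0}g^\delta=\mathcal A^\eps h$; that is, $\sup_{u,\delta}\E|g^\delta(u)|<\infty$ and $\E|g^\delta(u)-\mathcal A^\eps h(u)|\to 0$ for each $u$. Telescoping $h(t)-h(s)=\sum_{k=0}^{n-1}\bigl(h(t_{k+1})-h(t_k)\bigr)$, conditioning the $k$-th term on $\mathcal G^\eps_{t_k}$ and then on $\mathcal G^\eps_s\subseteq\mathcal G^\eps_{t_k}$ by the tower property, and using that $h(t_k)$ is $\mathcal G^\eps_{t_k}$-measurable, one obtains
\[
  \E^\eps_s[h(t)-h(s)]\;=\;\delta\sum_{k=0}^{n-1}\E^\eps_s\bigl[g^\delta(t_k)\bigr]\,.
\]
It then remains to let $n\to\infty$, hence $\delta\to 0$: writing $g^\delta(t_k)=\mathcal A^\eps h(t_k)+\bigl(g^\delta(t_k)-\mathcal A^\eps h(t_k)\bigr)$, the sum $\delta\sum_k\mathcal A^\eps h(t_k)$ is a Riemann sum for $\int_s^t\mathcal A^\eps h(u)\,du$, while the remainder has $L^1$-norm at most $\delta\sum_k\E|g^\delta(t_k)-\mathcal A^\eps h(t_k)|$, which is controlled by the pointwise $L^1$-convergence together with the uniform bound $\sup_{u,\delta}\E|g^\delta(u)|<\infty$. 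Since $\E^\eps_s$ is an $L^1$-contraction, passing to the limit yields $\E^\eps_s[h(t)-h(s)]=\E^\eps_s\bigl[\int_s^t\mathcal A^\eps h(u)\,du\bigr]$, which is the desired martingale identity.

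The delicate step — and the reason the $\plim$ notion is set up as it is — is this final passage to the limit, where one interchanges $\lim_{\delta\to 0}$ with the summation over the partition and with $\E^\eps_s$; the $\plim$ definition packages exactly the two ingredients needed, namely a uniform (in $u$ and $\delta$) $L^1$-bound that tames the growing number of summands, and pointwise $L^1$-convergence of $g^\delta$ to $\mathcal A^\eps h$. A minor further point is the convergence of the Riemann sums $\delta\sum_k\mathcal A^\eps h(t_k)$ to $\int_s^t\mathcal A^\eps h(u)\,du$: in every application in this paper $\mathcal A^\eps h$ is an explicit continuous functional of the driving processes, so this is immediate, and in the general case one reduces to that situation by approximation within $\mathcal S^\eps$. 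Since all of this is \cite[Theorem~1, p.~39]{Kushner84}, in the body of the paper we simply cite it.
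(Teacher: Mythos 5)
The paper offers no proof of this proposition at all — it is quoted verbatim from \cite{Kushner84} and used as a black box — so there is nothing internal to compare against; your sketch is the standard telescoping/Riemann-sum argument behind that theorem, and it is sound. The only point worth flagging is the one you yourself identify as delicate: pointwise $L^1$-convergence of $g^\delta(u)$ to $\mathcal{A}^\eps h(u)$ plus the uniform bound controls $\int_s^t \E|g^\delta(u)-\mathcal{A}^\eps h(u)|\,du$ by dominated convergence, but to conclude that the \emph{discrete} sum $\delta\sum_k \E|g^\delta(t_k)-\mathcal{A}^\eps h(t_k)|$ vanishes one additionally needs some regularity of $u\mapsto \mathcal{A}^\eps h(u)$ (or an averaging over the partition offset), which is exactly what the cited reference supplies and what holds for the explicit continuous functionals arising in this paper.
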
 

In other words, we introduce an operator $\Aep$ on functions of $Y^{\epsilon}$ having similar properties to those of an infinitesimal operator of a Markov process; more specifically, it satisfies a martingale problem.

\subsubsection{Perturbed test function method}

The main idea behind the perturbed test function method is the following.
To characterize the limiting process
through a martingale problem with generator $\mathcal{A}$, we can then try to compare $\mathcal{A}^\epsilon h $ with $\mathcal{A}h$ for some test function $h$.
Unfortunately, $\mathcal{A}^\epsilon h$ may have singular terms in $\eps$, and we cannot proceed that way directly.
Instead, try and extract the effective statistical behavior of the system under consideration from $\mathcal{A}^\epsilon$ by introducing $h^{\eps}$ an appropriate perturbation of $h$.
In other words, if we can show $\lim_{\eps\to 0}\E |h^{\eps}(t)-h(t)|=0$ and $\lim_{\eps\to 0}\E |\Aep h^{\epsilon}(t)-\mathcal{A}h(Y_t^{\epsilon})|=0$ for each $t$, then we can use Proposition \ref{martingale} to conclude that the limiting process $(Y_t)_{t\geq 0}$ satisfies the martingale problem with generator $\mathcal{A}$.

To depict how the perturbation $h^\eps$ is built, let us briefly remind its construction in the simpler situation 
\[
\dot Y^\eps_t = \frac{1}{\eps} \mathbf{F}\Big(Y^\eps_t,\nu\Big(\frac{t}{\eps^2}\Big),\frac{t}{\eps^2}\Big)\qquad\text{and}\qquad \mathbf{F}(y,v,\tau) = v\, F(y,\tau)\,,
\] 
where $\nu$ is a centered ergodic Markov process with invariant measure $\mu$, and generator $\mathcal L_\nu$ satisfying the Fredholm alternative. In this situation, to apply the perturbed test function method, we consider the extended Markov process $Z^\eps_t=(Y^\eps_t, \nu(t/\eps^2), \tau_{t/\eps^2})$ whose infinitesimal generator is given by
\[
\mathcal L^\eps = \frac{1}{\eps^2}Q + \frac{1}{\eps} \, \mathbf{F}(y,v,\tau)\cdot \nabla_y\qquad\text{with}\qquad Q = \Big(\mathcal L_\nu + \frac{\partial}{\partial \tau}\Big)\,.
\]  
Looking for a perturbation $h^\eps$ of $h$ of the form
\[
h^\eps(y,v,\tau) = h(y)+ \eps h_1(y,v,\tau) + \eps^2 h_2(y,v,\tau)\,,
\]
we have
\begin{align}\label{eq:sketch}
	\mathcal L^\eps h^\eps(y,v,\tau) & = \frac{1}{\eps}\big( Q h_1(y,v,\tau) + \mathbf{F}(y,v,\tau)\cdot\nabla_y h(y)\big) \\
	& + Q h_2(y,v,\tau) + \mathbf{F}(y,v,\tau) \cdot\nabla_y h_1(y,v,\tau) \nonumber\\
	& + \mathcal O(\eps)\,.\nonumber
\end{align}
To cancel the singular term we have to solve a Poisson problem of the form
\[
Q p = - q(y,v,\tau)\,.
\] 
Assuming the centering condition
\[
\int_0^1 \E_\mu [ q(y,\nu(0),\tau) ]\,d\tau = 0\,,
\] 
where the expectation is taken with respect to the invariant measure $\mu$ of the process $\nu$, the Poisson problem admits
\[p = \int_0^\infty e^{uQ}q \, du\]
as a solution, so that
\[
p(Z^\eps_t)  = \int_0^\infty  \E \Big[ q\Big( Y^\eps_{t}, \nu\Big(u + \frac{t}{\eps^2}\Big),u+\frac{t}{\eps^2}\Big) \,\Big\vert \,  \nu(s/\eps^2),\,s\leq t\Big]\, du\,,
\]
from the Markov property. Remembering that $\nu$ is centered and $F$ is mean zero with respect to $\tau$, we then have
\begin{equation}\label{eq:sketch_h1}
	h_1(Z^\eps_t)  = \int_0^\infty  \E \Big[ \mathbf{F}\Big( Y^\eps_{t}, \nu\Big(u + \frac{t}{\eps^2}\Big),u+\frac{t}{\eps^2}\Big)\cdot\nabla_y h ( Y^\eps_{t}) \,\Big\vert \,  \nu(s/\eps^2),\,s\leq t\Big]\, du\,.
\end{equation}
To treat the term of order 1 in \eqref{eq:sketch}, which still has fast oscillations through $h_1$, we solve again the Poisson problem but this time with
\begin{equation}\label{eq:sketch_q}
	q(y,v,\tau) = \mathbf{F}(y,v,\tau) \cdot\nabla_y h_1(y,v,\tau) - \int_0^1\E_\mu[\mathbf{F}(y,\nu(0),\tau') \cdot\nabla_y h_1(y,\nu(0),\tau')]d\tau'\,,
\end{equation}
which satisfies the above centering condition. As a result, we can chose
\begin{align}\label{eq:sketch_h2}
	h_2(Z^\eps_t) & = \int_0^\infty  \E \Big[ (\mathbf{F}\cdot\nabla_y h_1)\Big( Y^\eps_{t}, \nu\Big(u + \frac{t}{\eps^2}\Big),u+\frac{t}{\eps^2}\Big)  \,\Big\vert \,  \nu(s/\eps^2),\,s\leq t\Big] du \\
	&-\int_0^1\E_\mu[\mathbf{F}(Y^\eps_{t},\nu(0),\tau') \cdot\nabla_y h_1(Y^\eps_{t},\nu(0),\tau')]\,d\tau'\,.\nonumber
\end{align}
Finally, due to the centering in \eqref{eq:sketch_q}, the expansion \eqref{eq:sketch} becomes
\begin{align*}
	\mathcal L^\eps h^\eps(Z^\eps_t) & = \int_0^1\int_0^\infty \E_\mu[\mathbf{F}(Y^\eps_{t},\nu(0),\tau) \cdot\nabla_y (\mathbf{F}(Y^\eps_{t},\nu(u),\tau+u)\cdot\nabla_y h(Y^\eps_{t}))]\,du\, d\tau \nonumber\\
	& + \mathcal O(\eps)\,,\nonumber
\end{align*}
and we obtain the same expression as in \eqref{def_L}.

However, considering non-Markovian fluctuations, as well as long-range correlations, and a more complex periodic structure, make the construction of $h^\eps$ more delicate. The main concern is to handle the non-integrability at infinity of the correlation function of $V$. To overcome this difficulty we must take advantage of the periodic structure as in the proof of Proposition \ref{p:qConv}. The details are in the next section where we will construct $h^\eps$ to show tightness of $(Y^{\eps})_{\eps>0}$, which implies imply weak subsequential convergence of stochastic processes.

Before going into the proofs, we remind the reader that $(Y^{\eps})_{\eps>0}$ is defined via a truncated process in equation \eqref{e:defEqDiff} and there is an omitted index $M$ in the functions $F$ and $G$ to indicate the truncation. This allows us to bound several expressions in the proofs in terms of $M$.
\subsection{Tightness}\label{tightnesssec}

In this section, we prove the tightness of the family $(Y^{\eps})_{\eps>0 }$, seen as a family of continuous-time processes. Then, according to \cite[Theorem 13.4]{Billingsley99} it suffices to prove the tightness of  $(Y^{\eps})_{\eps>0 }$ in $\mathcal{D}([0,T],\R^2)$ (which is the set of c\`adl\`ag functions with values in $\R^2$ equipped with the Skorokhod topology).

\begin{proposition}\label{tightness}
  The family $(Y^{\eps})_{\eps>0}$  is tight in $\mathcal{D}([0,T],\R^2 )$. 
\end{proposition}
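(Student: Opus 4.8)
The plan is to prove tightness via the perturbed test function / pseudo--generator method of \cite[Chapter~3]{Kushner84}. Since the cutoff $\phi_M$ makes $F$, $G$ and all their derivatives bounded uniformly in $\eps$ and keeps $Y^\eps$ in a fixed bounded set, the compact--containment condition is immediate, and by Kushner's tightness criterion it suffices to exhibit, for every $h$ in a countable point--separating class of test functions on $\R^2$ (and for $h^2$, for second--moment control), perturbed functions $h^\eps\in\mathcal D(\Aep)$ such that $\plim_{\eps\to0}\bigl(h^\eps(t)-h(Y^\eps_t)\bigr)=0$, $\sup_{\eps,\,t\le T}\E|h^\eps(t)|<\infty$, and $\{\Aep h^\eps(t):\eps>0,\ t\le T\}$ is uniformly integrable. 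Granting these, $M^\eps_{h^\eps}(t)=h^\eps(t)-h^\eps(0)-\int_0^t\Aep h^\eps(u)\,du$ is a martingale by Proposition~\ref{martingale}, whose bracket is controlled by the analogous perturbation of $h^2$, and tightness of $(Y^\eps)_{\eps>0}$ in $\mathcal D([0,T],\R^2)$ follows; this is Proposition~\ref{tightness}.

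The crux is the construction of the corrector $h^\eps=h+h_1^\eps+h_2^\eps$ with $h_1^\eps=O(\eps)$ and $h_2^\eps=O(\eps^2)$ in $L^1$. Applying $\Aep$ to $h(Y^\eps_t)$ produces the singular term $\eps^{-1}v(t/\eps^2)\,F(Y^\eps_t,\tau^\eps_t)\cdot\nabla h(Y^\eps_t)$, and $h_1^\eps$ must be chosen so that $\Aep h_1^\eps$ cancels it modulo $O(1)$. One is led to set $h_1^\eps(t)=\eps^{-1}\int_t^\infty\E^\eps_t\bigl[v(s/\eps^2)\,F(Y^\eps_t,\check\tau^\eps_s)\bigr]\,ds\cdot\nabla h(Y^\eps_t)$, where $\check\tau^\eps_s=\tau^\eps_t+\eps^{-2}\omega(Y^\eps_t)(s-t)$ is the fast angle frozen at time $t$ (this choice is precisely what makes the $\omega/\eps^2$ contributions cancel when $\Aep$ differentiates the integrand). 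The obstruction is that $R\notin L^1$ for $\gamma\le1$, so the displayed integral diverges; it is rendered convergent exactly as in the proof of Proposition~\ref{p:qConv}, using that $F(y,\cdot)$ is $1$--periodic and mean--zero in the fast angle and that $\tau^\eps$ rotates with speed $\ge\omega_0/\eps^2$. Concretely, expanding $F(y,\theta)=\sum_{n\neq0}F_n(y)e^{2\pi in\theta}$ and inserting the explicit conditional expectation $\E^\eps_t[v(s/\eps^2)]=\int_S e^{-\mu|p|^{2\beta}(s-t)/\eps^2}\,V(t/\eps^2,dp)$ from Lemma~\ref{l:mar}, the $s$--integral collapses to
\[
  h_1^\eps(t)=\eps\sum_{n\neq0}F_n(Y^\eps_t)\,e^{2\pi in\tau^\eps_t}\;V\!\Bigl(\tfrac{t}{\eps^2},\;\tfrac{1}{\mu|\cdot|^{2\beta}-2\pi in\,\omega(Y^\eps_t)}\Bigr)\cdot\nabla h(Y^\eps_t),
\]
where each weight $p\mapsto(\mu|p|^{2\beta}-2\pi in\,\omega(Y^\eps_t))^{-1}$ is a smooth function on $S$ with $W^{1,k}(S)$--norm $O(1/|n|)$ uniformly in the random $Y^\eps_t$; by the scaling $V(\cdot,\lambda\varphi)=\lambda V(\cdot,\varphi)$, the moment bound~\eqref{boundV2} of Lemma~\ref{l:bound}, and the rapid decay of $F_n$, the series converges in $L^1$ uniformly in $\eps$ and $t\le T$, so $h_1^\eps=O(\eps)$ and the first two criterion conditions hold. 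The second corrector $h_2^\eps$ is built in the same way to remove the remaining $O(1)$ part of $\eps^{-1}v(t/\eps^2)F\cdot\nabla h_1^\eps$ that is still oscillatory (mean--zero) in the fast angle; the non--oscillatory leftover is precisely $\mathcal L h(Y^\eps_t)$ with $\mathcal L$ as in~\eqref{def_L}, up to $O(\eps)$ in $L^1$. Hence $\Aep h^\eps(t)=\mathcal L h(Y^\eps_t)+O(\eps)$, which is bounded uniformly in $\eps$, $t\le T$ and therefore uniformly integrable.

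I expect the genuine difficulty to be technical bookkeeping rather than conceptual. Because the noise $v$ is only a superposition of Ornstein--Uhlenbeck processes and not itself Markov, one cannot use a semigroup and must work throughout with the conditional--expectation formulas of Lemma~\ref{l:mar}; because $R$ is not integrable, the only mechanism producing a well--defined corrector is the fast rotation of $\tau^\eps$, which is itself coupled to the slow variable $Y^\eps$ through $\omega$ and perturbed by the noise through $G$. Consequently, when $\Aep$ hits the $Y^\eps$--, $\tau^\eps$-- and filtration--dependence inside $h_1^\eps$ and $h_2^\eps$, a host of commutator and error terms appear, and each must be shown to be $O(\eps)$ in $L^1$ by combining the exponential damping in the conditional expectation with the periodicity/oscillation of $F$ and the boundedness estimates of Lemma~\ref{l:bound} (with Proposition~\ref{l:bound_a} relevant only near $I=0$, a region already shielded by $\phi_M$ in this truncated setting). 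This is the step that carries the weight of the argument; once it is in place, the martingale property from Proposition~\ref{martingale} and the uniform bounds plug directly into Kushner's tightness criterion.
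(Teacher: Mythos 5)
Your proposal is correct and follows essentially the same route as the paper: the same Kushner tightness criterion, the same first corrector $h^\eps_1$ built from the Fourier expansion of $F$ (over $n\neq 0$, using mean-zero-ness in $\tau$) and the conditional-expectation formula of Lemma~\ref{l:mar}, with the bounds supplied by Lemma~\ref{l:bound}. The only differences are that the paper stops after $h^\eps_1$ for tightness — uniform integrability of $\Aep(h^\eps_0+h^\eps_1)$ is obtained directly from a uniform second-moment bound via~\eqref{boundV2}, without cancelling the residual oscillatory terms, and the correctors $h^\eps_2,h^\eps_3$ are deferred to the identification step — and that your claim $\Aep h^\eps(t)=\mathcal{L}h(Y^\eps_t)+O(\eps)$ pointwise is slightly too strong (the $n+m=0$ terms are quadratic in $V$ and only converge to $\mathcal{L}h$ after time-averaging, cf.\ Lemma~\ref{lemma_L}), though this over-claim is harmless for the uniform integrability needed here.
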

The proof of Proposition \ref{tightness} consists of applying \cite[Theorem 4, p. 48]{Kushner84}. In what follows, let $h$ be a bounded smooth function on $\R^2$ with bounded derivatives, and set $h_0 ^\eps (t)=h(Y^{\eps}_t)$. Our goal is to construct perturbations to $h_0^{\eps}$ as described above.  The pseudo-generator at $h_0^\eps$ is then given by
\begin{equation}\label{Aef0}
\mathcal{A}^\eps h_0 ^\eps (t)= \frac{1}{\eps}\int V\Big(\frac{t}{\eps^2},dp\Big) F\big(Y^{\eps}_t,\tau^{\eps}_t\big)\cdot \nabla h(Y^{\eps}_t)
\end{equation}
which is simply differentiating $h_0^{\eps}(t)$ with respect to $t$ (the derivative $\dot Y^{\eps}_t$ is stated in equation \eqref{e:defEqDiff}). The goal now is to modify the test function $h^\eps_0$ using a small perturbation $h^\eps_1$ so that the pseudo-generator $\mathcal{A}^\eps(h^\eps _0 +h^\eps _1)$ does not blow up in $\eps$ anymore. The first perturbation of $h^\eps_0$ is defined as follows.
\begin{align}\label{eq:h1}
  \nonumber
  \MoveEqLeft
  h^\eps _1 (t) = \frac{1}{\eps}\int_{t}^{\infty} du \int \E^\eps_t \Big[V\Big(\frac{u}{\eps^2},dp\Big) F\Big(Y^{\eps}_t,\tau^{\eps}_t + \frac{u-t}{\eps^2}\, \omega(Y^{\eps}_t)\Big)\Big]\cdot \nabla h(Y^{\eps}_t)\\
  \nonumber
  &=\eps\int_{0}^{\infty} du\int \E^\eps_t \Big[V\Big(u+\frac{t}{\eps^2},dp\Big) F\Big(Y^{\eps}_t,\tau^{\eps}_t + u\, \omega(Y^{\eps}_t)\Big)\Big]\cdot \nabla h(Y^{\eps}_t) \\
  &=\eps\int_{0}^{\infty} du\int \sum_{n\in\Z^*} \E^\eps_t \Big[V\Big(u+\frac{t}{\eps^2},dp\Big)\Big] e^{2i\pi n(\tau^{\eps}_t + u\,\omega(Y^{\eps}_t))} F_n(Y^{\eps}_t)\cdot \nabla h(Y^{\eps}_t),
\end{align}
where
\[F_n(y)\defeq \int_0^1  F(y,\tau)e^{-2i\pi n\tau}\, d\tau\,.\]

According to Lemma~\ref{l:mar}, we have
\begin{equation*}
\E^\eps_t V\Big(u+\frac{t}{\eps^2},dp\Big) = e^{-\mu |p|^{2\beta} u } V\Big(\frac{t}{\eps^2},dp\Big),\end{equation*}
so that
\begin{equation*}
\begin{split}
h^\eps _1 (t) &=\eps \sum_{n\in\Z^*} e^{2i\pi n \tau^{\eps}_t} \int V\Big(\frac{t}{\eps^2},dp\Big)      F_n(Y^{\eps}_t)\cdot \nabla h(Y^{\eps}_t)   \int_{0}^\infty  e^{-\mu |p|^{2\beta} u } e^{2i\pi n \omega(Y^{\eps}_t) u}\, du\\
&=\eps \sum_{n\in\Z^*} e^{2i\pi n \tau^{\eps}_t} \int \frac{V(t/\eps^2,dp)}{\mu \vert p \vert^{2\beta}-2 i\pi n\, \omega(Y^{\eps}_t)}      F_n(Y^{\eps}_t)\cdot \nabla h(Y^{\eps}_t)
\end{split}\end{equation*}

To apply \cite[Theorem 4 p. 48]{Kushner84} and then prove Proposition \ref{tightness}, we only have to prove the two following lemmas. 
\begin{lemma}\label{bound2}  For any $T>0$, and $\eta>0$
  \begin{equation*}
  \lim_{\eps\to 0} \P\Big(\sup_{t \in[0, T]} \lvert h^\eps _1 (t)\rvert>\eta\Big) =0\qquad \text{and} \qquad \lim_{\eps\to 0} \sup_{t \in[0, T]}\E\lvert h^\eps _1 (t) \rvert =0\,.
  \end{equation*}
\end{lemma}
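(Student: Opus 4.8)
The key is the explicit prefactor $\eps$ in the closed form for $h^\eps_1$ derived above, which I would rewrite as
\[
  h^\eps_1(t) = \eps \sum_{n\in\Z^*} e^{2i\pi n\tau^\eps_t}\,\big[F_n(Y^\eps_t)\cdot\nabla h(Y^\eps_t)\big]\, V\big(\tfrac{t}{\eps^2},\,g_n^{Y^\eps_t}\big)\,,
  \qquad g_n^y(p) \defeq \frac{1}{\mu|p|^{2\beta} - 2i\pi n\,\omega(y)}\,,
\]
where $V(t/\eps^2,\varphi)=\int_S\varphi\,V(t/\eps^2,dp)$ is linear in $\varphi$. The plan is to dominate $|h^\eps_1(t)|$, uniformly in $t\in[0,T]$, by $\eps$ times a quantity that Lemma~\ref{l:bound} controls uniformly in $\eps$, and then conclude by Markov's inequality.

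First I would record three uniform estimates on the factors. Since $F$ is smooth, bounded, and $1$-periodic in $\tau$ together with its $\tau$-derivatives (the cutoff $\phi_M$ makes this true up to $I=0$), two integrations by parts give $\sup_y|F_n(y)|\le C_M/n^2$. By~\eqref{e:hypOmega} one has $|\mu|p|^{2\beta}-2i\pi n\,\omega(y)|\ge 2\pi|n|\,\omega_0$ for every $y$, while $\partial_p g_n^y(p)$ is $O(|p|^{2\beta-1}/n^2)$; since $\beta>0$ I can fix a Sobolev exponent $k\in(1,\infty)$ with $(2\beta-1)k>-1$, so that $|p|^{2\beta-1}\in L^k(S)$, and conclude — writing $g_n^y$ via its real and imaginary parts — that $\|n\,g_n^y\|_{W^{1,k}(S)}\le C_k$ uniformly in $n\in\Z^*$ and $y$, i.e.\ $g_n^{y}\in W_{k,C_k/|n|}$. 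Finally $|\nabla h|\le\|\nabla h\|_\infty$. Since $t\mapsto \tfrac{|n|}{C_k}g_n^{Y^\eps_t}$ is bounded in $W^{1,k}(S)$ and measurable, hence an element of $L^\infty([0,T],W_{k,1})$, linearity of $V(t/\eps^2,\cdot)$ then gives $|V(\tfrac{t}{\eps^2},g_n^{Y^\eps_t})|\le\tfrac{C_k}{|n|}\sup_{(t,\varphi)\in D_{k,1}}|V(\tfrac{t}{\eps^2},\varphi(t,\cdot))|$ for all $t$ and $n$; summing the convergent series $\sum_{n}(C_M/n^2)(C_k/|n|)$ yields
\[
  \sup_{t\in[0,T]}|h^\eps_1(t)| \;\le\; \eps\,C_{M,h}\,\sup_{(t,\varphi)\in D_{k,1}}\big|V\big(\tfrac{t}{\eps^2},\varphi(t,\cdot)\big)\big|\,.
\]

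With this in hand the two statements follow. For the first, Markov's inequality together with~\eqref{boundV1} gives
\[
  \P\Big(\sup_{t\in[0,T]}|h^\eps_1(t)|>\eta\Big) \le \frac{\eps\,C_{M,h}}{\eta}\,\E\Big[\sup_{(t,\varphi)\in D_{k,1}}\big|V(t/\eps^2,\varphi(t,\cdot))\big|\Big] \le \frac{C_{M,h}}{\eta}\big(\eps\,C + C(\eps)\big)\xrightarrow[\eps\to0]{}0\,,
\]
since $C(\eps)\to0$. For the second, I would fix $t$, use $g_n^{Y^\eps_t}\in W_{k,C_k/|n|}$ almost surely and take expectations termwise to get $\E|h^\eps_1(t)|\le\eps\,C_{M,h}\,\E[\sup_{\varphi\in W_{k,1}}|V(t/\eps^2,\varphi)|]\le\eps\,C_{M,h}\,C_1$ by~\eqref{boundV2} (with $n=1$), which is uniform in $t\in[0,T]$; hence $\sup_{t\in[0,T]}\E|h^\eps_1(t)|\to0$.

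The one genuinely technical point I expect to dwell on is the uniform Sobolev bound $g_n^y\in W_{k,C_k/|n|}$: it uses the uniform non-degeneracy $\omega\ge\omega_0$ to keep the denominator away from zero, and a choice of $k$ compatible with the mild singularity $|p|^{2\beta-1}$ of $\partial_p g_n^y$ at $p=0$ — a choice that exists precisely because $\beta>0$. Everything else is bookkeeping: the prefactor $\eps$ and the a priori bounds of Lemma~\ref{l:bound} close both limits, and the summability in $n$ is automatic from the smoothness of $F$ (giving $F_n=O(n^{-2})$) and the extra $|n|^{-1}$ from $g_n$.
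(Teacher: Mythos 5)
Your argument is correct and follows essentially the same route as the paper: isolate the explicit prefactor $\eps$, show the test functions fed to $V(t/\eps^2,\cdot)$ lie in a fixed bounded set of $W^{1,k}(S)$ (with $k$ chosen according to the size of $\beta$ to absorb the $|p|^{2\beta-1}$ singularity of the derivative), and close with Lemma~\ref{l:bound} and Markov's inequality. The only difference is organizational — you bound the Fourier sum term by term using $F_n=O(n^{-2})$ and $\|n\,g_n^y\|_{W^{1,k}}\le C_k$, whereas the paper bounds the summed function $\varphi_{t,\eps}$ directly via Cauchy--Schwarz — and both yield the same conclusion.
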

\begin{lemma}\label{A1}
  For any $T>0$, $\big\{\mathcal{A}^\eps \big(h^\eps _0 +h^\eps _1\big)(t), \eps \in(0,1), 0\leq t\leq T\big\}$ is uniformly integrable.
\end{lemma}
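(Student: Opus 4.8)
The plan is to compute $\Aep(h^\eps_0+h^\eps_1)$ in closed form, observe that the singular terms of order $\eps^{-1}$ cancel by the very construction of $h^\eps_1$, and then bound what remains in $L^2$, uniformly in $\eps\in(0,1)$ and $t\in[0,T]$. Since a family bounded in $L^2$ is uniformly integrable, this proves Lemma~\ref{A1}.

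First I would apply the pseudo-generator to $h^\eps_1$. Since $(Y^\eps_t,\tau^\eps_t,V(t/\eps^2,\cdot))$ is Markov, $\Aep$ acts on the smooth functional $h^\eps_1$ as the associated generator, which splits into an $\eps^{-2}$-part — the (infinite-dimensional Ornstein--Uhlenbeck) evolution of $V$ together with the fast drift $\eps^{-2}\,\omega(Y^\eps_t)\,\partial_\tau$ of $\tau^\eps$ — and an $\eps^{-1}$-part, namely $\eps^{-1}v(t/\eps^2)\bigl(F(Y^\eps_t,\tau^\eps_t)\cdot\nabla_Y+G(Y^\eps_t)\,\partial_\tau\bigr)$. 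Two structural facts drive the computation: $h^\eps_1$ is \emph{linear} in $V$, so the variance term~\eqref{markovvar} of the Ornstein--Uhlenbeck generator never contributes and, by Lemma~\ref{l:mar}, the $V$-part simply replaces $V(t/\eps^2,dp)$ by $-\mu\abs{p}^{2\beta}V(t/\eps^2,dp)$; and $h^\eps_1$ is of order $\eps$, so the $\eps^{-1}$-part applied to it stays of order one. Inserting the explicit formula for $h^\eps_1$, the two $\eps^{-2}$-scale contributions combine into
\[
  \frac{1}{\eps}\sum_{n\in\Z^*} e^{2i\pi n\tau^\eps_t}\int \frac{2i\pi n\,\omega(Y^\eps_t)-\mu\abs{p}^{2\beta}}{\mu\abs{p}^{2\beta}-2i\pi n\,\omega(Y^\eps_t)}\,V\!\left(\tfrac{t}{\eps^2},dp\right) F_n(Y^\eps_t)\cdot\nabla h(Y^\eps_t),
\]
and since the quotient equals $-1$ while $\sum_{n\in\Z^*}e^{2i\pi n\tau}F_n(y)=F(y,\tau)$ (as $F$ is mean-zero in $\tau$), this is precisely $-\Aep h^\eps_0(t)$. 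Therefore the only surviving term is of order one,
\[
  \Aep(h^\eps_0+h^\eps_1)(t)=v\!\left(\tfrac{t}{\eps^2}\right)\bigl(F(Y^\eps_t,\tau^\eps_t)\cdot\nabla_Y+G(Y^\eps_t)\,\partial_\tau\bigr)\bigl(\eps^{-1}h^\eps_1\bigr)(t).
\]

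It then remains to bound the right-hand side in $L^2$. Differentiating $\eps^{-1}h^\eps_1$ in $Y$ or in $\tau$ produces a series over $n\in\Z^*$ whose $n$-th term is $\int V(t/\eps^2,dp)\,\Phi_n(p)$, where (up to the differential operator) $\Phi_n=\Phi_n^{(Y^\eps_t)}$ is $p\mapsto F_n(Y^\eps_t)\cdot\nabla h(Y^\eps_t)\,\bigl(\mu\abs{p}^{2\beta}-2i\pi n\,\omega(Y^\eps_t)\bigr)^{-1}$, or a derivative thereof. The cutoff $\phi_M$ confines $(I^\eps_t,\psi^\eps_t)$ to a bounded set on which $I$ stays away from $0$, so $F$, $\nabla F$, $\nabla h$, $\omega$, $\nabla\omega$ are bounded by constants depending only on $M$ and $h$; moreover $\abs{\mu\abs{p}^{2\beta}-2i\pi n\,\omega(Y^\eps_t)}\ge 2\pi\abs{n}\omega_0$ by~\eqref{e:hypOmega}, $S$ is bounded, and the smoothness of $F$ in $\tau$ forces $F_n$ (and its $Y$-derivatives) to decay faster than any power of $\abs{n}$, absorbing the single factor of $n$ brought down by $\partial_\tau$. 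Consequently each $\Phi_n$ lies in a fixed ball of $W^{1,k}(S)$ for some $k\in(1,\infty)$ — the mild $\abs{p}^{2\beta-1}$ singularity of $\partial_p\Phi_n$ at the origin being harmless because $\beta>0$ — of radius $C_M/(1+n^2)$, uniformly over $Y^\eps_t$ in the truncation region. Writing each factor as $V(t/\eps^2,\Phi_n)$ and $v(t/\eps^2)=V(t/\eps^2,\mathbf 1)$, one gets a pointwise bound
\[
  \bigl|\Aep(h^\eps_0+h^\eps_1)(t)\bigr|\le C_M\,\Bigl(\sup_{\varphi\in W_{k,M'}}\bigl|V(t/\eps^2,\varphi)\bigr|\Bigr)^2
\]
for a suitable $M'$, and the uniform fourth-moment estimate~\eqref{boundV2} of Lemma~\ref{l:bound} then yields $\sup_{\eps\in(0,1)}\sup_{t\in[0,T]}\E\,\bigl|\Aep(h^\eps_0+h^\eps_1)(t)\bigr|^2<\infty$, hence uniform integrability.

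The step I expect to be the main obstacle is the first one: justifying the \emph{exact} cancellation of the $\eps^{-1}$ terms and checking that no stray negative power of $\eps$ survives. This relies on $h^\eps_1$ having been engineered (through Lemma~\ref{l:mar}) so that the fast dynamics of $(\tau^\eps,V(\cdot/\eps^2))$ annihilates it up to $-\Aep h^\eps_0$, and on the linearity of $h^\eps_1$ in $V$, which keeps the variance part~\eqref{markovvar} of the Ornstein--Uhlenbeck generator out of the picture. The second step is long but routine; within it, the only genuine subtlety — the same one that appeared in the quadratic case — is that the relevant correlation integrals are finite only because of the oscillatory factors, here encoded in the denominators $\mu\abs{p}^{2\beta}-2i\pi n\,\omega(Y^\eps)$, and that the $n$-series converge thanks to the rapid Fourier decay of the smooth, $1$-periodic function $F$.
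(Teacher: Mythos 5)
Your proposal is correct and follows essentially the same route as the paper: the paper likewise computes $\mathcal{A}^\eps(h^\eps_1)(t)=-\mathcal{A}^\eps(h^\eps_0)(t)+\mathcal{P}^\eps_1(t)+\mathcal{P}^\eps_2(t)$, writes the surviving $O(1)$ terms as products $V(t/\eps^2,\varphi^j_{t,\eps})\,V(t/\eps^2,\mathbf 1_S)$ with $\varphi^j_{t,\eps}$ confined (via the cutoff $\phi_M$, the lower bound $\omega\ge\omega_0$, and the Fourier decay of $F$) to a fixed ball of $W^{1,k}(S)$, and concludes with the moment bound~\eqref{boundV2} of Lemma~\ref{l:bound} to get $\sup_{\eps,t}\E|\mathcal{A}^\eps(h^\eps_0+h^\eps_1)(t)|^2\le C$. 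Your derivation of the cancellation by splitting the generator into its $\eps^{-2}$ and $\eps^{-1}$ parts is just another presentation of the paper's direct differentiation of the time-integral representation of $h^\eps_1$, and leads to the same formula~\eqref{def_Af1}.
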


Before going through the proofs of these two lemmas, we make one remark
that the test function $h^\eps _1$, above, is well defined since we sum over $n\neq 0$.
This is because $F$ has mean zero respect to the $\tau$, and explains why this mean-zero condition is crucial to the analysis.
For $n=0$, the above function $h^\eps _1$ would not be defined in the case of long-range correlations. 

\begin{proof}[Proof of Lemma \ref{bound2}.] 
  We rewrite $h_1^\eps$ as
  \begin{equation*}h_1^\eps(t)=\eps V\Big(\frac{t}{\eps^2},\varphi_{t,\eps}\Big),\end{equation*}
  with
  \begin{equation}\label{eq:phi}\varphi_{t,\eps}(p)\defeq \sum_{n\in\Z^*} e^{2i\pi n \tau^{\eps}_t}\frac{1}{\mu \vert p \vert^{2\beta}-2 i\pi n\, \omega(Y^{\eps}_t)}      F_n(Y^{\eps}_t)\cdot \nabla h(Y^{\eps}_t)\,.\end{equation}
  By the Cauchy-Schwarz inequality
  \begin{equation*}
  \begin{split}
  |\varphi_{t,\eps}(p)| &\leq C_{h,\omega_0} \sum_{n\in\Z^*}\frac{1}{|n|} \| F_n(Y^{\eps}_t)\|\\
  &\leq C_{h,\omega_0} \Big(\sum_{n\in\Z^*}\frac{1}{|n|^2}\Big)^{1/2} \Big(\sum_{n\in\Z^*}\| F_n(Y_t^{\eps})\|^2\Big)^{1/2}\\
  &\leq C_{h,\omega_0} \Big(\sum_{n\in\Z^*}\frac{1}{|n|^2}\Big)^{1/2}\Big(\int_0^1 \| F(Y^{\eps}_t,\tau)\|^2\,d\tau\Big)^{1/2}\\
  & \leq C_{h,\omega_0} \sup_{\substack{|y|\leq 2M }} \Big(\int_0^1 \| F(y,\tau)\|^2\,d\tau\Big)^{1/2}. 
  \end{split}\end{equation*}
  
  If $\beta \geq 1/2$, we have the Lipschitz bound
  \begin{equation*}\begin{split}
  |\varphi_{t,\eps}(p)-\varphi_{t,\eps}(q)| &\leq (|p|^{2\beta}-|q|^{2\beta}) C_{h,\omega_0,F} \sum_{n\in\Z^*}\frac{1}{|n|^2}\\
  & \leq| |p|-|q| | \sup_{r\in S} |r|^{2\beta-1} C_{h,\omega_0,F} \sum_{n\in\Z^*}\frac{1}{|n|^2} \leq L |p-q|
  \end{split}\end{equation*}
  with
  \begin{equation*}
    L=  \sup\limits_{r\in S}|r|^{2\beta-1} C_{h,\omega_0,F} \sum_{n\in\Z^*}\frac{1}{|n|^2} <\infty \,.
  \end{equation*}
  Using inequality \eqref{boundV1} of Lemma~\ref{l:bound} with $k=\infty$, along with the Chebychev's inequality, we have
  \begin{equation*}
  \P\Big(\sup_{ t \in[0, T]} \lvert h^\eps _1 (t)\rvert>\eta\Big) \leq \frac{1}{\eta} \E\sup_{t \in[0, T]} \lvert h^\eps _1 (t)\rvert \leq C'_{h,\omega_0,F}\, C(\eps) \,,
  \end{equation*}
  with $\lim_\eps C(\eps)=0$. 
  
  If $\beta < 1/2$, the function $\varphi_{t,\eps}$ belongs to $W^{1,k}(S)$ with $k\in (1,1/(1-2\beta))$ since
  \begin{equation*}
  \int_S |\partial_p\varphi_{t,\eps}(p)|^k\, dp \leq  C''_{h,\omega_0,F} \int_S |p|^{k(2\beta-1)}\,dr<\infty\,,
  \end{equation*}
  so that using inequality \eqref{boundV1} of Lemma~\ref{l:bound} but with $k\in (1,1/(1-2\beta))$ leads again to 
  \begin{equation*}
  \P\Big(\sup_{t \in[0, T]} \lvert h^\eps _1 (t)\rvert>\eta\Big) \leq C'_{h,\omega_0,F}\, C(\eps)\,.
  \end{equation*}
  
  To prove the second limit in the lemma, one can just remark that
  \begin{equation*}
  \E\lvert h^\eps _1 (t) \rvert  \leq \E\sup_{t \in[0, T]} \lvert h^\eps _1 (t)\rvert  
  \end{equation*} 
  for any $t\in[0,T]$, and then concludes the proof of Lemma \ref{bound2}.
\end{proof}

\begin{proof}[Proof of Lemma \ref{A1}.] For proving uniform integrability, it is sufficient to show that 
  \begin{equation*}
  \sup_{\eps,t}\E\vert \mathcal{A}^\eps(h^\eps_0+h^\eps_1)(t)\vert^2\leq C\,.
  \end{equation*}
  Following the definition of pseudo-generator in Section \ref{pseudogene}, we have that
  \begin{align*}
    \mathcal{A}^{\eps}&(h_1^{\eps})(t) = -\mathcal{A}^{\eps}(h_0^{\eps})(t)\\&+\frac{1}{\eps}\int_{t}^{\infty} du \int \E^\eps_t \Big[V\Big(\frac{u}{\eps^2},dp\Big)\Big]\frac{d}{ds} \Big[F\Big(Y^{\eps}_s,\tau^{\eps}_s + \frac{u-s}{\eps^2}\, \omega(Y^{\eps}_s)\Big)\cdot \nabla h(Y^{\eps}_s)\Big]\Big|_{s=t}
  \end{align*}
  which is essentially differentiating $h^{\eps}_1(t)$ with respect to $t$ (we however remark that we do not differentiate the $t$ in $\E_t^\eps)$. After some lengthy but straightforward computations (similarly to those used to obtain~\eqref{eq:h1}), we obtain
  \begin{align}\label{def_Af1}
    \nonumber
    \MoveEqLeft
  \mathcal{A}^\eps(h^\eps_1)(t)=-\mathcal{A}^\eps(h^\eps_0)(t)+\sum_{n,m\in\Z^*} e^{2i\pi (n+m) \tau^{\eps}_t} \iint \frac{V(t/\eps^2,dp)V(t/\eps^2,dp')}{\mu |p|^{2\beta} - 2i\pi n\omega(Y^{\eps}_t)}\\
  \nonumber
  &\qquad\qquad\cdot \Big(  F_{n}(Y^{\eps}_t) ^T \nabla^2 h(Y^{\eps}_t)F_{m}(Y^{\eps}_t) +\nabla h(Y^{\eps}_t)^T \textrm{Jac} F_{n}(Y^{\eps}_t)F_{m}(Y^{\eps}_t) \\
  \nonumber
  &\qquad\qquad\qquad-\frac{2i\pi n}{\mu|p|^{2\beta}-2i\pi n \omega(Y^{\eps}_t)} F_{n}(Y^{\eps}_t)\cdot \nabla h(Y^{\eps}_t)F_{m}(Y^{\eps}_t)\cdot \nabla \omega(Y^{\eps}_t)\Big) \\
  \nonumber
  &\qquad+\sum_{n\in\Z^*} 2i\pi n \, e^{2i\pi n \tau^{\eps}_t} \iint \frac{V(t/\eps^2,dp)V(t/\eps^2,dp')}{\mu |p|^{2\beta} - 2i\pi n\omega(Y^{\eps}_t)} \\
  \nonumber
  &\qquad\qquad\cdot F_{n}(Y^{\eps}_t)\cdot \nabla h(Y^{\eps}_t) G(Y^{\eps}_t)\\
  \nonumber
  &=-\mathcal{A}^\eps(h^\eps_0)(t) + V\Big(\frac{t}{\eps^2},\varphi^1_{t,\eps}\Big)V\Big(\frac{t}{\eps^2},\mathbf{1}_S\Big) +V\Big(\frac{t}{\eps^2},\varphi^2_{t,\eps}\Big)V\Big(\frac{t}{\eps^2},\mathbf{1}_S\Big)\\
  &\defeq -\mathcal{A}^\eps(h^\eps_0)(t) +\mathcal{P}_1^\eps(t)+\mathcal{P}_2^\eps(t)
  \end{align}
  where $\varphi_{t,\eps}^j$, $j=1,2$ are defined similarly as in \eqref{eq:phi}. As in the proof of Lemma \ref{bound2}, it is not hard to see that the $\varphi^j_{t,\eps}$, $j=1,2$ are Lispschitz in $p$ on $S$, and that
  \begin{equation*}
  \begin{split}
  |\varphi^1_{t,\eps}(p)| &\leq C_{h,\omega_0,\nabla \omega} \sum_{m\in\Z^*} |F_{m}(Y^{\eps}_t)|  \sum_{n\in\Z^*} \frac{1}{|n|}\Big( (1+|n|) |F_{n}(Y^{\eps}_t)| + \|\textrm{Jac}F_{n}(Y^{\eps}_t)\| \Big)\\
  &\leq  \tilde{C}_{h,\omega_0,\nabla \omega}  \sup_{\substack{|y|\leq 2M }} \int_0^1\Big(  | F(y,\tau)|^2 + |\partial_\tau F(y,\tau)|^2 +\|\textrm{Jac}F(y,\tau)\|^2 \Big)\,d\tau\,.
  \end{split}\end{equation*}
  The same lines give also
  \begin{equation*}
  |\varphi^2_{t,\eps}(p)| \leq C_{h,\omega_0}  \sup_{\substack{|y|\leq 2M}} \big|G(y)\big|\Big(\int_0^1  |\partial_\tau F(y,\tau)|^2\,d\tau\Big)^{1/2}.
  \end{equation*}
  Then, we can use inequality \eqref{boundV2} of Lemma~\ref{l:bound}, and obtain
  \begin{equation*}\sup_{\eps,t} \E[|\mathcal{P}_j^\eps(t)|^2]\leq C,\quad j=1,2\		,.\end{equation*}  
  which concludes the proof of Lemma \ref{A1}, and then Proposition \ref{tightness} as well. 
\end{proof}

\subsection{Identification of the limit}

In this section, we identify all the limit points of $(Y^{\eps})_{\eps>0}$ via a well-posed martingale problem as stated in the following proposition. By abuse of notations, we still denote by $(Y^{\eps})_{\eps>0}$ a converging subsequence and by $(Y_t)_{t\geq 0}$ a limit point. 

\begin{proposition}\label{identification}
  All the limit points $(Y_t)_{t\geq 0}$ of $(Y^{\eps})_{\eps>0}$ are solutions of a well-posed martingale problem with generator 
  \begin{equation}\label{prop_LM}
  \mathcal{L} h(y) = \int_0^1 d\tau \int_0^\infty du \, R(u)\tilde F(0,y,\tau)\cdot \nabla_y ( \tilde F(u,y,\tau)\cdot \nabla_y h(y))
  \end{equation}
  with
  \begin{equation*}
  \tilde F(u,y,\tau) =  F(y,\tau +\omega(y)u)\,.\end{equation*}
\end{proposition}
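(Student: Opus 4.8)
The plan is to combine the tightness already proved in Proposition~\ref{tightness} with an identification of every subsequential limit via a well-posed martingale problem, using the perturbed test function method. By Prokhorov's theorem, from any subsequence of $(Y^\eps)_{\eps>0}$ one extracts a further subsequence, still denoted $(Y^\eps)_{\eps>0}$, converging in distribution in $\mathcal{C}([0,T],\R^2)$ to a limit $(Y_t)_{t\ge 0}$ with $Y_0=(I_0,\psi_0)$. Fix a bounded smooth $h$ on $\R^2$ with bounded derivatives. The aim is to build a perturbation $h^\eps=h^\eps_0+h^\eps_1+h^\eps_2$, with $h^\eps_0(t)=h(Y^\eps_t)$ and $h^\eps_1$ the corrector from~\eqref{eq:h1}, together with a second corrector $h^\eps_2$, such that \textbf{(i)} $\sup_{t\le T}\E|h^\eps_1(t)|\to 0$ and $\sup_{t\le T}\E|h^\eps_2(t)|\to 0$; \textbf{(ii)} $\mathcal{A}^\eps h^\eps(t)=\mathcal{L}h(Y^\eps_t)+\rho^\eps(t)$ with $\sup_{t\le T}\E|\rho^\eps(t)|\to 0$; and \textbf{(iii)} the family $\{\mathcal{A}^\eps h^\eps(t):\eps\in(0,1),\ 0\le t\le T\}$ is uniformly integrable. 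Property~(i) for $h^\eps_1$ is exactly Lemma~\ref{bound2}, and (iii) for $h^\eps_0+h^\eps_1$ is Lemma~\ref{A1}.

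The corrector $h^\eps_2$ is obtained by applying to $\mathcal{A}^\eps(h^\eps_0+h^\eps_1)=\mathcal{P}^\eps_1+\mathcal{P}^\eps_2$ (computed in~\eqref{def_Af1}) the same forward-in-time averaging that produced $h^\eps_1$ from $\mathcal{A}^\eps h^\eps_0$. Concretely one sets
\[
  h^\eps_2(t)=\int_t^\infty \E^\eps_t\big[\,\overline{\mathcal{P}}^\eps(t,s)-\mathcal{L}h(Y^\eps_t)\,\big]\,ds,
\]
where $\overline{\mathcal{P}}^\eps(t,s)$ is the quadratic-in-$V$ expression $\mathcal{P}^\eps_1+\mathcal{P}^\eps_2$ with the slow variables $(Y^\eps,\tau^\eps)$ frozen at time $t$, the angular phases $e^{2i\pi n\tau^\eps_t}$ advanced to $e^{2i\pi n(\tau^\eps_t+(s-t)\omega(Y^\eps_t)/\eps^2)}$, and the noise re-evaluated at time $s/\eps^2$. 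Computing $\E^\eps_t[V(s/\eps^2,dp)V(s/\eps^2,dq)]$ via Lemma~\ref{l:mar}, the conditional-mean part decays like $e^{-\mu(|p|^{2\beta}+|q|^{2\beta})(s-t)/\eps^2}$, while the fluctuation part equals $(1-e^{-2\mu|p|^{2\beta}(s-t)/\eps^2})r(p)\delta(p-q)$; subtracting the limit $\mathcal{L}h(Y^\eps_t)$ turns the latter into $-e^{-2\mu|p|^{2\beta}(s-t)/\eps^2}r(p)\delta(p-q)$, so the $s$-integral converges (and $h^\eps_2=O(\eps^2)$ after the rescaling $s-t=\eps^2 u$). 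The non-integrable tail of $R$ never enters here, because $F$ is mean-zero in $\tau$ so only Fourier modes $n\ne 0$ appear and all frequency integrals are of the form $\int_0^\infty R(u)\cos(2\pi n\omega(I)u)\,du$, which are finite by~\eqref{ineq_R} --- exactly as in the quadratic case and in Lemma~\ref{bound2}. By construction the pseudo-generator of $h^\eps_2$ cancels $\mathcal{P}^\eps_1+\mathcal{P}^\eps_2$ and leaves $\mathcal{L}h(Y^\eps_t)$ plus remainder terms arising from the action of $\mathcal{A}^\eps$ on the frozen slow variables; together with (i) and (iii) these are estimated, term by term exactly as in the proofs of Lemmas~\ref{bound2} and~\ref{A1}, from the boundedness of $V$ in Lemma~\ref{l:bound} (used with Sobolev exponent $k=\infty$ when $\beta\ge\frac12$ and $k\in(1,1/(1-2\beta))$ otherwise) and the boundedness of $F$, $G$ and their derivatives, which holds because of the cutoff $\phi_M$.

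Granting (i)--(iii), Proposition~\ref{martingale} gives that $M^\eps_h(t)=h^\eps(t)-h^\eps(0)-\int_0^t\mathcal{A}^\eps h^\eps(r)\,dr$ is a $(\mathcal{G}^\eps_t)$-martingale, so that for $0\le s<t\le T$ and every bounded continuous $\Psi:\mathcal{C}([0,s],\R^2)\to\R$,
\[
  \E\Big[\big(h^\eps(t)-h^\eps(s)-\textstyle\int_s^t\mathcal{A}^\eps h^\eps(r)\,dr\big)\,\Psi\big((Y^\eps_r)_{r\le s}\big)\Big]=0 .
\]
Replacing $h^\eps$ by $h^\eps_0+h^\eps_1+h^\eps_2$ and $\mathcal{A}^\eps h^\eps(r)$ by $\mathcal{L}h(Y^\eps_r)+\rho^\eps(r)$, using (i) and $\E|\rho^\eps|\to 0$ to discard the correctors and the remainder, using (iii) and uniform integrability to interchange the limit with the time integral, and using the weak convergence $Y^\eps\Rightarrow Y$ in $\mathcal{C}([0,T],\R^2)$ together with the bounded continuity of $h$ and of $\mathcal{L}h$ (the latter from its explicit form~\eqref{prop_LM}, the bound~\eqref{ineq_R}, and the cutoff $\phi_M$, which removes the $I=0$ singularity of $\partial_I a$), one passes to the limit $\eps\to 0$ and obtains
\[
  \E\Big[\big(h(Y_t)-h(Y_s)-\textstyle\int_s^t\mathcal{L}h(Y_r)\,dr\big)\,\Psi\big((Y_r)_{r\le s}\big)\Big]=0 .
\]
Since $h$, $\Psi$, $s$ and $t$ are arbitrary, every limit point $(Y_t)_{t\ge 0}$ solves the martingale problem for $\mathcal{L}$ with initial condition $(I_0,\psi_0)$.

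Finally, for well-posedness, expanding~\eqref{prop_LM} shows that $\mathcal{L}$ is a second-order operator whose diffusion matrix and drift are precisely the coefficients appearing in the SDE system~\eqref{eq:IM}--\eqref{eq:psiM}: the diffusion matrix is the Gram matrix of the maps $\phi\mapsto\int_0^1 a(\cdot,\tau)\phi(\tau)\,d\tau$ and $\phi\mapsto\int_0^1 b(\cdot,\tau)\phi(\tau)\,d\tau$ relative to the covariance~\eqref{def_cov}, all multiplied by $\phi_M$. Because $F=\phi_M\cdot(\cdots)$ vanishes for $I\le 1/(2M)$, for $I\ge 2M$ and for $|\psi|\ge 2M$, these coefficients are bounded and globally Lipschitz on $\R^2$, so~\eqref{eq:IM}--\eqref{eq:psiM} --- driven by the Brownian field $W$ --- has a unique strong solution and hence a unique law; the martingale problem for $\mathcal{L}$ is therefore well-posed (see~\cite{StroockVaradhan06}). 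I expect the main obstacle to be the construction and control of $h^\eps_2$: one must simultaneously handle the non-integrable tail of $R$ --- neutralized by the mean-zero structure of $F$ in $\tau$, leaving only the convergent oscillatory integrals $\int_0^\infty R(u)\cos(2\pi n\omega(I)u)\,du$ --- and the failure of the Markov property of $Y^\eps$ on its own, circumvented by the pseudo-generator and the exponential mixing~\eqref{markovesp}--\eqref{markovvar} of the Ornstein--Uhlenbeck components, which makes the conditional expectations explicit and the forward-in-time integrals convergent; every resulting error term must then be pushed through the boundedness estimates of Lemma~\ref{l:bound}.
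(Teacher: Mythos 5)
Your overall strategy---tightness plus identification of subsequential limits through a martingale problem, implemented with the pseudo-generator and perturbed test functions---is the same as the paper's, and your limit passage and well-posedness arguments are essentially the paper's (the paper is a bit more careful on the latter, constructing an explicit Hilbert--Schmidt square root of the diffusion matrix to get a locally Lipschitz dispersion coefficient). The genuine gap is in the construction of your single corrector $h^\eps_2$. By Lemma~\ref{l:mar}, $\E^\eps_t\big[V(s/\eps^2,dp)V(s/\eps^2,dq)\big]$ is the sum of a conditional-mean part decaying like $e^{-\mu(|p|^{2\beta}+|q|^{2\beta})(s-t)/\eps^2}$ and the fluctuation part $(1-e^{-2\mu|p|^{2\beta}(s-t)/\eps^2})\,r(p)\delta(p-q)$. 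Subtracting $\mathcal L h(Y^\eps_t)$ cancels the ``$1$'' in the fluctuation part only for the diagonal Fourier modes $n+m=0$, because $\mathcal L h$ contains no oscillatory modes. For every pair with $n+m\neq 0$ the integrand of your corrector still contains
\begin{equation*}
  e^{2i\pi(n+m)\left(\tau^\eps_t+(s-t)\omega(Y^\eps_t)/\eps^2\right)}\, r(p)\delta(p-q)\,W_{n,m}(p,Y^\eps_t)\,,
\end{equation*}
which neither decays nor is cancelled; it merely oscillates in $s$, so $\int_t^\infty(\cdots)\,ds$ is not absolutely (or even improperly) convergent, and $h^\eps_2$ as you define it does not exist. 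Your sentence ``so the $s$-integral converges'' is false precisely for this deterministic oscillatory contribution.

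This is exactly why the paper uses \emph{two} further correctors rather than one: its $h^\eps_2$ subtracts only $\E[V(0,dp)V(0,dp')]=r(p)\delta(p-p')\,dp\,dp'$ inside the $n+m\neq0$ sum, which makes that integral absolutely convergent but leaves behind the purely deterministic oscillatory residue $\mathcal D_\eps(t)$; this residue is then removed by a third corrector $h^\eps_3$ built with the Abel regularization $e^{-\eps(u-t)}$, whose pseudo-generator cancels $\mathcal D_\eps$ up to a remainder $\mathcal R^2_\eps$ shown to vanish (Lemma~\ref{boundf3}). You would need to reinstate this extra step (or an equivalent regularization) to make your construction work. A secondary, non-fatal difference: for the diagonal modes the paper does not use a corrector at all but proves the time-averaged statement $\E\big|\int_0^t(\mathcal A^\eps_2(u)-\mathcal L h(Y^\eps_u))\,du\big|\to 0$ (Lemma~\ref{lemma_L}) via a block decomposition and a covariance computation; your version, which corrects the diagonal fluctuation as well, is a legitimate alternative, but then your claim $\sup_{t\le T}\E|\rho^\eps(t)|\to0$ must be checked for the new remainders rather than asserted, and in the paper's route it would have to be weakened to its integrated form, which is all the martingale identity requires.
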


  To prove this proposition we use the notion of pseudo-generator introduced in Section \ref{pseudogene} and the perturbed-test-function technique that we have already used in Section \ref{tightnesssec} for the proof of tightness. Thanks to the pseudo-generator we can characterize the subsequential limits of $(Y^{\eps})_{\eps>0}$ as solutions of a well-posed martingale problem. 
  
  The outline of the proof is as follows.
  Recall in Section~\ref{tightnesssec} we saw that $\mathcal{A}^\eps(h^\eps _0)$ has a singular $1/\eps$ term, and modified the test function $h^\eps _0$ with a small perturbation $h^\eps _1$ to remove this singular term.
  As a result, $\mathcal{A}^\eps(h^\eps _0 +h^\eps _1)$ is not singular any more.
  However, it is not yet the generator of a martingale problem as $\mathcal{A}^\eps(h^\eps _0 +h^\eps _1)$ still has oscillatory terms in the form of $e^{2i\pi n\tau_t^{\eps}}$.
  We will show that these terms essentially vanish as $\eps \to 0$ by introducing a small perturbation $h_2^{\eps}(t)$ to cancel these oscillations, and then construct another perturbation $h_3^{\eps}(t)$ to cancel oscillations that result from computing $\mathcal{A}^{\eps}(h_2^{\eps})(t)$.
  After we have constructed the perturbed test function 
  \[
    h^{\eps}(t) = h_0^\eps(t)+h_1^\eps(t)+h_2^\eps(t)+h^\eps_3(t)\,,
  \]
  we show that the pseudo-generator
  \[
     \mathcal{A}^\eps(h_0^\eps+h_1^\eps+h_2^\eps+h^\eps_3)(t)
  \]
  has the desired form of a generator related to a martingale problem, plus a negligible term. Combining with tightness, we can show that the limiting process $(Y_t)_{t\geq 0}$ satisfies a martingale problem.

  We now carry out this approach.
  For convenience, the proofs of needed auxiliary lemmas will be deferred to Sections \ref{sec:lemmaf2}-\ref{sec:rewrite_gen}.
 \begin{proof}[Proof of Proposition \ref{identification}]
  We split $\mathcal{A}^\eps(h^\eps _0 +h^\eps _1)$ into three parts
  \begin{equation*}
  \mathcal{A}^\eps(h^\eps _0 +h^\eps _1)(t)=\mathcal{A}^\eps_1(t)+\mathcal{A}^\eps_2(t)+\mathcal{P}_2^\eps(t)\,,
  \end{equation*} 
  where $\mathcal{P}_2^\eps$ is defined in \eqref{def_Af1}, and $\mathcal A^\epsilon_1(t)$, $\mathcal A^\epsilon_2(t)$ are given by
  \begin{gather}
    \nonumber
  \mathcal{A}^\eps_1(t)=\sum_{\substack{n,m\in\Z^*\\ n+m\neq 0}} e^{2i\pi (n+m) \tau^\eps_t} \iint V\Big(\frac{t}{\eps^2},dp\Big)V\Big(\frac{t}{\eps^2},dp'\Big) W_{n,m}(p, Y^{\eps}_t)\,,
  \\
  \label{eq:A2}
  \mathcal{A}^\eps_2(t)=\sum_{n\in\Z^*}\iint V\Big(\frac{t}{\eps^2},dp\Big)V\Big(\frac{t}{\eps^2},dp'\Big)W_{n}(p, Y^{\eps}_t)\,.
  \end{gather}
  Here
  \begin{align}\label{def_W}
    \nonumber
  W_{n,m}(p, Y^{\eps}_t)&\defeq \frac{1}{\mu |p|^{2\beta} - 2i\pi n\omega(Y^{\eps}_t)}\Big(  F_{n}(Y^{\eps}_t) ^T \nabla^2 h(Y^{\eps}_t)F_{m}(Y^{\eps}_t) \\
  \nonumber
  &\qquad\qquad+\nabla h(Y^{\eps}_t)^T \textrm{Jac} F_{n}(Y^{\eps}_t)F_{m}(Y^{\eps}_t)\Big) \\
  &\qquad-\frac{2i\pi n}{(\mu|p|^{2\beta}-2i\pi n \omega(Y^{\eps}_t))^2} F_{n}(Y^{\eps}_t)\cdot \nabla h(Y^{\eps}_t)F_{m}(Y^{\eps}_t)\cdot \nabla \omega(Y^{\eps}_t)\,,
  \end{align}
  and
  \begin{equation*}
  W_{n}(p, Y^{\eps}_t) = W_{n,-n}(p, Y^{\eps}_t)\,.
  \end{equation*}
  
  The term $\mathcal{A}^\eps_2$ no longer has oscillatory components as $\mathcal{A}_2^{\eps}$ only picks up the terms with $n+m=0$.
  We now show that this term is related to the generator of a martingale problem.
    \begin{lemma}\label{lemma_L}
    For all $t\geq 0$, we have
    \begin{equation*}
    \lim_{\eps \to 0}\E\Bigl\vert \int_0^t (\mathcal{A}^\eps_2(u)-\mathcal{L} h(Y^{\eps}_u)) \,du \Bigr\vert=0\,.
    \end{equation*}
    with
    \begin{equation*}
    \mathcal{L} h(y)= \sum_{n\in \Z^*}
    \int dp\, r(p)W_{n}(p,y)\,,\end{equation*}
    where $W_{n}$ is defined through \eqref{def_W}.
  \end{lemma}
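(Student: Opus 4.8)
The plan is to show that the double stochastic integral $\mathcal{A}^\eps_2(t)$, when integrated in time, is asymptotically equivalent to the line integral $\mathcal{L} h(Y^\eps_u)$ obtained by replacing the pair of random measures $V(t/\eps^2, dp)\,V(t/\eps^2, dp')$ by their ``diagonal'' covariance $r(p)\,\delta(p-q)\,dp\,dq$. Write
\[
  \mathcal{A}^\eps_2(t) = \sum_{n\in\Z^*} \iint V\Big(\tfrac{t}{\eps^2},dp\Big)V\Big(\tfrac{t}{\eps^2},dp'\Big) W_n(p,Y^\eps_t)
  = \sum_{n\in\Z^*} \Big( \int r(p) W_n(p,Y^\eps_t)\,dp + \mathcal{E}^\eps_n(t) \Big),
\]
where $\mathcal{E}^\eps_n(t)$ is the ``centered'' remainder. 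Note that the first sum is precisely $\mathcal{L} h(Y^\eps_t)$, so the claim reduces to showing $\E\big| \int_0^t \sum_n \mathcal{E}^\eps_n(u)\,du \big| \to 0$.

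First I would establish summability in $n$, uniformly in $\eps$ and $t$. From \eqref{def_W}, the factor $1/(\mu|p|^{2\beta} - 2i\pi n\omega(Y^\eps_t))$ is $O(1/|n|)$ uniformly in $p\in S$ (using $\omega \geq \omega_0 > 0$ from \eqref{e:hypOmega}), and the squared denominator term is $O(1/n^2)$; the Fourier coefficients $F_n$, $\mathrm{Jac}\,F_n$, together with $\nabla h$, $\nabla^2 h$, $\nabla\omega$, decay fast in $n$ since $F$ is smooth (with the truncation $\phi_M$, all these are bounded in terms of $M$). Thus $\sum_n \|W_n(p,\cdot)\|_\infty < \infty$, which justifies interchanging the sum over $n$ with the expectation and time integral, and reduces the problem to proving $\E\big| \int_0^t \mathcal{E}^\eps_n(u)\,du \big| \to 0$ for each fixed $n$ with a bound summable in $n$.

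The heart of the matter is the single $n$ estimate. Here I would use the Markov structure of $V$ and the pseudo-generator machinery: introduce yet another perturbation $h_{2,n}^\eps(t)$ built from $\mathcal{E}^\eps_n$, analogous to the construction of $h_1^\eps$ in \eqref{eq:h1}, namely solving the appropriate Poisson equation by integrating forward in time and using \eqref{markovesp}, \eqref{markovvar} of Lemma~\ref{l:mar}. The quantity $\iint V(dp)V(dp')W_n - \int r(p)W_n\,dp$ has conditional mean zero behavior once we center; applying Lemma~\ref{martingale} (Kushner's Theorem 1) to $h_{2,n}^\eps$ produces a martingale, and the remaining non-martingale pieces are $O(\eps)$ in $L^1$ by the boundedness estimates \eqref{boundV1}--\eqref{boundV2} of Lemma~\ref{l:bound}. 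Then $\E\big|\int_0^t \mathcal{E}^\eps_n(u)\,du\big| \leq \E|h_{2,n}^\eps(t)| + \E|h_{2,n}^\eps(0)| + \E\big|\int_0^t (\text{remainder})\,du\big| \to 0$, with constants summable in $n$ by the decay established in the previous step. Summing over $n$ and invoking dominated convergence completes the proof.

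The main obstacle I anticipate is controlling the non-integrability at infinity of the correlation function of $V$ — exactly the difficulty flagged in the text. The forward-time integral defining $h_{2,n}^\eps$ involves $\int_0^\infty \E^\eps_t[\cdots]\,du$, and this converges only because of the oscillatory factor $e^{2i\pi n\omega(Y^\eps_t)u}$ combined with the exponential decay $e^{-\mu|p|^{2\beta}u}$ coming from \eqref{markovesp}; the algebraic decay of $R$ alone is insufficient. One must therefore perform the $u$-integral explicitly (as in the passage from \eqref{eq:h1} to the closed form of $h_1^\eps$), obtaining denominators $\mu|p|^{2\beta} - 2i\pi n\omega$, and verify the resulting $p$-integrand lies in $W^{1,k}(S)$ for a suitable $k$ (treating the cases $\beta \geq 1/2$ and $\beta < 1/2$ separately, exactly as in the proof of Lemma~\ref{bound2}), so that Lemma~\ref{l:bound} applies. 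Keeping all bounds uniform in $n$ with summable tails is the bookkeeping that requires care, but no new idea beyond what was used for tightness.
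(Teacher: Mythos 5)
Your identification of the limit (replacing $V\,V$ by its mean $r(p)\delta(p-p')\,dp\,dp'$) and your summability-in-$n$ bookkeeping match the paper. The gap is in the core step: treating the centered remainder $\mathcal{E}^\eps_n$ by yet another corrector obtained from forward time integration. For the resonant term $\mathcal{A}^\eps_2$ the indices satisfy $n+m=0$, so there is \emph{no} oscillatory factor $e^{2i\pi(n+m)\omega(Y^\eps_t)u}$ left to exploit; by \eqref{markovvar} the conditional expectation of the centered quadratic functional decays only like $e^{-\mu(|p|^{2\beta}+|p'|^{2\beta})u}$, and the $u$-integral produces the denominator $\mu(|p|^{2\beta}+|p'|^{2\beta})$, whose real part is \emph{not} bounded away from zero (contrast with $\mu|p|^{2\beta}-2i\pi n\omega$ in $h^\eps_1$, where $|{\cdot}|\ge 2\pi|n|\omega_0$). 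Pairing this with $r(p)\sim|p|^{-2\alpha}$, already the deterministic diagonal part of your proposed corrector behaves like $\int_S |p|^{-2\alpha-2\beta}\,dp$, which diverges whenever $2\alpha+2\beta\ge 1$ --- i.e.\ exactly in the long-range regime $\gamma=(1-2\alpha)/(2\beta)\le 1$ that the paper is about (this integral is $\int_0^\infty R(u)\,du$ up to constants). So the Poisson equation for the resonant part cannot be solved by your recipe, and the claimed $O(\eps)$ remainder bounds fail. Your own remark that the oscillation rescues the $u$-integral applies to $\mathcal{A}^\eps_1$, where the paper does build the correctors $h^\eps_2$, $h^\eps_3$ essentially as you describe, but not to $\mathcal{A}^\eps_2$.

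The paper's proof of Lemma \ref{lemma_L} avoids correctors for this term entirely: it partitions $[0,t]$ into blocks of length $\eps\eta$, freezes $Y^\eps$ at the left endpoint of each block (cost $O(\eta)$ by the same estimates as in Lemma \ref{lemmaf2}), and bounds the block averages of $VV-r\delta$ via Cauchy--Schwarz plus an explicit Gaussian (Wick) covariance computation; the small-$p$ singularity is handled by splitting the $p$-integration at $|p|\le\nu$ and using only \eqref{cond_alpha_beta} (so that $\int_{|p|\le\nu}|p|^{-2\alpha}\,dp\to 0$), then sending $\eps\to0$, $\eta\to 0$, $\nu\to 0$ in that order. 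If you insist on a corrector-based argument you would need some regularization (a damping factor as in $h^\eps_3$ together with a cutoff in $p$), and you would still end up having to prove the resulting singular object is small --- which is essentially the second-moment computation the paper performs.
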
  
  We present the proof of Lemma~\ref{lemma_L} in Section~\ref{sec:lemma_L}.
   To show the well-posedness of the limiting martingale problem, we need to rewrite $\mathcal{L}$ as a second-order differential operator, which eventually gives us the form in \eqref{prop_LM}. We do this in Section \ref{sec:rewrite_gen}, below.
   
   For the oscillatory term $\mathcal{A}^\eps_1$, we introduce two more small perturbations $h^\eps _2$ and $h^\eps _3$ to prove that the oscillations will vanish as $\eps \to0$. Note that $\mathcal{P}_2^{\eps}$ also has oscillatory terms.
   These can be treated in a manner similar to the oscillatory terms in $\mathcal{A}^\eps_1$, so that these terms vanish in the limit $\eps\to 0$.
   Since the treatment of this will be similar to the treatment of $\mathcal{A}^\eps_1$, we omit the details.
  
  Similarly to the construction of $h_1^{\eps}$ in equation \eqref{eq:h1}, we introduce the second perturbation
  \begin{equation*}
  \begin{split}
  h^\eps _2 &(t)   = \int_t^\infty du \sum_{\substack{n,m\in\Z^* \\ n+m\neq 0}} e^{2i\pi (n+m)(\tau^{\eps}_t+ (u-t)\omega(Y^{\eps}_t)/\eps^2)}\\
  &\cdot \iint\Big( \E^\eps_t \Big[V\Big(\frac{u}{\eps^2},dp\Big)V\Big(\frac{u}{\eps^2},dp'\Big)\Big] - \E[V(0,dp)V(0,dp')]\Big)W_{n,m}(p, Y^{\eps}_t)\,.
  \end{split}\end{equation*}
  Making the change of variable $u \to t+\eps^2 u$, we have
  \begin{equation*}
  \begin{split}
  h^\eps _2 & (t)   = \eps^2 \int_0^\infty du \sum_{\substack{n,m\in\Z^* \\ n+m\neq 0}} e^{2i\pi (n+m) (\tau^{\eps}_t+ u\omega(Y^{\eps}_t))}W_{n,m}(p, Y^{\eps}_t)\\
  &\cdot \iint \Big(\E^\eps_t \Big[ V\Big(u+\frac{t}{\eps^2},dp\Big)V\Big(u+\frac{t}{\eps^2},dp'\Big)\Big]-\E[V(0,dp)V(0,dp')]\Big)\,.
  \end{split}
  \end{equation*} 
  According to formula \eqref{markovvar}, one has
  \begin{equation*}
  \begin{split}
  \E^\eps_t \Big[ V & \Big(u+\frac{t}{\eps^2},dp\Big)V\Big(u+\frac{t}{\eps^2},dp'\Big)\Big]-\E[V(0,dp)V(0,dp')]\\
  &=e^{-\mu(\vert p \vert^{2\beta}+\vert p' \vert^{2\beta}) u} \Big( V\Big(\frac{t}{\eps^2},dp\Big) V\Big(\frac{t}{\eps^2},dp'\Big)-r(p)\delta(p-p')\,dp\,dp'\Big)\,,
  \end{split}\end{equation*} 
  so that
  \begin{equation*} 
  \begin{split}
  h^\eps _2 (t)&= \eps^2 \sum_{\substack{n,m\in\Z^* \\ n+m\neq 0}} e^{2i\pi (n+m) \tau^{\eps}_t} \\
  &\times \iint \frac{V(t/\eps^2,dp) V(t/\eps^2,dp')-r(p)\delta(p-p')\,dp\,dp'}{\mu (|p|^{2\beta}+|p'|^{2\beta}) - 2i\pi (n+m)\omega(Y^{\eps}_t)}W_{n,m}(p, Y^{\eps}_t)\,.
  \end{split}\end{equation*}
 We claim that $h^{\eps}_2$ is uniformly small.
   \begin{lemma}\label{lemmaf2} 
    We have 
    \begin{equation*}
    \lim_{\eps\to 0}\sup_{t\in[0,T]} \E \vert h^\eps_2(t)\vert =0\,.
    \end{equation*}
  \end{lemma}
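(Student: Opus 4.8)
The plan is to show $\E|h_2^\eps(t)| \to 0$ uniformly in $t \in [0,T]$ by exploiting the explicit $\eps^2$ prefactor in the formula for $h_2^\eps$ and showing that the double stochastic integral in $p,p'$ against $V(t/\eps^2,dp)V(t/\eps^2,dp') - r(p)\delta(p-p')\,dp\,dp'$ grows at most like $1/\eps^2$, so the product stays bounded. Concretely, I would write $h_2^\eps(t)$ as a sum over $n+m \neq 0$ of terms of the form
\[
  \eps^2 e^{2i\pi(n+m)\tau_t^\eps}\Bigl[ V\Bigl(\tfrac{t}{\eps^2},\varphi^{n,m}_{t,\eps}\Bigr)V\Bigl(\tfrac{t}{\eps^2},\mathbf 1_S\Bigr) - \iint \tfrac{r(p)\delta(p-p')\,dp\,dp'}{\mu(|p|^{2\beta}+|p'|^{2\beta}) - 2i\pi(n+m)\omega(Y_t^\eps)}W_{n,m}(p,Y_t^\eps)\Bigr],
\]
where $\varphi^{n,m}_{t,\eps}(p) \defeq [\mu(|p|^{2\beta}+|p'|^{2\beta}) - 2i\pi(n+m)\omega(Y_t^\eps)]^{-1}W_{n,m}(p,Y_t^\eps)$ — here one integrates out $p'$ against $\mathbf 1_S$ with the understanding that the $|p'|^{2\beta}$ in the denominator is absorbed after splitting, which requires a little care; alternatively keep the double integral and treat $(p,p')\mapsto$ the integrand as a function handled by a two-variable analogue of Lemma~\ref{l:bound}.

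The key estimates are as follows. First, since $n+m \neq 0$ and $\omega \geq \omega_0 > 0$ by~\eqref{e:hypOmega}, the denominator $|\mu(|p|^{2\beta}+|p'|^{2\beta}) - 2i\pi(n+m)\omega(Y_t^\eps)| \geq 2\pi|n+m|\omega_0$ is bounded below, which is what makes $\varphi^{n,m}_{t,\eps}$ well-defined and gives decay in $n+m$. Second, combining this with the bounds $|F_n(y)| \leq C|n|^{-1}\sup_y\|\partial_\tau F(y,\cdot)\|_{L^2}$ (and similarly for $\mathrm{Jac}\,F_n$), the truncation keeping $|y| \leq 2M$, and the $\tfrac{1}{\mu|p|^{2\beta}-2i\pi n\omega}$ and $\tfrac{n}{(\mu|p|^{2\beta}-2i\pi n\omega)^2}$ factors inside $W_{n,m}$, I would check that $\varphi^{n,m}_{t,\eps} \in W^{1,k}(S)$ (for $k=\infty$ if $\beta \geq 1/2$, and $k \in (1, 1/(1-2\beta))$ otherwise, exactly as in the proof of Lemma~\ref{bound2}) with $W^{1,k}$-norm bounded by $C_M |n|^{-\sigma}|m|^{-1}$ for some $\sigma > 1$ summable over $n,m$. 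Third, I apply~\eqref{boundV2} of Lemma~\ref{l:bound} to bound $\E|V(t/\eps^2,\varphi^{n,m}_{t,\eps})V(t/\eps^2,\mathbf 1_S)|$ by Cauchy--Schwarz and the $n=2$ case uniformly in $t$ and $\eps$, and note the deterministic $\delta$-term is bounded by a constant depending on $M$. Summing the resulting bound $\eps^2 \sum_{n+m\neq 0} C_M|n|^{-\sigma}|m|^{-1}$... wait — the sum $\sum_m |m|^{-1}$ diverges, so I must instead keep the decay in $n+m$: write the $n$-sum bound more carefully as $C_M|n+m|^{-2}$ times $|n|^{-1}|m|^{-1}$-type factors coming from $F_n$, $F_m$, which is summable, giving $\E|h_2^\eps(t)| \leq C_M\,\eps^2 \to 0$ uniformly in $t\in[0,T]$.

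The main obstacle I anticipate is \emph{not} the $\eps^2$ bookkeeping — that is the whole point and it is clean — but rather the careful accounting of the $p,p'$-regularity of the integrand $W_{n,m}(p,Y_t^\eps)/[\mu(|p|^{2\beta}+|p'|^{2\beta}) - 2i\pi(n+m)\omega]$ and making precise which version of Lemma~\ref{l:bound} (single- versus two-variable) is being invoked, since $W_{n,m}$ depends on $p$ (through $\mu|p|^{2\beta}$) but not $p'$, whereas the new denominator couples $p$ and $p'$. The cleanest route is probably to split $\tfrac{1}{\mu(|p|^{2\beta}+|p'|^{2\beta}) - 2i\pi(n+m)\omega}$ is awkward, so instead treat the pair $(p,p') \mapsto$ integrand as a single function on $S \times S$ and invoke a product version of the bound $\E[\sup \|V(t/\eps^2,\cdot)\otimes V(t/\eps^2,\cdot)\|] \leq C_n$ (which follows from~\eqref{boundV2} by Cauchy--Schwarz), checking the mixed Sobolev regularity in $(p,p')$ — which reduces to the same $|p|^{2\beta-1}$ and $|p'|^{2\beta-1}$ estimates already done for $\varphi_{t,\eps}$ in Lemma~\ref{bound2}. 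Once that regularity bookkeeping is in place, the conclusion $\lim_{\eps\to0}\sup_{t\in[0,T]}\E|h_2^\eps(t)| = 0$ is immediate from the $\eps^2$ factor.
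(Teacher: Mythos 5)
Your proposal is correct and follows essentially the same route as the paper: split off the deterministic $r(p)\delta(p-p')$ part (bounded directly by $C_M\eps^2$ using the $|n(n+m)|^{-1}$ decay), and control the genuinely stochastic double integral by a nested application of Lemma~\ref{l:bound} --- first applying $V(t/\eps^2,dp')$ to the integrand as a function of $p'$ with $p$ as a parameter, then applying $V(t/\eps^2,dp)$ to the resulting random function after normalizing by $\sup_{\varphi\in W_{k,\tilde C}}|V(t/\eps^2,\varphi)|$ so that~\eqref{boundV2} with second moments applies --- which is precisely the ``two-variable'' resolution of the $p,p'$-coupling you identify as the main obstacle. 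Your initial displayed factorization $V(\varphi^{n,m}_{t,\eps})V(\mathbf 1_S)$ is not literally valid because the denominator does not tensorize, but you flag and correct this yourself, and the regularity bookkeeping you describe ($|p|^{2\beta-1}$, $|p'|^{2\beta-1}$ estimates, choice of $k$ according to $\beta\gtrless 1/2$) matches the paper's.
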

 For clarity of presentation we postpone the proof lf Lemma~\ref{lemmaf2} to Section \ref{sec:lemmaf2}.
  As before, following the definition of pseudo-generator in Section \ref{pseudogene}, we have
  \begin{equation*}
  \mathcal{A}^\eps(h_2^\eps)(t)=-\mathcal{A}^\eps_1(t)+ \mathcal{D}_\eps(t) +\mathcal{R}^1_\eps(t)\,,
  \end{equation*}
  with
  \begin{equation*}
  \begin{split}
  \mathcal{D}_\eps(t)& \defeq \sum_{\substack{n,m\in\Z^*\\ n+m\neq 0}} e^{2i\pi (n+m) \tau_t^{\eps}} \int dp\, r(p)W_{n,m}(p, Y^{\eps}_t) \,,
  \end{split}\end{equation*}
  and~$\mathcal R_\epsilon^1(t)$ to be the remainder (an explicit formula for $\mathcal{R}^1_{\eps}(t)$ is presented in to Section \ref{sec:lemmaReps}, below).
  Even though oscillatory terms are present in~$\mathcal R^1_\epsilon$, we make the following claim.
  \begin{lemma}\label{lemmaReps}
    We have 
    \begin{equation*}
    \lim\limits_{\eps\to 0}\sup_{t\in[0,T]}\E\vert \mathcal{R}^1_\eps (t)\vert =0\,.
    \end{equation*}
  \end{lemma}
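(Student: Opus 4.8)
The plan is to estimate $\mathcal{R}^1_\eps(t)$ by first writing down its explicit form, which comes from differentiating $h^\eps_2(t)$ with respect to the time variable (keeping the $\E^\eps_t$ conditioning frozen, as in the computation of $\mathcal{A}^\eps(h_1^\eps)$). The remainder $\mathcal{R}^1_\eps(t)$ will consist of terms where the time derivative hits the slow variables $Y^\eps_t$ and $\tau^\eps_t$ inside $W_{n,m}(p,Y^\eps_t)$, $e^{2i\pi(n+m)\tau^\eps_t}$, and the resolvent-type denominators $(\mu(|p|^{2\beta}+|p'|^{2\beta}) - 2i\pi(n+m)\omega(Y^\eps_t))^{-1}$. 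Crucially, every such term carries an explicit prefactor of $\eps$ (or $\eps^2$ times $1/\eps$ from $\dot Y^\eps_t$ and $\dot\tau^\eps_t$ in \eqref{e:defEqDiff}--\eqref{eq:tau}, noting that the $\omega/\eps^2$ part of $\dot\tau^\eps_t$ is exactly what produced the $u\omega(Y^\eps_t)$ inside the integrand and cancels against the derivative of the exponential, leaving only the $O(1/\eps)$ noise part), so that $\mathcal{R}^1_\eps(t)$ is of order $\eps$ times expressions of the type $V(t/\eps^2,\varphi)V(t/\eps^2,\varphi')$ for suitable $W^{1,k}(S)$-bounded symbols $\varphi,\varphi'$.

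Concretely, the steps are: (i) compute $\mathcal{A}^\eps(h_2^\eps)(t)$ explicitly, identify the term $-\mathcal{A}^\eps_1(t)$ (coming from differentiating the outer $e^{2i\pi(n+m)(u-t)\omega(Y^\eps_t)/\eps^2}$ factor, which recreates $V(t/\eps^2,dp)V(t/\eps^2,dp')$ paired with $W_{n,m}$ — this is the whole point of the perturbation), the term $\mathcal{D}_\eps(t)$ (coming from the $r(p)\delta(p-q)$ piece in \eqref{markovvar} under the same differentiation), and collect everything else into $\mathcal{R}^1_\eps(t)$; (ii) check that each summand of $\mathcal{R}^1_\eps(t)$ carries a factor $\eps$ and can be written as $\eps\, V(t/\eps^2,\psi^1_{t,\eps})V(t/\eps^2,\psi^2_{t,\eps})$ (or with one factor $\mathbf 1_S$), where the symbols $\psi^j_{t,\eps}$ are built from $F_n$, $\mathrm{Jac}\,F_n$, $\nabla^2 h$, $\nabla\omega$, and the resolvent denominators; (iii) verify, exactly as in the proofs of Lemmas~\ref{bound2} and~\ref{A1}, that these symbols are bounded in $W^{1,k}(S)$ uniformly in $t$ (using $\beta\ge 1/2$ for the Lipschitz bound, or $k\in(1,1/(1-2\beta))$ otherwise, together with $\sum_n 1/n^2<\infty$ and $\inf\omega>\omega_0>0$ to control the denominators and the $n$-sums, and the truncation $\phi_M$ to keep $Y^\eps_t$ bounded); (iv) apply the Cauchy--Schwarz inequality and estimate \eqref{boundV2} of Lemma~\ref{l:bound} to get $\E|V(t/\eps^2,\psi^1_{t,\eps})V(t/\eps^2,\psi^2_{t,\eps})|\le (\E|V(t/\eps^2,\psi^1_{t,\eps})|^2)^{1/2}(\E|V(t/\eps^2,\psi^2_{t,\eps})|^2)^{1/2}\le C$ uniformly in $\eps$ and $t$; hence $\sup_{t\in[0,T]}\E|\mathcal{R}^1_\eps(t)|\le C\eps\to 0$.

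The main obstacle is purely bookkeeping: the derivative of $h^\eps_2(t)$ produces many terms (derivative of $W_{n,m}$ via $\nabla_y$ against $\dot Y^\eps_t$, derivative of the denominator via $\omega'$, derivative of $e^{2i\pi(n+m)\tau^\eps_t}$ via the $O(1/\eps)$ part of $\dot\tau^\eps_t$), and one must be careful that the double sum over $n,m$ with $n+m\neq 0$ still converges after these operations — this is where the extra powers of $n$ from differentiating exponentials must be absorbed by the decay of $F_n$ (which is rapid since $F$ is smooth in $\tau$) and by the extra denominator power $(\mu(|p|^{2\beta}+|p'|^{2\beta})-2i\pi(n+m)\omega)^{-1}$ or $(\mu|p|^{2\beta}-2i\pi n\omega)^{-1}$. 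I would therefore organize the symbols $\psi^j_{t,\eps}$ so that each carries enough negative powers of the relevant integer index, reducing every estimate to an application of Cauchy--Schwarz in $n$ (or $n,m$) plus Parseval, exactly as in Lemma~\ref{bound2}. Once the symbols are identified and their $W^{1,k}$-norms bounded uniformly in $t$, the conclusion is immediate from Lemma~\ref{l:bound}. No new ideas beyond those already used for $h_1^\eps$ are needed; the only genuinely new point is tracking the explicit $\eps$ prefactor, which is what forces $\mathcal{R}^1_\eps(t)\to 0$ rather than merely being bounded.
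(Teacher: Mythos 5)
Your proposal is correct and follows essentially the same route as the paper: write out $\mathcal{A}^\eps(h_2^\eps)$ explicitly, peel off $-\mathcal{A}^\eps_1$ and $\mathcal{D}_\eps$, observe that every surviving term carries an explicit factor of $\eps$ multiplying products of $V(t/\eps^2,\cdot)$ against $W^{1,k}$-bounded symbols, and conclude via the moment bounds of Lemma~\ref{l:bound} exactly as in Lemma~\ref{lemmaf2}. The only cosmetic difference is that the actual remainder is $\eps\, v(t/\eps^2)$ times a double integral against $V(t/\eps^2,dp)V(t/\eps^2,dp') - r(p)\delta(p-p')\,dp\,dp'$, i.e.\ a product of \emph{three} Gaussian factors rather than two, so one applies H\"older with three factors (still covered by \eqref{boundV2}) rather than plain Cauchy--Schwarz.
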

  The proof of this lemma is presented in Section~\ref{sec:lemmaReps}, below.
  To treat the oscillations in $ \mathcal{D}_\eps(t)$  we introduce the third perturbation 
  \begin{align*}
  h^\eps_3(t)&= \int_t^{\infty}du\sum_{\substack{n,m\in\Z^*\\ n+m\neq 0}} e^{2i\pi (n+m) (\tau_t^{\eps}+(u-t)\omega(Y_t^{\eps})/\eps^2)}e^{-\eps (u-t)} \int dp\, r(p)W_{n,m}(p, Y^{\eps}_t)\\
  &=\e^2\sum_{\substack{n,m\in\Z^*\\ n+m\neq 0}} \frac{e^{2i\pi(n+m)\tau_t^{\eps}}}{\e^3-2i\pi(n+m)\omega(Y_t^{\eps})} \int dp\, r(p)W_{n,m}(p, Y^{\eps}_t)
  \end{align*}
  We will see that
  \begin{equation*}
  \mathcal{A}^\eps(h^\eps_3)(t)=-\mathcal{D}_\eps(t) +\mathcal{R}^2_{\eps}(t)\end{equation*}
  with explicit formula for $\mathcal{R}^2_{\eps}(t)$ presented in Section \ref{sec:boundf3}, below.
  We claim $h^\epsilon_3$ and $R^2_\epsilon$ both vanish as~$\epsilon \to 0$.
  \begin{lemma}\label{boundf3} We have
    \begin{equation*}
    \lim_{\eps\to 0}\sup_{t\in[0,T]}\E|h^\eps_3(t)|=0\qquad\text{and}\qquad     \lim\limits_{\eps\to 0}\sup_{t\in[0,T]}\E\vert \mathcal{R}^2_\eps (t)\vert =0\,.
    \end{equation*}
  \end{lemma}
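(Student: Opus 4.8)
The plan is to make the announced identity $\mathcal{A}^\eps(h^\eps_3)(t)=-\mathcal{D}_\eps(t)+\mathcal{R}^2_\eps(t)$ explicit, read off the formula for $\mathcal{R}^2_\eps$ from the computation of $\mathcal{A}^\eps(h^\eps_3)$, and then bound $h^\eps_3$ and $\mathcal{R}^2_\eps$ term by term, using the truncation (through $M$) to keep every coefficient bounded uniformly in $\eps$, and Lemma~\ref{l:bound} to control the first moment of the noise. For the first limit, note that $n+m\neq 0$ forces $|n+m|\geq 1$, and $\omega\geq\omega_0>0$ by~\eqref{e:hypOmega} gives $|\eps^3-2i\pi(n+m)\omega(Y^\eps_t)|\geq 2\pi|n+m|\omega_0$. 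Writing $\phi_{n,m}(y)\defeq\int dp\,r(p)W_{n,m}(p,y)$, this yields
\[
  |h^\eps_3(t)|\leq\frac{\eps^2}{2\pi\omega_0}\sum_{\substack{n,m\in\Z^*\\ n+m\neq 0}}\frac{|\phi_{n,m}(Y^\eps_t)|}{|n+m|}\,.
\]
From~\eqref{def_W}, the bound $|\mu|p|^{2\beta}-2i\pi n\omega(Y^\eps_t)|\geq 2\pi|n|\omega_0$, and $\int_S r(p)\,dp<\infty$ (valid since $\alpha<1/2$ by~\eqref{cond_alpha_beta}), we get $|\phi_{n,m}(y)|\leq C_{h,\omega_0}|n|^{-1}\bigl(|F_n(y)||F_m(y)|+\|\textrm{Jac}\,F_n(y)\|\,|F_m(y)|\bigr)$. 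Since $F$ is smooth with bounded derivatives (and, together with all its $y$-derivatives, is supported in $\set{1/(2M)\leq I\leq 2M,\ |\psi|\leq 2M}$), the Fourier coefficients $F_n$ and those of $\textrm{Jac}\,F$ decay faster than any power of $|n|$, uniformly in $y$, with constants depending on $M$; hence $|\phi_{n,m}(y)|\leq C_{M,k}|n|^{-1}(1+|n|)^{-k}(1+|m|)^{-k}$ for every $k$, the sum converges, and $|h^\eps_3(t)|\leq C_M\eps^2$ with $C_M$ independent of $\eps$ and $t$. This gives the first limit.

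\textbf{Step~2 (computing $\mathcal{R}^2_\eps$).} After the $u$-integral is carried out, $h^\eps_3(t)$ is an explicit function $g(Y^\eps_t,\tau^\eps_t)$ of the current state, so, exactly as for $h^\eps_1$, $\mathcal{A}^\eps(h^\eps_3)(t)$ is obtained by differentiating $g$ along~\eqref{e:defEqDiff}--\eqref{eq:tau}:
\[
  \mathcal{A}^\eps(h^\eps_3)(t)=\frac{\omega(Y^\eps_t)}{\eps^2}\,\partial_\tau g+\frac{1}{\eps}v\Bigl(\frac{t}{\eps^2}\Bigr)\bigl(\nabla_Y g\cdot F(Y^\eps_t,\tau^\eps_t)+G(Y^\eps_t)\,\partial_\tau g\bigr)\,.
\]
Using $\partial_\tau g=\eps^2\sum_{n+m\neq 0}\frac{2i\pi(n+m)e^{2i\pi(n+m)\tau}}{\eps^3-2i\pi(n+m)\omega}\phi_{n,m}$ and the identity $\frac{2i\pi(n+m)\omega}{\eps^3-2i\pi(n+m)\omega}=-1+\frac{\eps^3}{\eps^3-2i\pi(n+m)\omega}$, the first term becomes $\frac{\omega}{\eps^2}\partial_\tau g=-\mathcal{D}_\eps(t)+\eps\,h^\eps_3(t)$, so the announced identity holds with
\[
  \mathcal{R}^2_\eps(t)=\eps\,h^\eps_3(t)+\frac{1}{\eps}v\Bigl(\frac{t}{\eps^2}\Bigr)\bigl(\nabla_Y g\cdot F(Y^\eps_t,\tau^\eps_t)+G(Y^\eps_t)\,\partial_\tau g\bigr)\,.
\]

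\textbf{Step~3 (bound on $\mathcal{R}^2_\eps$).} The first piece satisfies $|\eps\,h^\eps_3(t)|\leq C_M\eps^3$ by Step~1. Differentiating $g$ produces the denominator $\eps^3-2i\pi(n+m)\omega$ to the first and second powers; using $|\eps^3-2i\pi(n+m)\omega|\geq 2\pi|n+m|\omega_0$ one gets $|\partial_\tau g|\leq C_M\eps^2$ and $|\nabla_Y g\cdot F|\leq C_M\eps^2$, where $C_M$ dominates the absolutely convergent sums of $|\phi_{n,m}|$ and $|\nabla_Y\phi_{n,m}|$; here Proposition~\ref{l:bound_a}, the bound~\eqref{hyp_omega}, and the truncation $\phi_M$ ensure that $F$, $G$, $\nabla\omega$, and $\nabla_Y\phi_{n,m}$ are bounded on the support of $\phi_M$ by a constant depending only on $M$, with decay in $(n,m)$ as in Step~1. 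Hence $|\mathcal{R}^2_\eps(t)|\leq C_M\eps^3+C_M\eps\,|v(t/\eps^2)|$ uniformly in $t\in[0,T]$. Taking expectations and using Lemma~\ref{l:bound}, inequality~\eqref{boundV2} with $n=1$ and $\varphi=\mathbf{1}_S$, which gives $\sup_\eps\sup_t\E|v(t/\eps^2)|=\sup_\eps\sup_t\E|V(t/\eps^2,\mathbf{1}_S)|\leq C$, we obtain $\sup_{t\in[0,T]}\E|\mathcal{R}^2_\eps(t)|\leq C_M\eps^3+C_MC\eps\to 0$, which is the second limit.

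\textbf{Main obstacle.} The only genuine work is the bookkeeping in Steps~2--3: tracking every term produced by differentiating $g$ and checking that the double Fourier sums are absolutely dominated, uniformly in $\eps$, by constants depending only on $M$. In particular, one must verify that $\phi_M$ genuinely removes the singularities of $a$, $\partial_I a$, and $\omega'$ at $I=0$, since these enter $\nabla_Y\phi_{n,m}$ through $\textrm{Jac}\,F$ and $\nabla\omega$. The algebraic identity displayed in Step~2 is exactly what makes $h^\eps_3$ an exact corrector for $\mathcal{D}_\eps$, up to the small remainder $\mathcal{R}^2_\eps$.
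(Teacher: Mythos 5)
Your proposal is correct and follows essentially the same route as the paper: the identity you derive in Step~2 reproduces, term by term, the paper's explicit formula for $\mathcal{R}^2_\eps$ (your $\eps h^\eps_3$ piece is the paper's $\eps^3$-term, and the $\tfrac{1}{\eps}v(G\,\partial_\tau g+\nabla_Y g\cdot F)$ piece splits into the paper's $G$, $\nabla\omega\cdot F$, and $\nabla_y W_{n,m}\cdot F$ terms), and the bounds proceed exactly as the paper's, by reducing to $h^\eps_{22}$-type estimates together with a first-moment bound on the noise. The only cosmetic difference is that you control the double Fourier sum via rapid decay of the coefficients of the smooth truncated $F$, whereas the paper uses Cauchy--Schwarz to reduce to $L^2_\tau$ norms of $F$, $\partial_\tau F$, and $\mathrm{Jac}\,F$; both are valid here.
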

  We will prove Lemma~\ref{boundf3} in Section~\ref{sec:boundf3}, below.
   As a result, we now have a perturbed test function
   \begin{equation*}
   h(t)=h_0^\eps(t)+h_1^\eps(t)+h_2^\eps(t)+h^\eps_3(t)\,,\end{equation*}
   with $h^\eps_0(t) = h(Y^{\eps}_t)$, for which
   \begin{equation}\label{estimA}
   \mathcal{A}^\eps(h_0^\eps+h_1^\eps+h_2^\eps+h^\eps_3)(t)=\mathcal{A}^\eps_2(t)+o(\eps)\,.
   \end{equation}
   Using Theorem \ref{martingale} we see that for all $N\geq 1$, any bounded continuous function $\Psi$, and every sequence $0< s_1<\cdots <s_N \leq s <t$ we must have
   \begin{equation}\label{MGN}
   \E\Big[ \Psi\big( Y^{\eps}_{s_1},\dots,Y^{\eps}_{s_N} \big)\Big( M^\eps_h(t)-M^\eps_h(s)\Big) \Big]=0\,.
   \end{equation}
   Together with Lemmas \ref{bound2}, \ref{lemma_L}, \ref{lemmaf2}, \ref{lemmaReps}, \ref{boundf3}, and estimate \eqref{estimA}, we have
   \begin{equation}\label{estimMG}
   \lim_{\eps\to 0}\E\Big[ \Psi\big( Y^{\eps}_{s_1},\dots,Y^{\eps}_{s_N} \big)\Big( h(Y^{\eps}_t)-h(Y^{\eps}_{s})-\int_{s}^t \mathcal{L}h(u)\,du\Big) \Big]=0 \,,
   \end{equation}
   where $\mathcal L$ is defined in~\eqref{prop_LM}. Combining with $Y^{\eps}_{t=0} = Y_0$ for every $\eps>0$, the equation \eqref{estimMG} implies that any subsequential limit $(Y_t)_{t\geq 0}$ of $(Y_{\eps})_{\eps>0}$ is a solution of the martingale problem for the generator $\mathcal{L}$ with initial condition $Y_{t=0} = Y_0$.
   We claim that the solution to this martingale problem is unique (we prove this in Section \ref{sec:rewrite_gen}, below).
   Given this, along with the tightness proven in Proposition~\ref{tightness}, Prokhorov's theorem implies the weak convergence of the sequence of processes $(Y^{\eps})_{\eps>0}$ to the process $(Y_t)_{t\geq 0}$ as desired.
   This completes the proof of Proposition \ref{prop_main}.
     \end{proof} 
  

\subsection{Proof of Lemma \ref{lemmaf2}}\label{sec:lemmaf2}
We now prove Lemma~\ref{lemmaf2}.
By using Lemma~\ref{l:bound}, the proof is similar that of Lemma~\ref{A1}.
\begin{proof}[Proof of Lemma~\ref{lemmaf2}]
  We split the integrand in $h^{\eps}_2$ into two parts as follows
  \begin{equation*} 
    \begin{split}
      h^\eps _2 (t)&= \eps^2 \sum_{\substack{n,m\in\Z^* \\ n+m\neq 0}} e^{2i\pi (n+m) \tau^{\eps}_t} \\
      &\qquad\cdot \iint \frac{V(t/\eps^2,dp) V(t/\eps^2,dp')-r(p)\delta(p-p')\,dp\,dp'}{\mu (|p|^{2\beta}+|p'|^{2\beta}) - 2i\pi (n+m)\omega(Y^{\eps}_t)}W_{n,m}(p, Y^{\eps}_t)\\
      &\defeq h^\eps _{21} (t) - h^\eps _{22} (t)
    \end{split}
  \end{equation*}
  where $h_{21}^{\eps}$ contains the term $V(t/\eps^2,dp)V(t/\eps^2,dp')$ and $h_{22}^{\eps}$ contains the term $r(p)\delta(p-p')$. 
  
  For $h_{22}^{\eps}$, it is straightforward to see that
  \begin{align}\label{e:h22}
    \nonumber
  \vert h^\eps_{22}(t)\vert &\leq \eps^2 C_{h,\omega_0,\nabla \omega}  \int dp \, r(p)  \\
  \nonumber
  &\qquad\cdot\sum_{\substack{n,m\in\Z^* \\ n+m\neq 0}} \frac{1}{\vert n  (n+m) \vert}\vert F_{m} (Y^{\eps}_t)\vert\Big(\big(1+|n|)\vert F_{n} (Y^{\eps}_t)\vert +  \|\textrm{Jac} F_{n}(Y^{\eps}_t)\| \Big )\\ 
  \nonumber
  &\leq \eps^2 C_{h,\omega_0,\nabla \omega}  \int dp \, r(p)\\
  &\qquad\cdot \sum_{n\in\Z^*}\frac{1}{\vert n \vert^2} \sup_{\substack{|y|\leq 2M  }} \int_0^1  \Big(| F(y,\tau)|^2 + |\partial_\tau F(y,\tau)|^2 +\|\textrm{Jac}F(y,\tau)\|^2 \Big)d\tau\,.
  \end{align} 
  We rewrite
  \begin{equation*} h^\eps_{21}(t) = \eps^2 V\Big(\frac{t}{\eps^2},\varphi_{1,t,\eps}\Big),\end{equation*}
  with
  \begin{equation*}\varphi_{1,t,\eps}(p)=V\Big(\frac{t}{\eps^2},\varphi_{2,t,p,\eps}\Big)\end{equation*}
  and
  \begin{equation*}\begin{split}
  \varphi_{2,t,p,\eps}(p')&=\sum_{\substack{n,m\in\Z^* \\ n+m\neq 0}} e^{2i\pi (n+m) \tau^{\eps}_t} W_{n,m}(p, Y^{\eps}_t)\\
  &\qquad\cdot \frac{1}{\mu (|p|^{2\beta}+|p'|^{2\beta}) - 2i\pi (n+m)\omega(Y^{\eps}_t)}\,.
  \end{split}\end{equation*}
  For readers' convenience, we pause to recall the meaning of the above notation.
  The function $\varphi_{1,t,\eps}$ of the variable $p$ is obtained by applying the measure $V(t/\e^2,dp')$ to the function $\varphi_{2,t,p,\eps}(p')$, where $\varphi_{2,t,p,\eps}$ (a function of the variable $p'$), is defined as above.
  Finally, $h_{21}^{\eps}(t)$ is obtained by applying the measure $V(t/\e^2,dp)$ to the function $\varphi_{1,t,\eps}(p)$.
  
  For $h^\eps_{22}$, one has
  \begin{equation*}|\varphi_{2,t,p,\eps}(p')|\leq C\,,\end{equation*}
  uniformly in $p$ and $p'$. 
  Following the same lines as for the first perturbation of the test function $h^\eps_1$ but using inequality \eqref{boundV2} of Lemma~\ref{l:bound} (with $k=\infty$ if $\beta\geq 1/2$ and $k\in (1,1/(1-2\beta))$ if $\beta<1/2$), we have for some $C'>0$
  \begin{equation*}\sup_{p\in S}|\varphi_{1,t,\eps}(p)| \leq \sup_{\varphi \in W_{k,C'}}\Big|V\Big(\frac{t}{\eps^2},\varphi\Big)\Big|\,.\end{equation*}
  In fact, the derivative of $\varphi_{2,t,p,\eps}$ in $p'$ can be uniformly bounded in $p$ in the corresponding $L^k(S)$ space. 
  
  Now, let us look at the derivative of $\varphi_{1,t,\eps}(p)$ in $p$. One can write
  \begin{equation*}\partial_p\varphi_{1,t,\eps}(p) = |p|^{2\beta-1}V\Big(\frac{t}{\eps^2},\varphi'_{2,t,p,\eps}\Big)+V\Big(\frac{t}{\eps^2},\varphi''_{2,t,p,\eps}\Big),\end{equation*}
  where $\varphi'_{2,t,p,\eps}$ and $\varphi''_{2,t,p,\eps}$ belong to some $W_{k,C''}$, and where $C''$ does not depend on $p$. Then,
  \begin{equation*}|\partial_p\varphi_{1,t,\eps}(p)| \leq (|p|^{2\beta-1} + 1)\sup_{\varphi \in W_{k,C''}}\Big|V\Big(\frac{t}{\eps^2},\varphi\Big)\Big|\,.\end{equation*}
  As a result, considering $\tilde C = \max(C',C'')$, and
  \begin{equation*}\tilde h^\eps_{21}(t) \defeq \frac{h^\eps_{21}(t)}{\sup_{\varphi \in W_{k,\tilde C}}|V(t/\eps^2,\varphi)|} = \eps^2 V\Big(\frac{t}{\eps^2},\tilde \varphi_{1,t,\eps}\Big)\,,\end{equation*}
  with 
  \begin{equation*}\tilde \varphi_{1,t,\eps} = \frac{ \varphi_{1,t,\eps} }{\sup_{\varphi \in W_{k,\tilde C}}|V(t/\eps^2,\varphi)|},\end{equation*}
  we have
  \begin{equation*}\|\tilde \varphi_{1,t,\eps}\|_{W^{1,k}(S)}\leq 1.\end{equation*}
  Therefore, 
  \begin{equation*}|\tilde h^\eps_{21}(t)| \leq  \eps^2 \sup_{\varphi \in W_{k,\tilde C}}\Big|V\Big(\frac{t}{\eps^2},\varphi\Big)\Big|,\end{equation*}
  and hence
  \begin{equation*}\E|h^\eps_{21}(t)|\leq \eps^2 \E\sup_{\varphi \in W_{k,\max(1,\tilde C)}}\Big|V\Big(\frac{t}{\eps^2},\varphi\Big)\Big|^2=\eps^2 \E\sup_{\varphi \in W_{k,\max(1,\tilde C)}}|V(0,\varphi)|^2\,.\end{equation*}
  Using Lemma~\ref{l:bound}, this concludes the proof.
\end{proof} 

\subsection{Proof of Lemma \ref{lemmaReps}}\label{sec:lemmaReps}

We devote this section to proving Lemma~\ref{lemmaReps}.
   By following the computations in~\eqref{def_Af1}, we obtain the following explicit formula for $\mathcal{R}_1^{\eps}(t)$:
  %
  \begin{align*}
    \MoveEqLeft
  \mathcal{R}_1^{\eps}(t) =  \eps v\Big(\frac{t}{\eps^2}\Big)\sum_{\substack{n,m\in\Z^* \\ n+m\neq 0}}\iint \frac{2i\pi (n+m)e^{2i\pi (n+m)\tau^{\eps}_t}}{\mu (|p|^{2\beta}+|p'|^{2\beta}) - 2i\pi (n+m)\omega(Y^{\eps}_t)}\\
  &\qquad\cdot \Big( G(Y_t^{\eps}) - \frac{ \nabla \omega(Y_{t}^{\eps})\cdot F(Y_t^\eps,\tau_t^\eps)}{\mu (|p|^{2\beta}+|p'|^{2\beta}) - 2i\pi (n+m)\omega(Y^{\eps}_t)} \Big)\\
  &\qquad\cdot \Big( V\Big(\frac{t}{\eps^2},dp\Big) V\Big(\frac{t}{\eps^2},dp'\Big)-r(p)\delta(p-p')\,dp\,dp'\Big)W_{n,m}(p, Y^{\eps}_t)\,\\
  &+\eps v\Big(\frac{t}{\eps^2}\Big) \sum_{\substack{n,m\in\Z^* \\ n+m\neq 0}}\iint \frac{e^{2i\pi (n+m)\tau^{\eps}_t}}{\mu (|p|^{2\beta}+|p'|^{2\beta}) - 2i\pi (n+m)\omega(Y^{\eps}_t)}\\
  &\qquad\cdot \Big( V\Big(\frac{t}{\eps^2},dp\Big) V\Big(\frac{t}{\eps^2},dp'\Big)-r(p)\delta(p-p')\,dp\,dp'\Big)\\
  &\qquad\cdot \nabla_y W_{n,m}(p,y)_{|y=Y_t^{\eps}}\cdot F(Y_t^\eps,\tau_t^\eps)\,.\\
  \end{align*}
  From here, we can see that $\mathcal{R}_1^{\eps}(t)$ can be treated in the same manner as we treated $h^{\eps}_{2}(t)$ in Section \ref{sec:lemmaf2}.
  The completes the proof.

\subsection{Proof of Lemma \ref{boundf3}}\label{sec:boundf3} 

We devote this section to proving Lemma~\ref{boundf3}.
  The bounds for $h_3^\eps(t)$ can be obtained in the same manner as the bounds for $h_{22}^{\eps}$ in equation \eqref{e:h22}.
  As a result, we obtain
  \[
      \lim_{\eps\to 0}\sup_{t\in[0,T]}\E |h^\eps_3(t)|=0 \,.
  \]
  Treating $R_2^{\eps}(t)$ in the same manner as $R_1^{\eps}(t)$ we obtain
  \begin{align*}
    \MoveEqLeft
    \mathcal{R}_2^{\eps}(t) =  \eps v\Big(\frac{t}{\eps^2}\Big)\sum_{\substack{n,m\in\Z^* \\ n+m\neq 0}}\int\,dp\, \frac{2i\pi (n+m)r(p)e^{2i\pi (n+m)\tau^{\eps}_t}}{\eps^3 - 2i\pi (n+m)\omega(Y^{\eps}_t)}\\
    &\qquad\cdot \Big( G(Y_t^{\eps}) - \frac{ \nabla \omega(Y_{t}^{\eps})\cdot F(Y_t^\eps,\tau_t^\eps)}{\eps^3 - 2i\pi (n+m)\omega(Y^{\eps}_t)} \Big) W_{n,m}(p, Y^{\eps}_t)\,\\
    &+ \eps^3 \sum_{\substack{n,m\in\Z^* \\ n+m\neq 0}}\int\,dp\, \frac{r(p)e^{2i\pi (n+m)\tau^{\eps}_t}}{\eps^3 - 2i\pi (n+m)\omega(Y^{\eps}_t)}W_{n,m}(p, Y^{\eps}_t)\\
    &+\eps v\Big(\frac{t}{\eps^2}\Big) \sum_{\substack{n,m\in\Z^* \\ n+m\neq 0}}\int\,dp\, \frac{r(p)e^{2i\pi (n+m)\tau^{\eps}_t}}{\eps^3 - 2i\pi (n+m)\omega(Y^{\eps}_t)} \nabla_y W_{n,m}(p,y)_{|y=Y_t^{\eps}}\cdot F(Y_t^\eps,\tau_t^\eps)\,.\\
  \end{align*}
  From here, we can see that each of the terms in $\mathcal{R}_2^{\eps}(t)$ can be treated in the same manner as $h^{\eps}_{2}(t)$ in Section \ref{sec:lemmaf2}.
  This concludes the proof.
  \subsection{Proof of Lemma \ref{lemma_L}}\label{sec:lemma_L}

  We devote this section to proving Lemma~\ref{lemma_L}.
  The proof analyzes how $V(t/\eps^2,dp)V(t/\eps^2,dp')$ in $\mathcal{A}_2^\eps$ (defined through Formula \eqref{eq:A2}) averages in the limit $\eps\to 0$. This is essentially similar to the analysis of the covariance in the proof of Proposition~\ref{p:qConv} in Section~\ref{sec:quad} in the case when the Hamiltonian was quadratic.
  
    Let $\eta>0$ and write
    \begin{equation*}
    \begin{split}
    \int_0^t (\mathcal{A}^\eps_2(u)-\mathcal{L}^M h(Y^{\eps}_u))\, du&=\Big(\int_0^{[t/(\eps \eta)] \eps \eta}+\int_{[t/(\eps \eta)] \eps \eta}^t \Big) (\mathcal{A}^\eps_2(u)-\mathcal{L} h(Y^{\eps}_u))\, du \\
    &=\sum_{q=0}^{[t/(\eps \eta)]-1} \int_{q\eps \eta}^{(q+1)\eps \eta} (\mathcal{A}^\eps_2(u)-\mathcal{L} h(Y^{\eps}_u))\, du \\
    &\qquad+\int_{[t/(\eps \eta)] \eps \eta}^t  (\mathcal{A}^\eps_2(u)-\mathcal{L} h(Y^{\eps}_u)) \,du \\
    &\defeq R^\eps_1(t)+R^\eps_2(t)\,.
    \end{split}\end{equation*}
    Following the computations of Lemma \ref{lemmaf2}, we have
    \begin{equation*}
    \E \vert R^\eps_2(t)\vert \leq \eps \tilde C\,.
    \end{equation*}
    so that we only have to take care of $R^\eps_1(t)$. For this term we consider the decomposition
    
    \begin{equation*}
    \begin{split}
      R^\eps_1(t)
	&=\sum_{q=0}^{[t/(\eps \eta)]-1} \int_{q\eps \eta}^{(q+1)\eps \eta} (\mathcal{A}^\eps_2(u)-\mathcal{L}h(Y^{\eps}_u)) \,du
      \\
      &=\sum_{q=0}^{[t/(\eps \eta)]-1} \int_{q\eps \eta}^{(q+1)\eps \eta} \, du
      \\
	&\qquad\qquad\qquad\cdot \sum_{n\in\Z^*} \iint \Big(V\Big(\frac{u}{\eps^2},dp\Big)V\Big(\frac{u}{\eps^2},dp'\Big)-r(p)\delta(p-p')\,dp\,dp'\Big)
      \\
      &\qquad\qquad\cdot W_{n}(p, Y^{\eps}_{q\eps\eta})
      \\
      &\qquad+\sum_{q=0}^{[t/(\eps \eta)]-1} \int_{q\eps \eta}^{(q+1)\eps \eta} \, du
      \\
	&\qquad\qquad\qquad\cdot\sum_{n\in\Z^*} \iint \Big(V\Big(\frac{u}{\eps^2},dp\Big)V\Big(\frac{u}{\eps^2},dp'\Big)-r(p)\delta(p-p')\,dp\,dp'\Big)\\
      &\qquad\qquad\cdot \Big(W_{n}(p, Y^{\eps}_u) - W_{n}(p, Y^{\eps}_{q\eps\eta})\Big)\\
      &\defeq R^\eps_{11}(t)+R^\eps_{12}(t) \,.
    \end{split}\end{equation*}
    For $R^\eps_{12}(t)$, we have
    \begin{equation*}
    \begin{split}
    \E \vert R^\eps_{12}(t) \vert & \leq  \frac{C_{h,\omega_0,\nabla \omega, F}}{\eps}\sum_{q=0}^{[t/(\eps \eta)]-1} \int_{q\eps \eta}^{(q+1)\eps \eta} du \int_{q\eps \eta}^u du' \int dp\,r(p) \leq \eta \tilde C\,,
    \end{split}\end{equation*}
    following the computations in the proof of Lemma \ref{lemmaf2}.
    It only remains to treat $R^\eps_{11}(t)$, and this term requires more care.
    We have
    \begin{equation*}
    \begin{split}
    \E \vert R^\eps_{11}(t) \vert &\leq C_{h,\omega_0,\nabla \omega, F^M} \sum_{q=0}^{[t/(\eps \eta)]-1} \sqrt{I^\eps_{q,\eta}}\,,
    \end{split}\end{equation*}
    with
    \begin{equation*}
      I^\eps_{q,\eta} \defeq \E\Bigl[\Bigl\vert\int_{q\eps \eta}^{(q+1)\eps \eta} du  \iint\Big[ V\Big(\frac{u}{\eps^2},dp\Big) V\Big(\frac{u}{\eps^2},dp'\Big)-r(p)\delta(p-p')\,dp\,dp'\Big] \Bigr\vert^2\Bigr].
    \end{equation*}
    Using Gaussian property of $V$, we have
    \begin{equation*}
    \begin{split}
    I^\eps_{q,\eta}&=  \int_{q\eps \eta}^{(q+1)\eps \eta}du_1 \int_{q\eps \eta}^{(q+1)\eps \eta}du_2 \\
    &\qquad\cdot \int \Big(\E\Big[V\Big(\frac{u_1}{\eps^2},dp_1\Big)V\Big(\frac{u_2}{\eps^2},dp_2\Big)\Big]\E\Big[V\Big(\frac{u_1}{\eps^2},dp'_1\Big) V\Big(\frac{u_2}{\eps^2},dp'_2\Big)\Big] \\
    &\qquad\qquad\qquad+  \E\Big[V\Big(\frac{u_1}{\eps^2},dp_1\Big) V\Big(\frac{u_2}{\eps^2},dp'_2\Big)\Big]\E\Big[V\Big(\frac{u_1}{\eps^2},dp'_1\Big) V\Big(\frac{u_2}{\eps^2},dp_2\Big)\Big]\Big),
    \end{split}\end{equation*}
    and by the symmetry between $u_1$ and $u_2$, we can write
    \begin{equation*}
    \begin{split}
    I^\eps_{q,\eta} & \leq C_{h,\omega_0,\nabla \omega, F} \iint \,dp \,d p' \, r(p)r(p')\\
    &\qquad\cdot  \int_{q\eps \eta}^{(q+1)\eps \eta}du_1 \int_{q\eps \eta}^{u_1}du_2 \, e^{-\mu(\vert p \vert^{2\beta}+\vert p'\vert^{2\beta})( u_1 - u_2)/\eps^2}.
    \end{split}\end{equation*}
    Now, we split the integration domain of $p$ into the region where $|p|\leq \nu$, and the region where $|p| >\nu$.
    This leads to the decomposition 
    \begin{equation*}
    I^\eps_{q,\eta}  \leq J^\eps_{1,q,\nu} + J^\eps_{2,q,\nu}.\end{equation*}
    
    For the second term (when $|p|>\nu$), we have
    \begin{equation*}\begin{split}
    \int_{q\eps \eta}^{(q+1)\eps \eta}\,du_1 \int_{q\eps \eta}^{u_1}\,du_2 \,e^{-\mu(\vert p \vert^{2\beta}+\vert p'\vert^{2\beta})( u_1 - u_2)/\eps^2} & \leq \frac{\eps^2}{\mu|p|^{2\beta}}\int_0^{\eps \eta}(1-e^{-\mu|p|^{2\beta}u_1/\eps^2})\,du \\
    &\leq  C_{\eta,\nu}\eps^3\,,
    \end{split}\end{equation*}
    so that 
    \begin{equation*}
    \sum_{q=0}^{[t/(\eps \eta)]-1} \sqrt{J^\eps_{2,q,\eta}} \leq \eps^{1/2} C_{\eta,\nu,T}\,. \end{equation*}

    For the first term (when $\abs{p} \leq \nu$), we have
    \begin{equation*}\begin{split}
    \sum_{q=0}^{[t/(\eps \eta)]-1} \sqrt{J^\eps_{1,q,\nu}} & \leq  C_{T,\eta}\Big( \int_{\{|p|\leq \nu \}\times S} \,dp\,dp'\,r(p)r(p')\Big)^{1/2}\\
    & \leq C_{T,\eta}\Big( \int_{\{|p|\leq \nu\}}\frac{dp}{|p|^{2\alpha}}\Big)^{1/2}\,.\end{split}\end{equation*}
    Since~$\alpha < 1/2$, the dominated convergence theorem implies that this converges to~$0$.
    This concludes the proof of Lemma \ref{lemma_L}.

  \subsection{Well-posedness of the martingale problem}\label{sec:rewrite_gen}
  We devote this section to showing that the martingale problem with generator~$\mathcal L$ (see equation~\eqref{prop_LM}) is well-posed.
  First, we rewrite $\mathcal{L} h(y)$ as
  \begin{equation*}
  \mathcal{L} h(y)=\frac{1}{2}\sum_{j,l=1}^2  \fraka_{jl}(y)\partial^2_{x_lx_j} h(y)+\sum_{j=1}^2  \partial_{x_j} h(y)\frakb_j(y),
  \end{equation*}
  where
  \begin{equation}\label{def_a}\begin{split}
  \fraka_{jl}(y) \defeq \sum_{n\in\Z^*} R_n(y) F_{j,n}(y)F_{-l,n}(y) = \sum_{n\in\Z^*} R_n(y) F_{j,n}(y)\overline{F_{l,n}(y)}
  \end{split}\end{equation}
  with (equivalently as in formula \eqref{def_fourieraRR})
  \begin{equation}\label{def_rn}\begin{split}
  R_n(y) & \defeq \int dp\,r(p)\Big( \frac{1}{\mu|p|^{2\beta}-2i\pi n\omega(y)} +\frac{1}{\mu|p|^{2\beta}+2i\pi n\omega(y)} \Big)\\
  &= \int dp\,r(p) \frac{2\mu|p|^{2\beta}}{\mu|p|^{4\beta}+4\pi^2 n^2\omega^2(y)}\\
  & = 2\int_0^\infty du \,\E[v(u)v(0)]\cos(2\pi n \omega(y) u )\,,
  \end{split}  \end{equation}
  and
  \begin{align}\label{def_b}
    \nonumber
    \MoveEqLeft
  \frakb_j(y) \defeq \sum_{n\in\Z^*}\int dp\, \frac{r(p)}{\mu |p|^{2\beta}-2i\pi n \omega(y)} \\
  &\cdot\Bigl( \sum_{l=1}^2 \Bigl(\partial_{x_l}F_{j,n}(y) F_{l,-n}(y)-\frac{2i\pi n }{\mu |p|^{2\beta}-2i\pi n \omega(y)} F_{j,n}(y) F_{l,-n}(y)\partial_{x_l} \omega(y)\Bigr)\Bigr).
  \end{align}
  
  The uniqueness of the martingale problem is guaranteed by \cite[Corollary 4.9 p. 317, Proposition 3.20 p. 309, and Theorem 2.5 p. 287]{KaratzasShreve91} together with \cite[Theorem 1.2 p. 129]{Friedman06}, where the latter provides the locally Lipschitz behavior of a nonnegative square root. It is not hard to see that $\frakb$ is locally Lipschitz with respect to $y$, as well as $\fraka(y)$ is a symmetric nonnegative matrix, which is $\mathcal{C}^2(\R^2)$ with respect to $y$. The smoothness property is direct from \eqref{def_a} and \eqref{def_rn}. Regarding the nonnegativity, we have for any $x\in \R^2$
  \begin{equation}\label{var_quad}x^T \fraka(y) x = \big<{\sigma}^*(y)(x),{\sigma}^*(y)(x)\big>_{\mathscr{H}}\geq 0
  \end{equation}
  with the Hilbert space 
  \begin{equation*}
    \mathscr H = \Big\{ u: \Z^* \to \mathbb{C}: \overline{u_{-n}} =u_n \quad\text{and}\quad \sum_{n\in\Z^* }|u_n|^2<\infty\Big\}\,,\end{equation*}
  the adjoint ${\sigma}_M^*$ is an operator from $\R^2$ to $\mathscr{H}$ defined by 
  \begin{equation*}
  {\sigma}_M^*(y)(n,x)\defeq  R^{1/2}_n(y)F_{1,n}(y)x_1 + R^{1/2}_n(y) F_{2,n}(y)x_2\,,\end{equation*}
  and $\sigma$, from $\mathscr{H}$ to $\R^2$, is defined by
  \begin{equation}\label{def_sigM}{\sigma}(y)(u,j)\defeq \sum_{n\in\Z^* } R^{1/2}_n(y) F_{j,n}(y)u_{-n}\qquad j=1,2\,.\end{equation}
  Hence, we have an explicit self-adjoint Hilbert-Schmidt square root for $\fraka$ that we use in Section \ref{sec:propSDE} to derive a SDE for the process $(Y_t)_{t\geq 0}$. It is also clear that $\sigma$ is locally Lipschitz with respect to $y$, so that we can conclude the uniqueness of the martingale problem. 
  
  Let us finish by rewriting the martingale problem in a more convenient way and let us start with $\frakb$. Using the fact that
  \begin{equation*}\begin{split}
  \frac{-2i\pi n \partial_{x_l}\omega(y)}{(\mu |p|^{2\beta} - 2i\pi n \omega(y))^2} &= \partial_{y_l} \int_0^\infty e^{-(\mu|p|^{2\beta}-2i\pi n \omega(y))u}\,du \\
  &=2i\pi n \partial_{x_l}\omega(y) \int_0^\infty du\,u \, e^{-(\mu|p|^{2\beta}-2i\pi n \omega(y))u}\,,
  \end{split}\end{equation*}
  we sum Fourier series in \eqref{def_b} to obtain
  \begin{equation*}
  \frakb(y) =  \int_0^1 d\tau  \int_0^\infty du\, R(u) \textrm{Jac}_y[\tilde F(u,y,\tau)]\tilde F(0,y,\tau)\,,
  \end{equation*} 
  where
  \begin{equation*}
  \tilde F(u,y,\tau) = F(y,\tau +\omega(y)u)\,.\end{equation*}
  For $\fraka$ we have the following formulation
  \begin{equation*}
  \begin{split}
  \fraka_{jl}(y) &= \int_0^1 d\tau \int_0^\infty du\, R(u) \Big(\tilde F_j(u,y,\tau)\tilde F_l(0,y,\tau) + \tilde F_j(0,y,\tau)\tilde F_l(u,y,\tau)\Big)\\
  & = 2 \int_0^1 d\tau \int_0^\infty du\, R(u)\tilde F_j(u,y,\tau)\tilde F_l(0,y,\tau),\quad j,l\in\{1,2\}\,.
  \end{split}\end{equation*}
  Finally, to conclude, we just have to remark that according to \eqref{e:defF}, as well as the definition of $\fraka$ involving the integration of the periodic variable over a whole period, we obtain the formulation \eqref{prop_LM}.

\section{Proof of Proposition \ref{prop_SDE_I_psi_M}}\label{sec:propSDE}

The proof of this proposition follows the treatment in \cite[Proposition 4.6 p. 315]{KaratzasShreve91} and consists of deriving the SDE corresponding to the martingale problem with generator \eqref{def_L}. To prove this correspondence, we consider first the function $h_j(y)=y_j$ for any $j\in\{1,2\}$, so that
\begin{equation}\label{def_M}
M^0_j(t)=Y_{j,t}-Y_{j,0}-\int_0^t \frakb_j(Y_s)\,ds
\end{equation}
is a martingale, where $\frakb$ is defined as $\frakb^M$ in \eqref{def_b}. Proceeding the same way choosing $h_{jl}(y)=y_j y_l$ one can show that for any $j,l\in\{1,2\}$
\begin{equation*}
M^0_j(t)M^0_l(t)-\int_0^t \fraka_{jl}(Y_s)\,ds
\end{equation*}   
is a martingale.
(Recall, $\fraka$ is defined by \eqref{def_a}.)
Therefore $M^0$ is a martingale with quadratic variation given by 
\begin{equation*}
t\mapsto \int_0^t \fraka_{jl}(Y_s)\,ds\,.
\end{equation*}
Using~\eqref{var_quad}, we note $\fraka = \sigma {\sigma}^*$, where $\sigma$ is defined by \eqref{def_sigM}.
We can now apply a standard martingale representation theorem involving Hilbert spaces (see \cite[Theorem 8.2]{DaPratoZabczyk96} for instance).
This guarantees the existence of a complex valued cylindrical Brownian motion $B$ ($B=(B^1-iB^2)/\sqrt{2}$, with $B^1$ and $B^2$ being two independent real valued cylindrical Brownian motions) on the Hilbert space $\mathcal{T}$,
possibly defined on an extension of the probability space under consideration, and such that with probability one
\begin{equation*}
M^0(t)=\int_0^t \sigma(Y_s)\,dB_s\\
\end{equation*}  
for all $t\geq 0$. As a result, from \eqref{def_M} we obtain the following SDE for $(Y_t)_{t\geq 0}$, 
\begin{equation*}
dY_{t}=\sigma(Y_t)dB_{t} + \frakb(Y_t)dt\,.
\end{equation*}
Note that, because of symmetries, and that for the cylindrical Brownian motion on $\mathcal{T}$ we necessarily have $B^1_{-n} = B^1_{n} $ and $B^2_{-n} = - B^2_{n}$, this defines a real valued equation. 

Regarding $\sigma$, one can write
\begin{equation}\label{def_sig}
\begin{split}
\sigma_j(Y_t)dB_{t} & = \frac{1}{\sqrt{2}} \sum_{n\in\Z^* } \tilde F_{j,n}(Y_t) R^{1/2}_n(Y_t)(dB^1_{t,n}-i dB^2_{t,n}) \\
& = \int_0^1 d\tau \tilde F_j(Y_t,\tau)  \,dW_t(Y_t,\tau)
\end{split}
\end{equation}
where 
\begin{equation*}
dW_t(y,\tau) = \frac{1}{\sqrt{2}} \sum_{n\in\Z^*} e^{2i\pi n\tau}R^{1/2}_n(y) (dB^1_{t,n}-idB^2_{t,n}) \end{equation*}
is a real valued Brownian field with covariance function
\begin{equation*}
\begin{split}
\E W_t(y,\tau)W_s(y,\tau')& = t\wedge s \sum_{n\in\Z^*}  e^{2i\pi n(\tau-\tau')} R_n(y)\\
& = t\wedge s \sum_{n\in\Z^*}  e^{2i\pi n(\tau-\tau')}\int dp\, r(p)\\
&\qquad\cdot \int_0^\infty du\,\Big[ e^{-(\mu |p|^{2\beta}+2i\pi n \omega(y))u }+e^{-(\mu |p'|^{2\beta}-2i\pi n \omega(y))u }\Big]\\
& = t\wedge s \int_0^\infty du \, R(u)\\
&\qquad\cdot \sum_{n\in\Z^*} \Big[e^{2i\pi n(\tau-\tau' + \omega(y)u)}+e^{2i\pi n(\tau-\tau' - \omega(y)u)}\Big].
\end{split}\end{equation*}
so that for any test functions $\phi_1$, $\phi_2$ in the Hilbert space
\begin{equation*}
L^2_0(\T) = \Big\{\phi \in L^2(\T):\int_0^1 \phi(\tau) d\tau = 0\Big\}\,,\end{equation*}
the covariance function reads
\begin{equation*}
\begin{split}
\E W_t(y,\phi_1)W_s(y,\phi_2) & = t\wedge s \int_0^\infty \,du\,  R(u) \\
&\cdot \int_0^1 d\tau\Big[ \phi_1(\tau + \omega(y)u)\phi_2(\tau)+\phi_1(\tau)\phi_2(\tau + \omega(y)u)\Big]\,.
\end{split}\end{equation*}

To determine the SDE satisfied by $(I^M_t)_{t\geq 0}$ we just have to consider the first coordinate of $(Y_t)_{t\geq 0}$. Then, we have

\begin{equation*}
dI^M_t  = \int_0^1 d\tau\, a^M(I^M_t,\tau) \,dW_t(I^M_t,\tau) + \int_0^1 d\tau \int_0^\infty du\, R(u) \tilde A(u,I^M_t,\tau)\,dt
\end{equation*}
with 
\begin{equation*}\begin{split}
\tilde A(u,I,\tau) & =  \partial_I \big( a^M(I,\tau + \omega(I)u) \big) a^M(I,\tau) \\
&+ \partial_\tau a^M(I,\tau + \omega(I)u) \big( b^M(I,\tau) - \av{b^M(I)}\big)\,.
\end{split}\end{equation*}
Using the fact that $a^M$ is mean-zero in $\tau$, and integrating by parts, we have
\begin{equation*}\begin{split}
\int_0^1 d\tau\, \partial_\tau & a^M(I,\tau + \omega(I)u) \big( b^M(I,\tau) - \av{b^M(I)}\big)\\
= &\int_0^1 d\tau \partial_\tau a^M(I,\tau + \omega(I)u)b^M(I,\tau)  - \int_0^1d\tau a^M(I,\tau + \omega(I)u) \partial_\tau b^M(I,\tau)\,.
\end{split}\end{equation*}

Now, we restrict our study for $t\leq \eta_M$ so that, according to \eqref{relation_phi}, we have for $I\in (1/M, M)$
\begin{equation*}\partial_\tau b^M(I,\tau) = \partial_\tau b(I,\tau) = \partial_\tau \partial_I \varphi_1^{-1}(I,\tau) = \partial_I \partial_\tau \varphi_1^{-1}(I,\tau) = -\partial_I a(I,\tau)\,,\end{equation*}
so that
\begin{equation*}
\int_0^1 d\tau \tilde A(u,I,\tau)  = \int_0^1 d\tau \,\partial_I \big( a(I,\tau + \omega(I)u) \big) a(I,\tau) +  a(I,\tau + \omega(I)u) \partial_I a(I,\tau).
\end{equation*}
We then obtain the desired result for the stopped process $\tilde I^M_t = I^M_{t\wedge \eta_M}$. Now for the process $\psi^M_{t\wedge \eta_M}$, we look at the second coordinate of $(Y_t)_{t\geq 0}$, and we have
\begin{equation*}
d\psi^M_t  = \int_0^1 d\tau\, b^M(I^M_t,\tau)\, dW_t(I^M_t,\tau) + \int_0^1 d\tau \int_0^\infty du\, R(u) \tilde B(u,I^M_t,\tau)\,dt
\end{equation*}
with 
\begin{equation*}\begin{split}
\tilde B(u,I,\tau) & =  \partial_I \big( b^M(I,\tau + \omega(I)u) - \av{b^M(I)}\big) a^M(I,\tau) \\
&\qquad+ \partial_\tau ( b^M(I,\tau + \omega(I)u) - \av{b^M(I)}) \big( b^M(I,\tau) - \av{b^M(I)}\big)\,.
\end{split}\end{equation*}
Using again that $a^M$ and $\partial_\tau b^M$ are mean-zero in $\tau$, together with an integration by parts, we obtain
\begin{equation*}\begin{split}
\int_0^1 d\tau\, \tilde B(u,I,\tau) & = \int_0^1 d\tau \,\partial_I \big( b^M(I,\tau + \omega(I)u) \big) a^M(I,\tau)\\
&\qquad -\int_0^1d\tau\, b^M(I,\tau + \omega(I)u) \partial_\tau b^M(I,\tau)\,.
\end{split}\end{equation*}
Since 
for $I\in (1/M, M)$ we know
\begin{equation*}\partial_\tau b^M(I,\tau) = -\partial_I a(I,\tau)\,,\end{equation*}
the proof of Proposition~\ref{prop_SDE_I_psi_M} is complete.


\appendix
\section{Proofs of Lemmas~\ref{l:quad1} and~\ref{l:quad2} }\label{append:quad}
  In this appendix we prove Lemmas~\ref{l:quad1} and~\ref{l:quad2}.
  While the proofs are a bit lengthy, they are a direct computation.
  
  \begin{proof}[Proof of Lemma~\ref{l:quad1}]
    From~\eqref{e:w1} we note
    \begin{equation}
      w_1^{\e}(t) = \e \int_0^{t/\e^2}\sin(\tau)v(\tau) \, d\tau\,,
    \end{equation}
    and hence
    \begin{align}
      \nonumber
      \E(w_1^{\e}(t)-w_1^{\e}(s))^2
  &= \e^2\int_{s/\e^2}^{t/\e^2}\int_{s/\e^2}^{t/\e^2} \sin(\theta)\sin(\tau)R(\theta-\tau) \, d\theta \, d\tau
      \\
      \nonumber
  &= \frac{\e^2}{2}\int_{s/\e^2}^{t/\e^2}\int_{s/\e^2}^{t/\e^2}\left [\cos(\theta-\tau)-\cos(\theta+\tau)\right]R(\theta-\tau)\, d\theta \, d\tau
      \\
      \label{e:Ew1tminusw1s2}
  &= \frac{1}{2} \paren[\big]{ I_1^\epsilon(s,t) - I_2^\epsilon(s,t) }\,,
    \end{align}
    where
    \begin{align*}
      I_1^{\e}(s,t) &\defeq \e^2 \int_{s/\e^2}^{t/\e^2}\int_{s/\e^2}^{t/\e^2}\cos(\theta-\tau)R(\theta-\tau) \, d\theta \, d\tau \,,\\
      \text{and}\qquad
      I_2^{\e}(s,t) &\defeq \e^2 \int_{s/\e^2}^{t/\e^2}\int_{s/\e^2}^{t/\e^2}\cos(\theta+\tau)R(\theta-\tau) \, d\theta \, d\tau \,.
    \end{align*}
    
    We first analyze $I_1^{\e}(s,t)$.
    Making a change of variables $z = \theta-\tau$ and integrating by parts with respect to variable $\tau$, we have
    \begin{align*}
    I_1^{\e}(s,t)
      &= \e^2 \int_{s/\e^2}^{t/\e^2}\int_{s/\e^2-\tau}^{t/\e^2-\tau}\cos(z) R(z) \, dz \, d\tau
    \\
      &= {\epsilon ^2} \brak[\Big]{ \tau \int_{s/{\epsilon ^2} - \tau }^{t/{\epsilon ^2} - \tau } \cos(z)R(z) \, dz }_{\tau  = s/{\epsilon ^2}}^{\tau  = t/{\epsilon ^2}}
      \\
      &\qquad- \epsilon^2 \int_{s/{\epsilon ^2}}^{t/{\epsilon ^2}} \tau \Bigl(
  \cos \paren[\Big]{ \frac{s}{\epsilon ^2} - \tau }
  R\paren[\Big]{ \frac{s}{\epsilon ^2} - \tau }
  - \cos \paren[\Big]{ \frac{t}{\epsilon ^2} - \tau } R\paren[\Big]{\frac{t}{\epsilon ^2} - \tau } \Bigr) \, d\tau
    \end{align*}
    The first term on the right reduces to
    \begin{equation*}
      {\epsilon ^2} \brak[\Big]{ \tau \int_{s/{\epsilon ^2} - \tau }^{t/{\epsilon ^2} - \tau }\cos(z)R(z) \, dz }_{\tau  = s/{\epsilon ^2}}^{\tau  = t/{\epsilon ^2}} = (t - s)\int_0^{(t - s)/{\epsilon ^2}} \cos(z) R(z) \, dz \,.
    \end{equation*}
    For the second term on the right, we note
    \begin{multline*}
      {\epsilon ^2}\int_{s/{\epsilon ^2}}^{t/{\epsilon ^2}} \tau \cos \left(\frac{s}{\epsilon ^2} - \tau \right)R\left(\frac{s}{\e^2}-\tau\right)\, d\tau
      \\
      = {\epsilon ^2}\int_0^{(t - s)/{\epsilon ^2}}\cos(z)R(z)zdz + s\int_0^{(t - s)/{\epsilon ^2}}\cos(z)R(z)dz \,,
    \end{multline*}
    and
    \begin{multline*}
      {\epsilon ^2}\int_{s/{\epsilon ^2}}^{t/{\epsilon ^2}}  \cos \left(\frac{t}{\epsilon ^2} - \tau \right)R\left(\frac{t}{\e^2}-\tau\right) \tau d\tau
      \\
      = -{\epsilon ^2}\int_0^{(t - s)/{\epsilon ^2}} \cos(z)R(z)zdz+ t\int_0^{(t - s)/{\epsilon ^2}}\cos(z)R(z)dz\,.
    \end{multline*}
    Hence,
    \begin{align}
      \nonumber
      I_1^{\epsilon}(s,t)
  &= 2 (t-s)\int_0^{(t-s)/\e^2} \cos(z)R(z) \, dz
    -2 \e^2\int_0^{(t-s)/\e^2} z \cos(z)R(z) \, dz
      \\
  \label{e:I11}
  &= I_{11}^\epsilon - 2 I_{12}^\epsilon \,.
    \end{align}

    Clearly
    \begin{equation}\label{e:I11bound}
      \abs{I_{11}^\epsilon}
  \leq 2 (t - s) \sup_{x \in \R} \int_0^x \cos(z) R(z) \, dz\,.
    \end{equation}
    Note that the supremum on the right-hand side is finite for all~$\gamma \in (0, 2)$, as by Remark~\ref{r:Dexists} we know  $\lim_{x \to \infty} \int_0^x \cos(z) R(z) \, dz$ exists and is finite.

    To estimate $I_{12}$, we divide the analysis into cases.
    \restartcases
    \case[$\gamma \in (0, 1)$]
    Integrating by parts we note
    \begin{align}\label{e:I12bound}
      \nonumber
      I_{12}^\epsilon
  &= \epsilon ^2 \brak[\Big]{\sin (z) z R(z) }_{z = 0}^{z = (t - s)/{\epsilon ^2}}
  -\epsilon^2\int_0^{(t - s)/{\epsilon ^2}}
       \sin(z) (R'(z)z+R(z)) \, dz\,\\
   &=
    \e^{2\gamma}(t-s)^{1-\gamma}L\Big(\frac{t-s}{\e^2} \Big)
    -\epsilon^2\int_0^{(t - s)/{\epsilon ^2}}
       \sin(z) (R'(z)z+R(z)) \, dz\,.
    \end{align}
    For the first term on the right we note that
    \begin{equation*}
      \e^{2\gamma}(t-s)^{1-\gamma}L\Big(\frac{t-s}{\e^2} \Big)
  \leq 2 \e^{2\gamma}L\Big(\frac{1}{\e^2}\Big) (t - s)^{1 - \gamma}\,,
    \end{equation*}
    for all sufficiently small~$\epsilon$.

    For the second term, we note that~\eqref{e:slowIncrease} implies
    \begin{equation*}
      |R'(z)z| = |L'(z)z^{1-\gamma}-\gamma L(z)z^{-\gamma}|\leq 2|R(z)|\,,
    \end{equation*}
    for all sufficiently large~$z$.
    Thus
    \begin{align}
      \label{e:I12bound1}
      \MoveEqLeft
      \epsilon^2\int_0^{(t - s)/{\epsilon ^2}}
  \sin(z) (R'(z)z+R(z)) \, dz
  \leq 3 \epsilon^2 \int_0^{ (t -s) / \epsilon^2 } \abs{R(z)} \, dz
      \\
      \nonumber
  &\leq 4 \epsilon^{2\gamma} L\paren[\Big]{\frac{1}{\epsilon^2}} (t - s)^{1 - \gamma}\,,
    \end{align}
    where the last inequality followed from Karamata's theorem~\cite[Th 6.2.1]{BuldyginIndlekoferEA18}, and the fact that $L$ is slowly varying.
    Combining these, we see
    \begin{equation}\label{e:I12GaLe1}
      \abs{I_{12}^\epsilon}
  \leq 10 \e^{2\gamma} L\paren[\Big]{\frac{1}{\epsilon^2}} (t - s)^{1 - \gamma}\,,
    \end{equation}
    for all sufficiently small~$\epsilon$.
    \case[$\gamma =1$]
    We follow the proof in the case $\gamma < 1$, with a few minor changes.
    For the first term on the right of~\eqref{e:I12bound}, we note that $L(z) \leq 2 R(0) z$ for all sufficiently small~$z$.
    Since $L$ is slowly varying at infinity we must certainly have $L(z) \leq z$ for all sufficiently large~$z$.
    Hence we can find a finite constant~$C$ such that $L(z) \leq C z$ for all $z \geq 0$.
    As a result, we have
    \begin{equation*}
      \epsilon^2 L\paren[\Big]{\frac{t - s}{\epsilon^2}} \leq C (t - s)\,.
    \end{equation*}

    For the remaining terms, note that the function $g(x) \defeq \int_0^x R(z) \, dz$ is slowly varying (see for instance~\cite{BinghamGoldieEA89} Proposition 1.5.9a, p.\ 26).
    Thus using~\eqref{e:I12bound1} we see that
    \begin{align}
      \epsilon^2\int_0^{(t - s)/{\epsilon ^2}}
	\sin(z) (R'(z)z+R(z)) \, dz
      &\leq 3 \epsilon^2 \int_0^{ (t -s) / \epsilon^2 } \abs{R(z)} \, dz
      \\
      \label{e:intAbsR}
      = 3 \epsilon^2 g\paren[\Big]{ \frac{t - s}{\epsilon^2} }\,.
    \end{align}
    Since $g(0) = 0$, $g'(0) = R(0) < \infty$, and $g$ is slowly varying at infinity, we must have $g(z) \leq Cz$ for all $z \geq 0$ and a finite constant~$C$.
    (As before, we allow the constant $C$ to change from line to line, provided it does not depend on $\epsilon$, $t$ and $s$.)
    Consequently,
    \begin{equation*}
      \abs{I^\epsilon_{21}} \leq C(t - s)\,,
    \end{equation*}
    as desired.

    \case[$\gamma \in (1, 2)$]
    Directly integrating $I^\epsilon_{12}$ and using Karamata's theorem we see
    \begin{equation*}
      \abs{I_{12}^\epsilon}
  \leq 4 \epsilon^{2(\gamma - 1)} L\paren[\Big]{\frac{1}{\epsilon^2}} (t - s)^{2-\gamma}\,.
    \end{equation*}
  
    \smallskip

    We now turn our attention to the term $I_2^{\e}$.
    Substituting $z = \theta-\tau$, we note
    \begin{align}
      \nonumber
      I_2^{\e}& = \e^2 \int_{s/\e^2}^{t/\e^2}\int_{s/\e^2-\tau}^{t/\e^2-\tau}\cos(z+2\tau)R(z) \, dz \, d\tau
      \\
      \label{e:iep2sim}
      & =\e^2 \int_{s/\e^2}^{t/\e^2}\int_{s/\e^2-\tau}^{t/\e^2-\tau}[\cos(z)\cos(2\tau)-\sin(z)\sin(2\tau )]R(z) \, dz \, d\tau
      \\
      &= I_{21}^\epsilon - I_{22}^\epsilon\,,
    \end{align}
    where
    \begin{align}
      \label{e:I21}
      I_{21}^\epsilon &\defeq \e^2 \int_{s/\e^2}^{t/\e^2}\int_{s/\e^2-\tau}^{t/\e^2-\tau}\cos(z)\cos(2\tau)R(z) \, dz \, d\tau \,,
      \\
      \nonumber
      \text{and}\qquad
      I_{22}^\epsilon &\defeq \e^2 \int_{s/\e^2}^{t/\e^2}\int_{s/\e^2-\tau}^{t/\e^2-\tau}\sin(z)\sin(2\tau)R(z) \, dz \, d\tau \,.
    \end{align}
    We again divide the analysis into cases.
    \restartcases
    \case[$\gamma \in (0, 1)$]
    Integrating by parts with respect to $\tau$, we observe
    \begin{align}\label{e:I21intbyparts}
      I_{21}^\epsilon
  &= \frac{\epsilon^2}{2} \brak[\Big]{ \sin(2\tau) \int_{s/{\epsilon ^2} - \tau }^{t/{\epsilon ^2} - \tau } \cos(z)R(z) \, dz }_{\tau  = s/{\epsilon ^2}}^{\tau  = t/{\epsilon ^2}}\\
      &\qquad- \epsilon^2\int_{s/{\epsilon ^2}}^{t/{\epsilon ^2}}
  \Bigl(
    \begin{multlined}[t]
      \cos \paren[\Big]{ \frac{s}{\epsilon ^2} - \tau }
        R\paren[\Big]{ \frac{s}{\epsilon ^2} - \tau }
    \\
      - \cos \paren[\Big]{ \frac{t}{\epsilon ^2} - \tau }
        R\paren[\Big]{\frac{t}{\epsilon ^2} - \tau }
    \Bigr)
    \sin(2\tau) \, d\tau \, .
    \end{multlined}
    \end{align}
    and hence by Karamata's theorem,
    \begin{equation}\label{e:I21GaLe1}
      \abs{I_{21}^\epsilon}
  \leq C \epsilon^2 \int_0^{ (t -s) / \epsilon^2 } \abs{R(z)} \, dz
  \leq C \e^{2 \gamma} L\paren[\Big]{\frac{1}{\epsilon^2}} (t - s)^{1 - \gamma}\,,
    \end{equation}
    for some finite constant $C$.
    \case[$\gamma = 1$]
    As in the previous case, we know from~\eqref{e:I21GaLe1} that
      \begin{equation*}
  \abs{I^\epsilon_{21}} \leq
    C \epsilon^2 \int_0^{(t - s) / \epsilon^2} \abs{R(z)} \, dz\,.
      \end{equation*}
      Using the same argument as that used to bound~\eqref{e:intAbsR} we obtain
      \begin{equation*}
	\abs{I^\epsilon_{21}}
	\leq C (t - s)
      \end{equation*}
      as desired.

    \case[$\gamma \in (1, 2)$]
      As before, set $g(x) \defeq \int_0^x \abs{R(z)} \, dz$, and note
      \begin{equation*}
	\abs{I_{21}^\epsilon}
	  \leq \epsilon^2 \int_{s/\epsilon^2}^{t / \epsilon^2}
	  \paren[\Big]{ g\paren[\Big]{\frac{t}{\epsilon^2} - \tau} - g\paren[\Big]{\frac{s}{\epsilon^2} - \tau}  } \, d\tau 
	  = 2\epsilon^2 \int_0^{(t - s)/\epsilon^2} g(z) \, dz \,.
      \end{equation*}
      Let $G(x) = \int_0^x g(z) \, dz$ and note that $G$ is regularly varying with index $2 - \gamma$.
      Thus
      \begin{equation*}
	\abs{I^\epsilon_{21}} \leq 2 \epsilon^2 G\paren[\Big]{\frac{t - s}{\epsilon^2}}
	  = 3\epsilon^{2\gamma - 2} \tilde L\paren[\Big]{\frac{1}{\epsilon^2}} (t - s)^{2-\gamma}
      \end{equation*}

      The estimates for $I_{22}^\epsilon$ are identical to those for $I_{21}^\epsilon$.
      Combining the above estimates we obtain~\eqref{e:EwiTight} for $i = 1$.
      The proof when $i = 2$ is identical, and this finishes the proof of Lemma~\ref{l:quad1}.
  \end{proof}

  \begin{proof}[Proof of Lemma~\ref{l:quad2}]
    We first prove equation~\eqref{e:Ewi2} for $i = 1$.
    To do this, we follow the same computation as in the proof of Lemma~\ref{l:quad1} up to~\eqref{e:I11}.
    Now note
    \begin{equation*}
      \lim_{\epsilon \to 0} I_{11}^\epsilon
  = 2 \int_0^\infty \cos(z) R(z) \, dz\,,
    \end{equation*}
    where the above integral converges absolutely for $\gamma \in (1, 2)$, and conditionally for $\gamma \in (0, 1]$ (see Remark~\ref{r:Dexists}).
    When~$\gamma \neq 1$, the proof of Lemma~\ref{l:quad1} already shows that $I_{12}^\epsilon$, $I_{21}^\epsilon$ and $I_{22}^\epsilon$ all vanish as $\epsilon \to 0$.
    When~$\gamma = 1$, the proof of Lemma~\ref{l:quad1} shows
    \begin{equation*}
      \abs{I^\epsilon_{12}} + \abs{I^\epsilon_{21}} \leq
      C \epsilon^2 \int_0^{(t - s)/\epsilon^2} \abs{R(z)} \, dz
	= C \epsilon^2 g\paren[\Big]{\frac{t - s}{\epsilon^2}}\,,
    \end{equation*}
    where $g(x) = \int_0^x \abs{R(z)} \, dz$.
    Since $g$ is slowly varying this vanishes as $\epsilon \to 0$.
    This proves~\eqref{e:Ewi2} as claimed.

  To prove~\eqref{e:Ewij}, we note
  \begin{align*}
    \MoveEqLeft
    \E (w_1^{\e}(t)-w_1^{\e}(s))(w_2^{\e}(t)-w_2^{\e}(s)) \\
      &= \e^2\int_{s/\e^2}^{t/\e^2}\int_{s/\e^2}^{t/\e^2}R\paren{ \theta-\tau} \sin(\theta)\cos(\tau) \, d\theta \, d\tau\\
      &= \frac{\e^2}{2}\int_{s/\e^2}^{t/\e^2}\int_{s/\e^2}^{t/\e^2}R( \theta-\tau) [\sin(\theta+\tau)+\sin(\theta-\tau)] \, d\theta \, d\tau \,,
      \\
      &= \tilde I_1^\epsilon + \tilde I_2^\epsilon\,,
  \end{align*}
  where
  \begin{align*}
    \tilde I_1^{\e} &\defeq \e^2 \int_{s/\e^2}^{t/\e^2}\int_{s/\e^2}^{t/\e^2}\sin(\theta-\tau)R(\theta-\tau) \, d\theta \, d\tau,\\
    \tilde I_2^{\e} &\defeq \e^2 \int_{s/\e^2}^{t/\e^2}\int_{s/\e^2}^{t/\e^2}\sin(\theta+\tau)R(\theta-\tau) \, d\theta \, d\tau.
  \end{align*}
  
  We claim $\tilde I_1^{\e} = 0$.
  To see this, make the changes of variables
  \begin{equation}
    \theta' \defeq \frac{(t+s)}{\e^2}-\theta\,,
    \quad
    \tau' \defeq \frac{(t+s)}{\e^2}-\tau \,,
  \end{equation}
  and rewrite $\tilde I_1^{\e}$ as
  \begin{align*}
    \tilde I_1^{\e}&=\e^2\int_{t/\e^2}^{s/\e^2}\int_{t/\e^2}^{s/\e^2}\sin(\tau'-\theta')R(\tau'-\theta') \, d\theta' \, d\tau'\\
      &=- \e^2 \int_{s/\e^2}^{t/\e^2}\int_{s/\e^2}^{t/\e^2}\sin(\theta'-\tau')R(\theta'-\tau') \, d\theta' \, d\tau'\\
      &=- \tilde I_1^{\e} \,.
  \end{align*}
  This implies $\tilde I_1^{\e}=0$ as claimed.
  \smallskip
  
  We now turn our attention to $\tilde I_2^{\e}$.
  Making the change of variable $z = \theta-\tau$ we note
  \begin{align*}
    \tilde I_2^{\e}& = \e^2 \int_{s/\e^2}^{t/\e^2}\int_{s/\e^2-\tau}^{t/\e^2-\tau}\sin(z+2\tau)R(z) \, dz \, d\tau\\
    & =\e^2 \int_{s/\e^2}^{t/\e^2}\int_{s/\e^2-\tau}^{t/\e^2-\tau}[\sin(z)\cos(2\tau)+\cos(z)\sin(2\tau)]R(z) \, dz \, d\tau \,.
  \end{align*}
  This is similar to the expression for $I^\epsilon_2$ (equation~\eqref{e:iep2sim}) and following the proof of Lemma~\ref{l:quad1} we see $\tilde I^\epsilon_2 \to 0$ as $\epsilon \to 0$.
  Since~$\tilde I_1^\epsilon$ and $\tilde I_2^\epsilon$ both vanish as~$\epsilon \to 0$ we obtain~\eqref{e:Ewij} as claimed.
  \end{proof}

\section{Proof of Proposition \ref{p:fbmconverge}}\label{s:fbmConvProof}

In this section we show convergence of the rescaled processes~$u^\epsilon$ (equation~\eqref{e:udef}) to fBm.
The result is similar to well known results~\cites{Taqqu74,Taqqu77,Marty05}, and the proof is presented here for completeness.

\begin{proof}[Proof of Proposition~\ref{p:fbmconverge} when $\gamma \in (0, 1)$]
  Using~\eqref{e:rDef} and~\eqref{e:udef} we note
  \begin{align*}
    \E(u^\eps(t)-u^\eps(s))^2
      &= \frac{2}{\sigma(\epsilon)^2}\int_{r_1 = s}^t \int_{r_2=s}^{r_1} R\paren[\Big]{\frac{r_1 - r_2}{\epsilon^2}} \, dr_2 \, dr_1\,.
  \end{align*}
  By~\eqref{e:RregularlyVarying} and the uniform convergence theorem~\cite[Th. 1.2.1]{BinghamGoldieEA89}, we know
  \begin{equation}\label{e:uct}
      \frac{1}{\sigma(\epsilon)^2}
      \abs[\Big]{ R \paren[\Big]{ \frac{r_1 - r_2 }{\epsilon^2} } }
      = 
  \frac{1}{L(\epsilon^{-2}) \abs{r_1 - r_2}^\gamma}
  L \paren[\Big]{ \frac{r_1 - r_2 }{\epsilon^2} }
      \leq \frac{2}{(r_1 - r_2)^{\gamma}}\,,
  \end{equation}
  for all $r_1, r_2 \in [s, t]$ and all sufficiently small~$\epsilon$.
  Hence
  \begin{equation}\label{e:unifBound1}
    \E(u^\eps(t)-u^\eps(s))^2
  \leq 4 \int_{r_1 = s}^t \int_{r_2=s}^{r_1} \frac{dr_2 \, dr_1}{(r_1 - r_2)^{\gamma}}
  = \frac{4(t - s)^{2 - \gamma}}{(1 - \gamma)(2 - \gamma)} \,.
  \end{equation}
  Moreover~\eqref{e:RregularlyVarying}, \eqref{e:uct} and the dominated convergence theorem imply
  \begin{align*}
    \lim_{\epsilon \to 0} 
      \E(u^\eps(t)-u^\eps(s))^2
  &= 2 \int_{r_1 = s}^t \int_{r_2=s}^{r_1} \frac{dr_2 \, dr_1}{(r_1 - r_2)^\gamma}
  = \frac{(t - s)^{2\mathcal H}}{(2\mathcal H - 1) \mathcal H} \,.
  \end{align*}

  Since $u^\eps$ is Gaussian this implies that the finite-dimensional distributions of $u^\eps$ converge to that of $\sigma_{\mathcal H} B^\calH$.
  For convergence in law, note~\eqref{e:unifBound1} implies
  \begin{align*}
      \E[(u^\eps(s)-u^\eps(r))^2(u^\eps(t)-u^\eps(s))^2]&\leq \E[(u^\eps(s)-u^\eps(r))^4]^{1/2}\E[(u^\eps(t)-u^\eps(s))^4]^{1/2} \\
      & \leq 3 \E[(u^\eps(s)-u^\eps(r))^2]\E[(u^\eps(t)-u^\eps(s))^2]\\
      &\leq C |t-r|^{4\calH}\,.
  \end{align*}
  Since $4\calH>1$, Theorems~13.4 and~13.5 in \cite{Billingsley99} imply that $u^\epsilon$ converges to~$\sigma_{\mathcal H} B^{\mathcal H}$ in law.
\end{proof}

\begin{proof}[Proof of Proposition~\ref{p:fbmconverge} when $\gamma \in (1, 2)$]
  Using~\eqref{e:rDef}, \eqref{e:udef} and a change of variable we note
  \begin{align}
    \nonumber
    \E(u^\eps(t)-u^\eps(s))^2
      &= \frac{2}{\sigma(\epsilon)^2}\int_{r_1 = s}^t \int_{r_2=s}^{r_1} R\paren[\Big]{\frac{r_1 - r_2}{\epsilon^2}} \, dr_2 \, dr_1
    \\
    \nonumber
      &= \frac{2}{\sigma(\epsilon)^2}\int_{r_1 = s}^t \int_{z=0}^{r_1 - s} R\paren[\Big]{ \frac{z}{\epsilon^2} } \, dz \, dr_1
    \\
    \label{e:utus1}
      &= \frac{-2}{\sigma(\epsilon)^2}\int_{r_1 = s}^t \int_{z = r_1 - s}^\infty R\paren[\Big]{ \frac{z}{\epsilon^2} } \, dz \, dr_1\,,
  \end{align}
  where the last equality followed from~\eqref{e:intR0}.
  Since~$L$ is slowly varying,~\eqref{e:RregularlyVarying} implies
  \begin{equation}\label{e:Rbd1}
    \frac{1}{\sigma(\epsilon)^2} R \paren[\Big]{ \frac{z}{\epsilon^2} }
      \leq R(0) \varmin \paren[\Big]{\frac{C}{z^{\gamma'}}}\,,
  \end{equation}
  where $\gamma' = (1 + \gamma) / 2$ and $C$ is a finite constant that is independent of~$\epsilon$.
  We will subsequently allow $C$ to change from line to line, as long as it doesn't depend on~$\epsilon$.

  Note~\eqref{e:utus1} and~\eqref{e:Rbd1} immediately imply
  \begin{equation}\label{e:unifBound2}
    \E(u^\eps(t)-u^\eps(s))^2
  \leq 2 C \int_{r_1 = s}^t \int_{z = r_1 - s}^{\infty} \frac{dz \, dr_1}{z^{\gamma'}}
  = C(t - s)^{2 - \gamma'} \,.
  \end{equation}
  Moreover, by~\eqref{e:RregularlyVarying}, \eqref{e:Rbd1} and the dominated convergence theorem, we see
  \begin{align*}
    \lim_{\epsilon \to 0} 
      \E(u^\eps(t)-u^\eps(s))^2
  &= 2 \int_{r_1 = s}^t \int_{z = r_1 -s}^{\infty} \frac{dz \, dr_2}{z^\gamma}
  = \frac{(t - s)^{2\mathcal H}}{(1 - 2\mathcal H) \mathcal H} \,.
  \end{align*}
  Now the remainder of the proof is identical to the case when $\gamma \in (0, 1)$.
\end{proof}

\begin{proof}[Proof of Proposition~\ref{p:fbmconverge} when $\gamma = 1$]
  In this case we write
  \begin{align*}
    \MoveEqLeft
    \E(u^\eps(t)-u^\eps(s))^2
      = \frac{2}{\sigma(\epsilon)^2}\int_{r = s}^t \int_{z=0}^{r - s} R\paren[\Big]{ \frac{z}{\epsilon^2} } \, dz \, dr
    \\
      &=
  \frac{2}{L(\epsilon^{-2}) \abs{\ln \epsilon}}\int_{r = s}^t \int_{z = 0}^{1} R(z) \, dz \, dr
  + \frac{2}{\sigma(\epsilon)^2}\int_{r = s}^t \int_{z = \epsilon^2}^{r - s} R\paren[\Big]{ \frac{z}{\epsilon^2} } \, dz \, dr
  \end{align*}
  The first term on the right vanishes as~$\epsilon \to 0$.
  Using the uniform convergence theorem~\cite[Th 1.2.1]{BinghamGoldieEA89} on the second term we see
  \begin{equation*}
    \lim_{\epsilon \to 0}
      \E(u^\eps(t)-u^\eps(s))^2
      = \lim_{\epsilon \to 0}
  \frac{2}{\abs{\ln \epsilon}} \int_{r = s}^t
    \int_{z = \epsilon^2}^{r -s}
      \frac{1}{z} \, dz \, dr
      = t - s\,,
  \end{equation*}
  and the convergence is uniform when $s, t$ belong to any bounded interval.
  Hence by the same argument as in the previous two cases we see that $(u^\epsilon)$ converges in law to $\sigma_{\mathcal H} B^{\mathcal H}$ as claimed.
\end{proof}

\section{Proof of Lemma~\ref{l:mar}}\label{proofpropmar}

First, to prove \eqref{markovesp} it suffices to show, for all $n\geq 1$ and $0\leq t_1\leq \dots\leq t_{n+1}$, that
\begin{equation*}\E\big[ V(t_{n+1},dp) \vert V(t_{1},\cdot),\dots ,V(t_{n},\cdot)  \big]=e^{-\mu |p|^{2\beta}(t_{n+1}-t_n)} V(t_n,dp).\end{equation*}
For $n=1$ and $0\leq t_1\leq t_2$, we write
\begin{equation*}V(t_{2},dp)=e^{-\mu |p|^{2\beta}(t_{2}-t_1)}V(t_{1},dp) + Y,\end{equation*}
where $Y$ and $V(t_{1},dp)$ are independent. In fact, since they are mean-zero Gaussian variables, we have
\begin{equation*}\begin{split} 
\E[Y(\varphi)V(t_1,\psi)]&= \E[V(t_2,\varphi) V(t_1,\psi)]- \E[V(t_1,\varphi_{t_2-t_1}) V(t_1,\psi)]\\
&=\int dp \, r(p)\varphi(p)\psi(p)(e^{-\mu |p|^{2\beta}(t_{2}-t_1)}-e^{-\mu |p|^{2\beta}(t_{2}-t_1)} )\\
&=0,
\end{split} \end{equation*}
for all $\varphi,\psi$ bounded continuous functions where $\varphi_{s}(p)=e^{-\mu |p|^{2\beta}s}\varphi(p)$. As a result, we have
\begin{equation*}\E\big[ V(t_{2},dp) \vert V(t_{1},\cdot)\big]=e^{-\mu |p|^{2\beta}(t_{2}-t_1)} V(t_1,dp).\end{equation*}
Now, let us fix $n\geq 2$ and assume that for all family $(s_j)_{j\in\{1,\dots,n\}}$ such that $0\leq s_1\leq \dots\leq s_{n}$
\begin{equation*}  \E\big[V(s_{n},dp) \vert V(s_{1},\cdot),\dots ,V(s_{n-1},\cdot)  \big]=e^{-\mu |p|^{2\beta}(s_{n}-s_{n-1})}V(s_{n-1},dp), \end{equation*}
Then, we write
\begin{equation*}V(t_{n+1},dp)=e^{-\mu |p|^{2\beta}(t_{n+1}-t_n)}V(t_{n},dp) + Y,\end{equation*}
where $Y$ and $V(t_{n},dp)$ are independent as explained above, so that 
\begin{equation*}\begin{split}
  \MoveEqLeft
  \E\big[ V(t_{n+1},dp) \vert V(t_{1},\cdot),\dots ,V(t_{n},\cdot)  \big]\\
  =&\; e^{-\mu |p|^{2\beta}(t_{n+1}-t_n)}V(t_n,dp)+\E\big[ Y  \vert V(t_{1},\cdot),\dots ,V(t_{n-1},\cdot) \big] \\
  =&\;e^{-\mu |p|^{2\beta}(t_{n+1}-t_n)}V(t_n,dp)+\E\big[  V(t_{n+1},dp)  \vert V(t_{1},\cdot),\dots ,V(t_{n-1},\cdot) \big]\\
  &-e^{-\mu |p|^{2\beta}(t_{n+1}-t_n)}\E\big[ V(t_{n},dp)  \vert V(t_{1},\cdot),\dots , V(t_{n-1},\cdot) \big]\\
  =&\; e^{-\mu |p|^{2\beta}(t_{n+1}-t_n)}V(t_n,dp)\\
  &+(e^{-\mu |p|^{2\beta}(t_{n+1}-t_{n-1})}-e^{-\mu |p|^{2\beta}(t_{n+1}-t_{n})}e^{-\mu |p|^{2\beta}(t_{n}-t_{n-1})})V(t_{n-1},dp)\\
  =&\; e^{-\mu |p|^{2\beta}(t_{n+1}-t_n)}V(t_n,dp),
\end{split}\end{equation*}
which concludes the proof of \eqref{markovesp} by induction. 

Second, to prove \eqref{markovvar} it suffices to show that for all $n\geq 1$, $0\leq t_1\leq \dots\leq t_{n+1}\leq \tilde{t}_{n+1}$ and $\varphi,\psi$ bounded continuous functions, that
\begin{align*}
    \MoveEqLeft
    \E\big[ V(\tilde{t}_{n+1},\varphi) V(t_{n+1},\psi) \vert V(t_{1},\cdot),\dots ,V(t_{n},\cdot)  \big]
    \\
    &=\E\big[ V(\tilde{t}_{n+1},\varphi) \vert  V(t_{1},\cdot),\dots ,V(t_{n},\cdot) \big]
      \E\big[ V(t_{n+1},\psi) \vert V(t_{1},\cdot),\dots ,V(t_{n},\cdot) \big]
    \\
    &\qquad+\int dp\, r(p)\varphi(p)\psi(p)
    \begin{multlined}[t]
      \Bigl(e^{-\mu |p|^{2\beta}(\tilde{t}_{n+1}-t_{n+1})}
      \\
  -e^{-\mu |p|^{2\beta}(\tilde{t}_{n+1}-t_{n})}
   e^{-\mu |p|^{2\beta}(t_{n+1}-t_{n})}
       \Bigr) \,.
    \end{multlined}
\end{align*}
This last relation is a consequence of the following lemma, which is a consequence of \cite[Theorem 10.1 and Theorem 10.2]{Rozanov87}.
\begin{lemma}\label{projcond}
  Let $(X,Y,Z_1,\dots,Z_n)$ be a Gaussian vector on a probability space $(\Omega,\mathcal{F},\P)$, and $\mathcal{G}=\sigma(Z_1,\dots,Z_n)$ be the $\sigma$-field generated by $Z_1,\dots,Z_n$. Then, the couple $(X-\E[X\vert \mathcal{G}],Y-\E[Y\vert \mathcal{G}])$ is independent of $\mathcal{G}$, and
  \begin{equation}\label{expcondproj}\E[XY\vert\mathcal{G}]=\E[X\vert \mathcal{G}]\E[Y\vert \mathcal{G}]+\E[(X-\E[X\vert \mathcal{G}])(Y-\E[Y\vert \mathcal{G}])].\end{equation}
\end{lemma}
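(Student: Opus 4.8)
\textbf{Proof plan for Lemma~\ref{projcond}.}
The statement is the standard fact that for a jointly Gaussian family, conditioning on a sub-$\sigma$-field generated by Gaussian variables reduces to orthogonal projection in $L^2$, and that the residuals after projection are independent of (not merely uncorrelated with) the conditioning field. The plan is to reduce the assertion to linear algebra and the characterization of independence for Gaussian vectors.

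First I would recall that for a centered Gaussian vector, $\E[X\mid\mathcal G]$ coincides with the orthogonal projection $\hat X$ of $X$ onto the closed linear span $\calH_Z$ of $\{Z_1,\dots,Z_n\}$ (together with the constants): indeed $\hat X$ is a linear combination of the $Z_i$, hence $\mathcal G$-measurable, and $X-\hat X$ is orthogonal to every $Z_i$ by definition of the projection, so $\E[(X-\hat X)g(Z_1,\dots,Z_n)]=0$ for all $g$ — here one uses that $X-\hat X$ and $(Z_1,\dots,Z_n)$ are jointly Gaussian and uncorrelated, hence independent, which forces $\E[(X-\hat X)\mid\mathcal G]=0$. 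This identifies $\E[X\mid\mathcal G]=\hat X$ and likewise $\E[Y\mid\mathcal G]=\hat Y$. The same uncorrelated-implies-independent argument, applied to the Gaussian vector $(X-\hat X,\,Y-\hat Y,\,Z_1,\dots,Z_n)$, shows that the pair $(X-\hat X,Y-\hat Y)$ is independent of $\mathcal G$; this is exactly the first claim, and it is where I would invoke \cite[Theorem 10.1 and Theorem 10.2]{Rozanov87} if I wanted to quote it rather than reprove it.

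Given independence, the second claim is a short computation. Write $X=\hat X+(X-\hat X)$ and $Y=\hat Y+(Y-\hat Y)$ and expand the product:
\[
  XY=\hat X\hat Y+\hat X(Y-\hat Y)+(X-\hat X)\hat Y+(X-\hat X)(Y-\hat Y).
\]
Conditioning on $\mathcal G$: the term $\hat X\hat Y$ is $\mathcal G$-measurable so it stays; in the two cross terms one factor is $\mathcal G$-measurable and the other has conditional mean zero (since $\E[(X-\hat X)\mid\mathcal G]=\E[(Y-\hat Y)\mid\mathcal G]=0$), so they drop; and in the last term $(X-\hat X)(Y-\hat Y)$ is independent of $\mathcal G$ by the first part, so its conditional expectation equals its (unconditional) expectation $\E[(X-\hat X)(Y-\hat Y)]$. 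Collecting the surviving pieces gives precisely \eqref{expcondproj}.

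I do not anticipate a genuine obstacle here — the only point requiring care is the repeated use of the principle that jointly Gaussian and uncorrelated implies independent, which must be applied to the correct enlarged vector each time (once to get $\E[X\mid\mathcal G]=\hat X$, and once to get joint independence of the residual pair from $\mathcal G$). If one prefers to avoid reproving these, citing \cite{Rozanov87} as the paper does is sufficient, and then the lemma's proof is just the three-line expansion above.
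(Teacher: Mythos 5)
Your proposal is correct and follows essentially the same route as the paper: the independence of the residual pair $(X-\E[X\mid\mathcal G],\,Y-\E[Y\mid\mathcal G])$ from $\mathcal G$ is taken from \cite{Rozanov87} (Theorems 10.1 and 10.2), and the identity \eqref{expcondproj} then follows from the same four-term expansion of $XY$ with the cross terms killed by $\E[(X-\E[X\mid\mathcal G])\mid\mathcal G]=0$. The extra discussion of conditional expectation as orthogonal projection is fine but not needed beyond what the cited results already give.
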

\begin{proof}[Proof of Lemma \ref{projcond}] 
  The proof of the independence of $(X-\E[X\vert \mathcal{G}],Y-\E[Y\vert \mathcal{G}])$ with respect to $\mathcal{G}$ is a consequence of Theorem 10.1 and Theorem 10.2 of \cite{Rozanov87}. Consequently,
  \begin{equation*}
  \E\Big[\Big(X-\E[X\vert \mathcal{G}]\Big)\Big(Y-\E[Y\vert \mathcal{G}]\Big)\Big\vert \mathcal{G}\Big]=\E\Big[\Big(X-\E[X\vert \mathcal{G}]\Big)\Big(Y-\E[Y\vert \mathcal{G}]\Big)\Big],\end{equation*}
  and then
  \begin{equation*}
  \begin{split}
  \E[XY\vert\mathcal{G}]&=\E\Big[\Big(X-\E[X\vert \mathcal{G}]+\E[X\vert \mathcal{G}]\Big)\Big(Y-\E[Y\vert \mathcal{G}]+\E[Y\vert \mathcal{G}]\Big)\Big\vert\mathcal{G}\Big]\\
  &=\E[X\vert \mathcal{G}]\E[Y\vert \mathcal{G}]+\E\Big[\Big(X-\E[X\vert \mathcal{G}]\Big)\Big(Y-\E[Y\vert \mathcal{G}]\Big)\Big]\\
  &+\E[Y\vert \mathcal{G}]\underbrace{\E\big[\big(X-\E[X\vert \mathcal{G}]\big)\big\vert\mathcal{G}\big]}_{=0}+\E[X\vert \mathcal{G}]\underbrace{\E\big[\big(Y-\E[Y\vert \mathcal{G}]\big)\big\vert\mathcal{G}\big]}_{=0}.
  \end{split}\end{equation*}
\end{proof}
As a result, for all $\varphi,\psi$ bounded continuous functions, with
\begin{equation*}X=V(\tilde{t}_{n+1},\varphi)\qquad \text{and}\qquad Y=V(t_{n+1},\psi), \end{equation*}
we have
\begin{equation*}\begin{split}
    \MoveEqLeft
    \E\big[V(\tilde{t}_{n+1},\varphi) V(t_{n+1},\psi) \vert V(t_{1},\cdot),\dots ,V(t_{n},\cdot)  \big] \\
    &=\E\big[ V(\tilde{t}_{n+1},\varphi) \vert  V(t_{1},\cdot),\dots ,V(t_{n},\cdot) \big]\\
&\qquad\cdot\E\big[V(t_{n+1},\psi) \vert  V(t_{1},\cdot),\dots ,V(t_{n},\cdot) \big]\\
&\quad+P,
\end{split}\end{equation*}
where, using \eqref{markovesp},
\begin{equation*}\begin{split}
P&=\E\Big[\Big( X-\E\big[ X\vert  V(t_{1},\cdot),\dots ,V(t_{n},\cdot) \big] \Big)\Big(Y-\E\big[Y \vert V(t_{1},\cdot),\dots ,V(t_{n},\cdot) \big] \Big)\Big]\\
&=\E\Big[\Big( V(\tilde{t}_{n+1},\varphi)-V(t_n,\varphi_{\tilde{t}_{n+1}-t_n})\Big)\Big(V(t_{n+1},\psi)-V(t_n,\psi_{t_{n+1}-t_n})\Big)\Big]\\
&=\int dp \, r(p)\varphi(p)\psi(p)(e^{-\mu |p|^{2\beta}(\tilde{t}_{n+1}-t_{n+1})}-e^{-\mu |p|^{2\beta}(\tilde{t}_{n+1}-t_{n})}e^{-\mu |p|^{2\beta}(t_{n+1}-t_{n})}),
\end{split}\end{equation*}
which concludes the proof of \eqref{markovvar}.

\section{Proof of Lemma~\ref{l:bound}}\label{proofpropbound}

Let us start with the following remark. For any $(t,\varphi)\in D_{k,M}$, it is straightforward that $\varphi(t,\cdot)\in W_{k,M}$, so that

\begin{equation*}\E \sup_{(t,\varphi)\in D_{k,M}} \Big\vert V\Big(\frac{t}{\eps^2},\varphi(t,\cdot)\Big)\Big\vert \leq \E \sup_{(t,\varphi)\in \tilde D_{k,M}} \Big\vert V\Big(\frac{t}{\eps^2},\varphi\Big)\Big\vert\,,
\end{equation*}
with 
\begin{equation*}
\tilde D_{k,M} \defeq [0,T]\times W_{k,M}.
\end{equation*}
As a result, to prove inequality \eqref{boundV1}, we only need to prove that
\begin{equation*}\E\sup_{(t,\varphi)\in \tilde D_{k,M}} \Big\vert V\Big(\frac{t}{\eps^2},\varphi\Big)\Big\vert\leq C + \frac{C(\eps)}{\eps}\,,\end{equation*}
where $C>0$ and $C(\eps)>0$ goes to $0$ as $\eps\to 0$. To this end we first remark that there is a compact embedding of $\tilde D_{k,M}$ into $[0,T]\times C^0(S)$ equipped with the metric
\begin{equation}\label{def-norm}
\|(t,\varphi)\|_\eps := \sqrt{|t|}/\eps + \|\varphi\|_\infty\,.
\end{equation} 
Let us also recall \cite[Theorem 8.8]{Brezis11} that there exists $C_k>0$ such that for any $\varphi\in W^{1,k}(S)$, we have
\begin{equation*} \|\varphi\|_\infty \leq C_k \|\varphi\|_{W^{1,k}}\,.\end{equation*}
We also remark that for the pseudometric
\begin{equation*}
d_{V,\eps}((t,\varphi),(s,\psi))=\E[\vert V(t/\eps^2,\varphi)-V(s/\eps^2,\psi)\vert^2]^{1/2}\,,
\end{equation*}
for which
\begin{equation}\label{ineq-d}
\begin{split}
d^2_{V,\eps}((t,\varphi),(s,\psi)) &= \int_S dp \,r(p)(\varphi^2(p)+\psi^2(p)-2\varphi(p)\psi(p)e^{-\mu|p|^{2\beta}|t-s|/\eps^2}) \\
&= \int_S dp \,r(p) (\varphi^2(p)+\psi^2(p))(1-e^{-\mu|p|^{2\beta}|t-s|/\eps^2})\\
&+ \int_S dp \,r(p)(\varphi(p)-\psi(p))^2 e^{-\mu|p|^{2\beta}|t-s|/\eps^2}\\
&\leq C_{k,M,r} \|(t,\varphi) -(s,\psi)\|^2_\eps\,,
\end{split}
\end{equation}
we have
\begin{equation*}diam_{d_{V,\eps}}(\tilde D_{k,M})\leq \sqrt{2C_{k,M,r}(T+M)} \defeq c_{k,M,r}\,.\end{equation*}

Note now that it is not easy to see if $\tilde D_{k,M}$ is relatively compact equipped with $d_{V,\eps}$, while it is required to apply \cite[Theorem 1.5.1 p. 41]{AdlerTaylor07}. The problem is that $d_{V,\eps}$ is only a pseudometric. For instance, $d_{V,\eps}$ does not separate well points, since we have for any $s$ and $t$
\begin{equation*}
d_{V,\eps}((t,0),(s,0))=0\,.
\end{equation*}
However, according to \eqref{ineq-d}, the application $d_{V,\eps}$ provides a true metric on $\tilde D_{k,M}\setminus [0,T]\times\{0\}$. Then, instead of working directly with $\tilde D_{k,M}$ let us consider now the increasing sequence of $\|\cdot\|_\eps$-relatively compact subset of $\tilde D_{k,M}$
\begin{equation*} D^n_{k,M} \defeq \tilde D_{k,M}\cap [0,T]\times \big\{\varphi \in W^{1,k}(S):\, \|\varphi\|_\infty \geq 1/n\big\}\,.\end{equation*}
Now, since $d_{V,\eps}$ is a metric on every $D^n_{k,M}$, it is straightforward to see from \eqref{ineq-d} that all the $D^n_{k,M}$ are also $d_{V,\eps}$-relatively compact from the sequential characterization of compactness.   

We can now apply \cite[Theorem 1.5.1 p. 41 and Lemma 1.5.2 p. 44]{AdlerTaylor07} over all the $\overline{D^n_{k,M}}^{d_{V,\eps}}$, we have
\begin{equation*}
\begin{split}
\E\sup_{(t,\varphi)\in D^n_{k,M}} V\Big(\frac{t}{\eps^2},\varphi \Big) & \leq K\Big( c_{k,M,r}\ln(c_{k,M,r}) + \int_0^{c_{k,M,r}} \sqrt{\ln(N_{\eps}(r))} dr \Big)\,,
\end{split}
\end{equation*}
where $N_\eps(r)$ is the smallest number of balls covering $D^n_{k,M}$ with radius $r$, which are defined by
\begin{equation*}
B(X,r)=\{ Y\in D^n_{k,M}:\, d_{V,\eps}(X,Y)< r\}\,.
\end{equation*}
Because of \eqref{ineq-d}, it is clear that 
\begin{equation*}N_\eps(r)\leq \mathcal{N}_\eps\big(r/C^{1/2}_{k,M,r}\big)\,,\end{equation*}  
where $\mathcal{N}_\eps(u)$ stands for the smallest number of balls with radius $u$ associated to the metric defined by the norm $\|\cdot\|_\eps$. Since $D^n_{k,M}$ is defined as a product space, one can determine the smallest number of balls with radius $r$ that cover each of its components. For each parts, the metric is defined by the corresponding parts of the r.h.s of \eqref{def-norm}. Therefore, the smallest number of balls with radius $r$ that cover $[0,T]$ is of order $1/(r^2\eps^2)$, and for $\{\varphi \in W^{1,k}(S): \quad \|\varphi\|_{W^{1,k}}\leq M\}$ it is of order $\exp(1/r)$ \cite[Theorem 5.2 p. 311]{BirmanSolomjak67}. As a result, for any $n\in\N^*$
\begin{equation*}
\begin{split}
\E \sup_{(t,\varphi)\in D^n_{k,M}} V\Big(\frac{t}{\eps^2},\varphi \Big) & \leq C_1 + C_2\int_0^{c_{k,M,r}} \sqrt{\ln\Big(\frac{C \exp(1/r)}{r^2\eps^2}\Big)}\, dr\,,\\
& \leq C'_1 + C'_2\int_0^{c_{k,M,r}} \frac{dr}{\sqrt{r}} + \frac{C'_3}{\eps}\int_0^{\eps c_{k,M,r}} \sqrt{\ln\Big(\frac{1}{u}\Big)} \,du\,.
\end{split}
\end{equation*}

Finally, setting
\begin{equation*}C(\eps)\defeq C'_3\int_0^{\eps c_{k,M,r}} \sqrt{\ln\Big(\frac{1}{u}\Big)} \,du\,,\end{equation*}
and using the monotone convergence theorem, we have
\begin{equation*}E_1\defeq\E\sup_{(t,\varphi)\in \tilde D_{k,M}} V\Big(\frac{t}{\eps^2},\varphi \Big) =\lim_{n\to \infty}\E \sup_{(t,\varphi)\in D^n_{k,M}} V\Big(\frac{t}{\eps^2},\varphi \Big) \leq C''_1 + \frac{C(\eps)}{\eps}\,.\end{equation*}
We conclude the proof of the bound \eqref{boundV1} using that
\begin{equation*}\E\sup_{(t,\varphi)\in \tilde D_{k,M}} \Big\vert V\Big(\frac{t}{\eps^2},\varphi \Big) \Big\vert \leq 2E_1\end{equation*}
by symmetry.

For proving \eqref{boundV2}, we first remark that by stationarity in $t$ we have
\begin{equation*}\E\sup_{\varphi\in W_{k,M}} \Big\vert V\Big(\frac{t}{\eps^2},\varphi \Big) \Big\vert^n =\E\sup_{\varphi\in W_{k,M}}\vert V(0,\varphi)\vert^n \,,\end{equation*}
so that following the same lines as above by removing the $t$-dependence we have for the first order moment,
\begin{equation*}\E\sup_{\varphi\in W_{k,M}}\vert V(0,\varphi)\vert \leq 2\E\sup_{\varphi\in W_{k,M}} V(0,\varphi)\leq C_1\,.\end{equation*}
Finally, for the arbitrary order moments we write
\begin{equation*}\begin{split}
E'_n\defeq \E\sup_{\varphi\in W_{k,M}}\vert V(0,\varphi)\vert^n  &= \int_0^\infty du\, \P\Big(\sup_{\varphi\in W_{k,M}}\vert V(0,\varphi)\vert^n>u\Big) \\
&= \int_0^\infty du\, \P\Big(\sup_{\varphi\in W_{k,M}}\vert V(0,\varphi)\vert>u^{1/n}\Big)\\
&\leq 2\int_0^\infty du\,\P\Big(\sup_{\varphi\in W_{k,M}} V(0,\varphi) >u^{1/n}\Big)\,.
\end{split}\end{equation*}
Denoting 
\begin{equation*}
E''_1 = \E\sup_{\varphi\in W_{k,M}} V(0,\varphi)\,,
\end{equation*}
we have
\begin{equation*}\begin{split}
E'_n & \leq 2{E''_1}^n + 2\int_{{E''_1}^n}^\infty du\, \P\Big(\sup_{\varphi\in W_{k,M}} V(0,\varphi) -E''_1>u^{1/n}-E''_1\Big)\\
& \leq 2{E''_1}^n + 2\int_{{E''_1}^n}^\infty du\, e^{-(u^{1/n}-E''_1)^2/(2\sigma_D^2)}\,,
\end{split}\end{equation*}
where the last line is given by \cite[Theorem 2.1.1 p. 50]{AdlerTaylor07}
and
\begin{equation*}\sigma_D^2\defeq \sup_{\varphi\in W_{k,M}}\E V(0,\varphi)^2=\sup_{\varphi \in W_{k,M}}\int dp\,r(p)\varphi(p)^2\leq C_k^2 M^2\int dp\,r(p)\,.\end{equation*}
As a result,
\begin{equation*}\begin{split}
E'_n&\leq 2{E''_1}^n + 2(n-1)\int_{E''_1}^\infty dv\, v^{n-1} \,e^{-(v-E''_1)^2/(2\sigma_D^2)}\\
&\leq 2{E''_1}^n  + 2(n-1)\int_{0}^\infty dv\, v^{n-1} \,e^{-v^2/(2\sigma_D^2)}\,,
\end{split}\end{equation*}
which concludes the proof of the bound \eqref{boundV2}.
$\hfill \square$

\section{Proof of Proposition \ref{l:bound_a}}\label{proofpropbound_a}

The estimates for $a$, $\partial_\theta a$, and $\partial_I a$ we prove in this section are based on the following estimates on the inverse map $\varphi^{-1}$.

\begin{lemma}\label{lemma_bound_phi-1}
  There exist $r>0$ small enough, and a constant $C_r>0$ such that for any $I \in(0,r)$
  \[
  \sup_{\theta \in\mathbb{T}}\Big(\|\varphi^{-1}(I,\theta)\| + \|\partial_\theta \varphi^{-1}(I,\theta)\|\Big) \leq C_r \sqrt{I}\,,
  \]
  and
  \[
  \sup_{\theta \in\mathbb{T}}\|\partial_I \varphi^{-1}(I,\theta)\| \leq \frac{C_r}{\sqrt{I}}\,.
  \]
\end{lemma}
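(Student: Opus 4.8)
\textbf{Proof plan for Lemma~\ref{lemma_bound_phi-1}.}
The whole point is that near the non-degenerate minimum at the origin, the level sets $\{H = K(I)\}$ are, to leading order, ellipses of ``radius'' $\sim \sqrt{I}$, so the inverse map $\varphi^{-1}(I,\cdot)$ parametrizes a closed curve of diameter $O(\sqrt I)$, its $\theta$-derivative (the velocity along the curve times $1/\omega$) is also $O(\sqrt I)$, and its $I$-derivative, which measures how fast neighbouring level sets separate, blows up like $I^{-1/2}$. I would make this precise by a normal-form reduction. Since $H$ has a non-degenerate minimum at $0$ with $H(0)=0$, the Morse lemma gives a smooth change of coordinates $\Phi$ near $0$ with $H(\Phi(z)) = \tfrac12 |z|^2$; composing with a linear symplectic map one may further arrange $\Phi$ to be symplectic (this is the symplectic Morse/Weinstein lemma for a single critical point in two dimensions — area-preserving, since the Hessian can be symplectically diagonalized and then the remainder absorbed). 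In these coordinates the action-angle variables are the standard ones of the harmonic oscillator: $z = \sqrt{I/\pi}\,(\cos(2\pi\theta - \tfrac\pi2), \sin(2\pi\theta-\tfrac\pi2))$ as in the quadratic Example, with $K(I) = I/(2\pi)$. Hence $\varphi^{-1}(I,\theta) = \Phi\big(\sqrt{I/\pi}\,e(\theta)\big)$ where $e(\theta)$ is a fixed smooth unit-speed-type vector field on $\mathbb T$ and $\Phi$ is smooth with $\Phi(0)=0$.

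From this representation the three bounds are immediate by the chain rule. First, $\|\varphi^{-1}(I,\theta)\| = \|\Phi(\sqrt{I/\pi}\,e(\theta))\| \le \|D\Phi\|_{L^\infty(B_r)}\,\sqrt{I/\pi} \le C_r\sqrt I$, using $\Phi(0)=0$ and $|e(\theta)|$ bounded, valid for $I<r$ with $r$ small enough that $\sqrt{I/\pi}\,e(\theta)$ stays in the neighbourhood where the normal form holds. Second, $\partial_\theta \varphi^{-1}(I,\theta) = D\Phi(\sqrt{I/\pi}\,e(\theta))\cdot \sqrt{I/\pi}\,e'(\theta)$, so again $\|\partial_\theta\varphi^{-1}(I,\theta)\| \le \|D\Phi\|_{L^\infty}\|e'\|_{L^\infty}\sqrt{I/\pi} \le C_r\sqrt I$. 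Third, $\partial_I \varphi^{-1}(I,\theta) = D\Phi(\sqrt{I/\pi}\,e(\theta))\cdot \tfrac{1}{2\sqrt{\pi I}}\,e(\theta)$, whence $\|\partial_I\varphi^{-1}(I,\theta)\| \le \|D\Phi\|_{L^\infty}\|e\|_{L^\infty}\,\tfrac{1}{2\sqrt{\pi I}} \le C_r/\sqrt I$. Taking the supremum over $\theta\in\mathbb T$ and enlarging the constant gives the claim.

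The only genuinely nontrivial input is the existence of the \emph{symplectic} normal form $\Phi$ — i.e.\ that the coordinate change flattening $H$ near the non-degenerate minimum can be taken area-preserving. This is where I expect the main work; one either invokes the Colin de Verdière / Eliasson-type result on symplectic Birkhoff normal forms for non-degenerate elliptic equilibria in one degree of freedom, or argues directly: diagonalize the Hessian by a linear symplectic map so that $H = \tfrac12(x^2+y^2) + O(|(x,y)|^3)$, pass to action-angle $(I_0,\theta_0)$ of the quadratic part, observe $H$ expressed in these is $I_0/(2\pi) + O(I_0^{3/2})$ uniformly in $\theta_0$ with the remainder smooth in $(\sqrt{I_0}\cos,\sqrt{I_0}\sin)$, and use the one-dimensional Liouville-Arnold construction (the generating function of~\eqref{generating_eq}) to get $I$ as a smooth function of $I_0$ with $I(0)=0$, $I'(0)=1$, and $\theta$ a smooth $2\pi$-periodic correction; then $\varphi^{-1}(I,\theta)$ inherits the smooth dependence on $\sqrt I$ and $\theta$ needed above. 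Either route yields the representation $\varphi^{-1}(I,\theta)=\Phi(\sqrt{I/\pi}\,e(\theta))$ with $\Phi\in C^\infty$, $\Phi(0)=0$, after which the estimates are, as above, a one-line application of the mean value inequality.
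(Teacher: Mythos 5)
Your argument hinges on the claim that one can choose the flattening diffeomorphism $\Phi$ to be \emph{symplectic} while still having $H\circ\Phi(z)=\tfrac12|z|^2$ exactly, and this claim is false in general. A symplectomorphism conjugates Hamiltonian flows, and the flow of $\tfrac12|z|^2$ is $2\pi$-periodic on every orbit; so an exact symplectic normal form would force every orbit of $H$ near the minimum to have the same period, i.e.\ $\omega(I)\equiv\mathrm{const}$, which fails already for $H=\tfrac12(x^2+y^2)+x^4$. (The remark that ``composing with a linear symplectic map one may further arrange $\Phi$ to be symplectic'' does not help: post-composing a non-symplectic diffeomorphism with a linear map does not make it area-preserving.) The correct statement is Vey's isochore Morse lemma: there is a local symplectomorphism $\Phi$ and a smooth $g$ with $g(0)=0$, $g'(0)>0$, such that $H\circ\Phi=g\bigl(\tfrac12|z|^2\bigr)$. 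Since the level sets in the $z$-coordinates are still circles, this weaker normal form does yield your representation $\varphi^{-1}(I,\theta)=\Phi\bigl(\sqrt{I/\pi}\,e(\theta)\bigr)$, and then your chain-rule computation of all three bounds is fine. So the strategy is salvageable, but the ``only genuinely nontrivial input'' you defer is, as you stated it, impossible, and the direct construction you sketch in the last paragraph (passing to the action-angle variables of the quadratic part and claiming $I$ is a smooth function of $I_0$) is not carried out and glosses over the fact that $I_0$ is not constant on the level sets of $H$.

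For comparison, the paper avoids normal forms entirely and stays elementary. The bound $\|\varphi^{-1}(I,\theta)\|\le C_r\sqrt I$ comes from the second-order Taylor expansion $K(I)=H(X)=\tfrac12 X^T\nabla^2H(0,0)X+o(|X|^2)$ together with the hypothesis $K(I)\le c_{2,r}I$; the bound on $\partial_\theta\varphi^{-1}$ follows from the identities~\eqref{relation_phi}, which give $\partial_\theta\varphi^{-1}=\nabla^\perp$-type expressions equal to $\pm\partial_{x}H(\varphi^{-1})/\omega(I)$, combined with the Lipschitz estimate $\|\nabla H(\varphi^{-1})\|\le\sup\|\nabla^2H\|\cdot\|\varphi^{-1}\|$; and the bound on $\partial_I\varphi^{-1}$ is obtained by differentiating those identities in $I$, which yields an integral inequality in $\theta$ closed by Gronwall, anchored at $\theta=0$ where $\partial_I\varphi_1^{-1}(I,0)=0$ and $\partial_I\varphi_2^{-1}(I,0)=\omega(I)/\partial_yH(0,\varphi_2^{-1}(I,0))=O(I^{-1/2})$. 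If you want to keep your normal-form route you must invoke (or prove) Vey's theorem, which is considerably heavier machinery than the lemma requires.
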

The proof of this lemma is postponed to the end of this appendix.

In what follows we denote by
\[
B_r = \varphi^{-1}((0,r)\times \mathbb{T})
\] 
with $r>0$ small enough so that $B_r$ describes a small neighborhood of $(0,0)\in \R^2$.

\subsection{Bound for \texorpdfstring{$a$}{a}.}

Let us remind the reader that
\[
a(I,\theta) = e_2 \cdot \nabla \varphi_1(\varphi^{-1}(I,\theta)) = \partial_y\varphi_1(\varphi^{-1}(I,\theta))\,,
\]
so that differentiating w.r.t. $y$ the relation $K(I(x,y))=H(x,y)$ we have
\[
\partial_y I(x,y) = \partial_y \varphi_1(x,y)  = \partial_y H(x,y)/\omega(I)\,.
\]
Now, using that $\partial_y H(0,0)=0$, we have for any $I\in(0,r)$
\[\begin{split}
|a(I,\theta)|& \leq \omega^{-1}_0 |\partial_y H(\varphi^{-1}(I,\theta)) -\partial_y H(0,0)|\\
& \leq \omega^{-1}_0 \sup_{B_r} \| \nabla \partial_y H \|\cdot \|\varphi^{-1}(I,\theta)\| \leq C_r \sqrt{I}.
\end{split}\]

\subsection{Bound for \texorpdfstring{$\partial_\theta a$}{partial\_theta a}}

Using that
\begin{equation}\label{eq:relation_a}
a(I,\theta) = \partial_y H(\varphi^{-1}(I,\theta))/\omega(I)\,,
\end{equation}
and now differentiating w.r.t. $\theta$, we obtain
\[
\partial_\theta a(I,\theta) = \partial_\theta \varphi^{-1}(I,\theta)\cdot \nabla \partial_y H(\varphi^{-1}(I,\theta)) /\omega(I)\,.
\]
Then, by the Cauchy-Schwarz inequality
\[\begin{split}
|\partial_\theta a(I,\theta)| & \leq \omega^{-1}_0 \|\partial_\theta \varphi^{-1} (I,\theta)\|\sup_{B_r} \| \nabla \partial_y H \| \leq C_r \sqrt{I}\,.
\end{split}\]

\subsection{Bound for \texorpdfstring{$\partial_I a$}{partial\_I a}} 

Differentiating \eqref{eq:relation_a} w.r.t. $I$, we obtain
\[\begin{split}
\partial_I a(I,\theta)& = -\frac{\omega'(I)}{\omega^2(I)} \partial_y H(\varphi^{-1}(I,\theta) ) + \frac{1}{\omega(I)} \partial_I \varphi^{-1}(I,\theta) \cdot \nabla \partial_y H(\varphi^{-1}(I,\theta))\\
& \defeq T_1 + T_2\,.
\end{split}\]
For $T_1$, using \eqref{hyp_omega} and proceeding as for the function $a$ to deal with the term $\partial_y H$ we have
\[|T_1| \leq \frac{C_r}{\sqrt{I}}\,.\]
For $T_2$, applying the Cauchy-Schwarz inequality together with Lemma \ref{lemma_bound_phi-1} yields
\[|T_2|\leq \frac{C'}{\sqrt{I}}\,,\]
which concludes the proof of Proposition \ref{l:bound_a}.

\subsection{Proof of Lemma \ref{lemma_bound_phi-1}}

 Before going into the proof of the lemma, let us remind the reader that $H$ admits a unique minimum at $(0,0)$ so that $\nabla^2 H(0, 0)$ is a positive definite matrix
\[
\nabla^2 H(0, 0) = \begin{pmatrix} \lambda_1 & \lambda_{12} \\ \lambda_{12} &\lambda_2 \end{pmatrix}
\]
with $\lambda_1$, $\lambda_2$ $>0$. In this case, the inner product corresponding to this matrix defines a norm on $\R^2$ which is equivalent to the Euclidean norm $|\cdot|$. In other words, there exist two constants $\lambda$, $\bar \lambda > 0$ such that for any $X=(x,y)^T\in \R^2$
\begin{equation}\label{eq:equiv_norm}
\lambda |X|^2 \leq X^T \nabla^2 H(0, 0) X \leq \bar \lambda |X|^2\,.
\end{equation}

\subsubsection{Bound for $\varphi^{-1}$}

Since $(0,0)$ is the unique minimum to $H$, a Taylor expansion at the second order gives us
\begin{equation}\label{DLH}
K(I) = H(X) = \frac{1}{2}X^T \nabla^2 H(0, 0) X  + o(|X|^2)\,,
\end{equation}
with $X=(x,y)^T$. For any $X\in B_r$ we can have using \eqref{eq:equiv_norm} that
\begin{equation}\label{ineq_K}
K(I)\geq \tilde\lambda |X|^2 \geq \tilde \lambda y^2
\end{equation}
with $\tilde\lambda = \lambda /2  - \mu > 0$ for some small $\mu$, yielding directly
\[
|\varphi^{-1}_2(I,\theta)| = |y| \leq \sqrt{K(I)/\tilde\lambda} = C \sqrt{K(I)-K(0)} \leq C \sup_{J \in (0,r) }\sqrt{\omega(J)} \sqrt{I} = C'\sqrt{I}.
\] 
In the same way, we also have
\[
|\varphi^{-1}_1(I,\theta)| \leq C'_r\sqrt{I}\,.
\]

\subsubsection{Bound for $\partial_\theta \varphi^{-1}$}

Differentiating w.r.t. $x$ and then $y$ the relation $K(I(X)) = H(X)$ we have
\[
\partial_x \varphi_1(X) = \partial_x H(X) / \omega(I)\qquad \text{and}\qquad \partial_y \varphi_1(X) = \partial_y H(X)/\omega(I)\,,
\]
so that using \eqref{relation_phi} we obtain
\begin{equation}\label{eq:partial_theta_varphi}
  \partial_\theta \varphi^{-1}_2(I,\theta)
    = \frac{\partial_x H(\varphi^{-1}(I,\theta))}{\omega(I)}
  \quad\text{and}\quad
  \partial_\theta \varphi^{-1}_1(I,\theta)
    = \frac{-\partial_y H(\varphi^{-1}(I,\theta))}{\omega(I)} \,.
\end{equation}
Therefore, we have
\[
\| \partial_\theta \varphi^{-1}(I,\theta) \|\leq \|\nabla H(\varphi^{-1}(I,\theta))\|/\omega_0\,,
\]
with
\begin{equation}\label{ineq_grad}\begin{split}
\|\nabla H(\varphi^{-1}(I,\theta))\| & = \|\nabla H(\varphi^{-1}(I,\theta))-\nabla H(0,0)\| \\ 
& \leq \sup_{B_r} \|\nabla^2 H\| \cdot\|\varphi^{-1}(I,\theta)\| \leq C_r \sqrt{I}\,.
\end{split}\end{equation}

\subsubsection{Bound for $\partial_I \varphi^{-1}$}

Differentiating \eqref{eq:partial_theta_varphi} w.r.t. $I$ we obtain
\[\begin{split}
\partial^2_{I\theta} \varphi^{-1}_2(I,\theta) &= -\frac{\omega'(I)}{\omega^2(I)}\partial_x H(\varphi^{-1}(I,\theta)) + \frac{1}{\omega(I)} \partial_I \varphi^{-1} (I,\theta) \cdot \nabla \partial_x H (\varphi^{-1}(I,\theta))\\
\partial^2_{I\theta} \varphi^{-1}_1(I,\theta) &= -\frac{\omega'(I)}{\omega^2(I)}\partial_y H(\varphi^{-1}(I,\theta)) + \frac{1}{\omega(I)} \partial_I \varphi^{-1} (I,\theta) \cdot \nabla \partial_y H (\varphi^{-1}(I,\theta))\,,
\end{split}\]
so that using \eqref{partial_I_phi_10} and \eqref{ineq_grad}
\[
|\partial_{I} \varphi^{-1}_1(I,\theta)| \leq \frac{C_1}{\sqrt{I}} + \frac{1}{\omega_0} \sup_{B_r} \|\nabla^2 H\| \int_0^\theta \|\partial_{I} \varphi^{-1}(I,\theta')\| d\theta'\,,
\]
and in the same way for $\varphi^{-1}_2$
\[
|\partial_{I} \varphi^{-1}_2(I,\theta)| \leq \frac{C_2}{\sqrt{I}} + |\partial_{I} \varphi^{-1}_2(I,0)| + \frac{1}{\omega_0} \sup_{B_r} \|\nabla^2 H\| \int_0^\theta \|\partial_{I} \varphi^{-1}(I,\theta')\| \,d\theta'.
\]
Letting
\[U(I,\theta) \defeq |\partial_{I} \varphi^{-1}_1(I, \theta)|+ |\partial_{I} \varphi^{-1}_2(I, \theta)|\,, \]
and gathering the last two inequalities, we have  
\[
U(I,\theta) \leq \frac{C}{\sqrt{I}} + |\partial_{I} \varphi^{-1}_2(I,0)| + C'\int_0^\theta U(I,\theta')\, d\theta'\,. 
\]
Then, applying the Gronwall lemma, we obtain 
\[
\sup_{\theta\in(0,1)}U(I,\theta) \leq  \Big(\frac{C}{\sqrt{I}} + |\partial_{I} \varphi^{-1}_2(I,0)| \Big) e^{C'}\,.
\]
Moreover, differentiating w.r.t. $I$ the relation
\[H(\varphi^{-1}(I,\theta)) = K(I)\]
we have
\[\partial_I\varphi^{-1}_1(I,\theta) \partial_x H(\varphi^{-1}(I,\theta)) + \partial_I\varphi^{-1}_2(I,\theta)\partial_y H(\varphi^{-1}(I,\theta)) = \omega(I)\]
and setting $\theta=0$ we have from \eqref{phi_10} and  \eqref{partial_I_phi_10}
\[\partial_I\varphi^{-1}_2(I,0)\partial_y H(0,\varphi^{-1}_2(I,0)) = \omega(I)\,.\]
Also, using that 
\[\begin{split}
\partial_y H(0,y) &= \partial_y H(0,0) + \partial^2_{yy} H(0,0) y  + o(y)= \lambda_2 y + o(y)\,,
\end{split}\]
so that for any $(0, y) \in B_r$ we have, for some small $\mu$,
\[
|\partial_y H(0,y)| \geq (\lambda_2 -\mu)|y|\,.
\]
As a result,
\[
|\partial_I\varphi^{-1}_2(I,0)| \leq \frac{\omega(I)}{(\lambda_2 -\mu) |\varphi^{-1}_2(I,0)|}\,,
\]
and from \eqref{phi_10} together with \eqref{DLH}, we have
\[K(I) = H(0,\varphi^{-1}_2(I,0))\leq \bar \lambda_2\big(\varphi^{-1}_2(I,0)\big)^2,\]
with $\bar \lambda_2=\lambda_2/2+\mu$ so that
\[ |\varphi^{-1}_2(I,0)| \geq \sqrt{K(I)/\bar\lambda_2} \geq \sqrt{ c_{1} I/\bar\lambda_2}\,.\]
Finally, we obtain
\[
\sup_{\theta \in \mathbb{T}} |\partial_{I} \varphi^{-1}_1(I, \theta)|+ |\partial_{I} \varphi^{-1}_2(I,\theta)|   \leq  \frac{C_r}{\sqrt{I}}\,,
\]
which gives the desired result and concludes the proof of the lemma.

\bibliographystyle{halpha-abbrv}
\bibliography{refs,preprints}
\end{document}